\newtheorem{Lemma}{\sc Lemma}  
\newtheorem{Theorem}{\sc Theorem}  
\newtheorem{Proposition}{\sc Proposition}  
\newtheorem{Corollary}{\sc Corollary}
\theoremstyle{remark}
\newtheorem{Remark}{\sc Remark} 
\newtheorem{Example}{\sc Example} 
\newtheorem{Case}{\sc Case}
\newcommand{\vp}{\varphi}
\newcommand{\op}{\oplus}
\newcommand{\ld}{\ldots}
\newcommand{\ve}{\varepsilon}
\newcommand{\GL}{\textrm{GL}}
\newcommand{\PGL}{\textrm{PGL}}
\newcommand{\pr}{\textrm{pr\:}}
\newcommand{\id}{\textrm{id\,}}
\newcommand{\Stab}{\textrm{Stab}}
\newcommand{\Scal}{\textrm{Scal}}
\newcommand{\scal}{\textrm{scal}}
\newcommand{\Def}{\textrm{Def}}
\newcommand{\default}{\textrm{def}}
\newcommand{\Grass}{\textrm{Grass\,}}
\newcommand{\wA}{\widetilde{A}}
\newcommand{\wP}{\widetilde{P}}
\newcommand{\B}{{\mathcal B}}
\newcommand{\cG}{\mathcal{G}}
\newcommand{\cI}{\mathcal{I}}
\newcommand{\cJ}{\mathcal{J}}
\newcommand{\cS}{\mathcal{S}}
\newcommand{\cF}{\mathcal{F}}
\newcommand{\cO}{\mathcal{O}}
\newcommand{\cP}{\mathcal{P}}
\newcommand{\cQ}{\mathcal{Q}}
\newcommand{\T}{\mathcal{T}}
\newcommand{\cR}{\mathcal{R}}
\newcommand{\X}{\mathcal{X}}
\newcommand{\Y}{\mathcal{Y}}
\newcommand{\fZ}{\mathcal{Z}}
\newcommand{\A}{\mathcal{A}}
\newcommand{\M}{\mathcal{M}}
\newcommand{\cN}{\mathcal{N}}
\newcommand{\cU}{\mathcal{U}}
\newcommand{\cV}{\mathcal{V}}
\newcommand{\cC}{\mathcal{C}}
\newcommand{\ba}[1]{\overline{#1}}
\DeclareMathOperator{\Hom}{Hom}
\newcommand{\Sp}{\mathrm{Span}}
\newcommand{\f}{{\mathfrak f}}
\newcommand{\gl}{\mathfrak{gl}}
\newcommand{\W}{{\mathfrak W}}
\newcommand{\fN}{{\mathfrak N}}
\newcommand{\V}{{\mathfrak V}}
\newcommand{\fS}{\mathcal{S}}
\newcommand{\U}{{\mathfrak U}}
\newcommand{\PS}{{\mathbf{P}}}
\newcommand{\N}{{\mathbb N}}
\newcommand{\Z}{{\mathbb Z}}
\newcommand{\F}{{\mathbb F}}
\newcommand{\R}{{\mathbb R}}
\newcommand{\chr}[1]{\mathrm{char}\left( #1 \right)}
\newcommand{\Aut}[1]{\mathrm{Aut}(#1)}
\newcommand{\Der}[1]{\mathrm{Der}(#1)}
\newcommand{\Ker}[1]{\mathrm{Ker}\left( #1 \right)}
\newcommand{\var}[1]{\mathrm{var}\left( #1 \right)}
\newcommand{\rank}{\mathrm{rank}\,}
\theoremstyle{definition}
\newtheorem{Definition}{\sc Definition}  
\theoremstyle{plain}
\begin{document}
\title{On generic properties of nilpotent algebras}
\author{Yuri Bahturin\thanks{Supported by NSERC Discovery grant \# 227060-19}\\Department of Mathematics and Statistics\\Memorial University of Newfoundland\\St. John's, NL, A1C5S7
\and Alexander Olshanskii\thanks{Supported by NSF grant DMS-1901976}\\Department of Mathematics\\1326 Stevenson Center\\Vanderbilt University\\Nashville, TN 37240}
\date{}
\maketitle

\begin{abstract}
We study general nilpotent algebras. The results obtained are new even for the classical algebras, such as associative or Lie algebras. We single out  certain \textit{generic} properties of finite-dimensional algebras, mostly over infinite fields. The notion of being generic in the class of $n$-generated algebras of an arbitrary primitive class of $c$-nilpotent algebras appears naturally in the following way. On the set of the isomorphism classes of such algebras one can  introduce the structure of an algebraic variety. As a result, the subsets are endowed with the dimensions as algebraic varieties. A subset $\Y$ of a set $\X$ of lesser dimension can be viewed as \textit{negligible} in $\X$. For example, if $n\gg c$, we determine that an  automorphism group of a generic algebra $P$ consists of the automorphisms, which are scalar modulo $P^2$. Generic ideals are in $I(P)$, the annihilator of $P$. In the case of classical nilpotent algebras as above, the generic algebras are graded by the degrees with respect to some generating sets. 
 \end{abstract}
\tableofcontents 
 \section{Introduction}\label{sI}

In this paper we are interested in the question: what is a ``generic'' group of automorphisms of a  finite-dimensional nilpotent algebra? Or: what is a ``generic''  Lie algebras of derivations of a finite-dimensional nilpotent algebra? One says that a property of mathematical objects of a class $\cC$ is \textit{generic} if ``almost every'' object of $\cC$ enjoys this property. The words ``almost every'' are understood differently, depending on the class $\cC$.

At the same time, the answer does not depend on the particular class of algebras: associative, Lie, Jordan, etc. One of the early relevant papers on nilpotent algebras in such generality was the paper of A. I. Malcev \cite{AIM}.

 In particular,  if several properties are generic then their (finite) intersection is also generic. Moreover, sometimes,  it is easier to prove that some property is generic than to provide examples. For instance, in the papers \cite{Be,Mil,OAY}, the estimate of the cardinality or the dimension of the set of counterexamples allows one to disprove certain conjectures without explicitly providing the counterexamples. 

In the classes of finite groups (rings, graphs, etc.), it is natural to say that almost all, up to isomorphism, groups have certain property $\cP$ if $\frac{f_\cP (n)}{f(n)}\to 1$ as $n\to\infty$ where $f_\cP (n)$ is the number of groups of order $n$ with property $\cP$ and $f(n)$ is the total number of groups of order $n$ in the class of all groups in question. For example, a known conjecture is that a generic finite group is a 2-group.

If we consider the properties of pairs of elements in the symmetric groups $S_n$, then   the classical Dixon's
theorem \cite{Dix}
says that a pair of random permutations of a set of $n$ elements 
almost surely generates either the symmetric group or the 
alternating group of degree $n$.

Infinite groups  (rings, etc.) often have finite description, say, in terms of generators and defining relations. However here the number $n$ restricting the order of the group above now  restricts the length of defining relation when the number of relations is fixed. So the following statements make sense. ``Almost any finitely presented group is hyperbolic'' \cite{OA} and furthermore ``almost every subgroup with lesser number of generators in such a group is free'' \cite{GVS,AO}.

One of the most common ways to make sense of the words ``almost all'' or ``generic'' is to introduce a probabilistic measure on the mathematical objects in question. An example : ``almost every symmetric random walk on the set of integers is recurrent''.

But how can one handle the notion of being ``generic'' in the case of finite-dimensional algebras over an infinite field?
To reach our goals in this paper, we start with a ``primitive'' class of algebras $\V$ over a field $\F$, defined by a set of identical relations, say the the class  of associative, or Lie algebras, or commutative,  etc. algebras.  

Then we restrict the number of generators in algebras from $\V$ by some number $n$ and obtain a subclass of $n$-generated algebras $\V_n$. The class $\V_n$ is too large, unless we impose further restrictions on $\V$. So we  restrict ourselves to \textit{nilpotent} primitive classes, that is, we assume that there is $c\ge 2$ such that $P^{c+1}=\{ 0\}$ for any algebra $P\in\V$. Now $\V_n$ consists of finite-dimensional algebras and all algebras in $\V_{n}$ are homomorphic images of one algebra $F_n= F_n(\V)$, called the \textit{free algebra of rank $n$} in $\V$, or \textit{$\V$-free algebra of rank $n$}. As a matter of fact, if $\F$ is infinite, as we assume throughout this paper except in Chapter \ref{sGAFF}, then $F_n(\V)$ is $\Z$-graded, hence finite-dimensional if and only if $\V$ is nilpotent. 

Every algebra $P\in \V_n$ has the form of $F_n/K$ where $K$ is an ideal of $F_n$. The set of all ideals is an algebraic variety, a subvariety in the abstract disjoint union of Grassmann varieties of subspaces of $F_n$. To study such sets, we can  use the notion of the dimension of an algebraic variety. Then the statement that some set $\A$ of ideals is ``almost the same'' as a bigger variety $\B$ simply means that the complement $\B\setminus \A$ has strictly smaller dimension than the dimension of $\B$. (According to this understanding, for instance, almost every point on the plane has its first coordinate nonzero.) Later, this comparison of the dimensions of the varieties of ideals is transferred to the factor-space of quotient-algebras, where we identify ideals with isomorphic quotient-algebras. For the rigorous definition of the notion ``generic'' see Definition \ref{dGeneric}.\footnote{Another approach to the structures of algebraic varieties, specifically, on the sets of the isomorphism classes of nilpotent algebras of fixed dimension is suggested in \cite{KN, NYA}. It could be interesting to study the generic properties from this point of view.}

Our main results in this paper are proven in Section \ref{sMR}, where we assume that the base field $\F$ is algebraically closed. Starting from Section \ref{ssARFNA}, we also restrict ourselves to \textit{central} primitive classes. Many important nilpotent primitive classes are central, including all $c$-step nilpotent associative, or all $c$-step nilpotent Lie, and so on, provided that $c\ge 2$.   

Below we list some generic properties. We fix a $c$-step nilpotent central primitive class $\V$, with $c\ge 2$. Then the following properties of  $n$-generated algebras in $\V$ are generic, provided that the number $n$  is greater than some $n_0=n_0(c)$. In the last claim the primitive class does not need to be central.
\begin{itemize}
\item Every automorphism of a generic algebra $P$ is scalar modulo $P^2$, that is, given an automorphism $\alpha$ of $P$, there exists $\lambda\in\F$ such that $\alpha(a)-\lambda a\in P^2$, for any $a\in P$. 
\item Every derivation of a generic algebra $P$ is scalar modulo $P^2$.
\item For every $c\ge 2$, almost all ideals in the relatively free $c$-step nilpotent algebra $F_n$ of rank $n>n_0$ are contained in the annihilator ideal of $F_n$.
\end{itemize}

In order to prove these results, we review and prove, where necessary, some results about the primitive classes of nonassociative algebras, in Section \ref{sPCA}. In a more detailed way, we treat the primitive classes of nilpotent algebras, in Section \ref{sNAV}. Section \ref{sLA} contains some rather general results from Linear algebra. We use them in Section \ref{sGRFNA} to obtain some crucial estimates on the dimension of ideals in relatively free nilpotent algebras. The results in the above five sections are true without stringent restrictions on the base field. We use them to derive generic properties of nilpotent algebras over finite fields in Section \ref{sGAFF}.

\section{Primitive classes of algebras}\label{sPCA} 

In the study of automorphisms and derivations of nilpotent algebras, our basic objects are \textit{relatively free nilpotent algebras}. To deal with them, in  Sections \ref{sPCA} and  \ref{sNAV} we quickly review some standard facts from the theory of primitive classes of general nonassociative algebras. Note that one of the first papers on the identical relations in general algebras  was \cite{AIM}. 

For the reader's convenience we give references to the respective theorems in the theory of primitive classes (varieties) of Lie algebras in the recent monograph \cite{B}.

\subsection{Absolutely free algebras}\label{ssAFA} For any field $\F$ and a nonempty set $X$, one defines an (absolutely) free algebra $\cF(X)$ as follows. We first define the \textit{nonassociative monomials} in $X$ by induction on their \textit{degree}. The monomials of degree 1 are the elements of $X$. We write $\deg x=1$. Once the monomials of all degrees less than $n$ have been defined, the monomials of degree $n$ are all possible expressions of the form $(u)(v)$ where $\deg u=k\ge 1$ and $\deg v=n-k\ge 1$. The set of all nonassociative monomials in $X$ is denoted by $\Gamma(X)$. One has $\Gamma(X)=\bigcup_{n=1}^\infty \Gamma_n$ where $\Gamma_n$ is the set of monomials of degree $n$. 

The set $\Gamma(X)$ is endowed with an operation $\circ$, as follows. If $u\in \Gamma_k$ and $v\in \Gamma_\ell$ then $u\circ v=(u)(v)\in\Gamma_{k+\ell}$. With respect to this operation, $\Gamma(X)$ is generated by $X$. In practice, while using this operation, one omits the symbol $\circ$ and all unnecessary parentheses. For instance, one writes $(x_1x_2)x_2$ instead of  
$((x_1)\circ(x_2))\circ(x_2)$.
 
Finally, one defines the free algebra $\cF(X)$ as the linear span of $\Gamma(X)$ with coefficients in $\F$. The elements of $\cF(X)$ are often called the \textit{nonassociative polynomials}. The product in $\cF(X)$ is the extension by linearity of the product in $\Gamma(X)$. 

If $\cF_n$ is the linear span of $\Gamma_n$ then $\cF_k\cF_{\ell}\subset \cF_{k+\ell}$ and so we have the canonical $\N$-grading of the free algebra, where $\N$ is the additive semigroup of natural numbers, as follows 
\begin{equation}\label{eGrading}
\cF(X)=\bigoplus_{n=1}^\infty\cF_n
\end{equation}

The elements of $\cF_n$ are called \textit{homogeneous} of degree $n$. The \textit{ideals} $\cF^n=\bigoplus_{k=n}^\infty \cF_k$ satisfy $\cF^n\cF^m\subset \cF^{m+n}$ and form a descending filtration in $\cF(X)$: 
\begin{equation}\label{ePSF}
\cF(X)=\cF^1\supset \cF^2\supset\cdots\supset \cF^n\supset\cdots\mbox{ such that }\bigcap_{n=1}^\infty \cF^n=\{ 0\}.
\end{equation}
 One also calls the terms of (\ref{ePSF}) the \textit{powers of $\cF(X)$} and the filtration itself the \textit{power} series of $\cF(X)$.

Now let $\Phi\subset\Z^X$ be the additive subsemigroup of the group of functions  $\alpha:X\to \Z$  consisiting of the functions with nonnegative values and finite positive norm $|\alpha|=\sum_X\alpha(x)$. By definition, a nonassociative monomial $u=u(x_1,\ld,x_m)$ is contained in $\Gamma_\alpha$ if the degree $d_i$ of $u$ with respect to each $x_i$ equals $\alpha(x_i)$ and $\deg u=|\alpha|$. One then can partition $\Gamma_n$ as the union of subsets $\Gamma_\alpha$ such that $|\alpha|=n$. One has $\Gamma=\bigcup_{\alpha\in\Phi}\Gamma_\alpha$ and $\Gamma_\alpha\Gamma_\beta\subset\Gamma_{\alpha+\beta}$. We denote by $\cF_\alpha$ the linear span of $\Gamma_\alpha$. Then $\cF_\alpha\cF_\beta\subset\cF_{\alpha+\beta}$ and one then has a $\Phi$-grading of $\cF(X)$: 
\begin{equation}\label{enGrading}
\cF(X)=\bigoplus_{\alpha\in\Phi}\cF_\alpha.
\end{equation}
If $|X|=n$ then we can identify $\Phi$ with a subsemigroup in $\Z^n$ and so (\ref{enGrading}) becomes a $\Z^n$-grading, or more precisely a $\N_0^n$-grading where $\N_0=\N\cup\{ 0\}$. The elements of $\cF_\alpha$ are called \textit{multihomogeneous} of \textit{multidegree} $\alpha$. If $\alpha(x)$ is either $0$ or $1$, for every $x\in X$, then the elements of $\cF_\alpha$ are called \textit{multilinear} with respect to the set of the variables $x$ for which $\alpha(x)=1$.

\subsection{Primitive classes and relatively free algebras}\label{ssPC}

Given a nonassociative monomial $u(x_1,\ld,x_n)$ and the elements $a_1,\ld,a_n$ of  any algebra $P$ over any field $\F$, one can use the induction on $\deg u$  and the operation of $P$ to define the \textit{value} $u(a_1,\ld,a_n)\in P$.  The value of a monomial $x_i$ is just $a_i$, while $(uv)(a_1,\ld,a_n)=u(a_1,\ld,a_n)v(a_1,\ld,a_n)$. Now the value of a nonassociative polynomial $w(x_1,\ld,x_n)$ which is a linear combination of several monomials with coefficients in $\F$,  is just the linear combination of the values of these monomials with the same coefficients. 

If $\vp:X\to P$ is any mapping then the evaluation of nonassociative polynomials which replaces each $x\in X$ by $\vp(x)\in P$ is a unique homomorphism of algebras $\bar\vp:\cF(X)\to P$, extending $\vp$. This property is called the \textit{universal property} of $\cF(X)$. An immediate consequence of this property is that given any algebra $P$  there exists $X$ such that $P$ is a homomorphic image of $\cF(X)$ under a homomorphism $\ve$ mapping $X$ onto a generating set of $P$. Taking $K=\Ker\ve$, we get $P\cong \cF(X)/K$.

If $w(a_1,\ld,a_n)=0$, for all $a_1,\ld,a_n\in P$, we say that $w(x_1,\ld,x_n)=0$ is an \textit{identical relation} or shortly an \textit{identity} in $P$. 

Now let us set $\cF_\infty=\cF(x_1,\ld,x_n,\ld)$ over a field $\F$. Given a nonempty subset $V\subset\cF_\infty$, the class $\V$ of all algebras (over fixed field $\F$) satisfying all identities $v(x_1,\ld,x_n)=0$ where $v(x_1,\ld,x_n)\in V$ is called the \textit{primitive class defined by $V$}. By Birkhoff's Theorem \cite[Theorem 4.3]{B} a nonempty class $\V$ of algebras is primitive if and only if $\V$ is closed under subalgebras, quotient-algebras and Cartesian products of algebras.

Given a primitive class $\V$ and algebra $P$, one considers the ideal $\V(P)$ (more often denoted by a matching notation $V(P)$) of all values $v(a_1,\ld,a_n)$, where $a_1,\ld,a_n\in P$ and $v(x_1,\ld,x_n)=0$ is an identity in $\V$. By analogy with the group-theoretic notation, one calls $V(P)$ \textit{the verbal ideal corresponding to the primitive class $\V$}. $V(P)$ is an least ideal of $P$ such that $P/V(P)\in\V$. The ideal $V(P)$ is closed under all endomorphisms of $P$. Such ideals are called \textit{fully invariant}.

If $\F$ is a fixed field, then there is one-one Galois-type correspondence between the primitive classes over $\F$ and fully invariant ideals of $\cF_\infty$:
 
\begin{Proposition}\label{pGC}
Given a primitive class $\V$, the corresponding verbal ideal $V(\cF_\infty)$ is fully invariant. Conversely, every fully invariant ideal is a verbal ideal for an appropriate primitive class $\V$.$\hfill\Box$
\end{Proposition}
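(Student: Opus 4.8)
The plan is to establish the two directions of the claimed Galois correspondence separately, both being rather formal consequences of Birkhoff's Theorem and the universal property of $\cF_\infty=\cF(x_1,\ld,x_n,\ld)$.

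For the first direction, let $\V$ be a primitive class and $V(\cF_\infty)$ its verbal ideal. Fullness under endomorphisms has in fact already been recorded in the discussion preceding the statement (the verbal ideal of any algebra is fully invariant, being spanned by all values of the defining identities, and the image of a value under an endomorphism is again such a value). So strictly speaking the first sentence requires no new argument; I would simply note that it is the case $P=\cF_\infty$ of the observation that $V(P)$ is fully invariant in $P$, which follows because an endomorphism $\sigma$ of $P$ sends $v(a_1,\ld,a_n)$ to $v(\sigma a_1,\ld,\sigma a_n)$, again a value of $v$ in $P$.

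For the converse, suppose $J\normal\cF_\infty$ is fully invariant. Let $\V$ be the primitive class defined by the set $V=J$ (regarded as a set of identities $v=0$, $v\in J$); I claim $V(\cF_\infty)=J$. First, $V(\cF_\infty)\sub J$: a typical generator of $V(\cF_\infty)$ is $v(f_1,\ld,f_m)$ with $v\in J$ and $f_i\in\cF_\infty$, and since $v$ involves only finitely many variables, the assignment $x_i\mapsto f_i$ (on the finitely many relevant indices, and arbitrarily elsewhere) extends by the universal property to an endomorphism $\sigma$ of $\cF_\infty$ with $\sigma v=v(f_1,\ld,f_m)$; full invariance of $J$ gives $\sigma v\in J$. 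Conversely $J\sub V(\cF_\infty)$: for $v\in J$, take the endomorphism fixing each variable, so that $v$ itself is one of the values defining $V(\cF_\infty)$, hence $v\in V(\cF_\infty)$. Thus $J=V(\cF_\infty)$ is the verbal ideal of the primitive class $\V$.

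The only point deserving care is the use of the universal property to produce the endomorphisms $\sigma$ of $\cF_\infty$ realizing ``substitution'': one must check that substituting arbitrary elements $f_i$ for the variables genuinely extends to an algebra homomorphism $\cF_\infty\to\cF_\infty$, which is exactly the universal property of the absolutely free algebra recalled in Section~\ref{ssPC}, applied to the map $X\to\cF_\infty$. Once that is in hand, both inclusions are immediate and no real obstacle remains; I do not expect any hard step here, the statement being essentially a bookkeeping consequence of the definitions together with Birkhoff's characterization $\V=\{\,$algebras satisfying all $v=0$, $v\in J\,\}$ being genuinely a primitive class.
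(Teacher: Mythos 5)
Your first direction and the inclusion $J\sub V(\cF_\infty)$ are fine, but the converse as written has a small, genuine gap: you take ``a typical generator of $V(\cF_\infty)$'' to be $v(f_1,\ld,f_m)$ with $v\in J$. Under the paper's definition, the verbal ideal corresponding to the primitive class $\V$ consists of the values of \emph{all} identities of $\V$, not just of the elements of the defining set $J$; a priori the identity set of the class defined by $J$ could be strictly larger than $J$, and your argument does not exclude this. What you have actually proved is that the ideal of all substitution instances of elements of $J$ is $J$ itself, which is essentially a restatement of full invariance, not yet that $J$ equals the verbal ideal $V(\cF_\infty)$ of $\V$.

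The missing step is the observation that $\cF_\infty/J$ belongs to $\V$: for $v\in J$ and arbitrary cosets $\bar f_1,\ld,\bar f_m$ one has $v(\bar f_1,\ld,\bar f_m)=\overline{v(f_1,\ld,f_m)}=0$, since the endomorphism $x_i\mapsto f_i$ keeps $v$ inside $J$ by full invariance. Consequently, if $w=0$ is any identity of $\V$, evaluating it in $\cF_\infty/J$ at the cosets of the free generators gives $\bar w=0$, i.e. $w\in J$; hence the identity set of $\V$ is exactly $J$, and only then does your substitution argument yield $V(\cF_\infty)\sub J$. With this one extra line (a second use of full invariance, by the very same substitution device you already employ) the proof is complete; the paper itself states the proposition without proof, so this is the standard argument and there is nothing further to compare against.
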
  

If $U\subset\cF_\infty$ and $P$ is an algebra, one denotes by $U(P)$ the ideal of $P$ generated by all $u(a_1,\ld,a_n)$, where $a_1,\ld,a_n\in P$ and $u(x_1,\ld,x_n)\in U$. If $U=\{ w\}$ then it will be convenient to write $U(P)=w(P)$. This is the \textit{verbal ideal of $P$ defined by the set of nonassociative polynomials $U$.} If $U(\cF_\infty)=\V(\cF_\infty)$ for some variety $\V$ then one says that the set of identities $\{ u=0\,|\,u\in U\}$ is a \textit{basis of identities} of $\V$. Any algebra $P$ in which  $u(x_1,\ld,x_n)=0$ for any $u\in U$ also satisfies all identities of $\V$.

Given a nonempty set $X$, the algebra $F(\V,X)=\cF(X)/V(\cF(X))$ is called a \textit{$\V$-free algebra with the free generating set $X$} or a \textit{ free algebra in $\V$ with the free generating set $X$}. If $\V$ is nonzero, $X$ is injectively embedded in $F(\V,X)$. Each of these algebras possesses the \textit{universal property within the primitive class $\V$}, namely, any map $\vp:X\to P\in\V$ extends to a unique homomorphism $\overline{\vp}:F(\V,X)\to P$. It follows that every algebra $P\in\V$ is isomorphic to a quotient-algebra of an appropriate $F(\V,X)$. If we do not specify parameters $\V$ or $X$, then we simply say that $P$ is a \textit{relatively free algebra}.

Examples of relatively free algebras, that is free in some varieties, include free associative algebras, free Lie algebras, free commutative algebras, etc. In this paper we will be mostly interested in \textit{relatively free nilpotent algebras}. 

A useful characterization of relatively free algebras is the following. 

\begin{Lemma}\label{lRFA}An algebra $P$ generated by a set $X$ is relatively free with $X$ a free generating if and only if any map $\vp:X\to P$ extends to a  homomorphism $\bar{\vp}:P\to P$.
$\hfill\Box$
\end{Lemma}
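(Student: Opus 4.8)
\textbf{Proof proposal for Lemma~\ref{lRFA}.}

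The plan is to prove both implications using the universal property of the free algebra $\cF(X)$ and the defining homomorphism $\ve:\cF(X)\to P$ with $\Ker\ve=K$ and $\ve|_X$ a bijection onto the generating set $X\subset P$ (which I will identify with $X$). For the forward direction, suppose $P$ is relatively free in some primitive class $\V$ with free generating set $X$, so $P=F(\V,X)$. By Birkhoff's theorem (Proposition~\ref{pGC} and the discussion preceding it) $P\in\V$, so the universal property of $F(\V,X)$ within $\V$ applies directly: any map $\vp:X\to P$ (noting $P\in\V$) extends to a homomorphism $\bar\vp:P\to P$. This is essentially immediate from the definitions recalled above.

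The reverse direction is the substantive one. Assume $P$ is generated by $X$ and every $\vp:X\to P$ extends to an endomorphism $\bar\vp:P\to P$; I want to exhibit a primitive class $\V$ in which $P$ is $\V$-free on $X$. The natural candidate is $\V=\var{P}$, the primitive class defined by $V:=\{\,w\in\cF_\infty : w=0\text{ holds in }P\,\}$, equivalently the smallest primitive class containing $P$. Then $P\in\V$, and by the universal property of $F(\V,X)$ within $\V$ the identity map $X\to X\subset P$ extends to a surjective homomorphism $\pi:F(\V,X)\twoheadrightarrow P$ (surjective since $X$ generates $P$). It remains to show $\pi$ is injective. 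Write $\cF=\cF(X)$ and let $K=\Ker(\cF\to P)$, so $P\cong\cF/K$, and let $J=V(\cF)=V(\cF(X))$ be the verbal ideal, so $F(\V,X)=\cF/J$ and $\pi$ is induced by the inclusion $J\subseteq K$; injectivity of $\pi$ is exactly the reverse inclusion $K\subseteq J$. The key step: I claim $K$ is a fully invariant ideal of $\cF$. Indeed, let $\psi$ be any endomorphism of $\cF$; by the universal property of $\cF(X)$, $\psi$ is determined by a map $X\to\cF$, hence composing with $\ve:\cF\to P$ gives a map $X\to P$, which by hypothesis extends to an endomorphism $\bar\vp$ of $P$, and then $\bar\vp\circ\ve=\ve\circ\psi$ on $X$, hence on all of $\cF$ by the universal property. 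So $\ve(\psi(K))=\bar\vp(\ve(K))=\bar\vp(0)=0$, i.e. $\psi(K)\subseteq K$: $K$ is fully invariant. By Proposition~\ref{pGC} (the ``conversely'' clause, applied on $\cF(X)=\cF_{|X|}$ rather than $\cF_\infty$ — the same Galois correspondence holds over any free algebra of countable rank, and for $X$ of any cardinality one passes through $\cF_\infty$ in the standard way) a fully invariant ideal of $\cF$ is the verbal ideal of some primitive class $\V'$, so $K=V'(\cF)$. But every element of $K$ vanishes identically on $P$ (that is how $K$ was defined, via $\ve$), so $P\in\V'$, whence $\V\subseteq\V'$ and therefore $V'(\cF)\subseteq V(\cF)$, i.e. $K\subseteq J$. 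Combined with $J\subseteq K$ this gives $K=J$, so $\pi$ is an isomorphism and $P=F(\V,X)$ is $\V$-free on $X$.

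I expect the main obstacle to be a careful treatment of the fully-invariance argument when $X$ is infinite or of cardinality different from that of the countable generating set used for $\cF_\infty$ in Proposition~\ref{pGC}: one must either reprove the Galois correspondence directly over $\cF(X)$, or argue that the fully invariant ideal $K\trianglelefteq\cF(X)$ extends to a fully invariant ideal of $\cF_\infty$ whose corresponding primitive class $\V'$ then contains $P$. A clean way to sidestep cardinality bookkeeping is to take $\V'$ to be the primitive class defined directly by the set $K\subseteq\cF(X)\subseteq\cF_\infty$ and simply verify from the endomorphism-extension hypothesis that $K(P)=\{0\}$ (using that every substitution $X\to P$ extends to an endomorphism of $P$, so the ideal generated by values of elements of $K$ already lies in $K$ evaluated, which is $0$), making $P\in\V'$ and $\V\subseteq\V'$ directly; this reduces the whole argument to the one computation $\ve\circ\psi=\bar\vp\circ\ve$ above and avoids invoking Proposition~\ref{pGC} at an inconvenient rank. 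Everything else — surjectivity of $\pi$, the inclusion $J\subseteq K$, and the forward implication — is routine.
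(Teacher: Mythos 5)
The paper states this lemma without proof (the $\Box$ marks it as standard, and the discussion immediately after it already identifies the relevant primitive class as $\var{P}$), and your argument is exactly that standard route — reduce to showing $K=\Ker{\ve}$ equals the verbal ideal $V(\cF(X))$ via full invariance of $K$ — so it is correct and in line with what the paper intends. The one imprecision is the parenthetical that elements of $K$ ``vanish identically on $P$ (that is how $K$ was defined)'': by definition they vanish only at the distinguished generating tuple, and identical vanishing requires the extension hypothesis (equivalently the full invariance of $K$ you had just established), which is precisely the repair you supply yourself in your closing paragraph.
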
 

If $P$ satisfies the extension property in the above Lemma, then $P$ is free in the primitive class $\V$ \textit{generated by} $P$  that is, the least primitive class of algebras over $\F$ containing $P$. In this case, we write $\V=\var P$.  It follows by Birkhoff's Theorem (see \cite[Theorem 4.3]{B}) that $\V$ consists of the quotient-algebras of the subalgebras of the Cartesian powers of $P$.

One more useful property of relatively free algebra, following from the universal property, is given below.

\begin{Lemma}\label{lRelX} Let $0\ne v(x_1,\ld,x_n)\in\cF_\infty$ be a nonassociative polynomial and $F=F(\V,Y)$ a  free algebra in a primitive class $\V$ of algebras, with a free generating set $Y=\{ y_1,y_2,\ld\}$. Suppose $v(y_1,\ld,y_n)=0$ is a \emph{relation} among $y_1,\ld,y_n$ that holds in $F$. Then $v(x_1,\ld,x_n)=0$ is an \emph{identical relation} in $\V$. 
$\hfill\Box$
\end{Lemma}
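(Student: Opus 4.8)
The plan is to deduce the statement directly from the universal property of the relatively free algebra $F=F(\V,Y)$ recorded above. Fix an arbitrary algebra $P\in\V$ and arbitrary elements $a_1,\ld,a_n\in P$; the goal is to show that $v(a_1,\ld,a_n)=0$, since $P$ and the $a_i$ are then arbitrary and $v$ involves only the variables $x_1,\ld,x_n$.

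First, I would define a set map $\vp\colon Y\to P$ by setting $\vp(y_i)=a_i$ for $1\le i\le n$ and $\vp(y_j)=0$ (or any fixed element of $P$) for $j>n$; this is legitimate because $Y$ is infinite and hence contains $y_1,\ld,y_n$. Since $P\in\V$, the universal property of $F(\V,Y)$ within $\V$ yields a unique algebra homomorphism $\bar\vp\colon F\to P$ extending $\vp$.

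Next, I would invoke the compatibility of algebra homomorphisms with the evaluation of nonassociative polynomials: for any $w(x_1,\ld,x_m)\in\cF_\infty$ and any $u_1,\ld,u_m\in F$ one has $\bar\vp\bigl(w(u_1,\ld,u_m)\bigr)=w\bigl(\bar\vp(u_1),\ld,\bar\vp(u_m)\bigr)$, which is proved by a routine induction on $\deg w$ using that $\bar\vp$ respects both the $\F$-linear structure and the product. Applying this with $w=v$, $u_i=y_i$, together with the hypothesis $v(y_1,\ld,y_n)=0$ in $F$, gives
\[
0=\bar\vp(0)=\bar\vp\bigl(v(y_1,\ld,y_n)\bigr)=v\bigl(\bar\vp(y_1),\ld,\bar\vp(y_n)\bigr)=v(a_1,\ld,a_n).
\]
Hence $v(x_1,\ld,x_n)=0$ holds identically in $\V$, as desired.

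There is essentially no obstacle in this argument: the only point deserving a word of justification is the compatibility of $\bar\vp$ with polynomial evaluation, and even that is exactly the inductive bookkeeping already underlying the universal property. The single thing to be careful about is that the values of $\vp$ on the superfluous generators $y_j$, $j>n$, play no role, since $v$ is a polynomial in $x_1,\ld,x_n$ only.
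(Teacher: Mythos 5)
Your proof is correct and is precisely the standard argument the paper has in mind: the lemma is stated there without proof as an immediate consequence of the universal property of $F(\V,Y)$, and your evaluation-homomorphism argument (extend $y_i\mapsto a_i$ to $\bar\vp\colon F\to P$ and apply $\bar\vp$ to the relation $v(y_1,\ld,y_n)=0$) is exactly that consequence. No gaps; the compatibility of $\bar\vp$ with evaluation of nonassociative polynomials is indeed the routine induction you indicate.
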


Next we mention the following.

Any map of a set $X$ to a set $Y$ extends to a unique homomorphism $F(\V,X)\to F(\V,Y)$. In particular, the embedding  $X\subset Y$ extends to the embedding  $F(\V,X)\subset F(\V,Y)$ and  $F(\V,X)$ is a retract of $F(\V,Y)$ under the projection extending $x\mapsto x$ for $x\in X$ and $x\mapsto 0$ for $x\in Y\setminus X$.

The cardinality of $X$ is called the \textit{rank} of $F(\V,X)$. Two relatively free algebras in a non-trivial primitive class $\V$, that is, $\V$ contains a nonzero algebra, are isomorphic is and only if they have the same rank. So, with $\V$ fixed, we can simply write $F_n=F_n(\V)$ for the relatively free algebra of rank $n$ in the variety $\V$. In this case, we can choose $X=\{ x_1,\ld,x_n\}$.

\medskip

We conclude this section by a rarely mentioned property of the homomorphisms of algebras in a primitive class $\V$ of algebras over a field $\F$. It is true not only for the primitive classes of algebras over a field but also for the primitive classes of universal $\Omega$-algebras, with arbitrary signature $\Omega$.

\begin{Proposition}\label{Scrap8}Let $F=F(\V,X)$ be a free algebra in a primitive class $\V$ of algebras. Let $P$ and $Q$ be two algebras in $\V$. Suppose we are given three homomorphisms $\alpha: F \to P,\, \beta: F\to Q,\, \gamma: P\to Q$ such that $\beta$ is surjective. Then there exists a homomorphism $\delta:F\to F$ such that $\gamma\alpha = \beta\delta$. 
\end{Proposition}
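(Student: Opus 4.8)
The plan is to use the universal property of the free algebra $F = F(\V, X)$ and the surjectivity of $\beta$ in tandem. The key observation is that we must construct $\delta : F \to F$ on the generating set $X$, and then extend by the universal property; the content of the proposition is that we can make the defining equation $\gamma\alpha = \beta\delta$ hold on all of $F$ by making it hold on $X$.

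First I would consider an arbitrary element $x \in X$. The element $\gamma(\alpha(x)) \in Q$ lies in the image of $\beta$ because $\beta$ is surjective, so there exists some $f_x \in F$ with $\beta(f_x) = \gamma(\alpha(x))$. Here I would invoke the axiom of choice to pick, for each $x \in X$, one such preimage $f_x$; this defines a set map $\vp : X \to F$, $x \mapsto f_x$. By the universal property of $F = F(\V, X)$ within the primitive class $\V$ (note $F \in \V$), the map $\vp$ extends to a unique homomorphism $\delta : F \to F$ with $\delta(x) = f_x$ for all $x \in X$.

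It then remains to check that $\gamma\alpha = \beta\delta$ as homomorphisms $F \to Q$. Both sides are homomorphisms of algebras, and $F$ is generated by $X$, so it suffices to check they agree on $X$. For $x \in X$ we have $\beta(\delta(x)) = \beta(f_x) = \gamma(\alpha(x))$ by the choice of $f_x$, which is exactly what is needed. Since two homomorphisms that agree on a generating set are equal, $\gamma\alpha = \beta\delta$ on all of $F$.

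I do not expect a serious obstacle here: the argument is a standard diagram-chase using universality and surjectivity, and it is robust enough to work for primitive classes of universal $\Omega$-algebras with arbitrary signature, as the paper remarks. The only point deserving care is the invocation of choice to build the set map $\vp$ on $X$ (needed when $X$ is infinite), and the observation that $\delta$ is genuinely a \emph{homomorphism} rather than just a set map — this is guaranteed precisely because $F$ is the $\V$-free algebra on $X$ and $F$ itself belongs to $\V$, so the universal property of Section \ref{ssPC} applies.
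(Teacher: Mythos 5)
Your argument is correct and is essentially identical to the paper's proof: choose a preimage $a_x\in F$ of $\gamma\alpha(x)$ for each $x\in X$ using surjectivity of $\beta$, extend $x\mapsto a_x$ to a homomorphism $\delta$ by the universal property, and conclude $\gamma\alpha=\beta\delta$ since both homomorphisms agree on the generating set $X$. Nothing further is needed.
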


\begin{proof}
 \[
  \begin{tikzcd}
    F \arrow[dashrightarrow]{r}{\delta} \arrow{d}{\alpha} & F\arrow{d}{\beta}\\ P\arrow{r}{\gamma} & Q
  \end{tikzcd}
\]
For each $x\in X$ we choose $a_x\in F$ such that $\beta(a_x)=\gamma\alpha(x)$. Since $X$ is a free generating set for $F$, the map from $X$ to $F$ given by $x\mapsto a_x$ extends to a homomorphism $\delta:F\to F$. Now two homomorphisms $\beta\delta$ and $\gamma\alpha$ from $F$ to $Q$ coincide on $X$. As a result, $\gamma\alpha = \beta\delta$. 
\end{proof}

\subsection{Multihomogeneous primitive classes}\label{ssMHPC}

In Section \ref{ssAFA} we noticed that the free algebra $\cF(X)$ has natural gradings by degrees and multidegrees with respect to the free generating set $X$. This important property does not hold for general relatively free algebras. For instance, this is not true in the case of any primitive class $\var P$ generated by a finite non-nilpotent algebra $P$. So one defines \textit{multihomogeneous primitive classes}, as follows.
\begin{Definition}\label{dMV} A primitive class $\V$ is called \textit{multihomogeneous} if given an identity $v(x_1,\ld,x_n)=0$ holding in $\V$, it is true that also $v_\alpha(x_1,\ld,x_n)=0$ is an identity in $\V$, for any multihomogeneous component $v_\alpha(x_1,\ld,x_n)$ of $v(x_1,\ld,x_n)$. In other words, $V(\cF(X))$ is graded with respect to the multihomogeneous grading (\ref{enGrading}) of $\cF(X)$, where $X=\{x_1, x_2,...,x_n,\ld\}$. 
\end{Definition}
For any natural $n$ or any $\alpha\in\Phi$ one can define $F(\V,X)_n$ or $F(\V,X)_\alpha$ as the image of $\cF(X)_n$ or $\cF(X)_\alpha$ under the natural homomorphism $\ve$ extending the identity map $X\to X$. If $\V$ is multihomogneous, then the kernel $V(\cF(X))$ of $\ve$ is graded. This allows us to apply (\ref{eGrading}), (\ref{enGrading}) and (\ref{ePSF}). As a result, we have the following: 
\begin{equation}\label{eRelGrad}
F(\V,X)=\bigoplus_{n=1}^\infty F(\V,X)_n,\;F(\V,X)=\bigoplus_{\alpha\in\Phi}(F(\V,X))_\alpha,
\end{equation}
and
\begin{equation}\label{eApprox}
\bigcap_{i=1}^\infty F(\V,X)^n=\{0\}.
\end{equation}
Now the following is true.
\begin{Lemma}\label{L1.3} If $\F$ is an infinite field then any primitive class $\V$ of algebras over $\F$ is multihomogeneous. For a $\V$-free algebra $F(\V,X)$ the equations (\ref{eRelGrad}) and (\ref{eApprox}) hold valid. 
$\hfill\Box$
\end{Lemma}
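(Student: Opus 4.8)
The plan is to establish the first assertion --- that every primitive class $\V$ over an infinite field is multihomogeneous --- by the classical Vandermonde (scaling) argument; once this is done, the displayed equations are immediate from the discussion preceding the lemma. So let $v=v(x_1,\ld,x_n)=0$ be any identical relation holding in $\V$, and decompose $v$ into its multihomogeneous components with respect to the grading (\ref{enGrading}), writing $v=\sum_{\alpha}v_\alpha$, a finite sum over those $\alpha\in\Phi$ supported on $\{x_1,\ld,x_n\}$. The goal is to show that each $v_\alpha(x_1,\ld,x_n)=0$ is again an identity of $\V$; then $\V$ is multihomogeneous by Definition \ref{dMV}.

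First I would record the scaling behaviour of monomials: if $u\in\Gamma_\alpha$ and $a_1,\ld,a_n$ are elements of an algebra $P$, then for any scalars $\lambda_1,\ld,\lambda_n\in\F$ one has $u(\lambda_1 a_1,\ld,\lambda_n a_n)=\bigl(\prod_{i=1}^n\lambda_i^{\alpha(x_i)}\bigr)u(a_1,\ld,a_n)$; this follows by an easy induction on $\deg u$, using that the product of $P$ is bilinear so that scalars pull out of each factor. Summing over the multihomogeneous components, for every $P\in\V$, all $a_1,\ld,a_n\in P$ and all $\lambda_1,\ld,\lambda_n\in\F$ we obtain, writing $\lambda^\alpha=\prod_i\lambda_i^{\alpha(x_i)}$,
\[
0\;=\;v(\lambda_1 a_1,\ld,\lambda_n a_n)\;=\;\sum_{\alpha}\lambda^\alpha\,v_\alpha(a_1,\ld,a_n),
\]
the left equality because $v=0$ is an identity and $\lambda_i a_i\in P$.

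It then remains to extract the individual terms. Fix $P\in\V$ and $a_1,\ld,a_n\in P$. The finitely many elements $v_\alpha(a_1,\ld,a_n)$ span a finite-dimensional subspace $W\subseteq P$; choosing a basis $e_1,\ld,e_m$ of $W$ and writing $v_\alpha(a_1,\ld,a_n)=\sum_j c_{\alpha j}e_j$ with $c_{\alpha j}\in\F$, the displayed relation becomes $\sum_\alpha c_{\alpha j}\lambda^\alpha=0$ for every index $j$ and every point $(\lambda_1,\ld,\lambda_n)\in\F^n$. Since the $\lambda^\alpha$ are pairwise distinct monomials and $\F$ is infinite, a polynomial in $\F[\lambda_1,\ld,\lambda_n]$ that vanishes at every point of $\F^n$ is the zero polynomial; hence $c_{\alpha j}=0$ for all $\alpha,j$, i.e. $v_\alpha(a_1,\ld,a_n)=0$. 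As $P\in\V$ and the $a_i$ were arbitrary, each $v_\alpha(x_1,\ld,x_n)=0$ is an identity of $\V$. Thus $\V$ is multihomogeneous, so the verbal ideal $V(\cF(X))$ is graded with respect to (\ref{enGrading}) (it is generated by values of identities at arbitrary elements, and expanding those arguments into multihomogeneous components expresses each generator, hence the whole ideal, as a span of multihomogeneous elements). Then, exactly as noted before the lemma, the gradings (\ref{eRelGrad}) descend from $\cF(X)$ to the quotient $F(\V,X)=\cF(X)/V(\cF(X))$, and (\ref{eApprox}) follows from the filtration property (\ref{ePSF}).

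The only delicate point --- and the only place where the hypothesis enters --- is the final reduction from "$\sum_\alpha c_{\alpha j}\lambda^\alpha$ vanishes as a function on $\F^n$" to "$c_{\alpha j}=0$": this genuinely requires $\F$ infinite (the statement is false over finite fields), and it is what forces the harmless detour through the finite-dimensional subspace $W$ so that the function-theoretic argument applies even when $P$ is infinite-dimensional. Everything else --- the monomial scaling identity, the linearity bookkeeping, and the remark that a graded verbal ideal yields a graded relatively free algebra --- is routine.
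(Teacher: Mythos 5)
Your proof is correct: the scaling argument $v(\lambda_1a_1,\ld,\lambda_na_n)=\sum_\alpha\lambda^\alpha v_\alpha(a_1,\ld,a_n)$, followed by the observation that a polynomial vanishing on all of $\F^n$ over an infinite field is the zero polynomial, is exactly the classical argument, and the passage to (\ref{eRelGrad}) and (\ref{eApprox}) is already handled by the discussion preceding the lemma (note that by Definition \ref{dMV} multihomogeneity is literally the statement that $V(\cF(X))$ is graded, so your closing parenthetical is not even needed). The paper itself gives no proof but refers to \cite[\S 5]{AIM}, where the same Vandermonde-type scaling argument is used, so your route coincides with the intended one.
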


See the proof in \cite[\S 5]{AIM}. 

Of course, there are multihomogeneous varieties even in the case of finite fields. These are, for example, the primitive classes defined by the identities  $v(x_1,\ld,x_n)=0$, where $v(x_1,\ld,x_n)$ is multilinear, as defined earlier, at the end of Section \ref{ssAFA}. Such identities are called \textit{multilinear}. If one deals only with the fields of characteristic 0, then the following is true. 

\begin{Proposition}\label{pMultilin} If $\chr\F=0$ then any primitive class $\V$ of algebras over $\F$ can be defined by a set of multilinear identities. 
$\hfill\Box$
\end{Proposition}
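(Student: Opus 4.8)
The plan is to run the classical \emph{full linearization} (polarization) argument, using the multihomogeneity granted by Lemma~\ref{L1.3}. Since $\chr\F=0$ forces $\F$ to be infinite, Lemma~\ref{L1.3} applies and $\V$ is multihomogeneous; hence $V(\cF_\infty)$ is spanned by multihomogeneous nonassociative polynomials, and it suffices to show that every multihomogeneous identity of $\V$ follows from multilinear identities of $\V$ (and conversely is implied by one). So fix a multihomogeneous identity $v=v(x_1,\ld,x_n)=0$ holding in $\V$, with $\deg_{x_i}v=d_i$.

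First I would construct the linearization $\tilde v$. For each $i$ introduce fresh variables $x_{i,1},\ld,x_{i,d_i}$ and substitute $x_i\mapsto x_{i,1}+\cdots+x_{i,d_i}$ into $v$; the resulting polynomial vanishes under all evaluations, so it is again an identity of $\V$ in the enlarged variable set. Decomposing it into multihomogeneous components with respect to that enlarged set and invoking the multihomogeneity of $\V$ (Definition~\ref{dMV}, Lemma~\ref{L1.3}), every component is an identity of $\V$; let $\tilde v$ be the component that is linear in each $x_{i,j}$. This $\tilde v$ is multilinear in the sense of Section~\ref{ssAFA}, and $\tilde v=0$ is a multilinear identity of $\V$ deduced from $v=0$.

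Next I would reverse the process: substitute $x_{i,j}\mapsto x_i$ for all $i,j$ into $\tilde v$. Bookkeeping of the monomials shows this collapses $\tilde v$ to $\bigl(\prod_{i=1}^{n}d_i!\bigr)\,v$, since the multilinear component of $v(x_{1,1}+\cdots,\ld)$ collects exactly the terms corresponding to bijective assignments of the $d_i$ occurrences of $x_i$ to the symbols $x_{i,1},\ld,x_{i,d_i}$, and there are $\prod_i d_i!$ of them. Because $\chr\F=0$, the scalar $\prod_i d_i!$ is invertible, so $v=0$ is a consequence of the multilinear identity $\tilde v=0$. Therefore the set of all multilinear identities holding in $\V$ generates the same verbal ideal $V(\cF_\infty)$ as the set of all identities of $\V$, i.e.\ it is a basis of identities of $\V$ in the sense introduced after Proposition~\ref{pGC}; this is the assertion.

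The only step needing genuine care is the backward substitution: verifying that collapsing all $x_{i,j}$ to $x_i$ in $\tilde v$ yields precisely $\prod_i d_i!$ copies of $v$ and nothing extraneous. This is exactly where $\chr\F=0$ is indispensable — over a field of characteristic $p$ with $p\mid d_i!$ one cannot divide by that factor, and indeed the proposition fails there. Everything else is a routine use of multihomogeneity from Lemma~\ref{L1.3}.
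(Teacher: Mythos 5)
Your proposal is correct: it is the standard polarization/linearization argument, with the back-substitution correctly producing $\bigl(\prod_i d_i!\bigr)v$ inside the verbal ideal generated by the multilinear identity $\tilde v$, which is exactly where $\chr\F=0$ is used. The paper gives no proof of its own, referring instead to Malcev \cite[\S 5]{AIM}, where essentially this same argument appears, so your route matches the intended one.
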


See the proof in \cite[\S 5]{AIM}.

\subsection{Derivations of relatively free algebras}\label{ssDRFA}

Recall that a linear map $d:P\to P$ of an algebra $P$ is called a \textit{derivation} if for any $a,b\in P$ one has $d(ab)=d(a)b+ad(b)$. 
 
We noticed earlier, see Proposition \ref{pGC}, that the verbal ideals of algebras are closed under their endomorphisms. This  does not need to be true for their derivations, even in the case of $\cF(X)$.  

Weare going to show now  that this is still true in the case where $\V$ is multihomogeneous. We start with a simple fact about the absolutely free algebras $\cF(X)$.
\begin{Lemma}\label{lDAFA}
Any map $\vp:X\to\cF(X)$ extends to a derivation $d_\vp:\cF(X)\to\cF(X)$.
\end{Lemma}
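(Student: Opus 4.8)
The plan is to construct the derivation $d_\vp$ by specifying it on the spanning set $\Gamma(X)$ of nonassociative monomials, using induction on degree, and then checking that the Leibniz rule is forced at every stage so that the linear extension is a well-defined derivation.

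First I would define $d_\vp$ on monomials by recursion on $\deg u$. For $u = x \in X = \Gamma_1$, set $d_\vp(x) = \vp(x)$. Assuming $d_\vp$ has been defined on all monomials of degree $< n$, take a monomial $u \in \Gamma_n$ with $n \ge 2$; by construction of $\cF(X)$ it is uniquely of the form $u = v \circ w = (v)(w)$ with $v \in \Gamma_k$, $w \in \Gamma_{n-k}$ for a uniquely determined pair $(v,w)$ — this uniqueness is exactly the fact that $\Gamma(X)$ is the free magma on $X$, which is implicit in the inductive definition of monomials in Section~\ref{ssAFA}. So I may define, without ambiguity,
\[
d_\vp(u) = d_\vp(v)\, w + v\, d_\vp(w),
\]
the right-hand side already being defined since $\deg v, \deg w < n$. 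Then extend $d_\vp$ to all of $\cF(X)$ by $\F$-linearity; this is legitimate precisely because $\Gamma(X)$ is an $\F$-basis of $\cF(X)$.

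It remains to verify that the resulting linear map satisfies $d_\vp(ab) = d_\vp(a)\,b + a\,d_\vp(b)$ for all $a,b \in \cF(X)$. By bilinearity of the product and linearity of $d_\vp$, it suffices to check this when $a = u$ and $b = w$ are monomials. But for monomials $u \circ w$ is again a monomial, namely $(u)(w)$, and its unique factorization is exactly $(u, w)$, so the defining recursion gives $d_\vp(u \circ w) = d_\vp(u)\, w + u\, d_\vp(w)$ on the nose. Hence the Leibniz identity holds on the basis, and therefore everywhere, so $d_\vp$ is a derivation; it restricts to $\vp$ on $X$ by construction, and it is the unique such derivation since any derivation is determined on products once it is known on a generating set.

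I do not expect a genuine obstacle here: the only point needing care is the appeal to the freeness of the magma $\Gamma(X)$, i.e.\ that every monomial of degree $\ge 2$ has a \emph{unique} expression as $(v)(w)$, which guarantees the recursion is well-posed and that $d_\vp$ is genuinely well-defined on the basis rather than merely on a spanning set. This is built into the inductive construction of $\Gamma_n$ in Section~\ref{ssAFA} (each element of $\Gamma_n$ arises from exactly one pair with prescribed degrees, and within a fixed degree split the parenthesization records the pair), so invoking it is routine. Everything else is the standard bookkeeping of extending by linearity and checking Leibniz on basis elements.
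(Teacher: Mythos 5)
Your proposal is correct and follows essentially the same route as the paper's proof: define $d_\vp$ on monomials by recursion on degree via the Leibniz rule, extend by linearity over the basis $\Gamma(X)$, and check the derivation property by reducing to monomials. Your extra attention to the unique factorization of monomials (freeness of the magma) just makes explicit what the paper leaves implicit.
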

 \begin{proof}
  The desired extension of $\vp$ to a derivation $d_\vp:\cF(X)\to\cF(X)$ can be obtained, as follows. For $x\in X$ one has to set $d_\vp(x)=\vp(x)$ and for $w=uv$ of degree greater than 1, using induction, set $d_\vp(w)=d_\vp(u)v+ud_\vp(v)$. Using linearity, one expands $d_\vp$ to the whole of $\cF(X)$. The defining property of the derivation easily follows. 
\end{proof} 

Now if $F=F(\V,X)$ is a free algebra in the primitive class $\V$, one gets the same result provided that $V(\cF(X))$ is invariant under $d_\vp$. In Lemma \ref{lDer} we prove that the latter property holds valid in the case where $\V$ is a multihomogeneous primitive class. 

\begin{Remark}\label{rQ} Note that in the case of a finite field $\F$ the extendibility as in Lemma \ref{lDAFA} does not take place. Indeed, choose $\F$ with $|\F|=q$ and $\V$ the primitive class of associative and commutative algebras with an additional identity $x^q-x=0$. Let $F$ be $\V$-free algebra with one free generator $x$. Then the map $x\mapsto x$  does not extend to a derivation of $F$.
\end{Remark}

\begin{Lemma}\label{lDer} If $X$ is an arbitrary nonempty set, $\V$ a multihomogeneous primitive class over a field $\F$ then $V(\cF(X))$ is invariant under all derivations of $\cF(X)$. 
\end{Lemma}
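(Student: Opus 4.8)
The plan is to reduce the invariance of $V(\cF(X))$ under an arbitrary derivation $d$ to the single case $d = d_\vp$ for a map $\vp:X\to\cF(X)$, and then exploit multihomogeneity. First I would recall that by Lemma \ref{lDAFA} every map $\vp:X\to\cF(X)$ extends to a derivation $d_\vp$, and that the space of all derivations of $\cF(X)$ is spanned (in the relevant sense) by such $d_\vp$: since $\cF(X)$ is generated by $X$, any derivation $d$ is determined by its restriction to $X$, and $d = d_\vp$ where $\vp = d|_X$. Hence it suffices to show $d_\vp\bigl(V(\cF(X))\bigr)\subseteq V(\cF(X))$ for every $\vp:X\to\cF(X)$.

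Next, because $V(\cF(X))$ is generated as an ideal by the multihomogeneous components of the defining identities (here is where multihomogeneity of $\V$ enters: $V(\cF(X))$ is spanned by values $v(a_1,\ld,a_m)$ with $v$ multihomogeneous, and it is an ideal), and since $d_\vp$ is a derivation, $d_\vp$ of an ideal product $w_1 w_2$ expands by the Leibniz rule into terms of the same ideal once we know $d_\vp$ maps the generating values back inside. So the crux is: if $v(x_1,\ld,x_m)=0$ is an identity in $\V$ and $a_1,\ld,a_m\in\cF(X)$, then $d_\vp\bigl(v(a_1,\ld,a_m)\bigr)\in V(\cF(X))$. Applying the Leibniz rule repeatedly to the monomials making up $v$, one gets $d_\vp\bigl(v(a_1,\ld,a_m)\bigr)=\sum_{i=1}^m \bigl(\partial_i v\bigr)\!\bigl(a_1,\ld,a_m; d_\vp(a_i)\bigr)$, a sum of values of the partial (Fox-type) derivatives of $v$, each a nonassociative polynomial obtained from $v$ by replacing one occurrence of a variable by a new variable. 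The key point is that each such partial-derivative polynomial is itself a consequence of $v=0$ in $\V$, because one can recover it from $v$ by substitutions: concretely, substituting $x_i \mapsto x_i + t\, y$ (with $t$ a scalar and $y$ a fresh variable) into the identity $v=0$ and extracting the multihomogeneous component of degree $1$ in $y$ yields exactly $\partial_i v$, and multihomogeneity of $\V$ guarantees this component is again an identity of $\V$. Therefore its value at any arguments lies in $V(\cF(X))$, and summing over $i$ gives $d_\vp(v(a_1,\ld,a_m))\in V(\cF(X))$.

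Finally I would assemble these pieces: $V(\cF(X))$ is the ideal generated by all values of multihomogeneous identities; $d_\vp$ sends each such value into $V(\cF(X))$ by the previous paragraph; and $d_\vp$ sends the ideal generated by a $d_\vp$-invariant set into itself (if $S\subseteq J$ an ideal with $d_\vp(S)\subseteq J$, then for $r\in\cF(X)$, $s\in S$ one has $d_\vp(rs)=d_\vp(r)s + r\,d_\vp(s)\in J$, and similarly for $sr$ and for sums, so $d_\vp(J)\subseteq J$). Hence $d_\vp\bigl(V(\cF(X))\bigr)\subseteq V(\cF(X))$, and since $d = d_\vp$ for $\vp = d|_X$ this holds for every derivation $d$.

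The main obstacle I anticipate is the middle step: making precise that the Leibniz expansion of $d_\vp$ applied to a value $v(a_1,\ld,a_m)$ is genuinely a sum of values of polynomials lying in $V(\cF_\infty)$. One must be careful that each "partial derivative" $\partial_i v$ is, after renaming the slot $d_\vp(a_i)$ occupies to a free variable, a nonassociative polynomial that belongs to the fully invariant ideal $V(\cF_\infty)$ — and the cleanest justification is the substitution-plus-multihomogeneous-projection argument above, which is exactly where the hypothesis that $\V$ is multihomogeneous is used in an essential way (as Remark \ref{rQ} shows, over a finite field the statement can fail, so no argument avoiding this hypothesis can work).
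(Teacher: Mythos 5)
Your proof is correct and rests on the same key idea as the paper's: the degree-one component in a fresh variable $y$ of $v(x_1,\ld,x_i+y,\ld,x_m)$ is again an identity of $\V$ precisely because $\V$ is multihomogeneous, and the derivation acts on values of identities through these linearizations. The paper packages this slightly differently (reducing to finite $X$, splitting $D=\sum D_i$ and factoring each $D_i=\psi\circ\delta$ through a formal partial derivative $\delta$ and an endomorphism $\psi$, then using full invariance of $V(\cF_\infty)$), but your chain-rule expansion on values of identities is essentially the same argument, so no further comparison is needed.
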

\begin{proof} Let $D$ be a derivation of $\cF(X)$ and $u(x_1,\ld,x_n)\in V(\cF(X))$. We want to show that $D(u(x_1,\ld,x_n))\in V(\cF(X)$. In the language of identities, we need to show that if $u(x_1,\ld,x_n)=0$ is an identity in $\V$ then also $D(u(x_1,\ld,x_n))=0$ is an identity in $\V$. This reformulation allows one to restrict oneself to the case where $X$ is finite set: $X=\{x_1,\ld,x_n\}$. If we embed $X$ in a countable set $X_\infty=\{x_1,\ld,x_n,\ld\}$ then $\cF(X)$ is a retract in $\cF(X_\infty)$. In particular, this means that $V(\cF(X))=V(\cF(X_\infty))\cap\cF(X)$. So it is enough to prove $D(u(x_1,\ld,x_n))\in\cF(X_\infty)$. Since $\V$ is multihomogeneous, if $u(x_1,\ld,x_n)=0$ is an identity in $\V$, then also its multihomogeneous components are identities of $\V$.  So  we only need to show $D(u(x_1,\ld,x_n))\in\cF(X_\infty)$ in the case where $u(x_1,\ld,x_n)$ is multihomogeneous. 

Since every derivation of $\cF(x_1,\ld,x_n)$ is completely defined on the set of the free generators, we can write $D$ as the sum of derivations $D= D_1+\cdots+D_n$ where each derivation $D_i$ is given by $D_i(x_j)=\delta_{ij}D(x_i)$. So it is enough to prove $D(u(x_1,\ld,x_n))\in V(\cF(x_1,\ld,x_n))$ when $D=D_i$, for some $i=1,\ld,n$, say for $i=1$. 

We consider the endomorphism $\psi:\cF_\infty\to \cF_\infty$ given on the free generators by $\psi(x_{n+1})=D(x_1)$ and $\psi(x_i)=x_i$, for all the other $x_i$. Then we consider the derivation $\delta:\cF_\infty\to \cF_\infty$ given on the free generators by $\delta(x_1)=x_{n+1}$ and $\delta(x_i)=0$,  for all the other $x_i$. By induction with trivial basis, on all elements $v=v(x_1,\ld,x_n)$ of $\cF$, we have $(\psi\delta)(v)=D(v)$. Indeed, if $v=v_1v_2$ then 
\[
\psi(\delta(v_1v_2)=(\psi\delta)(v_1)\psi(v_2)+\psi(v_1)(\psi\delta)(v_2)=\delta(v_1)v_2+v_1\delta(v_2)=\delta(v_1v_2).
\]
By induction with trivial basis, let us show that $\delta(u(x_1,\ld,x_n))$ is the multihomogeneous component $\hat{u}$ of $u(x_1+x_{n+1},x_2,\ld,x_n)$ of degree 1 in $x_{n+1}$. This is enough to show when $u=u(x_1,\ld,x_n)$ is a monomial. So if $u=u_1u_2$ then the monomials of degree 1 in $u_1(x_1+x_{n+1},x_2,\ld,x_n)u_2(x_1+x_{n+1},x_2,\ld,x_n)$ come in two ways: choosing monomials of degree 1 in $x_{n+1}$ from the first factor, that is replacing $u_1$ by $\widehat{u_1}$ and leaving $u_2$ as  is, or leaving $u_1$ as  is and replacing $u_2$ by $\widehat{u_2}$. So, 
\[
\widehat{u}=\widehat{u_1}u_2+u_1\widehat{u_2}=\delta(u_1)u_2+u_1\delta(u_2)=\delta(u).
\]
Now if $u(x_1,\ld,x_n)\in V(\cF_\infty)$ then $u(x_1+x_{n+1},\ld,x_n)\in V(\cF_\infty)$. Since $\V$ is multihomogeneous, $\widehat{u}\in V(\cF_\infty)$, hence $\delta(u(x_1,\ld,x_n))\in V(\cF_\infty)$, hence $\psi(\delta(u(x_1,\ld,x_n)))\in V(\cF_\infty)$ and finally 
\[
D(u(x_1,\ld,x_n))\in V(\cF_\infty))\cap\cF(x_1,\ld,x_n)=V(\cF(x_1,\ld,x_n),
\]
 as claimed.
\end{proof}

\begin{Corollary}\label{cDer}
If $F=F(\V,X)$ is a  $\V$-free algebra in a multihomogeneous primitive class $\V$ then any map $\vp:X\to F$ extends to a unique derivation $d_\vp$ of $F$.
$\hfill\Box$
\end{Corollary}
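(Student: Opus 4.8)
The plan is to deduce Corollary~\ref{cDer} directly from Lemma~\ref{lDAFA} and Lemma~\ref{lDer}. First I would apply Lemma~\ref{lDAFA} to lift $\vp$: compose $\vp\colon X\to F$ with a set-theoretic section of the canonical surjection $\ve\colon\cF(X)\to F=\cF(X)/V(\cF(X))$ to obtain a map $\tilde\vp\colon X\to\cF(X)$ with $\ve\circ\tilde\vp=\vp$. By Lemma~\ref{lDAFA} this extends to a derivation $d_{\tilde\vp}\colon\cF(X)\to\cF(X)$. Since $\V$ is assumed multihomogeneous, Lemma~\ref{lDer} says $V(\cF(X))$ is invariant under every derivation of $\cF(X)$, in particular under $d_{\tilde\vp}$; hence $d_{\tilde\vp}$ descends to a well-defined linear map $d_\vp\colon F\to F$ with $\ve\circ d_{\tilde\vp}=d_\vp\circ\ve$.

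Next I would check that $d_\vp$ is a derivation and that it extends $\vp$. The derivation identity is inherited through the surjection $\ve$: for $x,y\in F$ pick preimages $u,v\in\cF(X)$, and then
\[
d_\vp(xy)=\ve\bigl(d_{\tilde\vp}(uv)\bigr)=\ve\bigl(d_{\tilde\vp}(u)v+u\,d_{\tilde\vp}(v)\bigr)=d_\vp(x)\,y+x\,d_\vp(y),
\]
using that $\ve$ is an algebra homomorphism. That $d_\vp$ restricts to $\vp$ on $X$ is immediate: for $x\in X$, $d_\vp(x)=\ve(d_{\tilde\vp}(x))=\ve(\tilde\vp(x))=\vp(x)$. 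For uniqueness, note that $X$ generates $F$ as an algebra and any two derivations agreeing on a generating set agree everywhere (a derivation is determined on products by its values on the factors, by the same induction as in Lemma~\ref{lDAFA}).

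The one point requiring a little care is the use of multihomogeneity: Lemma~\ref{lDer} is stated for multihomogeneous $\V$, and by Lemma~\ref{L1.3} this is automatic when $\F$ is infinite, but the corollary as stated already presupposes $\V$ multihomogeneous, so the hypothesis is exactly what is needed and nothing more is required. I expect no real obstacle here; the main (minor) subtlety is simply to phrase the descent of $d_{\tilde\vp}$ to the quotient cleanly — i.e., to observe that a derivation of $\cF(X)$ preserving the ideal $V(\cF(X))$ induces a derivation of the quotient algebra, the derivation property on the quotient being a formal consequence of surjectivity of $\ve$. Everything else is bookkeeping.
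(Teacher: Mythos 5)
Your argument is correct and is exactly the route the paper intends: lift $\vp$ to $\cF(X)$, extend by Lemma \ref{lDAFA}, and use Lemma \ref{lDer} to see that $V(\cF(X))$ is preserved so the derivation descends to $F$, with uniqueness following since derivations are determined by their values on a generating set. The paper leaves the corollary without a written proof precisely because this is the deduction set up in the paragraph preceding Lemma \ref{lDer}, so nothing further is needed.
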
 

Using the argument of Proposition \ref{Scrap8} allows us to prove the following result about the derivations of \textit{arbitrary} algebras in the multihomogeneous primitive classes.

\begin{Proposition}\label{pID}
Let $P$ be an algebra over a field $\F$ in a multihomogeneous primitive class $\V$. If $P=F/I$, where $F$ is a $\V$-free algebra and $\ve$ is a natural epimorphism $F\to P$ with kernel $I$ then for any derivation $d$ of $P$ there is a derivation $\ba{d}$ of $F$ such that the following diagram is commutative
\begin{equation}\label{diag1}
  \begin{tikzcd}
    F \arrow[dashrightarrow]{r}{\ba{d}} \arrow{d}{\ve} & F\arrow{d}{\ve}\\ P\arrow{r}{d} & P
  \end{tikzcd}
\end{equation} 
In particular, $I$ is invariant under $\ba{d}$.  
\end{Proposition}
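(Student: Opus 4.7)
The plan is to mimic the argument of Proposition \ref{Scrap8} but using derivations in place of homomorphisms, with Corollary \ref{cDer} replacing the universal property for homomorphisms. Let $X$ be a free generating set of $F$, so that $F=F(\V,X)$. For each $x\in X$, since $\ve$ is surjective, I would choose some $a_x\in F$ with $\ve(a_x)=d(\ve(x))$. Define $\vp:X\to F$ by $\vp(x)=a_x$. By Corollary \ref{cDer}, since $\V$ is multihomogeneous, $\vp$ extends to a (unique) derivation $\ba{d}:F\to F$ with $\ba{d}(x)=a_x$ for every $x\in X$. This produces the candidate lift.

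Next, I would verify that the diagram (\ref{diag1}) commutes, namely that $\ve\ba{d}=d\ve$ as maps $F\to P$. The trick is to consider the difference $\Delta=\ve\ba{d}-d\ve$ and view $P$ as an $F$-bimodule via $\ve$. A direct Leibniz computation gives, for any $a,b\in F$,
\[
\Delta(ab)=\ve\ba{d}(a)\ve(b)+\ve(a)\ve\ba{d}(b)-d(\ve(a))\ve(b)-\ve(a)d(\ve(b))=\Delta(a)\ve(b)+\ve(a)\Delta(b),
\]
so $\Delta$ is a derivation from $F$ into the $F$-bimodule $P$. By construction $\Delta(x)=\ve(a_x)-d(\ve(x))=0$ for every $x\in X$. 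Since $X$ generates $F$ as an algebra, a routine induction on the complexity of elements of $F$ (using that if $\Delta(a)=0$ and $\Delta(b)=0$ then $\Delta(ab)=\Delta(a)\ve(b)+\ve(a)\Delta(b)=0$, together with linearity) shows $\Delta\equiv 0$, i.e.\ $\ve\ba{d}=d\ve$.

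Finally, the invariance of $I=\Ker{\ve}$ is immediate: for $u\in I$ one has $\ve(\ba{d}(u))=d(\ve(u))=d(0)=0$, hence $\ba{d}(u)\in I$.

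The main obstacle is really the existence of $\ba{d}$ as a derivation of $F$; this is precisely where multihomogeneity of $\V$ enters through Corollary \ref{cDer} (and ultimately Lemma \ref{lDer}). Once $\ba{d}$ exists, the commutativity check and the invariance of $I$ are formal consequences of the Leibniz rule and the surjectivity of $\ve$.
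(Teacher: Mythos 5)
Your proposal is correct and follows essentially the same route as the paper: choose preimages $a_x$ of $d(\ve(x))$, extend to a derivation $\ba{d}$ of $F$ via Corollary \ref{cDer} (where multihomogeneity is used), and then verify $\ve\ba{d}=d\ve$ by a Leibniz-rule induction over monomials in the generators. Your packaging of that verification as ``the difference $\Delta$ is a derivation into the $F$-bimodule $P$ vanishing on $X$'' is only a cosmetic reformulation of the paper's degree induction, and the conclusion about the invariance of $I$ is the same formal consequence.
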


\begin{proof} For each $x\in X$ we choose $a_x\in F$ such that $\ve(a_x)=d\ve(x)$. Since $X$ is a free generating set for $F$ and $F$ is free in a multihomogeneous primitive class $\V$, by Corllary \ref{cDer}, there exist a derivation $\ba{d}:F\to F$ extending the map $\vp:X \to F$ given by $\vp(x)=a_x$, for any $x\in X$. Now two maps $\ve \ba{d}$ and $d\ve$ from $F$ to $B$ coincide on $X$. Since $\V$ is multihomogeneous, $F$ is graded by total degrees in $X$. This allows us to use induction with an obvious basis by the degrees of the monomials in $X$ to prove that $\ve \ba{d}=d\ve$ on the whole of $F$. Indeed, if we already knew that these two maps coincide on all monomials of degree less than $n$ and $w=uv$ is a monomial of degree $n>1$ then
\begin{eqnarray*}
\ve \ba{d}(uv)&=&\ve(\ba{d}(u)v+u\ba{d}(v))=\ve(\ba{d}(u))\ve(v)+\ve(u)\ve(\ba{d}(v))\\&=&d(\ve(u))\ve(v)+\ve(u)d(\ve(v))=d(\ve(u)\ve(v))=d\ve(uv),
\end{eqnarray*}
as needed. 
\end{proof}
  
\section{Primitive classes of nilpotent algebras}\label{sNAV}

If there is $m$ such that the value of every nonassociative monomial of degree $m$ in an algebra $P$ is zero, then
$P$ is called \textit{nilpotent}. The linear span of
the values of monomials of degree $i$ in $P$ is denoted by $P^i$. One says that $P$ is $c$-step nilpotent if $P^{c+1}=\{0\}$, but $P^c\ne \{0\}$. For each $c\ge 0$, the class $\mathfrak{N}_c$ of all nilpotent algebras $P$ satisfying $P^{c+1}=\{ 0\}$ is primitive. The relatively free algebras in $\mathfrak{N}_c$ are called \textit{free $c$-step nilpotent algebras}. One has $F(\mathfrak{N}_c,X)\cong\cF(X)/(\cF(X))^{c+1}$. The basis of $F(\mathfrak{N}_c,X)$ is formed by the (images) of all nonassociative monomials of degree at most $c$.

In any algebra $P$ we have two series of ideals: the descending power series  
\begin{equation}\label{ePS}
P=P^1\supset P^2\supset\cdots P^n\supset\cdots
\end{equation}
 and the ascending \textit{annihilator series}, defined as follows. For any algebra $P$, we define the ideal $I(P)$ called the \textit{annihilator} of $P$ by setting 
\[
I(P)=\{ x\in P\,|\,(\forall a\in P)(ax=xa=0)\}.
\]
Then we define $I_n(P)$ by induction on $n$ if we set $I_1=I(P)$ and define $I_n(P)$ as the full preimage of $I(P/I_{n-1}(P))$ in $P$, $n>1$. As a result, we have an ascending annihilator series of ideals of $P$:
\begin{equation}\label{eAS}
I_1(P)\subset I_2(P)\subset\cdots\subset I_n(P)\subset\cdots
\end{equation} 
In the case of a $c$-step nilpotent algebra  $P$, both series have finite length and $P^k\subset I_{c-k+1}(P)$, for any $k=1,2,\ld,c$.

\begin{Remark}\label{rPowvsAnn}
In distinction  with the lower and upper central series in groups or Lie algebras, the length of the annihilator series can be shorter than the length of the power series. A quick example is the quotient-algebra $P$ of the free $4$-step nilpotent algebra $F(x,y,z,t)$ by the ideal generated by all monomials of degree 4 except $(xy)(zt)$. One has $I_1(P)=P^3$, $I_2(P)=P^2$, $I_3(P)=P$. At the same time, $P^4\ne\{0\}$. 
\end{Remark}

 \subsection{Homomorphisms of relatively free nilpotent algebras}\label{ssHA}
 
We start with two lemmas, whose proofs copy can be found in \cite[\S 3]{AIM}.
 
 \begin{Lemma}\label{lBPNA}
If a nilpotent algebra $P$ is generated by $S\cup P^2$, where $S$ a subset of $P$, then $P$ is generated by $S$ alone. 
 $\hfill\Box$
\end{Lemma}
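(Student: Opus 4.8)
The statement to prove is Lemma~\ref{lBPNA}: if a nilpotent algebra $P$ is generated by $S\cup P^2$ then $P$ is generated by $S$ alone.

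The plan is to argue by induction on the nilpotency class, or more directly by an iterative substitution that terminates because $P$ is nilpotent. Let $Q$ be the subalgebra of $P$ generated by $S$. The goal is to show $Q=P$. First I would observe that since $P$ is generated by $S\cup P^2$, every element of $P$ is a linear combination of (values of) nonassociative monomials in the elements of $S\cup P^2$. In particular, this applies to the generators of $P^2$ themselves, and more generally it gives $P = Q + (\text{ideal or subalgebra generated by } S\cup P^2 \text{ involving at least one factor from } P^2)$. The key structural point is that any monomial that uses at least one factor from $P^2$ lands in $P^{k}$ for $k$ strictly larger than it would be otherwise; iterating, the ``remainder'' terms get pushed into higher and higher powers $P^k$.

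More precisely, the main step is to prove by downward induction on $k$ (from $k=c+1$ down to $k=1$, where $P^{c+1}=\{0\}$) the claim that $P = Q + P^k$. The base case $k=c+1$ is trivial since $P^{c+1}=\{0\}$, so $P=Q+\{0\}=Q$ would be immediate — wait, that is the conclusion, so one must be careful about the direction. The correct setup: prove $P = Q + P^k$ for all $k\ge 1$ by induction on $k$; the case $k=1$ is $P=Q+P$, trivially true, and actually we want to push $k$ upward. Assume $P=Q+P^k$. Then $P^2 = (Q+P^k)(Q+P^k)\subseteq Q^2 + QP^k + P^kQ + P^{2k}\subseteq Q + P^{k+1}$ (using $Q^2\subseteq Q$ and $QP^k, P^kQ, P^{2k}\subseteq P^{k+1}$ since $k\ge 1$). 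Now since $P$ is generated by $S\cup P^2$ and $S\subseteq Q$, we get $P = Q + (\text{span of monomials with at least one }P^2\text{ factor})$; but each such monomial lies in $P^2\cdot(\text{something}) + (\text{something})\cdot P^2 \subseteq$... hmm, one needs $P = Q\cdot(\text{stuff}) + P^2$, i.e.\ $P=Q+P^2$, and then substituting $P^2\subseteq Q+P^{k+1}$ yields $P=Q+P^{k+1}$. So really the two ingredients are: (a) $P = Q + P^2$ directly from the hypothesis that $S\cup P^2$ generates $P$ (every generator-monomial is either a monomial in $S$ alone, hence in $Q$, or contains a $P^2$-factor, hence lies in $P^2$); and (b) the inductive improvement $P^2\subseteq Q+P^{k+1}$ whenever $P=Q+P^k$.

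Putting (a) and (b) together: $P=Q+P^2 \subseteq Q+(Q+P^3)=Q+P^3\subseteq\cdots\subseteq Q+P^{c+1}=Q$, so $P=Q$, as desired. The main obstacle, and the only place requiring genuine care, is ingredient (a): being precise about why ``generated by $S\cup P^2$'' gives the linear decomposition $P=Q+P^2$ rather than something weaker. One must note that a nonassociative monomial in elements of $S\cup P^2$ either has all its leaves in $S$ — in which case its value lies in the subalgebra $Q$ generated by $S$ — or has at least one leaf equal to an element of $P^2$; in the latter case, writing the monomial as a product $u\cdot v$, at least one of $u,v$ (inductively) has value in $P^2$, so the whole value lies in $P\cdot P^2 + P^2\cdot P\subseteq P^2$. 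Since $P$ is the linear span of such monomial values, $P\subseteq Q+P^2$, and the reverse inclusion is obvious. Everything else is the routine nilpotency-powered iteration sketched above; this matches the reference \cite[\S 3]{AIM} cited in the excerpt.
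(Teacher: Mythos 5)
Your proof is correct: the decomposition $P=Q+P^2$ from the generating hypothesis, followed by the iteration $P=Q+P^k\Rightarrow P^2\subseteq Q+P^{k+1}$ and nilpotency $P^{c+1}=\{0\}$, is exactly the standard argument, which is what the paper relies on by citing \cite[\S 3]{AIM} rather than proving the lemma itself. The momentary hesitation about the direction of the induction is resolved correctly in your final write-up, so no gap remains.
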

 \begin{Lemma}\label{lFSRFL}
Let $F=F(\V,X)$ be a free algebra of a  primitive class $\V$ over a field $\F$, $Y$ a subset of $F$ linealry independent modulo $F^2$ and $G$ be a subalgebra of $F$ generated by $Y$. Then $G\cong F(\V,Y)$. 
 $\hfill\Box$
\end{Lemma}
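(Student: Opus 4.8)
The plan is to reduce, by means of an automorphism of $F$, to the case in which $Y$ is literally a subset of the free generating set $X$, and then to invoke the elementary fact recorded in Section~\ref{ssPC} that for $Y'\subseteq X$ the canonical homomorphism $F(\V,Y')\to F(\V,X)=F$ is injective with image the subalgebra $\langle Y'\rangle$; this at once gives $\langle Y'\rangle\cong F(\V,Y')$. To set up the reduction, observe first that since $F$ is generated by $X$ we have $F=\Sp X+F^2$, so the cosets $\ba{x}$ ($x\in X$) span $F/F^2$, and — $\V$ being a nonzero primitive class — they in fact form a \emph{basis} of $F/F^2$ (linear independence of the free generators modulo $F^2$ belongs to the basic theory of free algebras; when $\V$ is nilpotent, or more generally multihomogeneous, it is immediate from the grading (\ref{eRelGrad})). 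Since $Y$ is linearly independent modulo $F^2$, the family $\{\ba{y}:y\in Y\}$ is a linearly independent subset of the vector space $F/F^2$; complete it to a basis by adjoining $\{\ba{z}:z\in Z\}$ for a suitable set $Z\subseteq F$ chosen disjoint from $Y$, put $W=Y\cup Z$, and fix a bijection $\beta\colon W\to X$ (both $W$ and $X$ index bases of $F/F^2$, hence have the same cardinality). Let $\Phi\colon F\to F$ be the endomorphism determined by $\Phi(x)=\beta^{-1}(x)$ for $x\in X$; it induces on $F/F^2$ the linear map carrying the basis $\{\ba{x}\}$ bijectively onto the basis $\{\ba{w}\}$, which is an automorphism of $F/F^2$.

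The heart of the matter — and the step I expect to be the main obstacle — is the assertion that an endomorphism $\Phi$ of a relatively free algebra $F$ that induces an automorphism of $F/F^2$ is itself an automorphism. This is proved by successive approximation along the power filtration (\ref{ePSF})--(\ref{eApprox}): from $\Phi(F)+F^2=F$ together with $\Phi(F^k)\subseteq F^k$ and $\Phi(F^k)\Phi(F^\ell)\subseteq\Phi(F^{k+\ell})$ one deduces inductively that $\Phi(F)+F^k=F$ for all $k$, which when $\V$ is $c$-step nilpotent terminates at $k=c+1$ to give surjectivity, and in the multihomogeneous case is combined with (\ref{eApprox}) to build an inverse degree by degree; injectivity is argued dually (a nonzero kernel element would lie in some $F^m\setminus F^{m+1}$ and contradict that $\Phi$ induces isomorphisms on the graded quotients $F^m/F^{m+1}$). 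For an arbitrary base field one simply quotes \cite[\S 3]{AIM}. Granting this, set $\Psi=\Phi^{-1}$. Since $\Phi(\beta(y))=\beta^{-1}(\beta(y))=y$ for every $y\in Y$, we get $\Psi(y)=\beta(y)\in X$, so $\Psi$ maps $Y$ into $X$ and restricts to an isomorphism $G=\langle Y\rangle\xrightarrow{\ \sim\ }\langle\beta(Y)\rangle$. As $\beta(Y)\subseteq X$, the retract property above gives $\langle\beta(Y)\rangle\cong F(\V,\beta(Y))$, and since $\beta$ restricts to a bijection $Y\to\beta(Y)$ the latter is isomorphic to $F(\V,Y)$; chaining these isomorphisms completes the proof.

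An alternative that sidesteps the automorphism statement is to construct directly a homomorphism $\eta\colon F\to F(\V,Y)$ with $\eta(y)=y$ for every $y\in Y$: applying $\eta$ to the canonical surjection $\theta\colon F(\V,Y)\twoheadrightarrow G$ (which exists because $G\in\V$ by Birkhoff's theorem \cite[Theorem~4.3]{B}) yields $\eta\theta=\id$ on the free generators of $F(\V,Y)$, so $\theta$ is injective. The homomorphism $\eta$ is built by the same approximation device: an initial $\eta_1$ exists because one only needs a linear map $F/F^2\to F(\V,Y)/F(\V,Y)^2$ sending the class of each $y$ to the class of the corresponding free generator, after which the error terms are pushed into ever higher powers $F(\V,Y)^k$; for a fixed element $w\in F(\V,Y)$ of degree $\le D$ the $D$-th approximation already suffices, and then the grading of $F(\V,Y)$ forces $w=0$ as soon as $\theta(w)=0$. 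In every version the only genuinely delicate point is convergence of the approximation — trivial when $\V$ is nilpotent, routine when $\V$ is multihomogeneous, and otherwise imported from \cite{AIM}.
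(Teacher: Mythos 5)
The paper offers no argument for this lemma---it simply refers to \cite[\S 3]{AIM}---so your proposal has to stand on its own, and its main route does not. The pivotal claim, that an endomorphism $\Phi$ of a relatively free algebra inducing an automorphism of $F/F^2$ is itself an automorphism, is a specifically nilpotent phenomenon, not ``routine when $\V$ is multihomogeneous'': take $\V$ to be all associative algebras (defined by a multilinear identity, hence multihomogeneous over any field), $F=F(\V,\{x\})=x\F[x]$, and $\Phi\colon x\mapsto x+x^2$. This $\Phi$ induces the identity on $F/F^2$, and even on every quotient $F^m/F^{m+1}$, yet its image is the proper subalgebra generated by $x+x^2$, so it is not surjective; the ``inverse built degree by degree'' exists only in a completion, not in $F$, and the dual injectivity argument says nothing about surjectivity. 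Surjectivity genuinely uses termination of the power filtration, i.e.\ nilpotency, via Lemma \ref{lBPNA}. Note also that in the nilpotent case the statement you invoke is exactly Claim 2 of Corollary \ref{cGFS}, which the paper deduces \emph{from} the present lemma, so you must prove it independently; your sketch can be completed there, but the injectivity step should first be reduced (by composing with a lift of the inverse of the induced map) to an endomorphism congruent to the identity modulo $F^2$, since ``$\Phi$ induces isomorphisms on the graded quotients'' is otherwise precisely what is in question. Finally, ``otherwise imported from \cite{AIM}'' cannot rescue the fully general case, because read for an arbitrary primitive class over an arbitrary field the lemma itself fails: for commutative associative $\F_2$-algebras with the identity $x^4=x^2$, the element $u=x+x^2$ of $F(\V,\{x\})$ is independent modulo $F^2$ but $u^2=x^2+x^4=0$, so the subalgebra it generates is one-dimensional with zero product and is not isomorphic to $F(\V,\{u\})$. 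Some hypothesis---nilpotency, as in the surrounding section, or multihomogeneity---is indispensable, and your proof should say which one it is using where.

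Your second, ``alternative'' route is the right one and is essentially the Mal\'cev-type argument the paper cites: it suffices to show that the canonical surjection $\theta\colon F(\V,Y)\to G$ is injective. In the nilpotent case your successive approximation does terminate, but the inductive step needs to be spelled out: the corrections to the images of the free generators can be chosen at each stage because the degree-one parts of the elements of $Y$ are linearly independent in the span of $X$ (and in a nontrivial nilpotent class $X$ is independent modulo $F^2$), and perturbing generator images by elements of $F(\V,Y)^{k+1}$ moves images of elements of $F^2$ only within $F(\V,Y)^{k+2}$; after $c$ steps $\eta(y)=y$ exactly and $\eta\theta=\id$. In the multihomogeneous case no approximation is needed at all: if $0\neq w\in F(\V,Y)$ has lowest homogeneous component $w_{d_0}$, then the degree-$d_0$ component of $\theta(w)$ is $w_{d_0}$ evaluated at the linearly independent degree-one parts of the elements of $Y$, and after a degree-preserving change of free generators this is a value on distinct free generators, hence nonzero---otherwise Lemma \ref{lRelX} would make the corresponding polynomial an identity of $\V$ and force $w_{d_0}=0$. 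So $\theta(w)\neq0$ and $\theta$ is injective outright. If you rebuild the proof around this second route, with the standing hypothesis made explicit and the automorphism claim dropped, the argument is sound.
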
 
 \begin{Corollary}\label{cGFS} 
 \begin{enumerate} Let $F=F(\V,X)$ be a free algebra of a  nilpotent  primitive class $\V$
 \item If a subset $S\subset F$   is a basis of $F$ modulo $F^2$ then $S$ is a free generating set of $F$
 \item Let $\alpha$ be an endomorphism of $F$. Let $\gamma$ be the endomorphism of $F/F^2$ induced by $\alpha$. If $\gamma$ is an automorphism then the same is true for $\alpha$.$\hfill\Box$
\end{enumerate}  
 \end{Corollary}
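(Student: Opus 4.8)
The plan is to prove the two parts of Corollary \ref{cGFS} in order, using Lemma \ref{lFSRFL}, Lemma \ref{lBPNA}, and the fact that a surjective endomorphism of a finite-dimensional space is automatically bijective.

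For part (1): suppose $S\subset F$ is a basis of $F$ modulo $F^2$, i.e. the images of the elements of $S$ form a basis of the vector space $F/F^2$. In particular $S$ is linearly independent modulo $F^2$, so by Lemma \ref{lFSRFL} the subalgebra $G$ generated by $S$ is isomorphic to $F(\V,S)$; thus it suffices to show $G=F$. Since the images of $S$ span $F/F^2$, the subalgebra $G$ together with $F^2$ generates $F$, i.e. $F$ is generated by $S\cup F^2$. Because $\V$ is nilpotent, $F$ is a nilpotent algebra, so Lemma \ref{lBPNA} applies and gives that $F$ is already generated by $S$; hence $G=F$ and $S$ is a free generating set.

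For part (2): let $\alpha$ be an endomorphism of $F$ inducing the automorphism $\gamma$ of $F/F^2$. First observe that $\alpha$ is surjective: pick the original free generating set $X$ (or any basis $S$ of $F$ mod $F^2$). Since $\gamma$ is onto, for each $x\in X$ there is $f_x\in F$ with $\alpha(f_x)\equiv x \pmod{F^2}$, so $X\subset \alpha(F)+F^2$, hence $F=\alpha(F)+F^2$; then $\alpha(F)\supseteq\alpha(F)^2\supseteq\cdots$ and iterating $F=\alpha(F)+F^2 = \alpha(F)+ (\alpha(F)+F^2)^2 \subseteq \alpha(F)+F^3 \subseteq \cdots$, and since $F^{c+1}=\{0\}$ (nilpotency, say $c$-step) we get $F=\alpha(F)$, so $\alpha$ is surjective. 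Because $\V$ is nilpotent, $F=F(\V,X)$ is finite-dimensional over $\F$, and a surjective linear map of a finite-dimensional vector space to itself is injective; therefore $\alpha$ is bijective, i.e. an automorphism. (Alternatively, one can argue directly with the graded/filtered structure from \eqref{eRelGrad}: $\alpha$ respects the filtration by powers $F^n$, induces $\gamma$ on $F/F^2$, and more generally induces maps on the graded pieces $F^n/F^{n+1}$ which, being spanned by products of $F/F^2$, are surjective and hence bijective; then a standard filtered-to-graded argument shows $\alpha$ itself is bijective.)

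The only point that requires care — and hence the main obstacle — is the surjectivity of $\alpha$ in part (2): one must genuinely use nilpotency to lift surjectivity modulo $F^2$ to surjectivity on the nose, via the telescoping argument $F=\alpha(F)+F^n$ for all $n$ together with $F^{c+1}=\{0\}$. Everything else is a routine combination of the two cited lemmas and the finite-dimensionality of relatively free nilpotent algebras. I would present the $F=\alpha(F)+F^n$ induction explicitly and keep the rest terse.
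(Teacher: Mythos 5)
Your part (1) is exactly the intended argument: Lemma \ref{lFSRFL} gives that the subalgebra $G$ generated by $S$ is $\V$-free on $S$, and Lemma \ref{lBPNA} upgrades ``$S\cup F^2$ generates'' to ``$S$ generates'', so $G=F$; nothing to add there.

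For part (2) you take a genuinely different route, and as written it proves less than the statement claims. The corollary does not assume $X$ finite, and for infinite $X$ the algebra $F(\V,X)$ is infinite-dimensional even though $\V$ is nilpotent; your injectivity step ``a surjective endomorphism of a finite-dimensional space is bijective'' (and likewise the parenthetical graded variant, which again invokes ``surjective hence bijective'' on the pieces $F^n/F^{n+1}$) then breaks down. Your surjectivity argument is fine and needs no finiteness: the telescoping $F=\alpha(F)+F^n$ for all $n$ is correct, and is in effect Lemma \ref{lBPNA} applied to the set $\{\alpha(f_x)\}$ of representatives of a basis modulo $F^2$ (this set together with $F^2$ generates $F$, hence generates $F$, hence $\alpha(F)=F$ since $\alpha(F)$ is a subalgebra). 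The rank-free way to finish --- and presumably the derivation the paper intends, since (2) is stated as a consequence of (1) --- is: because $\gamma$ is an automorphism, the elements $\alpha(x)$, $x\in X$, are pairwise distinct and form a basis of $F$ modulo $F^2$, so by part (1) $\alpha(X)$ is a free generating set of $F$; the universal property then provides an endomorphism $\beta$ of $F$ with $\beta(\alpha(x))=x$ for all $x\in X$, and since $\beta\alpha$ fixes the free generating set $X$ and $\alpha\beta$ fixes the free generating set $\alpha(X)$, both composites are the identity, so $\alpha$ is an automorphism. If you are content to prove the corollary only for finite rank (which is all the paper uses later), your argument is complete; for the statement as given, replace the dimension count by this universal-property construction of the inverse.
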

 
 \begin{Remark}\label{rScrap29.1} If $F_n$, $n\ge 2$, is a $\V$-free algebra in a $c$-step nilpotent primitive class $\V$, $c\ge 2$, then $I(F_n)\subset F_n^2$. Indeed if $x\in I(F_n)\setminus F^2$ then by Corollary \ref{lFSRFL},  $x$ can be included in a  free generating set of $F_n$. Since $xy=0$ for all $y\in F_n$, including a free generator $y$. By Lemma \ref{lRelX} $xy=0$ is an identity in $\V$, contradicting to $c\ge 2$.
 \end{Remark}
 
One of the main tools of dealing with the automorphisms of nilpotent algebras is the following result of A. I. Malcev \cite[\S 3]{AIM}.

\begin{Lemma}[Malcev]\label{tScrap8.4} Let $c\ge 1$ and $\V$ be a $c$-step nilpotent primitive class of  algebras over a field $\F$. Let $F=F(\V,X)$ be a $\V$-free algebra with the free generating set $X$. Then the following are true.
\begin{enumerate}
\item[(1)] Let $P$ be an image of $F$ under  an epimorphism $\ve:F\twoheadrightarrow P$. Then any automorphism $\gamma$ of $P$ is induced by an automorphism $\ba{\gamma}$ of $F$, that is $\ve\ba{\gamma}=\gamma\ve$.
\item[(2)] If $K,L$ are two ideals of a $c$-step nilpotent relatively free algebra $F$ of finite rank then $F/K\cong F/L$ if and only if there is an  automorphism  $\delta$ of $F$ such that $\delta(K)=L$.$\hfill\Box$
\end{enumerate}
\end{Lemma}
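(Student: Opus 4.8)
The plan is to prove both parts by a single lifting construction: starting from the given isomorphism of quotient algebras, I build an endomorphism of $F$ that is, by design, an automorphism modulo $F^2$, and then invoke Corollary \ref{cGFS}(2) to upgrade it to an automorphism of $F$. Part~(1) is the case $K=L$ of the nontrivial direction of part~(2), so I would set up the general lifting first and then read off both claims. The easy direction of~(2) is immediate: if $\delta\in\Aut{F}$ with $\delta(K)=L$, then $\delta$ descends to an isomorphism $F/K\to F/L$.

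For the nontrivial direction of~(2), write $P=F/K$ and $Q=F/L$ with the canonical epimorphisms $\ve_K\colon F\to P$ and $\ve_L\colon F\to Q$, and let $\phi\colon P\to Q$ be an isomorphism. It suffices to produce $\delta\in\Aut{F}$ with $\ve_L\delta=\phi\ve_K$, since then $\delta^{-1}(L)=\ker(\ve_L\delta)=\ker(\phi\ve_K)=K$ forces $\delta(K)=L$. I would build $\delta$ on the free generating set $X=\{x_1,\dots,x_n\}$ as follows. First, $\phi$ maps $P^2$ onto $Q^2$ and hence induces an isomorphism $\phi_1\colon P/P^2\to Q/Q^2$; writing $V=F/F^2$, $\overline K=(K+F^2)/F^2$ and $\overline L=(L+F^2)/F^2$, one identifies $P/P^2$ with $V/\overline K$ and $Q/Q^2$ with $V/\overline L$, and finiteness of the rank makes $V$ finite-dimensional, so $\dim\overline K=\dim\overline L$. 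Choosing complements $V=\overline K\oplus U_K=\overline L\oplus U_L$ and a linear isomorphism $\overline K\to\overline L$, I extend $\phi_1$ to a linear automorphism $\mu$ of $V$ with $\mu(\overline K)=\overline L$ inducing $\phi_1$ on the quotients. Next, for each $x\in X$ I choose $a_x\in F$ with $a_x+F^2=\mu(x+F^2)$ and $\ve_L(a_x)=\phi\ve_K(x)$; such an $a_x$ exists because the two prescriptions have equal image in $F/(F^2+L)\cong Q/Q^2$, which is precisely the assertion that $\mu$ lifts $\phi_1$, and any compatible pair of congruence conditions modulo $F^2$ and modulo $L$ is realized by an element of $F$. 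Then $x\mapsto a_x$ extends to a homomorphism $\delta\colon F\to F$ inducing $\mu$ on $F/F^2$; since $\mu$ is an automorphism, Corollary \ref{cGFS}(2) gives $\delta\in\Aut{F}$, and $\ve_L\delta=\phi\ve_K$ since the two homomorphisms agree on $X$.

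Part~(1) is then the special case $K=L$, $P=Q$, $\phi=\gamma$: here $\phi_1=\gamma_1\in\Aut(P/P^2)$, and $\mu$ is obtained by splitting $V=\overline K\oplus U$ with $U\cong V/\overline K\cong P/P^2$ and setting $\mu=\mathrm{id}_{\overline K}\oplus\gamma_1$, after which the construction yields $\ba{\gamma}\in\Aut{F}$ with $\ve\ba{\gamma}=\gamma\ve$. This case requires no finiteness hypothesis: the only facts used about $V=F/F^2$ are the existence of a vector-space complement to $\overline K$ and Corollary \ref{cGFS}(2), both valid for an arbitrary free generating set $X$.

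I expect the main obstacle to be precisely the need for the careful choice of the lifts $a_x$. A naive lift obeying only $\ve_L(a_x)=\phi\ve_K(x)$ may induce a non-invertible map on $F/F^2$ (when $K+F^2\ne F^2$ the degree-one parts of the $a_x$ are unconstrained --- for instance one could take $a_x=0$ whenever $\phi\ve_K(x)=0$), and such a $\delta$ need not even be surjective, so there is no short-cut via a Hopfian-type argument; the construction has to commit from the outset to a fixed linear automorphism $\mu$ of $F/F^2$ lifting $\phi_1$, and the real work is the compatibility check that lets one choose each $a_x$ satisfying both congruences. After that, Corollary \ref{cGFS}(2) does the entire job of turning ``automorphism modulo $F^2$'' into ``automorphism of $F$''.
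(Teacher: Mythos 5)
Your proof is correct. There is nothing in the paper to compare it against: Lemma \ref{tScrap8.4} is stated with a box and attributed to Malcev \cite[\S 3]{AIM}, so the paper relies on the external reference rather than giving its own argument. Your construction is essentially the classical lifting argument, assembled from the paper's own tools: you fix in advance a linear automorphism $\mu$ of $F/F^2$ carrying $\overline{K}$ onto $\overline{L}$ and inducing the map that $\phi$ (resp.\ $\gamma$) defines between $P/P^2$ and $Q/Q^2$, you solve, for each free generator, the pair of congruences modulo $F^2$ and modulo $L$ (the compatibility check modulo $F^2+L$ is exactly the point where a naive lift as in Proposition \ref{Scrap8} can fail, e.g.\ by sending generators lying in $K$ to $0$), and you then use Corollary \ref{cGFS}(2) to promote the resulting endomorphism, which induces $\mu$ on $F/F^2$, to an automorphism; the identities $\ve_L\delta=\phi\ve_K$ and $\delta(K)=L$ follow as you say from agreement on $X$ and bijectivity of $\delta$. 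You also correctly isolate where the finite-rank hypothesis of part (2) enters (to get $\dim\overline{K}=\dim\overline{L}$, hence an automorphism of $F/F^2$ moving $\overline{K}$ to $\overline{L}$) and why part (1), being the case $K=L$, needs no finiteness — consistent with the paper's Remark \ref{rScrap8.5}, which flags the analogous subtlety only in the group case.
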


\begin{Remark}\label{rScrap8.5} The same argument applies to any group $G$ in any primitive class $\mathfrak{N}$ of nilpotent groups or locally finite $p$-groups. However, we must assume one of $N\subset F^2$ or $N\subset\Phi(F)$, the Frattini subgroup of $F$.
\end{Remark}

In our future sections we will be viewing the last term $F^c_n$ of a relatively free algebra $F_n(\V)$ of rank $n$ as a module for the group $\GL(n)$ and Lie algebra $\gl(n)$. 

Given a $\V$-free algebra $F_n=F_n(\V)$ of a $c$-step nilpotent primitive class $\V$, all terms $F_n^k$, $k=1,\ld,c$ are invariant under all automorphisms and all derivations.  Now $\Aut{F_n/F_n^2}\cong\GL(n)$, as groups, while $\Der{F_n/F_n^2}\cong \gl(n)$, as Lie algebras. By Lemma \ref{lRFA}, each $A\in\GL(n)$ extends to an automorphism $\alpha$ of $F_n$. Also, if $\V$ is multihomogeneous, by Corollary \ref{cDer}, each $D\in\gl(n)$ extends to a derivation $d$ of $F_n$. If an automorphism is trivial on $F_n/F_n^2$ then it is also trivial on $F_n^c$. If a derivation is zero on $F_n/F_n^2$ then it is zero on $F_n^c$. This allows us to define the action of  $A\in\GL(n)$ on $F_n^c$ as $\wA$ in the following: 
\begin{equation}\label{ewA}
\wA w=\alpha(w),\mbox{ where }w\in F^c.
\end{equation}
If $\V$ is mulihomogeneous, then $D\in\gl(n)$ acts on $F_n^c$ as $\widetilde{D}$ in the following:
\begin{equation}\label{ewAD}
\widetilde{D} w=d(w),\mbox{ where }w\in F^c.
\end{equation}

We summarize the above in the following.
  
\begin{Proposition}\label{lGLFc} Let $F_n$ be a $\V$-free $c$-step nilpotent algebra of rank $n$. Then (\ref{ewA}) gives a well-defined group action of $\GL(n)$ on $F_n^c$. Also, if $\V$ is multihomogneous, (\ref{ewAD}) gives a well-defined  Lie algebra action of $\gl(n)$ on $F^c$.$\hfill\Box$
\end{Proposition}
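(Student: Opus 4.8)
The plan is to verify that the two assignments $A \mapsto \wA$ and $D \mapsto \widetilde D$ are independent of the choices made, and then that they respect the group (resp.\ Lie algebra) structure. The only choice involved in \eqref{ewA} is that of an automorphism $\alpha$ of $F_n$ extending $A \in \GL(n) = \Aut{F_n/F_n^2}$; such an $\alpha$ exists because $F_n$ is relatively free, so by Lemma~\ref{lRFA} any lift of the linear map $A$ on $F_n/F_n^2$ to a map $X \to F_n$ extends to an endomorphism, and by Corollary~\ref{cGFS}(2) this endomorphism is in fact an automorphism since $A$ is. So the first step is well-definedness: if $\alpha, \alpha'$ are two automorphisms of $F_n$ both inducing $A$ on $F_n/F_n^2$, then $\alpha^{-1}\alpha'$ induces the identity on $F_n/F_n^2$, hence (as recalled in the paragraph preceding the Proposition, or directly because $\alpha^{-1}\alpha'(x) - x \in F_n^2$ for every generator $x$ and an easy induction on degree then shows $\alpha^{-1}\alpha'$ is the identity on $F_n^c$, using $F_n^{c+1}=\{0\}$) $\alpha^{-1}\alpha'$ acts trivially on $F_n^c$. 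Therefore $\alpha(w) = \alpha'(w)$ for all $w \in F_n^c$, so $\wA$ does not depend on the lift.

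Next I would check the group action axioms. Clearly the identity of $\GL(n)$ lifts to $\id_{F_n}$, which acts as the identity on $F_n^c$. For multiplicativity, if $A$ lifts to $\alpha$ and $B$ lifts to $\beta$, then $\alpha\beta$ is an automorphism of $F_n$ inducing $AB$ on $F_n/F_n^2$, so by the well-definedness just established $\widetilde{AB}\,w = (\alpha\beta)(w) = \alpha(\beta(w)) = \wA(\widetilde B\, w)$ for $w \in F_n^c$ — here one uses that $\beta$ preserves $F_n^c$ (all terms $F_n^k$ are automorphism-invariant, as noted before the Proposition), so that applying $\wA = $ ``action of $\alpha$'' to $\beta(w) \in F_n^c$ is legitimate. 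This gives the first assertion.

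For the Lie algebra action, now assume $\V$ is multihomogeneous. Given $D \in \gl(n) = \Der{F_n/F_n^2}$, Corollary~\ref{cDer} provides a derivation $d$ of $F_n$ extending any lift of $D$ to a map $X \to F_n$, and $d$ preserves $F_n^c$ (the $F_n^k$ are derivation-invariant). Well-definedness of $\widetilde D$ on $F_n^c$ is the derivation analogue of the above: if $d, d'$ both induce $D$ on $F_n/F_n^2$, then $e := d - d'$ is a derivation with $e(x) \in F_n^2$ for every generator $x$; an induction on degree using the Leibniz rule shows $e$ maps $F_n^k$ into $F_n^{k+1}$, hence $e(F_n^c) \subseteq F_n^{c+1} = \{0\}$, so $d(w) = d'(w)$ for $w \in F_n^c$. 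Linearity of $D \mapsto \widetilde D$ is immediate since the sum of lifts of $D_1, D_2$ is a lift of $D_1 + D_2$, and similarly for scalars. For the bracket: if $d_1, d_2$ lift $D_1, D_2$, then $[d_1,d_2] = d_1 d_2 - d_2 d_1$ is again a derivation of $F_n$ and it induces $[D_1,D_2]$ on $F_n/F_n^2$ (the commutator bracket is computed the same way in $\Der{F_n}$ and in $\gl(n)$ via the quotient map), so $\widetilde{[D_1,D_2]}\,w = [d_1,d_2](w) = d_1(d_2(w)) - d_2(d_1(w)) = \widetilde{D_1}(\widetilde{D_2}\,w) - \widetilde{D_2}(\widetilde{D_1}\,w)$ for $w \in F_n^c$, where again we use that each $d_i$ preserves $F_n^c$ so the compositions make sense as actions on $F_n^c$.

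The substantive point — and the only place where anything needs to be checked rather than bookkept — is the claim that an endomorphism (resp.\ derivation) trivial modulo $F_n^2$ acts trivially on $F_n^c$; everything else is formal manipulation of lifts. That claim is the degree induction indicated above: for an endomorphism $\theta$ with $\theta(x) \equiv x \pmod{F_n^2}$ one shows $\theta(u) \equiv u \pmod{F_n^{k+1}}$ for every monomial $u$ of degree $k$, by writing $u = vw$ and using $\theta(vw) = \theta(v)\theta(w)$ together with $F_n^i F_n^j \subseteq F_n^{i+j}$; taking $k = c$ and $F_n^{c+1} = \{0\}$ gives $\theta = \id$ on $F_n^c$. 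The derivation case is identical with Leibniz in place of multiplicativity. I do not expect any genuine obstacle here — the grading/filtration hypotheses built into "$c$-step nilpotent'' (and "multihomogeneous'' for the derivation half) are exactly what make the inductions go through.
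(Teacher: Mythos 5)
Your proposal is correct and follows essentially the same route as the paper, which states the Proposition as a summary of the preceding paragraph: extend $A$ (resp. $D$) via Lemma~\ref{lRFA} and Corollary~\ref{cGFS} (resp. Corollary~\ref{cDer}), observe that a map trivial modulo $F_n^2$ acts trivially on $F_n^c$, and conclude well-definedness and the action axioms. Your degree induction using $F_n^iF_n^j\subseteq F_n^{i+j}$ and $F_n^{c+1}=\{0\}$ is exactly the justification the paper leaves implicit, so there is nothing to add.
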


\begin{Remark}\label{rvarFc} In conclusion of this section, we mention the following property of the $c$-step nilpotent primitive class $\V$: $\V=\var {F_c(\V)}$. Indeed, obviously $\var {F_c(\V)}\subset \V$. Conversely, let $w=0$, $w\in\cF_\infty$, be an identity that is satisfied in $F_c(\V)$ but not in $\V$. Suppose that the number of of monomials in the expression of $w$ through the free generating set of $\cF_\infty$ is the least possible. Each monomial depends on at most $c$ generators. Let us fix the sum of monomials depending on some free generators $x_1,\ld,x_c$. If we replace all other free generators by 0, we will obtain a consequence $u=0$ which is satisfied in $F_c(\V)$, and also in $\V$. The difference $w-u$ has shorter expression through the monomials and so $w-u=0$ must be satisfied in $\V$. Since $u=0$ is satisfied in $\V$, we must have $w=0$ satisfied in $\V$, which is a contradiction.
\end{Remark} 
\subsection{Some special automorphisms and derivations of nilpotent algebras}\label{ssDefault} The group $\Aut{P}$ naturally acts on the set of ideals of an algebra $P$. Here we present special automorphisms and derivations that naturally appear in the context of  this paper. 

\begin{Definition}\label{dScal}  Given $K\in\cJ_m(P)$, we denote by $\Stab (K)=\Stab_{\Aut{P}}K$ the stabilizer of $K$ in $\Aut{F_n}$, that is, the subgroup of all $\delta\in\Aut{F_n}$ such that $\delta(K)=K$. Next, given an algebra $P$ over a field $\F$, we denote by $\Scal(P)$ the normal subgroup  of  $\Aut{P}$ consisting of the automorphisms $\delta$, which are scalar modulo $P^2$. This condition means that there exists $\lambda\in \F$ such that $\delta(a)-\lambda a\in P^2$, for all $a\in P$. We call the automorphisms in $\Scal(P)$ the \textit{$S$-automorphisms} while those outside $\Scal(P)$ the \textit{$NS$-automorphisms}. In a similar manner one defines the Lie ideal $\scal(P)\subset\Der{P}$ as consisting of derivations which are scalar mod $P^2$.
\end{Definition}

The importance of Definition \ref{dScal} will be seen in Section \ref{ssARFNA} where it will be shown that $\Stab(K)\subset\Scal(F_n)$ is a generic property for the ideals $K$ of $\V$-free nilpotent algebras $F_n$ in a central  primitive class $\V$ (see Definition \ref{dCentral}).

In the next example we present the default automorphisms and derivations of $2$-step nilpotent algebras. 

\begin{Example}\label{rtwostep}
Let $\alpha$ be an $S$-automorphism of a $2$-step nilpotent algebra $P$. Then there exist a vector space decomposition $P=Q\oplus P^2$, $\lambda\ne 0$ and a linear map $f:Q\to P^2$ such that $\alpha(x)=\lambda x +f(x)$, for any $x\not\in P^2$. Also, $\alpha(y)=\lambda^2 y$, for any $y\in P^2$. Conversely, given  a vector space decomposition $P=Q\oplus P^2$, a linear map $f:Q\to P^2$ and $0\ne\lambda\in\F$,  map $\alpha_{f,\lambda}:P\to P$ given by
\begin{equation}\label{edefaultaut}
\alpha_{f,\lambda}(x)=\lambda x+f(x),\mbox{ for }x\in Q,\mbox{ and }\alpha_{f,\lambda}(y)=\lambda^2 y\mbox{ if }y\in P^2
\end{equation} 
 is an automorphism of $P$. Since this is an automorphism of any $2$-step nilpotent algebra, we call $\alpha_{f,\lambda}$ a \textit{default automorphism} of $P$.  The set   of all default automorphisms is a normal subgroup $\Def(P)=\Scal(P)$ in $\Aut{P}$. We will show in Theorem \ref{36}, Claim (ix) that for a generic $2$-step nilpotent algebra $P$, one has $\Aut{P}=\Def(P)$. 
 Note here that $\Def(P)=\{\alpha_{f,\lambda}\,|\,f\in\Hom(Q,P^2),\lambda\in\F\}$ can be written in terms of the matrices of order $m=\dim P$, once we choose a basis in $Q$ and complement it to the basis of $P$:
\begin{equation}\label{edefmat}
A_{f,\lambda}=\begin{pmatrix}
\lambda I_n&0\\R&\lambda^2 I_{m-n}
\end{pmatrix}.
\end{equation}
Here $R$ is a matrix of $f$ with respect to the chosen bases of $Q$ and $P^2$.

In a similar way, one treats the derivation of a $2$-step nilpotent algebra $P$. If $\delta$ is an $S$-derivation of $P$ then there is a vector space decomposition $P=Q\oplus P^2$, $\lambda\in\F$ and a linear map $f:Q\to P^2$ such that $\delta(x)=\lambda x+f(x)$, $\delta(y)=2\lambda y$, if $y\in P^2$. Conversely, given a vector space decomposition $P=Q\oplus P^2$,  $\lambda\in\F$, and a linear map $f:Q\to P^2$, one can define a derivation $\delta_{f,\lambda}:P\to P$, given by
 \begin{equation}\label{edefaultdef}
\delta_{f,\lambda}(x)=\lambda x+f(x),\mbox{ for }x\in Q,\mbox{ and }\delta_{f,\lambda}(y)=2\lambda y\mbox{ if }y\in P^2.
\end{equation} 
One calls $\delta_{f,\lambda}$ a  \textit{default derivation} of $P$. The set of all default derivations forms a Lie ideal $\default(P)=\scal(P)$ in $\Der{P}$.    

We will show in Theorem \ref{36}, Claim (x) that for a generic $2$-step nilpotent algebra $P$, one has $\Der{P}=\default(P)$. As earlier for the automorphisms, note that $\default(P)=\{\alpha_{f,\lambda}\,|\,f\in\Hom(Q,P^2)\}$ can be written in terms of the matrices of order $m=\dim P$, once we choose a basis in $Q$ and complement it to the basis of $P$:
\begin{equation}\label{edefmatder}
A_{f,\lambda}=\begin{pmatrix}
\lambda I_n&0\\R&2\lambda I_{m-n}
\end{pmatrix}.
\end{equation}
Again, $R$ is a matrix of $f$ with respect to the chosen bases of $Q$ and $P^2$.
\end{Example}

\subsection{Verbal ideals of relatively free nilpotent algebras}\label{ssTNA}

The following result is well known in the classical cases, such as associative or Lie algebras. 

\begin{Lemma}\label{27.9} Let $F=F(\V,X)$ be a relatively free algebra of a mutihomogeneous $c$-step nilpotent primitive class $\V$, with a free generating set $X$. Suppose $|X|> c$. Then the following are true.
\begin{enumerate}
\item[(a)] Consider  $Y\subset X$ with $|Y|=c+1$ and let $u\in F$ be such that $ux=0$, for any $x\in Y$. Then also $uF=\{ 0\}$. Similarly, $xu=0$ for all $x\in Y$ implies $Fu=\{0\}$.
\item[(b)] Each term $I_k(F)$ of the annihilator series (\ref{eAS}) of $F$ is a multihomogeneous verbal ideal of $F$.
\end{enumerate}  
\end{Lemma}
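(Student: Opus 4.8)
The two claims are closely linked, so I would prove (a) first and then bootstrap to (b). For part (a), the statement ``$ux=0$ for all $x$ in a $(c+1)$-element subset $Y$ implies $uF = \{0\}$'' is exactly the kind of assertion that Lemma~\ref{lRelX} is designed for, but one cannot apply it directly because $u$ is a general element of $F$, not a free generator. The trick is multihomogeneity: decompose $u = \sum_\alpha u_\alpha$ into multihomogeneous components with respect to the free generating set $X$. Since $F$ is $c$-step nilpotent, every nonzero $u_\alpha$ involves at most $c$ of the free generators; hence for each $\alpha$ there is some generator $y \in Y$ not occurring in $u_\alpha$ (here $|Y| = c+1$ is used). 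Then I would argue that $u_\alpha x = 0$ for every $x \in X$: for a generator $x$ not in $Y$, relabel $x \mapsto y$ (an endomorphism of $F$ permuting free generators, or rather the substitution sending $x$ to $y$ and fixing the rest) to reduce to the case $x \in Y$; and for $x \in Y$ the relation $u_\alpha x = 0$ follows because $ux = 0$ and $ux = \sum_\alpha u_\alpha x$ is a sum over \emph{distinct} multidegrees $\alpha + \varepsilon_x$ — wait, that needs care, because different $\alpha$ can give the same $\alpha + \varepsilon_x$ only if the $\alpha$ were equal, so the components $u_\alpha x$ live in distinct multihomogeneous pieces of $F$ and must each vanish. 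Once $u_\alpha x = 0$ for all free generators $x$, Lemma~\ref{lBPNA} (together with the fact that a nilpotent algebra is generated by $X$, hence by products of the $x$'s) gives $u_\alpha z = 0$ for \emph{all} $z \in F$, and summing over $\alpha$ yields $uF = \{0\}$. The left-sided statement is symmetric.

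\textbf{The bridge to the case $x$ arbitrary in $X$.} Let me restate the one subtle point above, since I expect it to be the main obstacle: for a generator $x \notin Y$, I want $u_\alpha x = 0$ knowing only that $u x' = 0$ for $x' \in Y$. Fix $\alpha$ with $u_\alpha \ne 0$ and pick $y \in Y$ not occurring in $u_\alpha$. Apply the substitution endomorphism $\sigma$ of $F$ that swaps $x$ and $y$ and fixes all other free generators; this is an automorphism by Corollary~\ref{cGFS}. Then $\sigma(u) \sigma(x') = \sigma(ux') = 0$ for every $x' \in Y$; in particular, taking $x' = y$, we get $\sigma(u)\, x = 0$. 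Now decompose $\sigma(u) = \sum_\beta \sigma(u)_\beta$; the component whose multidegree is $\sigma$-applied to $\alpha$ is $\sigma(u_\alpha)$, and since $y$ does not occur in $u_\alpha$, the variable $x$ does not occur in $\sigma(u_\alpha)$. By the distinct-multidegree argument, $\sigma(u_\alpha)\, x = 0$, and applying $\sigma^{-1} = \sigma$ gives $u_\alpha\, y \cdot(\text{relabeled}) = 0$, i.e. $u_\alpha x = 0$ after we track the relabeling carefully. This relabeling bookkeeping — making sure the variable juggling is consistent — is the only genuinely fiddly part; everything else is formal.

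\textbf{Part (b).} I would prove by induction on $k$ that $I_k(F)$ is a multihomogeneous verbal ideal. For $k=1$: by Remark~\ref{rScrap29.1} we have $I(F) = I_1(F) \subset F^2$, and I claim $I_1(F) = \{u \in F : ux = xu = 0 \text{ for all } x \in Y\}$ for any fixed $(c+1)$-subset $Y$ — the inclusion $\subseteq$ is trivial, and $\supseteq$ is exactly part (a). The right-hand side is cut out by the multilinear-type conditions $u x = 0$, $x u = 0$ for finitely many generators $x$, and since $\V$ is multihomogeneous, the solution set of these conditions is a multihomogeneous subspace; it is an ideal because $I(F)$ always is; and it is verbal (fully invariant) because one can describe $I_1(F)$ internally as the set of values of those nonassociative polynomials $w$ such that $w x_i = x_i w = 0$ holds identically for $c+1$ new variables $x_i$ — by Lemma~\ref{lRelX} this is equivalent to $wF = Fw = 0$ in $F$, so the verbal ideal generated by these $w$ lands in $I_1(F)$ and conversely every element of $I_1(F)$, being multihomogeneous-decomposable, is such a value. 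For the inductive step, $I_k(F)$ is the preimage in $F$ of $I_1(F/I_{k-1}(F))$; by induction $I_{k-1}(F)$ is a multihomogeneous verbal ideal, so $\bar F := F/I_{k-1}(F)$ is again a relatively free algebra of a multihomogeneous $c$-step nilpotent primitive class (its verbal ideal in $\cF(X)$ is the preimage of a graded fully invariant ideal, hence graded and fully invariant), and it is still $c$-step nilpotent (a quotient of a $c$-step nilpotent algebra), and generated by the images of $X$ with $|X| > c$. Applying the $k=1$ case to $\bar F$ shows $I_1(\bar F)$ is multihomogeneous verbal in $\bar F$; pulling back along the natural projection $F \to \bar F$ (which is degree-preserving and carries verbal ideals to verbal ideals since it is the composite of identity-on-$X$ maps) shows $I_k(F)$ is multihomogeneous and verbal in $F$. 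The induction terminates since $F$ is $c$-step nilpotent, so the series stabilizes after finitely many steps.

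I expect the only real work to be the careful variable-relabeling argument in part (a); part (b) is then essentially a formal induction, modulo checking that "quotient of relatively free by a multihomogeneous verbal ideal is again relatively free in a multihomogeneous class" — which follows from Proposition~\ref{pGC} and the fact that the preimage of a graded fully invariant ideal under the identity-on-$X$ surjection $\cF(X) \to F(\V,X)$ is graded and fully invariant.
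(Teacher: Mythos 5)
Your overall strategy for (a) — decompose $u$ into multihomogeneous components $u_\alpha$, note each involves at most $c$ generators so some $y\in Y$ is unused, and exploit the distinct-multidegree argument to get $u_\alpha x'=0$ for $x'\in Y$ — is exactly the paper's, but your concluding step has a genuine gap. You pass from ``$u_\alpha x=0$ for every free generator $x$'' to ``$u_\alpha z=0$ for all $z\in F$'' by invoking Lemma \ref{lBPNA} and the fact that $X$ generates $F$. That inference is false for nonassociative algebras: right-annihilating a generating set does not force right-annihilating the whole algebra (e.g.\ the $3$-step nilpotent algebra spanned by $a_1,a_2,a_2a_2,a_1(a_2a_2)$ with all other products zero has $a_1a_1=a_1a_2=0$ but $a_1(a_2a_2)\ne 0$), and Lemma \ref{lBPNA} is a statement about generating sets, not annihilators. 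The conclusion does hold in the relatively free setting, but only via the substitution step you never take — which is precisely the paper's key move: from $u_\alpha y=0$ with $y\in Y$ not occurring in $u_\alpha$ (already supplied by your multidegree argument), apply the endomorphism of $F$ fixing the generators in the support of $u_\alpha$ and sending $y\mapsto v$ for an \emph{arbitrary} $v\in F$ (equivalently, view $u_\alpha y=0$ as an identity of $\V$ by Lemma \ref{lRelX} and substitute). This gives $u_\alpha v=0$ for all $v$ at once and makes your swap-relabelling ``bridge'' unnecessary; note also that the bridge as written is circular — applying $\sigma$ to $\sigma(u_\alpha)x=0$ just returns $u_\alpha y=0$, not $u_\alpha x=0$ (the correct relabelling is the one-way substitution $y\mapsto x$ fixing the other generators).

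For (b) your plan is essentially the paper's: the paper proves $I_1$ is multihomogeneous and verbal (working in $F_\infty$ and checking invariance under all endomorphisms by the same fresh-variable substitution), then inducts on $c$ via $F/I_1$, whereas you induct on $k$ via $F/I_{k-1}$; these are the same induction. Once (a) is repaired your $k=1$ argument is fine in outline, but the claim that every $u\in I_1(F)$ is a value of a polynomial $w$ with $wy=yw=0$ holding identically needs a word: since $u$ may involve \emph{all} generators of $F_n$, you must adjoin a genuinely fresh generator (embed $F_n\subset F_{n+1}$ or $F_\infty$) and use part (a) there to see that $u$ still annihilates the larger algebra before Lemma \ref{lRelX} applies; also, the preimage of $I_1(\bar F)$ is verbal because it equals $W(F)+I_{k-1}(F)$, a verbal ideal for the union of the two sets of polynomials — ``the projection carries verbal ideals to verbal ideals'' argues in the wrong direction.
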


\begin{proof}(a) Since each $ux=0$, $x\in Y$, is a relation among the free generators of $F$, by Lemma \ref{lRelX}, $ux=0$ is an identical relation in $\V$. Let us write $u=u_1+\cdots+u_m$ where $u_i$ is a multihomogeneous component of $u$ as a polynomial in $X$. Now for each $x\in Y$, $u_ix$ is a multihomogeneous component of $ux$, hence by our condition on $\V$, each $u_ix=0$ is a relation among the generators of $F$, so  an identity in $\V$.  Since $F^{c+1}=\{ 0\}$,  being multihomogeneous, $u_i$ depends only on the set $Z$ of at most $c$ variables. Thus there is $x\in Y$ such that $u_i$ does not depend on $x$. Now for any $v\in F$, let us consider a homomorphism $\vp:F\to F$ that maps the variables of $Z$ to themselves and $x$ to $v$. Then we will have $uv=0$, hence $uF=0$, as needed.

(b) If we take $u\in I$ in the argument in the proof of (a), we can see that $I$ is a multihomogeneus ideal. Now let us naturally embed $F=F(x_1,\ld,x_n)$, with $n>c$, in $F_\infty=F(x_1,\ld,x_n,\ld)$ and consider $I=I(F_n)$ and $J=I(F_\infty)$. If $J$ is a verbal ideal of $F_\infty$ then there is a primitive subclass $\U\subset\V$ such that $J=U(F_\infty)$. Since $F_n$ is a retract in $F_\infty$, we will get that $I=U(F_n)$, which follows because by Part (a), $I\subset J$.

So it remains only to show that $J$ is a verbal ideal of $F_\infty$. Using Proposition \ref{pGC}, it is sufficient to show that for any endomorphism $\vp$ of $F_\infty$ we have $\vp(J)\subset J$. Take $v\in J$ and assume $v\in F(x_1,\ld,x_m)$. Set $y_i=\vp(x_i)$ for $i=1,\ld,m$. Suppose $y_1,\ld,y_m\in F(x_1,\ld,x_k)$ where $k\ge m$. Consider $vx_{k+1}=0$ and apply to this equation an endomorphism $\psi$ such that $\psi(x_i)=y_i$ if $i=1,\ld,m$ and $\psi(x_{k+1})=x_{k+1}$. Then $0=\psi(vx_{k+1})=\vp(v)x_{k+1}$. So if we apply to this equation an endomorphism $\tau$ such that $\tau(x_i)=x_i$ for $i=1,\ld,k$ and $\tau(x_{k+1})=u$, where $u$ is any element of $F_\infty$, we will obtain $\vp(v)u=0$ or $\vp(v)F_\infty=\{ 0\}$. Similarly, we can obtain $F_\infty\vp(v)=\{ 0\}$. Thus $\vp(J)\subset J$, for any endomorphism $\vp$ of $F_\infty$, proving that $J=I(F_\infty)$ is a verbal ideal. It follows that $I=I(F_n)$ is indeed a verbal ideal.

It is true that $I=I_1(F)$ is a multihomogeneous verbal ideal in $F$. Indeed, assume $u=u_1+...+u_k\in I$, where each $u_i$ is homogeneous of degree $i$. Then $ux=u_1x+...+u_kx=0$ is an identity of  $\V$ with homogeneous components $ux_i=0$. Since $\V$ is multihomogeneous, each $ux_i=0$ is an identity of $\V$. Similarly, $xu_i=0$ is an identity in $\V$. It follows, that for any $v\in F$ we have $uv=vu=0$, that is, $u\in I$.

The quotient-algebra $F'=F/I$ is then a free algebra in a multihomogeneous $(c-1)$-step nilpotent primitive class  $V'$. It remains to apply induction by $c$.    
\end{proof}

\section{Some linear algebra}\label{sLA}

In this section we gather some facts from Linear Algebra, which will be needed in the proofs of the main results in Section \ref{sMR}. We quickly recall one of the versions of the Structure Theorem of modules over the principal ideal domain (see e.g. \cite[Theorem 4 in Chapter XV]{SL}) in the case of the polynomial ring in one variable. Recall that a  vector space $V$ over a field $\F$ with the action of a linear operator $B:V\to V$ becomes a module over the polynomial ring $R=\F[t]$ via $tv=B(v)$, for any $v\in V$.  If $V$ is finite-dimensional then $V$ is a finitely generated $R$-module.

\begin{Proposition}\label{tFGMPID} Assume that $\dim_\F V<\infty$. Then, as an $R$-module, $V$ is the direct sum of cyclic submodules. Moreover, there exists a sequence of monic polynomials $f_1(t),f_2(t),\ldots,f_r(t)$ satisfying $f_1(t)\vert f_2(t)\vert\cdots\vert f_r(t)$ such that  
\begin{equation}\label{e1}
V= V_1\oplus V_2\oplus\cdots\oplus V_r, \mbox{ where }V_i\cong R/(f_i(t)).
\end{equation}
$\hfill\Box$
\end{Proposition}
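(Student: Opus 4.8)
The statement to prove is the classical structure theorem for finitely generated modules over $\F[t]$, stated as Proposition~\ref{tFGMPID}. Let me sketch a proof plan.

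\medskip

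The plan is to derive this from the general structure theorem for finitely generated modules over a principal ideal domain, which the excerpt explicitly allows us to cite (\cite[Theorem 4 in Chapter XV]{SL}). First I would observe that $R=\F[t]$ is a PID, and that $V$, made into an $R$-module via $t\cdot v=B(v)$, is finitely generated: indeed any $\F$-basis $v_1,\ldots,v_m$ of the finite-dimensional space $V$ already generates $V$ over $R\supset\F$. Hence the general theorem applies and gives a decomposition $V\cong R/(f_1)\oplus\cdots\oplus R/(f_r)\oplus R^{s}$ into cyclic modules with the divisibility chain $f_1\mid f_2\mid\cdots\mid f_r$ on the torsion part and a free part $R^s$.

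\medskip

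The key remaining point is to rule out a nonzero free summand, i.e. to show $s=0$, so that $V$ is a torsion module (equivalently, every cyclic summand is of the form $R/(f_i)$ with $f_i\ne 0$ monic). This follows from finite-dimensionality over $\F$: if $R^s$ with $s\ge 1$ appeared as a direct summand, then $V$ would contain a submodule isomorphic to $R=\F[t]$, which is infinite-dimensional over $\F$, contradicting $\dim_\F V<\infty$. Concretely, the minimal polynomial $\mu(t)$ of $B$ is a nonzero polynomial annihilating all of $V$, so every element of $V$ is torsion; hence the free rank is $0$. Once the free part is gone, the $f_i$ can be taken monic (multiplying a generator of $(f_i)$ by a unit of $\F$, i.e. a nonzero scalar, normalizes the leading coefficient), and the divisibility chain is exactly the invariant-factor form of the theorem, which is \eqref{e1}. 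Setting $V_i$ to be the image of the $i$-th cyclic summand under the isomorphism gives the internal direct sum decomposition $V=V_1\oplus\cdots\oplus V_r$ with $V_i\cong R/(f_i(t))$.

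\medskip

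There is essentially no obstacle here beyond bookkeeping: the substantive content is entirely contained in the cited PID structure theorem, and the only thing to add is the elementary remark that finite $\F$-dimension forces the module to be torsion (hence forces monic invariant factors rather than a free part). If one wanted a self-contained argument one would instead reprove the structure theorem via Smith normal form of the matrix $tI-B$ over $\F[t]$, but since the excerpt cites \cite{SL} for precisely this statement, the efficient route is simply to invoke it and record the torsion reduction. I would also, for cleanliness, note that the $f_i$ are non-units (otherwise $R/(f_i)=0$ and the summand is dropped) and uniquely determined, though uniqueness is not asserted in the Proposition and need not be proved.
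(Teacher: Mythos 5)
Your proposal is correct and matches the paper's treatment: the paper simply quotes this as a version of the structure theorem over a PID, citing \cite[Theorem 4 in Chapter XV]{SL}, and offers no further proof. Your only addition — that finite $\F$-dimension kills the free part and that the invariant factors may be normalized to be monic — is exactly the routine reduction needed to pass from the cited theorem to the stated form, so there is nothing to object to.
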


In what follows, we will call $V$ as above, a $B$-module. If $\dim_\F V=m$ and $B=\lambda\,\id_V$ is a scalar operator then any minimal generating set of $V$ as a $B$-module is a basis of $V$ so that the  minimal number of generators needed to generate $V$ as a $B$-module equals $m$. If $V$ is a $B$-module as in (\ref{e1}), then the minimal number of generators equals $r$. Number $r$ is an invariant of a $B$-module $V$; it is called the \textit{rank} of $V$ and we write $\rank_B(V)=r$. We always have $\rank_B(V)\le \dim_\F V$.

\begin{Lemma}\label{28.1} Let $B$ be a  linear operator on a vector space $V$ over a field $\F$ such that $\dim_\F V=m$ and  $\rank_B(V)=r$. Then for any $\ell\in [\frac{2m+r}{3}, m]$ there is a pair of subspaces $U\subset W$ in $V$ such that $\dim_\F W=\ell$, $\dim_\F U=m-\ell$, $\dim_\F B(U)=\dim_\F U$,  and $V = W \oplus B(U)$.  
\end{Lemma}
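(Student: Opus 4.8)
The plan is to reduce the statement, via a routine basis‑extension, to the construction of a single well‑chosen subspace $U$, and then to build $U$ explicitly from the rational canonical decomposition of $(V,B)$ supplied by Proposition \ref{tFGMPID}.

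\emph{Reduction.} Put $s=m-\ell$. I claim it suffices to produce a subspace $U\subseteq V$ with $\dim_\F U=s$, with $B|_U$ injective (equivalently $\dim_\F B(U)=\dim_\F U$), and with $U\cap B(U)=\{0\}$. Indeed, granted such a $U$, the quotient $V/B(U)$ has dimension $m-s=\ell$, and since $U\cap B(U)=\{0\}$ the composite $U\hookrightarrow V\twoheadrightarrow V/B(U)$ is injective. Choose a basis $u_1,\dots,u_s$ of $U$; its image in $V/B(U)$ is linearly independent, so extend it by vectors $\bar w_{s+1},\dots,\bar w_\ell$ to a basis of $V/B(U)$, pick lifts $w_j\in V$, and set $W=\Sp(u_1,\dots,u_s,w_{s+1},\dots,w_\ell)$. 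Then $U\subseteq W$, the map $W\to V/B(U)$ sends a basis to a basis and hence is an isomorphism, so $W\cap B(U)=\{0\}$ and $W+B(U)=V$, and $\dim_\F W=\ell$. Thus $U\subset W$ has all the required properties.

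\emph{Construction of $U$.} By Proposition \ref{tFGMPID}, write $V=V_1\oplus\cdots\oplus V_r$ with each $V_i$ a cyclic $B$-module, $d_i=\dim_\F V_i$, $\sum_i d_i=m$; fix a cyclic generator $e_i$ of $V_i$, so that $\{B^ke_i : 1\le i\le r,\ 0\le k\le d_i-1\}$ is a basis of $V$. For each $i$ choose a set of "levels" $T_i\subseteq\{0,1,\dots,d_i-2\}$ containing no two consecutive integers, and set
\[
U=\Sp\{\,B^ke_i : 1\le i\le r,\ k\in T_i\,\}.
\]
Since $k\le d_i-2$ for $k\in T_i$, applying $B$ to the spanning vector $B^ke_i$ yields the genuine basis vector $B^{k+1}e_i$ (no wrap‑around to a relation $B^{d_i}e_i$), and the pairs $(i,k+1)$ are pairwise distinct; hence $B$ carries a basis of $U$ to linearly independent vectors, so $B|_U$ is injective and $B(U)=\Sp\{B^{k+1}e_i:k\in T_i\}$. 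Moreover $U$ and $B(U)$ are spanned by disjoint sets of basis vectors, since a common $B^ke_i$ would force $k\in T_i$ and $k-1\in T_i$, two consecutive levels; hence $U\cap B(U)=\{0\}$.

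\emph{Counting, and the main obstacle.} It remains to pick the $T_i$ with $\sum_i|T_i|=s$. An admissible $T_i$ (subset of $\{0,\dots,d_i-2\}$ with no two consecutive elements) has size at most $\lfloor d_i/2\rfloor$, realised by $\{0,2,4,\dots\}$, and $\sum_i\lfloor d_i/2\rfloor\ge\sum_i\tfrac{d_i-1}{2}=\tfrac{m-r}{2}$. The hypothesis $\ell\ge\tfrac{2m+r}{3}$ gives $s=m-\ell\le\tfrac{m-r}{3}\le\tfrac{m-r}{2}\le\sum_i\lfloor d_i/2\rfloor$, so one can distribute $s$ over the components, e.g. taking $T_i=\{0,2,\dots,2(t_i-1)\}$ with $0\le t_i\le\lfloor d_i/2\rfloor$ and $\sum_i t_i=s$; the degenerate cases $s=0$ (take $U=\{0\}$, $W=V$) and $d_i=1$ (contributes no admissible level) are absorbed here. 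The only genuinely delicate point is precisely this bookkeeping: the levels must stay in $\{0,\dots,d_i-2\}$ so that $B$ never produces a wrap‑around term, must be non‑consecutive so that $B(U)$ stays disjoint from $U$, and must still total $s$ — and it is exactly the inequality $s\le\tfrac{m-r}{2}$, guaranteed by $\ell\ge\tfrac{2m+r}{3}$ (in fact already by $\ell\ge\tfrac{m+r}{2}$), that makes all three compatible. Everything else is the standard extension argument producing $W$.
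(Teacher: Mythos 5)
Your proof is correct. It rests on the same foundation as the paper's argument — the cyclic decomposition of Proposition \ref{tFGMPID} and the idea of taking alternating powers $B^k e_i$ of the cyclic generators so that $U$ and $B(U)$ sit on disjoint sets of basis vectors — but it is organized differently. The paper constructs $U$ and $W$ simultaneously and by stages: first the cyclic case (with separate even/odd bookkeeping and an incremental swap of basis vectors to raise $\ell$), then direct sums of cyclic summands of dimension at least $2$, and finally a separate treatment of the one-dimensional invariant summands. You instead isolate a clean reduction — it suffices to produce $U$ with $B|_U$ injective and $U\cap B(U)=\{0\}$, after which $W\supseteq U$ complementary to $B(U)$ comes for free by extending a basis of the image of $U$ in $V/B(U)$ — and then handle all cyclic summands uniformly by a single combinatorial choice of non-consecutive ``levels'' $T_i\subseteq\{0,\dots,d_i-2\}$, with the inequality $s=m-\ell\le \frac{m-r}{2}\le\sum_i\lfloor d_i/2\rfloor$ doing all the counting. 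This buys a shorter, case-free argument, makes transparent exactly where the hypothesis on $\ell$ enters, and in fact establishes the statement on the larger interval $\ell\ge\frac{m+r}{2}$ (consistent with the paper's own remark that the cyclic case works from $\frac{m+1}{2}$ on); the paper's version, by contrast, exhibits the pair $(U,W)$ very explicitly inside the cyclic pieces, which costs the extra bookkeeping you avoid.
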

\begin{proof} The case $m=1$  being obvious, we assume $m\ge 2$.

Fist assume that $V$ is cyclic as a $B$-module and $e$ is a generator of $V$ as a $B$-module. We want to show that in this case we can relax the interval for $\ell$ to $[\frac{m+1}{2}, m]$. 

Set $e_i=B^i(e)$, $i=0,\ld,m-1$. Then $\{e_0,e_1,\ld,e_{m-1}\}$ is a basis of $V$. 

If $m$ is even, we set 
\[
W^0=U^0=\Sp \{ e_0,e_2,\ld,e_{m-2}\},
\]
\[
B(U^0)=\Sp \{ e_1,e_3,\ld,e_{m-1}\}.
\]
 We have $V=W^0\oplus B(U^0)$, also $\frac{m}{2}=\ell^0=m-\ell^0=\dim_\F U^0=\dim_\F W^0$. 

If $m$ is odd, we set 
\[
W^0=\Sp \{ e_0,e_2,\ld,e_{m-1}\},
\]
\[
U^0=\Sp \{ e_0,e_2,\ld,e_{m-3}\}.
\]
  Then 
  \[
  B(U^0)=\Sp \{ e_1,e_3,\ld,e_{m-2}\}.
  \]
   Again we have $V=W^0\oplus B(U^0)$, also $\frac{m+1}{2}=\ell=\dim_\F W^0$, $\frac{m-1}{2}=m-\ell=\dim_\F U^0$. 

In both cases, if we want to have $U\subset W$ with a greater value for $\dim_\F W$, we simply remove, say, $e_0,\ld,e_{2k}$, from the basis of $U$ and add $e_1,\ld,e_{2k+1}$ to the basis of $W$. So in the case where $V$ is a cyclc $B$-module, we can find the desired pair $U\subset W$ with any $\ell\in[\frac{m+1}{2},m]$. 

Now if $V=V_1\oplus\cdots\oplus V_r$, $r>1$, each $V_i$ cyclic of dimension $m_i\ge 2$, we consider $W_i^0$ and $U_i^0$ in each $V_i$, built in the same way as in the case of a cyclic module, so that $\dim_\F W_i^0=\frac{m_i}{2}$ if $m_i$ is even and $\dim_\F W_i^0=\frac{m_i+1}{2}$ if $m_i$ is odd. Set $W^0=W_1^0\oplus\cdots\oplus W_r^0$, $U^0=U_1^0\oplus\cdots\oplus U_r^0$. We then have $V=W^0\oplus B(U^0)$. Set $\ell_0=\dim_\F W^0$. We have $\ell_0\le \frac{m+r}{2}$. Now let $\ell$ be any number, $\ell_0\le \ell\le m$. If $\ell\ne m$ then one of $\dim_\F W_i\le m_i$. Using the same method as in the case of a cyclic module, we can modify $W_i^0$ and $U_i^0$ so that the value of $\ell$ grows by 1.

In the general case, $V=V'\oplus V''$ where $V'$ is as just considered, with $m'=\dim_\F V'$, $r'=\rank_B(V')$ and $V''$ is the sum of one-dimensional $B$-invariant subspaces. We have $\dim_\F V''=m-m'$. The rank $r$ of $V$ as a $B$-module equals $r'+\dim_\F V''=r'+m-m'$. Since $\frac{m'+r'}{2}\le \frac{2m'+r'}{3}$, using our previous argument, for any $\ell'\in [\frac{2m'+r'}{3},m']$ we find $U'\subset W'\subset V'$ such that $\dim_\F W'=\ell'$, $\dim_\F U'=m'-\ell'$ and $V'=W'\oplus B(U')$. We set $U=U'$ and $W=W'\oplus V''$.

 Now we can conclude that since for $\ell'$ we can choose an arbitrary value in the interval $\left[\frac{2m'+r'}{3},m'\right]$, we can get for $\ell$ an arbitrary integral value in $\left[\frac{2m'+r'}{3}+(m-m'),m'+(m-m')\right]=\left[\frac{2m+r}{3},m\right]$, as claimed.

\end{proof}

\begin{Lemma}\label{28.24} Let $V$ be a vector space of dimension $m$ over a field $\F$ and $B:V\to V$ a linear operator. If $\rank_B(V)>\frac{m}{2}$ then there is $\lambda\in\F$ and a subspace $W$ of dimension $r=\rank_B(V)$ such that $(B-\lambda\, I)(W)=\{ 0\}$. 
\end{Lemma}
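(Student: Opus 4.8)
The plan is to pass to the module picture: regard $V$ as a module over $R=\F[t]$ with $t$ acting by $B$, apply the invariant-factor form of the structure theorem (Proposition~\ref{tFGMPID}), and read off a common eigenvalue of the cyclic pieces.

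First I would write $V=V_1\oplus\cdots\oplus V_r$ as a direct sum of $r=\rank_B(V)$ nonzero cyclic $R$-modules $V_i\cong R/(f_i(t))$, with the $f_i$ monic and nonconstant and $f_1(t)\mid f_2(t)\mid\cdots\mid f_r(t)$, so that $\sum_{i=1}^r\deg f_i=\dim_\F V=m$. The divisibility chain forces $\deg f_1\le\deg f_2\le\cdots\le\deg f_r$; hence if $\deg f_1\ge 2$ then every $\deg f_i\ge 2$ and $m\ge 2r$, contradicting the hypothesis $r>\frac m2$. Therefore $\deg f_1=1$, say $f_1(t)=t-\lambda$ with $\lambda\in\F$.

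Next, from $f_1\mid f_i$ I get $f_i(t)=(t-\lambda)g_i(t)$ with $\deg g_i=\deg f_i-1$. In $V_i\cong R/(f_i(t))$ the residue class $w_i$ of $g_i(t)$ is nonzero, since its degree is strictly below $\deg f_i$, and $(B-\lambda\, I)w_i$ is the class of $(t-\lambda)g_i(t)=f_i(t)$, i.e.\ zero; so $w_i\in\Ker{B-\lambda\, I}$ for every $i$. Because the $V_i$ are independent, the vectors $w_1,\dots,w_r$ are linearly independent, whence $\dim_\F\Ker{B-\lambda\, I}\ge r$. Taking $W$ to be any $r$-dimensional subspace of $\Ker{B-\lambda\, I}$ then yields $(B-\lambda\, I)(W)=\{0\}$ and $\dim_\F W=r=\rank_B(V)$, as required.

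There is no real obstacle here: once the invariant-factor decomposition is in hand the argument is immediate. The only points needing a moment's care are the numerical observation that non-decreasing invariant-factor degrees together with $r>\frac m2$ force $\deg f_1=1$, and the check that the eigenvectors $w_i$ produced in the separate summands are genuinely independent (which is just directness of the decomposition). I would also remark that this in fact recovers the exact equality $\dim_\F\Ker{B-\lambda\, I}=r$, though only the displayed inequality is needed for the statement.
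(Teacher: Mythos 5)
Your proposal is correct and follows essentially the same route as the paper: invoke the invariant-factor decomposition of Proposition \ref{tFGMPID}, observe that $r>\frac m2$ forces some (hence the first) invariant factor to be linear, say $t-\lambda$, deduce that every $f_i$ is divisible by $t-\lambda$ so each cyclic summand contains a $\lambda$-eigenvector, and span these $r$ independent eigenvectors to get $W$. Your explicit construction of the eigenvector in $V_i$ as the class of $f_i(t)/(t-\lambda)$ just makes concrete a step the paper states without detail.
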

\begin{proof} Let us use (\ref{e1}): $V=V_1\oplus\cdots\oplus V_r$ where $V_i=\F[t]/(f_i(t))$ and $f_1(t)\vert\ld\vert f_r(t)$. If all of $V_i$ had dimension greater than 1 then we would have $\rank_B(V)\le \frac{m}{2}$. So one of $V_i$ is 1-dimensional, hence spanned by an eigenvector with eigenvalue $\lambda\in\F$. So $f_1(t)=t-\lambda$ and also each $f_i(t)$ is divisible by $t-\lambda$. As a result, in each $V_i$ there is an eigenvector $e_i$ with eigenvalue $\lambda$. So we can choose $W=\Sp\{e_1,\ld,e_r\}$ for the $r$-dimensional subspace annihilated by $B-\lambda I$.

\end{proof}

\begin{Lemma}\label{27.15}Let $V$ be a vector space over a field $\F$, written as $V=V_1\op\ld\op V_t$ for some subspaces $V_1,\ld,V_t$, and $U_1,\ld, U_n$ some subspaces in $V$. Assume that there exists an integer $c$ such that for any $j=1,\ld,t$ at most $c$ of the subspaces $U_1,\ld, U_n$ have nonzero projection to $V_j$. Then
\begin{equation}\label{eLB}
\dim_\F U_1+\cdots+\dim_\F U_n\le c\dim_\F(U_1+\cdots+ U_n).
\end{equation}
\end{Lemma}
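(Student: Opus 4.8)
The plan is to prove the inequality by induction on $t$, the number of summands in the decomposition $V=V_1\oplus\cdots\oplus V_t$. Throughout, let $\pi_j\colon V\to V_j$ denote the projections attached to this decomposition, and write $U=U_1+\cdots+U_n$.

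The base case $t=1$ is immediate: here $V=V_1$, so ``$U_i$ has nonzero projection to $V_1$'' just means $U_i\ne\{0\}$, and the hypothesis says at most $c$ of the $U_i$ are nonzero. Since each $U_i\subseteq U$ we have $\dim_\F U_i\le\dim_\F U$, and summing over the at most $c$ nonzero ones gives $\dim_\F U_1+\cdots+\dim_\F U_n\le c\,\dim_\F U$.

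For the inductive step I write $V=V'\oplus V_t$ with $V'=V_1\oplus\cdots\oplus V_{t-1}$ and let $\rho\colon V\to V'$ be the projection along $V_t$. The delicate point --- and the one place a careless argument breaks down --- is how to estimate $\dim_\F U_i$. It is tempting to bound $\dim_\F U_i\le\dim_\F\rho(U_i)+\dim_\F\pi_t(U_i)$, but this over-counts: it leads to a bound involving $\dim_\F\rho(U)+\dim_\F\pi_t(U)$, which in general strictly exceeds $\dim_\F U$. Instead I will use the exact rank-nullity identity for $\rho$ restricted to $U_i$,
\[
\dim_\F U_i=\dim_\F(U_i\cap V_t)+\dim_\F\rho(U_i),
\]
and sum it over $i$, splitting the total into a ``$V_t$-part'' and a ``$V'$-part''. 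For the $V_t$-part, the subspaces $U_i\cap V_t$ all lie in $V_t$; at most $c$ of them are nonzero, because $U_i\cap V_t\ne\{0\}$ forces $\pi_t(U_i)\ne\{0\}$; and they all lie inside $U\cap V_t$. Applying the already-proved case $t=1$ to $V_t$ and the subspaces $U_i\cap V_t$ gives $\sum_i\dim_\F(U_i\cap V_t)\le c\,\dim_\F(U\cap V_t)$. For the $V'$-part, I check that the hypothesis transfers: for $j\le t-1$ one has $\pi_j(\rho(U_i))=\pi_j(U_i)$, so for each such $j$ at most $c$ of the $\rho(U_i)$ have nonzero projection to $V_j$. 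The inductive hypothesis, applied to $V'=V_1\oplus\cdots\oplus V_{t-1}$ and the subspaces $\rho(U_1),\dots,\rho(U_n)$, then yields $\sum_i\dim_\F\rho(U_i)\le c\,\dim_\F\rho(U)$, using $\sum_i\rho(U_i)=\rho(U)$.

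Adding the two estimates gives $\sum_i\dim_\F U_i\le c\bigl(\dim_\F(U\cap V_t)+\dim_\F\rho(U)\bigr)$, and one final application of rank-nullity --- this time to $\rho$ restricted to $U$ --- gives $\dim_\F(U\cap V_t)+\dim_\F\rho(U)=\dim_\F U$, which closes the induction. So the only genuine obstacle is the one flagged above: for the two pieces to recombine to exactly $c\,\dim_\F U$ one must split $\dim_\F U_i$ through the kernel-image decomposition $U_i\cap V_t$ versus $\rho(U_i)$ rather than through the two coordinate projections, which would lose too much.
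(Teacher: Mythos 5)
Your proof is correct and follows the same overall strategy as the paper's: induction on $t$, peeling off the last summand $V_t$ and applying the inductive hypothesis over $V'=V_1\oplus\cdots\oplus V_{t-1}$. The only real difference is bookkeeping: the paper works with the intersections $U_i\cap V'$, uses the crude bound $\dim_\F U_i\le\dim_\F(U_i\cap V')+\dim_\F V_t$ for the at most $c$ subspaces with nonzero projection to $V_t$, and recovers $\dim_\F U$ at the end via the normalization $V_j=\mathrm{pr}_{V_j}U$ (together with a separate treatment of the easy case $n\le c$), whereas you split each $\dim_\F U_i$ exactly by rank--nullity into $\dim_\F(U_i\cap V_t)+\dim_\F\rho(U_i)$ for your projection $\rho$ of $V$ onto $V'$ along $V_t$, bound the two halves separately (base case applied inside $V_t$, induction inside $V'$), and recombine exactly with rank--nullity applied to $U$ --- a slightly cleaner variant that needs no normalization and no special cases.
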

\begin{proof} We start with few remarks.  Set $U=U_1+\cdots+U_n$. Since for any $j=1,\ld,t$ the projections of each $U_i$ to $V_j$ are in the projections of $U$ to $V_j$, the conditions of the Lemma will be satisfied if we replace each $V_j$ by the projection $\pr_{V_j}U$. Once $V_j=\pr_{V_j}U$ is assumed, setting $\widetilde{V_j}=V_1\op\cdots\op V_{j-1}\op V_{j+1}\op\cdots\op V_t$, for any $j=1,\ld,t$,  by the Homomorphism Theorem applied to the projection map of $U=U_1+\cdots+U_n$ onto $V_j$, we will have 
\begin{equation}\label{eLB2}
\dim_\F (U_1+\cdots+U_n)-\dim_\F((U_1+\cdots+U_n)\cap \widetilde{V_j})=\dim_\F V_j.
\end{equation}
Also note that since $\dim_\F U_i\le \dim_\F U$,
in the case $n\le c$, our lemma easily follows because
\[
\dim_\F U_1+\cdots+\dim_\F U_n \le n \dim_\F U\le c \dim_\F (U_1+\cdots+U_n).
\]

To proceed with the proof, we apply induction by $t$. If $t=1$ then $V_1=V$ and $U_i$ is its own projection to $V_1$, for all $k=1,\ld, n$. So only $d\le c$ of $U_i$, say, $U_1,\ld,U_d$ are nonzero. Removing zero subspaces from (\ref{eLB}) we may assume $n\le c$ and in this case our claim has been shown to be true.

Now we need to consider the case $t\ge 2$ and $n>c$. Let us choose $n-c$ subspaces, say, $U_1,...,U_{n-c}$ whose projections to $V_t$ are zero. 

 By induction, for $V'=V_1\op\cdots\op V_{t-1}$ and the subspaces $U_i'=U_i\cap V'$, we have
  \begin{equation}\label{eLB3}
\dim_\F U_1'+\cdots+\dim_\F U_n'\le c\dim_\F (U_1'+\cdots +U_n').  
  \end{equation}
  Note that $\dim_\F U_1'=\dim_\F U_1,\ld,\dim_\F U_{n-c}'=\dim_\F U_{n-c}$. Also, since 
  \[
  U_i/U_i'\cong U_i+V'/V'\subset V/V'\cong V_t,
  \]
   we have that 
  \begin{equation}\label{eAT}
 \dim_\F U_i\le \dim_\F U_i'+\dim_\F V_t\mbox{ for all }i=n-c+1,\ld,n. 
  \end{equation} 
  
Using (\ref{eLB3}), which is true by induction, and (\ref{eAT}), which is always true, we obtain
 \begin{eqnarray*}
 &&\dim_\F U_1+...+\dim_\F U_n\le \dim_\F U_1'+\cdots+\dim_\F U_n'+c\dim_\F V_t\\
 &&\le c(\dim_\F (U_1'+\cdots+U_n')+c\dim_\F V_t\\&&=c\dim_\F((U_1\cap V')+...+(U_n\cap V'))+c\dim_\F V_t\\
&&\le c\dim_\F((U_1+...+U_n)\cap V')+c\dim_\F V_t\\&&=c\dim_\F(U_1+...+U_n), 
\end{eqnarray*}
as claimed. Note that in the last equality we have used that $V_t=\mathrm{pr}_{V_t}U$.
\end{proof}

\section{Ideals in relatively free nilpotent algebras}\label{sGRFNA}
 
\begin{Lemma}\label{27.4} For any $c\ge 1$ there exist positive $\kappa_1$ and $\kappa_2$ with the following property. Consider a primitive class $\V\subset \fN_c$ such that $\V\not\subset \fN_{c-1}$. Let $F_n=F(\V,\{ x_1,\ld,x_n\})$ be a $\V$-free algebra of arbitrary rank $n\ge 1$.  
Then, as soon as $F_n^c\ne \{ 0\}$, the following hold
\begin{equation}\label{e01} 
\kappa_1 n^c < \dim_\F F_n^c\le \dim_\F I(F_n) < \kappa_2 n^c.
\end{equation} 
\end{Lemma}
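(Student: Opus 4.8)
The plan is to establish the four inequalities in (\ref{e01}) separately. The two middle relations are easy: $\dim_\F F_n^c\le\dim_\F I(F_n)$ follows because $F_n^c$ annihilates $F_n$ (as $F_n^{c+1}=\{0\}$), so $F_n^c\subseteq I(F_n)$; and $\dim_\F F_n^c>0$ is the hypothesis $F_n^c\ne\{0\}$, which will be used to see that the lower bound is non-vacuous. So the real content is the two outer bounds: a lower bound $\kappa_1 n^c<\dim_\F F_n^c$ and an upper bound $\dim_\F I(F_n)<\kappa_2 n^c$, with $\kappa_1,\kappa_2$ depending only on $c$.

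For the upper bound on $\dim_\F I(F_n)$, I would first note that $I(F_n)\subseteq F_n^2$ (Remark \ref{rScrap29.1}), but more is true: by Lemma \ref{27.9}(b), when $n>c$ each $I_k(F_n)$ is a multihomogeneous verbal ideal, and using $F_n^k\subseteq I_{c-k+1}(F_n)$ one sees that $I(F_n)=I_1(F_n)$ lies between $F_n^c$ and $F_n^2$. The cleanest route is: $I(F_n)$ is spanned by multihomogeneous elements, and a multihomogeneous element of $I(F_n)$ of multidegree $\alpha$ must have $|\alpha|\le c$ and in fact (since $F_n$ is $c$-step nilpotent and $c\ge 2$) must be a polynomial in at most $c$ of the variables — otherwise, as in the proof of Lemma \ref{27.9}(a), one could substitute a free generator for a missing variable and deduce $xu=0$ is an identity, contradicting $\V\not\subseteq\fN_{c-1}$ in the top degree. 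Hence $I(F_n)\subseteq\sum_{|Y|\le c}F(\V,Y)$ ranging over subsets $Y\subseteq X$, and each $F(\V,Y)$ has dimension bounded by a constant depending only on $c$ (at most $\dim\cF(Y)/\cF(Y)^{c+1}$, the number of nonassociative monomials of degree $\le c$ in $\le c$ variables). There are $\binom{n}{c}+\binom{n}{c-1}+\cdots=O(n^c)$ such subsets, giving $\dim_\F I(F_n)<\kappa_2 n^c$ for a suitable $\kappa_2=\kappa_2(c)$.

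For the lower bound $\kappa_1 n^c<\dim_\F F_n^c$, the idea is that $\V\not\subseteq\fN_{c-1}$ means there is some nonzero multilinear (if $\chr\F=0$, by Proposition \ref{pMultilin}; in general some multihomogeneous) element $w(x_1,\ldots,x_c)$ of $\cF_c$ that is not an identity of $\V$, so its image in $F(\V,\{x_1,\ldots,x_c\})^c$ is nonzero. Then for each $c$-element subset $Y=\{x_{i_1},\ldots,x_{i_c}\}$ of $X$ we get a nonzero element $w_Y\in F_n^c$ obtained by the corresponding substitution. The key point is that these $w_Y$, as $Y$ ranges over $c$-subsets, are linearly independent: they lie in distinct multihomogeneous components of $F_n$ (the component of multidegree the indicator of $Y$, if $w$ is multilinear), and multihomogeneous components for distinct multidegrees are independent by Lemma \ref{L1.3}. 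This yields at least $\binom{n}{c}$ linearly independent elements of $F_n^c$, so $\dim_\F F_n^c\ge\binom{n}{c}>\kappa_1 n^c$ for a suitable $\kappa_1=\kappa_1(c)$ once $n$ is large; for small $n$ with $F_n^c\ne\{0\}$ one absorbs finitely many cases into the constant, or simply notes $\binom{n}{c}\ge\kappa_1 n^c$ already for all $n\ge c$. If $\chr\F\ne 0$ one must work with a multihomogeneous rather than multilinear witness $w$; the witness could have some variable appearing to a power $\ge 2$, but since $w$ involves at most $c$ distinct variables the same counting (substituting $c$-subsets, invoking distinctness of multidegrees) still produces $\Omega(n^c)$ independent elements — one just needs to be slightly careful that the chosen multidegree pattern, applied to $c$-subsets in order, gives distinct multidegrees, which it does.

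The main obstacle is the lower bound: one must be sure that substituting distinct $c$-subsets into the fixed witness $w$ really produces \emph{linearly independent} images in $F_n^c$, and the clean way to guarantee this is exactly the multihomogeneity of $\V$ (Lemma \ref{L1.3}) together with choosing $w$ multihomogeneous and tracking its multidegree; handling the positive-characteristic case (no multilinearization) without weakening the $n^c$ exponent is the one place requiring care, but it reduces to the observation that a nonzero multihomogeneous witness in exactly $c$ variables suffices and its multidegree "profile", transported along $c$-subsets, separates the relevant graded pieces.
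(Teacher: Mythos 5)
Your counting scheme is the same as the paper's (a single degree-$c$ witness substituted across all $c$-element subsets for the lower bound, an $O(n^c)$ count of monomials for the upper bound), and over an infinite field your argument is correct. The genuine gap is one of generality: the lemma carries no hypothesis on $\F$ and is applied verbatim to finite fields in Section \ref{sGAFF}, but your linear-independence step rests on Lemma \ref{L1.3}, i.e.\ on multihomogeneity of $\V$ and the resulting multigrading of $F(\V,X)$, which is available only for infinite $\F$. Over a finite field the verbal ideal $V(\cF(X))$ need not be graded, so ``the $w_Y$ lie in distinct multihomogeneous components, hence are independent'' is not justified; and your positive-characteristic patch (replace the multilinear witness by a multihomogeneous one) does not repair this, because the problem is not the existence of a witness but the possible failure of the grading of $F_n$ itself. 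Your upper-bound detour through Lemma \ref{27.9} has the same defect (it presupposes a multihomogeneous class and $n>c$), although there the issue is harmless since a cruder bound suffices.

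The paper's proof avoids multihomogeneity altogether, and you can too. Since $\V\not\subset\fN_{c-1}$, some monomial of degree $c$ takes a nonzero value on an algebra of $\V$; renaming its repeated entries by distinct variables gives a \emph{multilinear monomial} $w$ of degree $c$ that is not an identity of $\V$ (identities are stable under substitution), valid over any field and any characteristic, with no appeal to Proposition \ref{pMultilin}. By Lemma \ref{lRelX}, each value $w_S$ of $w$ on a $c$-subset $S$ of the free generators is nonzero in $F_n$, and independence is proved by substitution rather than by grading: if $\sum_S\lambda_S w_S=0$ with $\lambda_{S_0}\ne 0$, apply the endomorphism of $F_n$ fixing the generators in $S_0$ and sending the remaining generators to $0$; every $w_S$ with $S\ne S_0$ involves a generator outside $S_0$ and is killed, so the relation collapses to $\lambda_{S_0}w_{S_0}=0$, a contradiction. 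This gives $\binom{n}{c}\le\dim_\F F_n^c$ over any field, and the finitely many ranks $n<c$ with $F_n^c\ne\{0\}$ are absorbed into $\kappa_1$, as you indicate. For the upper bound nothing more is needed than $\dim_\F I(F_n)\le\dim_\F F_n\le\kappa_2 n^c$, since $F_n$ is spanned by the monomials of degree at most $c$ in its $n$ generators and the number of bracket placements is bounded by a constant depending only on $c$.
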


\begin{proof} 

For the estimate from below, we first assume that $n>c$. It follows from Lemma \ref{lRelX} and  $\V$ being $c$-step nilpotent that  for each subset $S$ of $c$ different variables of $\{x_1,\ld,x_n\}$ there is a monomial $w_S$ of degree $c$ in the variables of this subset which is nonzero in $F_n$. Moreover, the set of such monomials, one for each choice of subsets, will be linearly independent in $F_n$. Indeed, if $\sum\lambda_Sw_S=0$ and  $\lambda_S\ne 0$, for some $S$ then the endomorphism of $F_n$ sending $x$ to $x$ is $x\in S$ and $x$ to $0$ otherwise, sends the above linear combination to $\lambda_{S}w_{S}=0$, which is a contradiction. 
Thus $\binom{n}{c}\le \dim_\F F_n^c$ and one can find a positive constant $\kappa_1'=\kappa_1'(c)$ such that $\kappa_1'(c)< \frac{\binom{n}{c}}{n^c}$. Thus $\kappa_1'(c)n^c<\dim_\F F_n^c$, for all $n\ge c$. Then we  choose a positive $\kappa_1$, such that $\kappa_1\le \kappa_1'$ and $\kappa_1n^c\le \dim_\F F_n^c$, for all $F_n^c$, $n=1,\ld,c-1$, which are different from zero. Then we will have our lower bound $\kappa_1 n^c<\dim_\F F_n^c$, for all $n\ge 1$ such that $F_n^c\ne\{ 0\}$. Finally, the same estimate works for $I(F_n)$ because $F_n^c\subset I(F_n)$.

To get the estimate from above, we notice that in any monomial $x_{i_1}\cdots x_{i_c}$ the number of possible placements of brackets is restricted by some $\kappa_2'=\kappa_2'(c)$. Since $F_n$ is spanned by (nonassociative) monomials of degree $c$, we have $\dim_\F F_n^c\le \kappa_2'(c)n^c$. It follows that $\dim_\F F_n$ is bounded from above by a polynomial in $n$ of degree $c$ and $\dim_\F F_n/F_n^c$ by a polynomial in $n$  of degree $c-1$. Since $I(F_n)\supset F_n^c$, $\dim_\F I(F_n)$ is bounded from above by $\dim_\F F_n/F_n^c+\dim_\F F_n^c$, which is a polynomial in $n$  of degree $c$. So there is $\kappa_2(c)\ge \kappa_2'(c)$ such that $\dim_\F I(F_n)< \kappa_2(c)n^c$ and (\ref{e01}) indeed holds.     
\end{proof}

A close result, which we will need in Section \ref{ssWGI} is the following.

\begin{Lemma}\label{L*}
Let $f\in \cF_{\infty}^c$ be a polynomial without monomials depending on one variable only. Assume that $f=0$ is not an identical relation in a free algebra $F_n=F(\V,X)$, $X=\{ x_1,\ld,x_n\}$, of a $c$-step nilpotent variety $\V$, $c\ge 2$.  Then the verbal ideal $f(F_n)$ generated by $f$ in $F_n$ has dimension  bounded from below by a quadratic function of $n$.

\end{Lemma}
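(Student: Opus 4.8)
The plan is to produce, inside the verbal ideal $f(F_n)$ itself, a family of about $\binom n2$ elements lying in pairwise distinct multihomogeneous components of $F_n$, which are then automatically linearly independent.

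First I would make a few reductions. Neither the hypothesis nor $\dim_\F f(F_n)$ is affected by replacing $\F$ with an infinite extension field (relatively free algebras and verbal ideals commute with scalar extension $-\otimes_\F\F'$, which is faithfully flat and preserves dimensions), so we may assume $\F$ infinite; then $\V$ is multihomogeneous by Lemma \ref{L1.3}. Since $F_n^{c+1}=\{0\}$ and $f-f_{[c]}\in\cF_\infty^{c+1}$, where $f_{[c]}$ is the degree-$c$ component of $f$, the values of $f$ and of $f_{[c]}$ on $F_n$ agree, so $f(F_n)=f_{[c]}(F_n)$, which is nonzero by hypothesis; in particular $f_{[c]}\ne 0$. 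Over an infinite field the verbal ideal of a polynomial is the sum of the verbal ideals of its multihomogeneous components (the substitutions $y_\ell\mapsto\lambda_\ell y_\ell$ plus a Vandermonde inversion express each component value through the values of $f_{[c]}$), so some multihomogeneous component $h$ of $f_{[c]}$ is not an identity in $F_n$ and satisfies $\dim_\F f(F_n)\ge\dim_\F h(F_n)$. As $h$ is a partial sum of the monomials of $f$, it has no monomial in one variable, and being a nonzero multihomogeneous polynomial it therefore depends on exactly $k$ variables, $2\le k\le c$, each to positive degree. From here on I replace $f$ by $h$, rename those variables $y_1,\ld,y_k$, and assume $n\ge k$; the finitely many cases $n<k\le c$ are trivial because there $\dim_\F f(F_n)\ge 1$ already.

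Now the construction. Since $f=0$ is not an identity in $F_n$, $f\ne 0$ in $F_k=F(\V,\{y_1,\ld,y_k\})$. For every two-element set $\{i,j\}\subseteq\{k-1,k,\ld,n\}$ (say $i<j$), the rule $y_\ell\mapsto x_\ell$ for $\ell\le k-2$, $y_{k-1}\mapsto x_i$, $y_k\mapsto x_j$ defines a homomorphism $\theta_{ij}\colon F_k\to F_n$ sending the free generators of $F_k$ bijectively onto $k$ distinct free generators of $F_n$, hence an embedding (a relabelling isomorphism followed by the canonical embedding $F(\V,X')\hookrightarrow F(\V,X)$ for $X'\subseteq X$). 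Put $g_{\{i,j\}}:=\theta_{ij}(f)\in f(F_n)$. Then $g_{\{i,j\}}\ne 0$, and since $f$ is multihomogeneous in $y_1,\ld,y_k$ with all degrees positive while $\theta_{ij}$ substitutes distinct generators, $g_{\{i,j\}}$ is multihomogeneous of a multidegree with support exactly $\{1,\ld,k-2\}\cup\{i,j\}$.

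Because $i,j\ge k-1>k-2$, the support of $g_{\{i,j\}}$ recovers the pair $\{i,j\}$, so distinct pairs give elements in distinct components of the $\N_0^n$-grading of $F_n$; hence the $g_{\{i,j\}}$ are linearly independent and
\[
\dim_\F f(F_n)\ \ge\ \binom{n-k+2}{2}\ \ge\ \binom{n-c+2}{2},
\]
which, combined with $\dim_\F f(F_n)\ge 1$, is bounded below by a quadratic in $n$ (for instance by $(n-c)^2/c^2$). I expect the only genuinely delicate step to be the first one: extracting a multihomogeneous component $h$ that still has no one-variable monomial while retaining $\dim_\F f(F_n)\ge\dim_\F h(F_n)$. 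This is exactly where multihomogeneity of $\V$ — hence the passage to an infinite base field — is needed, and it is what makes the support bookkeeping in the last step legitimate; once $f$ is multihomogeneous and depends on at least two variables, the rest is routine.
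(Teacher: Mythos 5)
Your argument, read over an infinite base field, is correct and is in essence the paper's: extract from $f$ a component that still lies in the verbal ideal, is not an identity, and involves at least two variables; then plant it on many different sets of free generators and count linearly independent values. The differences are technical. You decompose $f$ into multihomogeneous components (legitimate, by Lemma \ref{L1.3}, only when $\F$ is infinite) and get linear independence from the $\N_0^n$-grading of $F_n$, indexing your instances by the $\binom{n-k+2}{2}$ pairs $\{i,j\}$. The paper instead decomposes $f$ by supports, $f=\sum_Y f_Y$, where $f_Y$ is the sum of the monomials depending on exactly the variables of $Y$: each $f_Y$ is a consequence of $f$ over \emph{any} field, being a signed sum of the polynomials obtained from $f$ by substituting $0$ for the variables outside subsets of $Y$, and the $\binom{n}{t}$ instances $\sigma_Z(f_Y)$ are shown independent by applying the endomorphisms killing all generators outside $Z$ (as in the proof of Lemma \ref{27.4}). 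The paper's choices make the proof work verbatim over an arbitrary base field, which is relevant since the results of this section are later invoked in the finite-field setting.

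Accordingly, the one step of yours I would not accept as written is the opening reduction: the assertion that relatively free algebras and verbal ideals commute with scalar extension $-\otimes_\F\F'$ is precisely the delicate point when $\F$ is finite. It is tied to the failure of multihomogeneity there: you would need every value of $f$ at points of $F_n\otimes_\F\F'$, equivalently the values of the polarizations of $f$, to lie in $f(F_n)\otimes_\F\F'$, and that requires proof (and is the same issue you acknowledge as "genuinely delicate" later). Under the paper's standing hypothesis that $\F$ is infinite this reduction is superfluous and your proof is complete; if you want the statement over arbitrary fields, replace the multihomogeneous decomposition by the support decomposition $f_Y$ described above, which eliminates any need for base change.
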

\begin{proof} If $Y$ is a subset of $X$, we denote by $f_Y$ the sum of all monomials in $f$ each of which depend on all variables of $Y$ and only on them. The primitive subclass of $\V$ defined by $f$ is the same as the primitive subclass of $\V$ defined by the collection of all $f_Y$, $Y\subset X$, such that $|X|=t\le c$. It is enough to prove our claim in the case where $f=f_Y$, for some $Y$, as before, that is, $|Y|=t\le c$. Given a subset $Z\subset X$ with $|Z|=t$, we consider a bijection $\sigma_Z: Y\to Z$. The polynomials $\sigma_Z(f)$ are all in the verbal ideal $f$, they are linearly independent and hence there is $\kappa$ such that $\dim_\F f\ge\kappa \binom{n}{t}$  (cf. with the proof of Lemma \ref{27.4}. Since $t\ge 2$, we have a quadratic function bounding $\dim_\F f(F_n)$ from below.
\end{proof}

We further discuss some properties of a relatively free $c$-step nilpotent algebra $F_n$, which will be instrumental later in the proof of the fact that ``almost all'' ideals of $F_n$, with $n>n_0=n_0(c)$, are contained in the annihilator $I(F_n)$.

Given subspaces $P, Q$ of an algebra $R$ over a field $\F$, we denote by $PQ$ the linear span of all products $ab$, where $a\in P$ and $b\in Q$.
 
\begin{Lemma}\label{27.18} Let $\V$ be a $c$-step nilpotent multihomogneous primitive class of algebras over a field $\F$, $c\ge 2$. Then there exists $\kappa=\kappa(c)>0$ with the following property. Let $F_n=F(\V,\{ x_1,\ld,x_n\})$ be a $\V$-free algebra of rank $n\ge 2c$. If $U$ is a subspace in $I_2(F_n)$ of dimension $s>0$ such that $U\cap I_1(F_n)=\{ 0\}$ then either $\dim_\F UF_n > \kappa s n$ or $\dim_\F F_nU >  \kappa s n$.
\end{Lemma}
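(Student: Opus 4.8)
The plan is to exploit the grading on $F_n$ by multidegrees together with Lemma~\ref{27.15}, which is precisely designed to convert ``each homogeneous component is touched by few of the spaces'' into a dimension inequality. Fix a basis $u_1,\ldots,u_s$ of $U$ inside $I_2(F_n)$ with $U\cap I_1(F_n)=\{0\}$; since $\V$ is multihomogeneous, $F_n$ is graded by multidegrees $\alpha\in\Phi$ with $|\alpha|\le c$, and each $u_i$ decomposes into multihomogeneous components. Because $U\cap I_1(F_n)=\{0\}$, for each $i$ there is a product (left or right) by some generator $x_{j(i)}$ that is nonzero; the point of working in $I_2$ is that $u_iF_n+F_nu_i\subset I_1(F_n)$, so all these products land in a controlled place, and by Lemma~\ref{27.9}(a) (applied with $|X|>c$, which holds since $n\ge 2c$) one nonzero product by a single generator already forces $u_iF_n\ne\{0\}$ or $F_nu_i\ne\{0\}$.

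First I would split the index set $\{1,\ldots,s\}$: let $s_L$ be the number of $i$ with $u_iF_n\ne\{0\}$ and $s_R$ the number with $F_nu_i\ne\{0\}$; then $s_L+s_R\ge s$, so WLOG $s_L\ge s/2$, and it suffices to bound $\dim_\F UF_n$ from below by $\kappa' s_L n$ for a suitable $\kappa'$, whence $\dim_\F UF_n>\kappa s n$ with $\kappa=\kappa'/2$. Restricting to those $u_i$, for each such $i$ consider the products $u_ix_1,\ldots,u_ix_n$. The key counting observation: each $u_i$ (or each of its multihomogeneous components) depends on at most $c$ of the variables $x_1,\ldots,x_n$, so among the $n$ products $u_ix_1,\ldots,u_ix_n$, ``most'' of them — at least $n-c$ — involve a variable on which $u_i$ does not already depend. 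For such $x_j$, the product $u_ix_j$ is multihomogeneous of a multidegree involving $x_j$, and crucially, using Lemma~\ref{27.9}(a) again in the contrapositive form, once $u_iF_n\ne\{0\}$ one gets that a large proportion of these $u_ix_j$ are nonzero (if too many vanished, $u_i$ would annihilate a $(c+1)$-element subset of generators and hence all of $F_n$). So each selected $u_i$ contributes $\ge n-c\ge n/2$ linearly independent-ish directions inside $UF_n$, spread over multidegrees that each involve the ``new'' variable.

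To package this via Lemma~\ref{27.15}: take the ambient grading $V=F_n$ with its multihomogeneous decomposition $V=\bigoplus_\alpha (F_n)_\alpha$; take the subspaces to be $W_{i}=u_iF_n$ (or rather the spans of the nonzero products $u_ix_j$ over the ``new'' $j$) for the selected indices $i$. The hypothesis needed is: for each multidegree $\alpha$, only boundedly many ($\le$ some constant depending on $c$) of the $W_i$ have nonzero projection to $(F_n)_\alpha$. This holds because $W_i$ lives in multidegrees of the form $\beta_i+e_j$ where $\beta_i$ is the (at most $c$-variable) multidegree support of $u_i$ and $e_j$ ranges over ``new'' variables; a fixed $\alpha$ of weight $\le c$ can arise this way from at most a constant number of pairs $(\beta_i,j)$ — indeed from at most $c$ choices of which coordinate of $\alpha$ is the ``$+e_j$'' one, and the remaining data is then determined by $\alpha$. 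Then Lemma~\ref{27.15} gives $\sum_i \dim_\F W_i \le \mathrm{const}\cdot\dim_\F(\sum_i W_i)\le \mathrm{const}\cdot\dim_\F UF_n$, while the left side is $\ge s_L\cdot(n/2)\cdot(\text{positive fraction})$ by the previous paragraph; rearranging yields $\dim_\F UF_n\ge\kappa' s_L n$.

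The main obstacle I anticipate is making rigorous the claim that a definite fraction of the products $u_ix_j$ (over $j$ new to $u_i$) are \emph{nonzero and together span a space of dimension comparable to $n$}: vanishing of individual $u_ix_j$ is possible, and one must argue that it cannot happen too often without collapsing $u_i$ to something in $I_1(F_n)$ — this is where Lemma~\ref{27.9}(a) and the multihomogeneity are doing real work, and the bookkeeping of multidegrees (ensuring the nonzero $u_ix_j$ for a fixed $i$ are themselves linearly independent, which follows since they sit in distinct multidegrees when $u_i$ is itself multihomogeneous, so one should first reduce to $u_i$ multihomogeneous) needs to be done carefully. A secondary technical point is checking that $n\ge 2c$ is exactly what is needed so that $n-c\ge n/2$ and so that Lemma~\ref{27.9} applies with $|X|=n>c$.
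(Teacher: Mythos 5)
Your proposal has a genuine gap, and it sits exactly where the factor $s$ in the bound $\kappa sn$ has to come from. You index the pieces fed into Lemma~\ref{27.15} by the basis vectors $u_i$ and claim that a fixed multidegree $\alpha$ can receive nonzero projections from only boundedly many of the spaces $W_i$, because $\alpha=\beta_i+e_j$ leaves at most $c$ choices of $j$ and ``the remaining data is then determined by $\alpha$''. But $\beta_i$ does not determine $i$: arbitrarily many of the $u_i$ (possibly all $s$ of them) can be multihomogeneous of one and the same multidegree $\beta$, and then for every ``new'' $j$ all $s$ spaces $W_i$ lie inside the single graded component of multidegree $\beta+e_j$. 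In that regime Lemma~\ref{27.15} only applies with the constant $s$ in place of $c$, and your inequality degenerates to $\dim_\F UF_n\ge$ (roughly) $n-2c$, with the factor $s$ lost. The paper produces the factor $s$ by a mechanism that is absent from your plan: if $\dim_\F Ux<\frac{s}{2c+2}$ for $c+1$ generators $x$ on the right and $c+1$ on the left, then the intersection of the kernels of these $2(c+1)$ linear maps $U\to Ux$, $U\to xU$ has codimension $<s$ in $U$, hence contains a nonzero $u$, which lies in $I_1(F_n)$ by Lemma~\ref{27.9}(a), contradicting $U\cap I_1(F_n)=\{0\}$. Therefore $\dim_\F Ux\ge\frac{s}{2c+2}$ (or the left-hand analogue) for at least $n-c$ generators $x$, and Lemma~\ref{27.15} is then applied to the spaces $Ux$ indexed by these generators, where the multiplicity bound $\le c$ is genuinely correct: $Ux_j$ can only meet graded components of $I_1(F_n)$ whose support contains $x_j$, and each support has size $\le c$. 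This kernel-codimension pigeonhole over the whole space $U$, not a per-basis-vector count, is the missing idea.

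A second, related problem is the reduction ``first reduce to $u_i$ multihomogeneous''. A general element of $U$ may depend on all $n$ variables, so without the reduction your premise that at least $n-c$ of the $u_ix_j$ involve a new variable fails; and with the reduction, replacing $U$ by the span $U'$ of multihomogeneous components (or any graded surrogate) only gives $UF_n\subset U'F_n$, so a lower bound for $\dim_\F U'F_n$ says nothing about $\dim_\F UF_n$; moreover individual components of elements of $U$ may lie in $I_1(F_n)$, so the hypothesis $U\cap I_1(F_n)=\{0\}$ is not inherited. The paper's argument never chooses a basis of $U$ and uses gradedness only for the ideal $I_1(F_n)$ (Lemma~\ref{27.9}(b) together with multihomogeneity of $\V$), which is why it works for a completely arbitrary subspace $U$. (A minor further point: with only $n\ge 2c$ your count of at least $n-2c$ nonzero ``new'' products can be zero, whereas the paper only needs $n-c\ge\frac{n}{2}$.)
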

 
 \begin{proof}
 Assume that there are subsets $M,L\subset \{ x_1,\ld,x_n\}$ with $|M|=|L|=c+1$ such that $\dim_\F Ux < \frac{s}{2c+2}$ for all $x\in M$ and $\dim_\F xU < \frac{s}{2c+2}$ for all $x\in L$. Then the intersection of the kernels of $2(c+1)$ linear maps $U\to Ux$ ($x\in M$) and $U\to xU$ ($x\in L$) has codimension $<2(c+1)\frac{s}{2c+2}=s$. In this case, there exists a nonzero $u\in U$ which belongs to the intersection of the kernels of all these maps. By Lemma \ref{27.9}, part (a), we must have $u\in I_1(F_n)$. Since we assumed $U\cap I_1(F_n)=\{ 0\}$, this is not possible. Thus our assumption is not valid.
 
 It follows that there is an $(n-c)$-element subset $Y$, say $Y=\{x_1,...,x_{n-c}\}$ $\subset \{ x_1,\ld,x_n\}$, such that either for each $x\in Y$   
 \begin{equation}\label{eGrillr}
\dim_\F Ux\ge \frac{s}{2c+2} 
 \end{equation}
 or for each $x\in Y$
 \begin{equation}\label{eGrilll}
\dim_\F xU\ge \frac{s}{2c+2}. 
\end{equation}

Let us consider the case of (\ref{eGrillr}) and set $W=\sum_{x\in Y} Ux \le UF\le I_1(F_n)                                                                                                                                                                                                                                                                                                                                                                                                                                                                                       $.

By statement (b) in Lemma \ref{27.9}, the annihilator $I_1(F_n)$ is a verbal ideal. Since the base field is infinite, by Lemma \ref{L1.3}, $I_1(F_n)$ is the direct sum of its multihomogeneous components: $I_1(F_n)=V_1\op\cdots\op V_t$. Since $F_n^{c+1}=\{ 0\}$,  each component $V_i$ is a linear span of the elements of fixed degree in each free generator belonging to a fixed subset $Z_i$ with $|Z_i| \le c$. It follows that the projection of $Ux$ to such $V_i$ can be nonzero only if $x\in Z_i$. Hence at most $c$ subspaces among $Ux$, $x\in Y$, can have a nonzero projection on $V_i$. By Lemma \ref{27.15}, equation (\ref{eGrillr}), and using $n-c\ge c\ge 2$, hence $n-c\ge \frac{n}{2}$,  we obtain
\[
\dim_\F UF_n \ge \dim_\F \sum_{x\in Y} Ux \ge c^{-1} \sum_{x\in Y} \dim_\F Ux \ge \frac{(n-c) s}{2c(c+1)}\ge \frac{sn}{4c(c+1)}.
\]

So any positive $\kappa <\frac{1}{4c(c+1)}$ can be taken to satisfy the claim of our lemma.

\end{proof}

To state the next lemma, we introduce the following notation. Let $\varPi$ be the  set of words in the alphabet $\{\lambda,\rho\}$ and $U$ a subspace in an algebra $P$. If $\varpi\in \varPi$ is an empty word, we write $\varpi U=U$. If $\varpi=\varpi'\lambda$, we use induction by the length of $\varpi$ and write $\varpi U=F_n(\varpi' U)$. If $\varpi=\varpi'\rho$  we write $\varpi U=(\varpi' U)F_n$. For instance, if $\varpi=\lambda\rho\rho\lambda$ then $\varpi U=F_n(((F_nU)F_n)F_n)$. Our next lemma is the following.

\begin{Lemma}\label{27.24} Assume $c\ge 2$ and $n\ge 2c$. Then one can find a positive  $\gamma=\gamma(c)>0$ such that the following is true. Let $i\ge 1$ be such that $I_i$ is a proper subspace in $I_{i+1}$,  $U$ a subspace in $I_{i+1}$, such that $\dim_\F (U+I_i)/I_i =s_i$. Then there is $\varpi\in \varPi$ of length $i$ such that $\dim_\F \varpi U > \gamma s_i n^i$.  
\end{Lemma}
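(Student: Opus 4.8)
The plan is to iterate Lemma \ref{27.18} down the annihilator series, at each step peeling off one factor of $F_n$ (either on the left or on the right). Observe first that Lemma \ref{27.18} is exactly the case $i=1$ of the statement we want: if $U\subset I_2(F_n)$ meets $I_1(F_n)$ only in zero and has dimension $s$, then $\dim_\F UF_n>\kappa sn$ or $\dim_\F F_nU>\kappa sn$, i.e.\ there is a one-letter word $\varpi\in\{\lambda,\rho\}$ with $\dim_\F\varpi U>\kappa sn$. Moreover $\varpi U\subset I_1(F_n)$ in that case. The general case should follow by induction on $i$, the inductive step being an application of the $i=1$ case inside a suitable quotient.

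First I would reduce to the case where $U\cap I_i=\{0\}$: replace $U$ by a complement to $U\cap I_i$ inside $U$, which has dimension exactly $s_i=\dim_\F(U+I_i)/I_i$ and still sits in $I_{i+1}$; a lower bound on $\dim_\F\varpi U'$ for this subspace $U'$ is a fortiori a lower bound for $\dim_\F\varpi U$. Now for the inductive step with $i\ge 2$: apply Lemma \ref{27.18} in the relatively free algebra $\bar F_n=F_n/I_{i-1}(F_n)$, which is again $\V'$-free for a $(c')$-step nilpotent multihomogeneous primitive class with $c'=c-i+1\ge 2$ (using that $n\ge 2c\ge 2c'$). The image $\bar U$ of $U$ in $\bar F_n$ lies in $I_2(\bar F_n)=I_{i+1}(F_n)/I_{i-1}(F_n)$, meets $I_1(\bar F_n)=I_i(F_n)/I_{i-1}(F_n)$ in zero by our reduction, and has dimension $s_i$. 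Lemma \ref{27.18} then gives a one-letter word $\varpi_1$ with $\dim_\F \varpi_1\bar U>\kappa s_i n$; lifting, $\varpi_1 U + I_{i-1}(F_n)$ has dimension at least $\kappa s_i n$ over $I_{i-1}$, and $\varpi_1 U\subset I_i(F_n)$. Now $\varpi_1 U$ is a subspace of $I_i(F_n)$ with $\dim_\F(\varpi_1 U + I_{i-1})/I_{i-1}\ge \kappa s_i n$, so by the induction hypothesis applied to $i-1$ there is $\varpi_2\in\varPi$ of length $i-1$ with $\dim_\F\varpi_2(\varpi_1 U)>\gamma(c)\cdot(\kappa s_i n)\cdot n^{\,i-1}$. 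Setting $\varpi=\varpi_2\varpi_1$ (length $i$) and absorbing the extra factor $\kappa$ into the constant — i.e.\ choosing $\gamma=\gamma(c)$ small enough at the base case that $\gamma\le\kappa\gamma$ survives all $i\le c$ iterations, which is fine since only finitely many steps occur — yields $\dim_\F\varpi U>\gamma s_i n^i$.

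The bookkeeping subtlety, and the step I expect to need the most care, is the constant: each descent multiplies the estimate by $\kappa$, and $i$ ranges up to $c$, so the final constant is roughly $\kappa^{\,c}$ times the single-step constant of Lemma \ref{27.18}; one must fix $\gamma(c)$ at the outset as a quantity dominated by $\kappa^{c}\cdot\frac{1}{4c(c+1)}$ (or simply by $\kappa^{c}$ times any valid constant for $i=1$) and check the induction closes with this choice. A second point to verify carefully is that $F_n/I_{i-1}(F_n)$ really is relatively free in a $(c-i+1)$-step nilpotent multihomogeneous primitive class — this is precisely the content of Lemma \ref{27.9}(b) together with the fact that quotients of multihomogeneous classes by verbal ideals are multihomogeneous, and that $P^k\subset I_{c-k+1}(P)$ forces the nilpotency class of the quotient to drop by exactly the number of annihilator layers removed (at least enough to keep $c'\ge 2$, which is all we need). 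One should also record that $\varpi_1 U$ and each successive image lie in the correct term of the annihilator series so the induction hypothesis applies verbatim; this follows from Lemma \ref{27.9}(a), exactly as in the proof of Lemma \ref{27.18}.
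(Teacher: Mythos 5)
Your proposal is correct and is essentially the paper's own argument: pass to the quotient $F_n/I_{i-1}$ (relatively free by Lemma \ref{27.9}(b)), apply Lemma \ref{27.18} there in relaxed form to peel off one factor, and induct on $i$, with the constant degrading by $\kappa$ at each step. The only point to state cleanly is the bookkeeping you already flag: run the induction with the $i$-dependent constant $\kappa^i$ (a fixed $\gamma$ does not literally survive the step, since each application multiplies by $\kappa<1$) and then set $\gamma=\gamma(c)=\kappa^{c}$ at the end, exactly as the paper does.
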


\begin{proof}
Since $I_1$ annihilates $U$, the conditions $\dim_\F U=s_i$ and $U\cap I_1 =\{ 0\}$ in Lemma \ref{27.18} can be relaxed to $\dim_\F (U+I_1)/I_1 =s_i$. If we apply Lemma \ref{27.18} in its relaxed form to the relatively free algebra $F_n/I_{i-1}$, we will obtain  $\dim_\F \tau U+I_{i-1}/I_{i-1}\ge\kappa s_i n$, for some $\kappa$ which is common for all $i$ in question, where $\tau=\rho$ or $\tau=\lambda$. Applying induction by $i$ with basis formed by Lemma \ref{27.18}, we will conclude that $\dim_\F \varpi'\tau U\ge \kappa^is_in^i$ for some bracket placement $\varpi'$ of length $i-1$ so that we can take $\varpi=\varpi'\tau$ and $\gamma=\kappa^i$. 
\end{proof}

In the next lemma we consider two types of linear operators $\vp$ on a relatively free $c$-step nilpotent algebra $F_n$ of rank $n$: the automorphisms $\vp$ and the derivations $D$. Let $A$ be the linear operator induced by one of $\vp$ or $D$ on $F_n/F_n^2$. By Proposition \ref{lGLFc}, if $A$ comes from $\vp$ then $A\in\GL(n)$. Otherwise, $A\in\gl(n)$. The restriction of both $\vp$ and $D$ to $F^c_n$ depends only on $A$ and denoted by $\wA$.

\begin{Lemma}\label{34.3} For any $c\ge 2$ there exists $\delta=\delta(c)>0$ such that the following holds. Let $F_n$ be a relatively free $c$-step nilpotent algebra of  rank $n\ge c$. For each of the operators $\vp\in\Aut{F_n}$ and $D\in\Der{F_n}$, denote by $A$ its action on $F_n/F_n^2$ and by $\wA$ its action on $F^c_n$. If $A$ is not scalar, then $\dim_\F F^c_n-\rank_{\wA}(F^c_n)\ge\delta n^{c-1}$.
\end{Lemma}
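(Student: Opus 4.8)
The plan is to produce, for either type of operator, a subspace $W\subset F_n^c$ of dimension at least $\delta n^{c-1}$ on which $\widetilde A$ acts as a scalar $\mu$, together with a complementary flag so that the rank of $\widetilde A$ drops by at least $\dim W$. The key input is that $A$ (on $F_n/F_n^2$) is \emph{not} scalar, and I want to leverage this together with the structure of $F_n^c$ as a $\GL(n)$- (resp.\ $\gl(n)$-)module, as set up in Proposition \ref{lGLFc}. The natural strategy is to realize $F_n^c$ as spanned by values of multilinear monomials in $c$ of the generators; applying $A$ to such a value and expanding by multilinearity exhibits $\widetilde A$ concretely as a sum of ``substitution'' maps, one per slot. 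Since $A$ is not scalar, there is a generator, say $x_1$, and a generator $x_j$ with $j\ne 1$ such that the $x_j$-component of $A(x_1)$ is nonzero (or, in the automorphism case, a nonzero nonscalar $2\times 2$ block).

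First I would reduce to the ``worst'' case where $A$ differs from a scalar $\lambda I$ in a single entry: write $A=\lambda I+N$ where $N$ has a single nonzero entry (this reduction uses that altering one coordinate at a time can only decrease the bound, or more carefully that the rank deficiency of $\widetilde A$ is at least what one gets from a single nonzero off-diagonal or diagonal perturbation). For such an $A$, I would compute $\widetilde A$ on monomials of degree $c$ directly. If $N$ contributes $x_j\mapsto \mu x_i$ (one off-diagonal entry), then for a monomial $w$ not involving $x_i$ but involving $x_j$ exactly once in one slot, $\widetilde A(w)=\lambda^c w + (\text{terms involving }x_i)$; crucially, restricting to the subspace $W$ of values of monomials of degree $c$ \emph{not involving} $x_i$, the map $\widetilde A$ differs from $\lambda^c I_W$ by a nilpotent operator that moves $W$-monomials into monomials \emph{involving} $x_i$, which lie in a complementary subspace $W'$. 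By Lemma \ref{27.4}, $\dim_\F W$ — spanned by degree-$c$ monomials in the $n-1$ generators $\ne x_i$ — is at least $\kappa_1(n-1)^c$, which is of order $n^c$, much bigger than $n^{c-1}$; but I only need a subspace on which $\widetilde A-\mu I$ vanishes, whereas here it is only nilpotent. So instead I restrict further: on the subspace $W_0\subset W$ spanned by those degree-$c$ monomials that moreover do not involve $x_j$ either, $\widetilde A$ acts exactly as the scalar $\lambda^c$, and $\dim_\F W_0\ge \kappa_1(n-2)^c$. Then $\widetilde A$ maps $W_0$ identically (up to scalar) into itself, maps the $x_j$-involving, $x_i$-free monomials into a space intersecting the $x_i$-involving monomials, and the rank count $\dim F_n^c-\rank_{\widetilde A}(F_n^c)$ is controlled by the overlap. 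Actually the cleaner route: once $\widetilde A=\lambda^c I + \widetilde N$ with $\widetilde N$ a nonzero nilpotent (nonzero because $c\ge 2$ and the perturbation survives to degree $c$ — here Lemma \ref{lRelX}/non-nilpotency of $\V$ at step $c$ enters), the number of monomials $w$ with $\widetilde N(w)\ne 0$ but we can still bound: $\rank_{\widetilde A}(F_n^c)\le \dim F_n^c - \dim(\ker(\widetilde A-\lambda^c I))$, and $\ker(\widetilde A-\lambda^c I)\supseteq$ the span of degree-$c$ monomials avoiding the finitely many ``active'' generators, forcing a rank drop of order $n^{c-1}$ because the monomials that are \emph{not} killed — those touching the one active generator in the relevant slot — number $O(n^{c-1})$.

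So the real computation is: for $A=\lambda I+N$ with $N$ of rank one supported on entry $(i,j)$, show $\dim_\F\ker(\widetilde A-\lambda^c I_{F_n^c})\ge \dim_\F F_n^c - c_0 n^{c-1}$ for a constant $c_0=c_0(c)$, hence $\rank_{\widetilde A}(F_n^c)\le \rank_{\widetilde{\lambda I}}(F_n^c)+c_0 n^{c-1}$; combined with $\rank_{\lambda I}(F_n^c)=0$ when $\lambda$-part is handled, or more precisely comparing $\rank_{\widetilde A}$ to $\dim F_n^c$ directly, gives $\dim F_n^c-\rank_{\widetilde A}(F_n^c)\ge \dim F_n^c - c_0 n^{c-1}\ge \kappa_1 n^c - c_0 n^{c-1}\ge \delta n^{c-1}$ once $n\ge c$ and we absorb constants (using Lemma \ref{27.4} for the lower bound $\dim F_n^c>\kappa_1 n^c$, which dominates $c_0 n^{c-1}$). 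For the diagonal (non-off-diagonal) nonscalar case, $A$ has two distinct diagonal eigenvalues $\lambda_p\ne\lambda_q$ on generators $x_p,x_q$; then $\widetilde A$ on degree-$c$ monomials acts by the product of the eigenvalues of the generators occurring (with multiplicity), and monomials involving only $x_p$ have eigenvalue $\lambda_p^c$ while those involving only $x_q$ have $\lambda_q^c\ne\lambda_p^c$, and more generally the eigenvalues take many distinct values; picking a single eigenvalue $\mu$ that occurs on a subspace of dimension $O(n^{c-1})$ but \emph{not} on the full space gives, via $\rank_{\widetilde A}(F_n^c)\le \dim F_n^c-\dim(\text{other eigenspace})$... — actually here I want the opposite: I want a \emph{large} eigenspace for the rank bound. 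Since $\rank_{\widetilde A}(F_n^c) = \dim F_n^c - \max_\mu \dim\ker(\widetilde A-\mu I)$ is false in general (rank isn't $\dim - \dim\ker$ of a single eigenvalue) — I should use instead that $\rank_{\widetilde A}(V)\le \dim V - \dim\ker(\widetilde A-\mu I)$ for any chosen $\mu$, since each cyclic summand on which $\widetilde A$ is the scalar $\mu$ contributes $1$ to the rank but has a $1$-dimensional $\mu$-eigenspace; precisely, $\rank_{\widetilde A}(V)\le \dim V - (\text{number of diagonal }\mu\text{-blocks})$ and a $\mu$-eigenvector lies in each, so $\rank_{\widetilde A}(V)\le \dim V-\dim\ker(\widetilde A-\mu I)$ does hold when $\widetilde A-\mu I$ is semisimple on the $\mu$-part, which here it is (the grading diagonalizes the diagonal part). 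Taking $\mu=\lambda_p^c$ and $W=$ span of degree-$c$ monomials in $x_p$ and the $n-2$ generators $\ne x_p,x_q$ on which $\widetilde A$ reduces to $\lambda_p^c I$ — no, still need off-diagonal entries of $A$ to vanish on those; after the single-entry reduction the diagonal case has $N$ purely diagonal so this is automatic — gives $\dim\ker(\widetilde A-\mu I)\ge \dim W\ge\kappa_1 n^c$ minus corrections, again $\ge \delta n^{c-1}$.

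\textbf{Main obstacle.} The delicate point is the bookkeeping translating ``$A$ not scalar'' into a \emph{guaranteed} rank drop of the right order on $F_n^c$, rather than just nonzero: I must ensure the perturbation does not get annihilated passing from degree $1$ to degree $c$ (this is exactly where $\V\not\subseteq\fN_{c-1}$ and Lemma \ref{lRelX} are essential — some degree-$c$ monomial must survive), and I must verify that the subspace of degree-$c$ monomials on which $\widetilde A-\mu I$ vanishes has codimension only $O(n^{c-1})$, which requires the combinatorial estimate that monomials ``touching a fixed generator in a fixed slot-pattern'' number $O(n^{c-1})$ while the total is $\asymp n^c$ — this is a direct counting argument in the spirit of Lemma \ref{27.4}, but assembling it uniformly over the two operator types and over all nonscalar $A$, after the single-entry reduction, is where care is needed.
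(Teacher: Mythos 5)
Your argument hinges on the inequality $\rank_{\wA}(V)\le \dim_\F V-\dim_\F\ker(\wA-\mu I)$, i.e.\ on the idea that exhibiting a large subspace on which $\wA$ acts as a scalar $\mu$ forces $\dim_\F F_n^c-\rank_{\wA}(F_n^c)$ to be large. This is backwards. In the paper $\rank_B(V)$ is the number of cyclic summands in the invariant-factor decomposition (the minimal number of module generators), so each cyclic summand meets any eigenspace in dimension at most one and hence $\rank_B(V)\ge\dim_\F\ker(B-\mu I)$ for every $\mu$; in particular $\rank_{\lambda I}(V)=\dim_\F V$, not $0$ as you write. Consequently a large $\mu$-eigenspace gives an \emph{upper} bound on $\dim_\F V-\rank_B(V)$, not a lower one; your inequality already fails for the semisimple operator $B=\mathrm{diag}(\mu,\mu,\mu,\nu)$, where $\rank_B(V)=3$ and $\dim_\F V-\dim_\F\ker(B-\mu I)=1$, so the caveat about semisimplicity does not save it. The error also shows up quantitatively in your first case: for $A=\lambda I+E_{ij}$ the operator $\wA-\lambda^cI$ has \emph{linear} rank $O(n^{c-1})$, hence $\rank_{\wA}(F_n^c)\ge\dim_\F\ker(\wA-\lambda^c I)\ge\dim_\F F_n^c-O(n^{c-1})$, so $\dim_\F F_n^c-\rank_{\wA}(F_n^c)$ is only $O(n^{c-1})$; your conclusion that it is of order $n^c$ (equivalently that $\rank_{\wA}(F_n^c)\le c_0 n^{c-1}$) is false, which shows the strategy cannot be repaired by better bookkeeping. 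Your preliminary ``single-entry reduction'' is also unsupported, since $\dim_\F F_n^c-\rank_{\wA}(F_n^c)$ has no evident monotonicity under changing one entry of $A$, and an arbitrary non-scalar $A$ is not of that form.

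What is actually needed is the opposite kind of estimate: for \emph{every} scalar $\mu$ the kernel of $\wA-\mu I$ must have codimension at least $\delta n^{c-1}$. The paper gets this by producing a subspace $U\subset F_n^c$ with $\dim_\F U\ge\delta n^{c-1}$, $\dim_\F\wA(U)=\dim_\F U$ and $U\cap\wA(U)=\{0\}$; then $(\wA-\mu I)|_U$ is injective for all $\mu$, so $\dim_\F\ker(\wA-\mu I)\le\dim_\F F_n^c-\dim_\F U$ for every $\mu$ (irreducible factors of degree $\ge 2$ are disposed of by a dimension count), whence $\rank_{\wA}(F_n^c)\le\dim_\F F_n^c-\dim_\F U$. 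The non-scalarity of $A$ enters through the invariant-factor decomposition of $F_n/F_n^2$: some cyclic summand has dimension $t\ge2$, giving free generators $y_1,\dots,y_n$ with $A(\bar y_1)=\bar y_2$; then $U$ is spanned by the $\ge\binom{n-3}{c-1}$ degree-$c$ monomials containing $y_1$ but not $y_2,y_t$, and $\wA$ sends these to combinations of monomials not containing $y_1$ (in the derivation case the unique term containing $y_2$ plays this role), which yields both $U\cap\wA(U)=\{0\}$ and $\dim_\F\wA(U)=\dim_\F U$. Your proposal contains no valid bridge from eigenspace information to the module rank, and that bridge is the heart of the lemma.
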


\begin{proof}

It is enough to find a subspace $U$ in $F_n^c$ such that $U\cap \wA(U)=\{ 0\}$ and $\dim_\F \wA(U)>\delta n^{c-1}$ for a positive $\delta$ depending only on $c$. In order to do this, we consider the decomposition  $F_n/F^2_n= V_1\op \cdots\op V_r$ as the direct sum of cyclic $\wA$-submodules $V_1,\ld,V_r$, as in Proposition \ref{tFGMPID},  such that at least one of them, say  $V_1$, has dimension $t\ge 2$. If $e$ is a generator of $V_1$ and $A$ is not scalar, then $\{ e_1=e, e_2=A(e),\ld, e_t=A^{t-1}(e)\}$ is a basis of $V_1$. Let us complement  this basis of $V_1$ to the basis $\{ e_1,\ld,e_n\}$ of $ F_n/F_n^2$ by the vectors from the subspaces $V_2,\ld,V_r$.

We denote by $y_1,\ld, y_n$ the preimages of $e_1,\ld,e_n$ in $F_n$ under the natural homomorphism $\ve: F_n\to F_n/F_n^2$. We know (see Corollary \ref{cGFS}) that these elements freely generate $F_n$.  Since $F_n$ is $c$-step nilpotent, one can find a nonzero monomial $w$, with some placement of brackets, of degree $c$ in $y_1,\ld,y_c$. If we replace in $w$ the variables $y_1,\ld,y_c$ by any other set of pairwise different variables $y_{\mu(1)},\ld,y_{\mu(c)}$ then we will obtain a linearly independent (thanks to Lemma \ref{L1.3}) set of monomials $\{ w_\mu\}$ in $F_n^c$, for pairwise different subsets of the form $\{\mu(1),\ld,\mu(c)\}$.  Let us consider the span $U$ of those $w_\mu$  which include $y_1$ but not $y_2$ or $y_t$. We have $\dim_\F U\ge \binom{n-3}{c-1}$. Clearly, for some $\delta=\delta(c)>0$ and all $n\ge c+3$ we have   $\binom{n-3}{c-1}>\delta n^{c-1}$.

Let us apply $\wA$ to $w_\mu$ in $U$. We first consider the case where $A$ and $\wA$ are induced by an automorphism. The following is well-defined: 
\[
\wA(w_\mu)=\wA(w(y_{\mu(1)},\ld,y_{\mu(c)}))= w(\ve^{-1}(A(e_{\mu(1)})),\ld,\ve^{-1}(A(e_{\mu(c)})).
\]
We have $A(e_i)=e_{i+1}$, $i=1,2,\ld,t-1$, and $A(e_j)\in U_2\op\cdots\op U_r$, if $j=t+1,\ld,n$. It then follows that $\ve^{-1}(A(e_{i}))=y_{i+1}+F_n^2$, $i=1,2,\ld,t-1$, and $\ve^{-1}(A(e_j))=v_j+F_n^2$ where $v_j$ does not depend on the variables $y_1,\ld,y_t$, if $j=t+1,\ld,n$.
As a result, the elements $\wA(w_\mu)$ are well-defined  linear combinations of the monomials with the same placement of brackets as in $w$, not depending on $y_1$. We have $\dim_\F \wA(U)=\dim_\F U\ge\delta n^{c-1}$. At the same time, in $U$, all spanning monomials depend on $y_1$. Moreover, $U\cap \vp(U)=\{ 0\}$ because the endomorphism of $F_n$ mapping $y_1$ into 0 and leaving other variables fixed annihilates $U$ but leaves $\vp(U)$ invariant. So $U\cap\wA(U)=\{ 0\}$, as needed.

Now assume that $A$ and $\wA$  are induced by a derivation $D$. Note that if $w_\nu=w(y_{\nu(1)},\ld, 
y_{\nu(c)})$ then 
\begin{eqnarray}\label{e34.6}
\wA(w_\nu)&=&\sum_{j=1}^c w(y_{\nu(1)},\ld,D(y_{\nu(j)}),\ld, y_{\nu(c)})\\&=&\sum_{j=1}^c w(\ve^{-1}(e_{\nu(1)}),\ld,\ve^{-1}(A(e_{\nu(j)})),\ld, \ve^{-1}(e_{\nu(c)})).\nonumber
\end{eqnarray}

There is only one monomial, let us denote this by $w_\nu'$, on the rightmost side of (\ref{e34.6}), depending on $y_2$. Since $y_1,\ld,y_c$ are free generators, $w_\nu'=w_\mu'$ if and only if $\mu=\nu$. The variables in $w'_\nu$ are the same as in $w_\nu$, except that $y_1$ is replaced by $y_2$.  It follows that $\vp(U)$ has the same dimension as $U$. As in the case of the automorphisms, $U\cap D(U)=\{ 0\}$ or $U\cap\wA(U)=\{ 0\}$. The proof is compete.
\end{proof}

We now proceed with an auxiliary section, placed for the reader's convenience. In the whole of Section \ref{ssSFAG} the base field is algebraically closed of arbitrary characteristic.

\section{Some Algebraic Geometry}\label{ssSFAG}

For the reader's convenience, we list some basic definitions and results from the first chapter of the book \cite{IRS}. 

\subsection{Basic notions}\label{ssBN} Let $V$ be  a finite-dimensional vector  space  over an algebraically closed field $\F$. Given $0\ne v\in V$, one denotes by $\PS(v)$ the one-dimensional subspace of $V$ containing $v$. The \textit{projective space} $\PS(V)$ built on $V$ is the set of all $\PS(v)$. One has $\PS(u)=\PS(v)$ iff there is $\lambda\in\F$ such that $v=\lambda u$. Thus, given a basis $\mathcal{B}=\{e_0,\ld,e_{n}\}$ in $V$, the point $\PS(v)$ is defined by  its \textit{projective coordinates}, which is the equivalence class of non-zero tuples $(\alpha_0:\ld:\alpha_{n})$ such that  $\alpha_0e_0+\cdots+\alpha_{n}e_{n}\in\PS(v)$. The tuple $(\beta_0:\ld:\beta_{n})$ defines the same point iff  there exists $\lambda\ne 0$ such that 
\begin{equation}\label{eP}\alpha_0=\lambda\beta_0,\ld,\alpha_{n}=\lambda\beta_{n}.
\end{equation}

If $\alpha:U\to V$ is an isomorphism of vector spaces then $\alpha$ establishes a bijection between one-dimensional subspaces of $U$ and $V$, which is denoted by $\PS(\alpha)$. If $U=V$ then the set of all $\PS(\alpha)$ with $\alpha\in\GL(V)$ is a group $\PGL(V)$, which is isomorphic to the factor-group of $\GL(V)$ by its center. 

The (standard) projective space $\PS^n$ is the set of equivalence classes of nonzero tuples $(\alpha_1:\ld:\alpha_{n+1})$ under the equivalence relation given by (\ref{eP}).

The projective space $\PS^n$ is a topological space under the \textit{Zariski topology}. A closed subset $\X\subset \PS^n$ in this topology is defined as the set of solutions of a system of homogeneous polynomial equations with respect to the projective coordinates of the points in $\textbf{P}^n$. One calls  $\X$ a \textit{projective variety}. The variety $\X$ is called \textit{irreducible} if $\X$ is not the union of its proper projective varieties. An open subset $\Y$ of a projective variety is called a \textit{quasi-projective variety}. One calls $\Y$ \textit{irreducible} if its closure is irreducible.

A finite union of quasi-projective varieties in a projective space $\textbf{P}^n$ is called a \textit{constructible} set, or simply a \textit{variety}; it does not need to be a quasi-projective variety.

A map $f$  of an irreducible quasi-projective variety $\X$ to a variety $\Y\subset \PS^n$  is a function given by homogeneous monomials of the same degree on the coordinates of the points of an open subset of $\X$. The image $f(\X)$ of a quasi-projective variety $\X$ under a  map $f$ does not need to be quasi-projective. By \cite[Exercises 3.18]{HRT}, $f(\X)$ is a constructible set.

According to \cite[1.6.2]{IRS}, Corollary 3, the dimension $\dim\X$ of a projective variety $\X$ can be defined as the 
maximal integer $n$ for which there exists a strictly decreasing chain 
\[
\Y_0\supset \Y_1\supset\cdots\supset 
\Y_n \supset \varnothing
\]
of length $n$ of irreducible subvarieties $\Y_i\subset\X$. The dimension of any subset can be defined as the dimension of its closure in the Zariski topology.  If a constructible variety $\X$ is a finite union of quasi-projective varieties $\X=\X_1\cup\ldots\cup\X_n$ then $\dim\X=\max\{\dim\X_1,\ld,\dim\X_n\}$. 

{\sc Note.} We reserve the notation $\dim\X$ for the dimension of a \textit{variety} $\X$; we keep using  $\dim_\F V$ for the dimension of a \textit{vector space} $V$ over a field $\F$.
 
\begin{Remark}\label{rCartProd}The \textit{Cartesian product} $\textbf{P}^n\times \textbf{P}^m$ becomes a quasi-projective variety via Segr\' e embedding $\vp:\textbf{P}^n\times \textbf{P}^m\to \textbf{P}^N$ where $N=(n+1)(m+1)-1$ (see \cite[1.5.1]{IRS}). If $x=(u_0:\ld:u_n)\in\textbf{P}^n$ and $y=(v_0:\ld:v_m)\in\textbf{P}^m$ then explicit formula is $\vp(x,y)= (w_{ij})\in \textbf{P}^N$, where $w_{ij}=u_iv_j$, for all $0\le i\le n$, $0\le j\le m$. Using Segr\' e embedding, one gives the structure of a quasi-projective subvariety in $\textbf{P}^N$ to $\X\times\Y$ for any quasi-projective varieties $\X\subset \textbf{P}^n$ and $\X\subset \textbf{P}^m$.
\end{Remark}  

If $\dim_\F V=n$ then $\dim\GL(V)=n^2$ while $\dim\PGL(V)=n^2-1$. One also writes $\GL(V)=\GL(n)$ and $\PGL(V)=\PGL(n)$.

Some properties of the dimension are listed in the following. They all can be found in \cite[Chapter 1, \S\S 4-6]{IRS}.

\begin{Lemma}\label{LSH}  Let $\X$ and $\Y$ be two varieties, $\X$ irreducible. Then the following are true.
\begin{enumerate}
\item[\rm(a)]  If $\Y$ is an open subset in $\X$ then $\dim\Y=\dim\X$. If $\Y$ is a proper closed subset of  $\X$ then $\dim\Y<\dim\X$. 

\item[\rm(b)] Let $f:\X\to\Y$ be a  map of irreducible varieties. Then the fiber $f^{-1}(y)$ over each $y=f(x)$, $x\in\X$, is always a variety. Suppose the dimensions of all these fibers are bounded from below by some $r$. Then $\dim f(\X)\le \dim\X-r$. 
\item[$\mathrm{(b^\prime)}$] If $f:\X\to \Y$ is a surjective  map of  irreducible varieties, $\dim\X=n, \dim \Y=m$. Then $n\ge m$ and there exists a nonempty open subset $U\subset \Y$ such that $\dim f^{-1}(y) = n-m$ for $y\in U$.
 
\item[\rm(c)] If $\X$ and $\Y$ are irreducible varieties then $\X\times \Y$ is irreducible, also $\dim \X\times \Y =\dim \X+\dim \Y$;$\hfill\Box$ 
\end{enumerate}
\end{Lemma}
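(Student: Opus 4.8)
All four assertions are the standard dimension facts for quasi-projective varieties over an algebraically closed field, and the plan is to deduce them from two inputs that I would quote from \cite[Chapter 1, \S\S 4-6]{IRS}: the description of the dimension of an \emph{irreducible} variety $\X$ as the transcendence degree of its function field over $\F$, and the fibre-dimension theorem for a dominant map, which is essentially (b$'$) itself. I would establish (a) first, then (b$'$), and then obtain (b) and (c) from it. For (a): if $\Y\subseteq\X$ is nonempty and open then, $\X$ being irreducible, $\Y$ is dense, so $\overline{\Y}=\X$ and $\dim\Y=\dim\overline{\Y}=\dim\X$ directly from the definition of the dimension of a subset; if $\Y\subsetneq\X$ is a proper closed subset, pick an irreducible component $\Y_0$ of $\Y$ with $\dim\Y_0=\dim\Y=:d$ together with a chain $\Y_0\supsetneq\Y_1\supsetneq\cdots\supsetneq\Y_d$ of irreducible subvarieties realizing it, and note that since $\Y_0\subseteq\Y\subsetneq\X$ with $\X$ irreducible, prepending $\X$ gives a strictly longer chain inside $\X$, whence $\dim\X\ge d+1>\dim\Y$.

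The substance is in (b$'$), for which I would reproduce or simply cite the argument of \cite[Chapter 1, \S 6]{IRS}: for a dominant map $f\colon\X\to\Y$ of irreducible varieties the field extension $\F(\Y)\hookrightarrow\F(\X)$ has transcendence degree $n-m$; choosing a transcendence basis exhibits $f$, over a suitable nonempty open subset, as the composite of a dominant generically finite map $\X\to\Y\times\mathbb{A}^{\,n-m}$ with the projection onto $\Y$, and a dimension count on the finite generic fibres of the first map together with the openness of the locus where the fibre has the expected dimension yields a nonempty open $U\subseteq\Y$ with $\dim f^{-1}(y)=n-m$ for all $y\in U$; in particular $n\ge m$. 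For a general map $f$ one first replaces $\Y$ by $\overline{f(\X)}$, which is irreducible because $\X$ is.

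Given (b$'$), part (b) follows by applying it to $f\colon\X\to\overline{f(\X)}$ (shrinking the good open set, if need be, so that it lies inside $f(\X)$): there is a nonempty open $U\subseteq f(\X)$ with $\dim f^{-1}(y)=\dim\X-\dim\overline{f(\X)}=\dim\X-\dim f(\X)$ for $y\in U$, the last equality being the definition of the dimension of the constructible set $f(\X)$; since by hypothesis each of these fibres has dimension $\ge r$, we get $\dim f(\X)\le\dim\X-r$. For (c), irreducibility of $\X\times\Y$ is the classical covering argument: if $\X\times\Y=Z_1\cup Z_2$ with $Z_1,Z_2$ closed, then each slice $\{x\}\times\Y\cong\Y$ is irreducible and hence lies in $Z_1$ or in $Z_2$, the sets $\X_i=\{x\in\X:\{x\}\times\Y\subseteq Z_i\}=\bigcap_{y\in\Y}\{x:(x,y)\in Z_i\}$ are closed and cover $\X$, so one of them equals $\X$ and the corresponding $Z_i$ is all of $\X\times\Y$; the dimension formula then follows by applying (b$'$) to the first projection $\pi\colon\X\times\Y\to\X$, whose fibres are all isomorphic to $\Y$ and so of constant dimension $\dim\Y$ (equivalently, $\operatorname{trdeg}_\F\F(\X\times\Y)=\operatorname{trdeg}_\F\F(\X)+\operatorname{trdeg}_\F\F(\Y)$).

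The only genuine obstacle is (b$'$): it is the one part that is not mere bookkeeping from the definitions, and its proof really needs the transcendence-degree picture of dimension together with Noether normalization (equivalently, surjectivity and finiteness of fibres of finite maps). Since this lemma is used in the sequel only as a black box, in the actual write-up I would, like the authors, simply cite \cite[Chapter 1, \S\S 4-6]{IRS} for all four parts rather than reproduce these proofs.
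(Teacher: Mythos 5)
Your proposal is correct and matches the paper's treatment: the paper offers no proof of Lemma \ref{LSH} at all, simply citing \cite[Chapter 1, \S\S 4--6]{IRS} for all four parts, which is exactly what you conclude should be done. Your supporting sketches (density for (a), the fibre-dimension theorem for (b$'$), reduction of (b) to (b$'$) via $\overline{f(\X)}$, and the slice argument plus projection for (c)) are the standard arguments and are sound.
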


Now we can introduce the following central

\begin{Definition}\label{dGeneric} Let $\cS$ be a subset of a projective space $\textbf{P}^n$. We say that certain property is \textit{generic} in $S$ if the following holds. Denote by $T$ the subset of those points of $S$ which have this property. Then $T$ contains a subset which is a quasi-projective variety of some dimension $k$ while $S\setminus T$ is contained in a projective variety of dimension $<k$.
\end{Definition}

\subsection{Grassmann varieties}\label{sssGV}
Our main type of varieties  will be \textit{Grassmann varieties}, their products and images under the maps of varieties. For the proofs of the facts given below see \cite[Chapter 1, \S 6, Example 5]{IRS}.

Given a vector space $V$ with $\dim_\F V=n$ and a number $r$, $0\le r\le n$, one considers the projective space $\textbf{P}(\Lambda^r(V))$ where $\Lambda^r(V)$ is the homogeneous component of degree $r$ in the Grassman algebra $\Lambda(V)$ of the vector space $V$. The nonzero elements of the form $u_1\wedge\ld\wedge u_r\in \Lambda^r(V)$, where $u_1,\ld, u_r\in V$, are called \textit{decomposable}. The points $\PS(u_1\wedge\ld\wedge u_r)$ form a Zariski closed subset $\mathrm{Grass}(r,V)$ in $\textbf{P}(\Lambda^r(V))$, hence a projective variety. The variety $\Grass(r,V)$ is irreducible and
\begin{equation}\label{dimGr}
\dim\Grass(r,V)=r(n - r),\mbox{ where }n=\dim_\F V.
\end{equation} 

The points of the projective variety $\mathrm{Grass}(r,V)$ are in one-one correspondence with $r$-dimensional subspaces of $V$. The correspondence is given by $\Sp\{u_1,\ld,u_r\}\mapsto \PS(u_1\wedge\ld\wedge u_r)$.  Thus the set of $r$-dimensional subspaces $U$ in an $n$-dimensional space $V$ acquires the structure of a projective algebraic variety. One keeps the notation $\Grass(r,V)$ for this variety and calls this the \textit{Grassmann variety of $r$-dimensional subspaces in an $n$-dimensional space}. 

If $\mathcal{B}=\{e_1,\ld,e_{n+1}\}$ is a basis of $V$ then the elements of the form $e_{i_1}\wedge\cdots\wedge e_{i_r}$ with $1\le i_1<\ld<i_r\le n$ form a basis of $\Lambda^r(V)$. The projective coordinates of $\PS(u_1\wedge\cdots\wedge u_r)$ with respects to this basis are thus the projective coordinates of $U=\Sp\{u_1,\ld,u_r\}$ in the Grassmann variety $\Grass(r,V)$. They are called the \textit{Pl\"ucker coordinates} of $U$.
 
In the proofs about the properties of the maps of various constructions (unions, Cartesian products, etc.) of Grassmann varieties we will be using the following property mentioned in \cite[Example 1 in Section 4.1]{IRS}. 

\begin{Lemma}\label{lCartPluck} Let $\mathcal{B}=\{e_1,\ld,e_{n}\}$ be a basis of $V$. Let the Pl\" ucker coordinates in $\Grass(r,V)$ be defined via $\mathcal{B}$, as above. Then there exist homogeneous rational functions of degree zero with the following property. In every $r$-dimensional subspace $U$, there is a basis $\{ u_1,\ld,u_r\}$ such that the coordinates of the vectors of this basis with respect to $\mathcal{B}$ are the values of the above functions in the Pl\" ucker coordinates of $U$.
$\hfill\Box$
\end{Lemma}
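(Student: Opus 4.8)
The plan is to realize the Pl\"ucker coordinates as the maximal minors of a coordinate matrix of $U$, and then, on each standard affine chart of $\Grass(r,V)$, to pass to the row-echelon basis of $U$ adapted to a nonvanishing Pl\"ucker coordinate. Cramer's rule will express that basis through ratios of $r\times r$ minors, and these ratios are precisely homogeneous rational functions of degree zero in the Pl\"ucker coordinates.

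First I would fix an $r$-dimensional subspace $U$ and an arbitrary basis $\{v_1,\ld,v_r\}$ of it, forming the $r\times n$ matrix $A=(a_{ij})$ with $v_i=\sum_j a_{ij}e_j$; it has rank $r$. For a multi-index $I=\{i_1<\cdots<i_r\}$ write $A_I$ for the $r\times r$ submatrix on the columns indexed by $I$. As is standard, the scalars $\det A_I$ are, up to a common nonzero factor depending only on the chosen basis of $U$, the Pl\"ucker coordinates $p_I(U)$ of $U$ relative to $\mathcal{B}$: this is just the identity $v_1\wedge\cdots\wedge v_r=\sum_I (\det A_I)\, e_{i_1}\wedge\cdots\wedge e_{i_r}$, modulo fixing the sign conventions for the basis of $\Lambda^r(V)$.

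Second, since $\rank A=r$, at least one $p_I(U)\ne 0$; fix such an $I$, so $U$ lies in the chart $\cU_I=\{p_I\ne 0\}$, and these charts cover $\Grass(r,V)$. On $\cU_I$ the submatrix $A_I$ is invertible, and $B:=A_I^{-1}A$ represents a basis $\{u_1,\ld,u_r\}$ of $U$ (its rows), uniquely determined by the normalization $B_I=\mathrm{Id}_r$. By Cramer's rule, the $(k,j)$ entry of $B$ equals $\det(A_I^{(k\to j)})/\det A_I$, where $A_I^{(k\to j)}$ denotes $A_I$ with its $k$-th column replaced by the $j$-th column of $A$; sorting the column index set $(I\setminus\{i_k\})\cup\{j\}$ into increasing order identifies this numerator, up to an explicit sign, with the minor $p_{(I\setminus\{i_k\})\cup\{j\}}(U)$ (and it is simply $0$ or $1$ when $j\in I$). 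Hence every entry of $B$ — that is, every coordinate with respect to $\mathcal{B}$ of each vector of the echelon basis $\{u_1,\ld,u_r\}$ — is $\pm\, p_{I'}(U)/p_I(U)$ for a suitable multi-index $I'$, which is a homogeneous rational function of degree zero in the Pl\"ucker coordinates. The finitely many such functions, one collection $\Psi_I$ for each chart $\cU_I$, then have the required property, since every $U$ belongs to some $\cU_I$.

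The only genuine work is bookkeeping: checking that $\det A_I$ is the Pl\"ucker coordinate $p_I$ in the sign normalization chosen for $\Lambda^r(V)$, and tracking the sign produced by reordering $(I\setminus\{i_k\})\cup\{j\}$. The conceptual point — that passing from a representing matrix to its $I$-echelon form is governed by Cramer's rule, hence by ratios of maximal minors — is exactly what forces the resulting functions to be homogeneous of degree zero, so I do not anticipate any obstacle beyond this routine verification.
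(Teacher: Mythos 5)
Your argument is correct and is exactly the standard chart-by-chart Cramer's-rule argument that the paper itself relies on: the paper gives no proof of this lemma, citing instead the corresponding example in Shafarevich, where the coordinates of the normalized basis on the chart $p_I\ne 0$ are recovered as ratios $\pm\,p_{I'}/p_I$ of maximal minors, just as you do. The only caveat, which you already handle, is that the rational functions depend on the chart, and this local (open-cover) form is precisely how the lemma is used later in the paper.
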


\begin{Lemma}\label{lproduct} Let $W$ be a vector space, $\dim_\F W=n$. Consider the Grassmannians $\X=\Grass(s,W)$ and $\Y=\Grass(t,W)$, where $s+t\le n$. Then there is an open subset $\cO$ of $\fZ=\X\times \Y$ consisting of the pairs $(U,V)$ with $U\cap V=\{ 0\}$ on which the map $(U,V)\to U\oplus V$ is a map of varieties from $\cO$ to $\Grass(s+t,W)$.
\end{Lemma}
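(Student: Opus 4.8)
Here is my plan.

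\textbf{Setup.} The statement asserts two things: that the subset $\cO$ of pairs $(U,V)\in\fZ=\X\times\Y$ with $U\cap V=\{0\}$ is open in the variety $\fZ$, and that on $\cO$ the assignment $(U,V)\mapsto U\oplus V$ is a map of varieties (in the sense defined in the excerpt, i.e.\ given locally by homogeneous monomials of the same degree in the coordinates). Recall that $\fZ=\X\times\Y$ already carries the structure of a quasi-projective variety via the Segr\'e embedding (Remark \ref{rCartProd}), and $\Grass(s+t,W)$ is a projective variety in $\textbf{P}(\Lambda^{s+t}(W))$.

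\textbf{Openness of $\cO$.} First I would fix a basis $\mathcal{B}=\{e_1,\ldots,e_n\}$ of $W$ and use the Pl\"ucker coordinates on $\X$ and $\Y$. For a pair $(U,V)$ with Pl\"ucker coordinates $(p_I)_{|I|=s}$ and $(q_J)_{|J|=t}$, the condition $U\cap V\ne\{0\}$ is equivalent to $\dim(U+V)<s+t$, i.e.\ to the vanishing of all $(s+t)\times(s+t)$ minors of the $2\times$-something configuration; more efficiently, $U\oplus V=W'$ spans an $(s+t)$-dimensional space iff the wedge $u_1\wedge\cdots\wedge u_s\wedge v_1\wedge\cdots\wedge v_t$ is nonzero in $\Lambda^{s+t}(W)$, where $\{u_i\}$ and $\{v_j\}$ are bases of $U$ and $V$. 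By Lemma \ref{lCartPluck}, one can choose such bases whose $\mathcal{B}$-coordinates are homogeneous rational functions (of degree zero) in the Pl\"ucker coordinates $(p_I)$ and $(q_J)$ respectively. Hence the coordinates of the wedge $u_1\wedge\cdots\wedge u_s\wedge v_1\wedge\cdots\wedge v_t$ in the standard basis of $\Lambda^{s+t}(W)$ are rational functions, homogeneous of some fixed bidegree, in $(p_I)$ and $(q_J)$. The complement of $\cO$ is the locus where all these functions vanish, which is closed (after clearing denominators, or working on the affine charts of the Segr\'e image); therefore $\cO$ is open in $\fZ$.

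\textbf{The map on $\cO$.} On $\cO$, by construction $u_1\wedge\cdots\wedge u_s\wedge v_1\wedge\cdots\wedge v_t$ is a nonzero decomposable element of $\Lambda^{s+t}(W)$ representing $U\oplus V$, and its coordinates are the Pl\"ucker coordinates of $U\oplus V$. So the map $(U,V)\mapsto U\oplus V$ is given, in the Pl\"ucker/Segr\'e coordinates, by these homogeneous rational functions; clearing a common denominator we get homogeneous monomials (polynomials) of the same degree in the coordinates of $\fZ$, valid on the open set where that denominator does not vanish --- and these open sets cover $\cO$. This is exactly the definition of a map of (quasi-projective) varieties from $\cO$ to $\Grass(s+t,W)$, so we are done.

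\textbf{Main obstacle.} The only delicate point is bookkeeping: checking that the rational functions furnished by Lemma \ref{lCartPluck} combine under the wedge product into functions that are genuinely homogeneous of one fixed multidegree in the Segr\'e coordinates $w_{IJ}=p_I q_J$, and that the loci where denominators vanish are harmless (they are contained in, or equal to, $\fZ\setminus\cO$, or can be cleared by passing to a finite open cover). None of this is deep --- it is the standard verification that the direct-sum operation is a morphism of Grassmannians on the disjointness locus --- but it requires care with the charts and with the homogeneity degrees; everything else follows formally from the cited facts about Pl\"ucker coordinates and the Segr\'e embedding.
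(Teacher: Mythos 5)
Your proposal is correct and follows essentially the same route as the paper: both pass through the Segr\'e embedding, recover Cartesian bases of $U$ and $V$ from their Pl\"ucker coordinates via Lemma \ref{lCartPluck}, and observe that the wedge of the two bases yields the Pl\"ucker coordinates of $U\oplus V$, with $\cO$ cut out by the non-vanishing of that wedge (equivalently, of the $(s+t)\times(s+t)$ minors of the coordinate matrix). The only point the paper includes that you omit is the one-line remark that $\cO$ is nonempty, which is immediate from $s+t\le n$.
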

\begin{proof}  We may assume that $\X$ and $\Y$ are embedded in projective spaces $\textbf{P}^k=\textbf{P}(\Lambda^s(W))$ and $\textbf{P}^\ell=\textbf{P}(\Lambda^t(W))$, respectively. Then $\fZ$ is canonically embedded in $P^N$, where $N=(k+1)(\ell+1)-1$. By Remark \ref{rCartProd},  the homogeneous coordinates $w_{ij}$ of the point $(U,V)$ are expressed as homogeneous polynomials of degree 2 in the (Pl\" ucker) coordinates of $U$ and $V$. From these formulas one can see that, conversely, the coordinates of the points $U$ and $V$ can be obtained as ratios of the coordinates $w_{ij}$ on an open subset, for example, the subset of the points where $w_{00}\ne 0$.  

If $e_1\ld,e_n$ is a basis in $W$ then by Lemma \ref{lCartPluck} the Cartesian coordinates of a certain basis of $U$ (also $V$) can be rationally expressed in terms of the Pl\" ucker coordinates of $U$ (also $V$).

If $U\cap V=\{0\}$, using the Cartesian bases for $U$ and $V$, one can form the Cartesian basis $b_1,\ld,b_{s+t}$ for $U+V$. The coordinates of the vectors of this basis are used to compute the coordinates of $b_1\wedge\ldots\wedge b_{s+t}$ in the basis $\{ e_{i_1}\wedge\ldots\wedge e_{i_{s+t}}\,|\,i_1<\ld< i_{s+t}\}$ of $\Lambda^{s+t}(W)$. These coordinates are the homogeneous (Pl\" ucker) coordinates of $U+V$ in the variety $\Grass(s+t,W)$.

The composition of the above maps is also given by homogeneous polynomials. Now we have to exclude the pairs $(U,V)$ where $U\cap V\ne\{ 0\}$. The intersection is nonzero if the span of the union of the bases for $U$ and $V$ has dimension $< s+t$. This condition is given by equating to zero all minors of size $s+t$ of the matrix composed of the Cartesian coordinates of the spanning vectors, in terms of the basis $e_1,\ld,e_n$. Using Lemma \ref{lCartPluck} once again, we obtain an equation in terms of Pl\"ucker coordinates which defines the open subset $\cO$. Note that $\cO$ is not empty because there do exist pairs $(U,V)$ with $U\cap V=\{ 0\}$.

\end{proof}

In what follows we will be dealing with the ideals of algebras, rather than subspaces. We denote by $\cJ_m(P)$ the set of $m$-dimensional ideals of a finite-dimensional algebra $P$ and by $\cJ(P)$ the disjoint union  $\cJ(P)=\bigcup_{m=1}^n\mathcal{J}_m(P)$ of all ideals of $P$. 

\begin{Lemma}\label{lIdeajs} Given an algebra $P$ with $\dim_\F P=n\le\infty$ and a natural number $m$, the set $\cJ_m(P)$ is  a closed subspace in $\Grass(m,P)$.
\end{Lemma}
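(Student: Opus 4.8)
The plan is to exhibit $\cJ_m(P)$ inside $\Grass(m,P)$ as the common zero locus of a finite family of homogeneous polynomial equations in the Pl\"ucker coordinates, so that it is Zariski-closed. First I would fix a basis $e_1,\ldots,e_n$ of $P$ and recall, via Lemma \ref{lCartPluck}, that on a suitable open cover of $\Grass(m,P)$ one can recover, from the Pl\"ucker coordinates of an $m$-dimensional subspace $U$, a basis $u_1,\ldots,u_m$ of $U$ whose coordinates in $e_1,\ldots,e_n$ are homogeneous rational functions of degree zero in those Pl\"ucker coordinates. The point $U$ lies in $\cJ_m(P)$ precisely when $U$ is a two-sided ideal, i.e.\ when $e_k u_j\in U$ and $u_j e_k\in U$ for all $j=1,\ldots,m$ and $k=1,\ldots,n$ (it suffices to test against the basis $e_1,\ldots,e_n$ of $P$, by bilinearity of the multiplication).

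The key step is to translate the membership conditions $e_k u_j\in U$ and $u_j e_k\in U$ into polynomial equations. Fix $j,k$. The vector $e_k u_j$ (computed using the structure constants of $P$ and the recovered coordinates of $u_j$) lies in $U=\Sp\{u_1,\ldots,u_m\}$ if and only if the $(m+1)\times n$ matrix whose rows are the coordinate vectors of $u_1,\ldots,u_m,e_k u_j$ has rank $\le m$, equivalently all its $(m+1)\times(m+1)$ minors vanish. Each such minor is a polynomial in the coordinate entries, hence — after clearing denominators using the degree-zero homogeneity from Lemma \ref{lCartPluck} — a homogeneous polynomial in the Pl\"ucker coordinates of $U$ on the given chart. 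Doing the same for $u_j e_k$ and ranging over all $j,k$ produces a finite system of homogeneous equations whose common zero set on each chart is exactly $\cJ_m(P)\cap(\text{chart})$. Since closedness is local, patching over the finitely many charts of the standard Pl\"ucker atlas shows $\cJ_m(P)$ is closed in $\Grass(m,P)$.

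A couple of routine points need care but present no real obstacle: the recovered basis is only defined on an open subset, so one must check that on overlaps the two descriptions define the same closed condition (they do, since ``$U$ is an ideal'' is a basis-independent property, and the minors for different choices of recovered basis differ by invertible rational factors on the overlap), and one must verify it suffices to test $e_k u_j\in U$ rather than $a u_j\in U$ for all $a\in P$ (immediate from linearity in the first argument). The genuinely load-bearing input is Lemma \ref{lCartPluck}, which guarantees that ``recover a basis of $U$ from its Pl\"ucker coordinates'' is given by honest rational functions of degree zero; granting that, the minor conditions are polynomial and the conclusion follows. The case $\dim_\F P=n\le\infty$: if $n$ is finite the above is complete; I would not expect the infinite case to require separate treatment here since $m$-dimensional ideals still sit inside finite-dimensional pieces, but in any event the statement as used later is for finite-dimensional $F_n$, so I would simply remark that the same minor argument applies verbatim once one works inside any finite-dimensional subspace containing $U$ together with its image under left and right multiplication by the relevant generators.
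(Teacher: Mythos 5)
Your proposal is correct and follows essentially the same route as the paper: the paper also fixes a basis of $P$, takes a basis of the candidate ideal, adjoins the rows of coordinates of $e_iu_j$ and $u_je_i$, and expresses the ideal condition as the vanishing of all minors of order $m+1$, which is polynomial in the Pl\"ucker coordinates. Your extra care about the chart-by-chart use of Lemma \ref{lCartPluck} and basis-independence only makes explicit what the paper's shorter argument leaves implicit.
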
 

\begin{proof}
If $e_1,\ld,e_n$ is a basis of $P$ and $u_1,\ld,u_s$ a basis in an ideal $K$ of $P$ then the matrix of coordinates of the latter basis in terms of the former one has rank $m$. If we add to this matrix the rows of coordinates of $e_iu_j$ and $u_je_i$, the rank of the enlarged matrix cannot grow and so all the minors of order $m+1$ must be zero. This necessary and sufficient condition  is polynomial in terms of the Pl\" ucker coordinates. It selects the subvariety of $m$-dimensional ideals in $\Grass(m,P)$.
\end{proof}

\section{Main results}\label{sMR}

In this chapter, the base field $\F$ is always assumed algebraically closed.

\subsection{Generic ideals}\label{ssINA}

Let us fix a $c$-step nilpotent primitive class $\V$, $c\ge  2$, and a  $\V$-free $c$-step nilpotent algebra $F_n=F_n(\V)$ of rank $n$. In this section, we shorten the notation $I_k(F_n)$ for the $k$th term of the annihilator series of $F_n$ to $I_k$ and write $d_k = \dim_\F I_k$. We often write $I$ in place of $I_1$. As noted just before Section \ref{sNAV}, the length of the annihilator series can be shorter than $c$. We denote by $h$ the least index such that $I_{h}=F_n$. We now introduce $\X_{i,s_i}=\Grass(s_i, I_{i+1})$, the Grassmann variety of subspaces of some dimension $s_i>0$ in the annihilator $I_{i+1}$, for each $i= 1,\ld,h-1$. We choose $s_i\le \dim_\F (I_{i+1}/I_i)$, for all $i=1,\ld,h-1$.

For each $U\in \X_{i,s_i}$ and each placement of brackets $\varpi\in \varPi$, with $|\varpi|=i$, we have defined a subspace $\varpi U$ just before Lemma \ref{27.24}. Since $U\subset I_{i+1}$, we have  $\varpi U\subset I_1=I$. 

\begin{Lemma}\label{lXsi} The map $U\mapsto \varpi U$, depending on the placement of brackets $\varpi$, is given by polynomials in the homogeneous coordinates on an open subset of the Grassmann variety of $s_i$-dimensional subspaces of the space $I_{i+1}$. It takes values in a Grassman variety $\Grass(t,I)$, for some $t$.
\end{Lemma}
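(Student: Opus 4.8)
The plan is to unwind the definition of $\varpi U$ inductively on $i=|\varpi|$ and at each step invoke the multiplication map on a Grassmannian, realized by polynomials in Pl\"ucker coordinates, exactly as in Lemma \ref{lproduct} and Lemma \ref{lCartPluck}. The base case is $\varpi$ empty, where $\varpi U = U$ and the assertion is trivial (the identity map is a map of varieties). For the inductive step, write $\varpi = \varpi'\tau$ with $\tau\in\{\lambda,\rho\}$ and $|\varpi'| = i-1$; by the inductive hypothesis, $U\mapsto \varpi' U$ is given by polynomials in the homogeneous (Pl\"ucker) coordinates on an open subset $\cO'$ of $\Grass(s_i, I_{i+1})$, landing in $\Grass(t', I)$ for some $t'$. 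So it suffices to show that for a fixed subspace $V$ of $F_n$, the maps $Z\mapsto F_n Z$ and $Z\mapsto Z F_n$ are maps of varieties $\Grass(t', I)\to\Grass(t, I)$ (with $t=\dim_\F F_nZ$, resp. $\dim_\F ZF_n$, generic on the relevant open set).

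To see that $Z\mapsto F_nZ$ is a map of varieties: fix a basis $e_1,\ld,e_n$ of $F_n$. By Lemma \ref{lCartPluck}, on an open subset of $\Grass(t', F_n)$ (or of $\Grass(t',I)$, which we may regard as a subvariety by Lemma \ref{lIdeajs} — though here we only need it as a subvariety of a Grassmannian of subspaces), every $t'$-dimensional subspace $Z$ has a distinguished basis $z_1,\ld,z_{t'}$ whose Cartesian coordinates in $e_1,\ld,e_n$ are homogeneous rational functions of degree zero in the Pl\"ucker coordinates of $Z$. Then $F_nZ$ is spanned by the $n t'$ products $e_k z_j$, whose coordinates are bilinear — hence, after substituting, rational of degree zero — in the structure constants of $F_n$ and the Pl\"ucker coordinates of $Z$. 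On the open locus where this spanning set has maximal rank $t := \max_Z \dim_\F F_nZ$, selecting a maximal independent subfamily and forming the wedge product gives the Pl\"ucker coordinates of $F_nZ$ as polynomials (after clearing denominators) in the Pl\"ucker coordinates of $Z$; that the target lies in $\Grass(t, I)$ follows because $Z\subset I_{i+1}\subset I_2$, so $F_nZ\subset I$ by the very definition of the annihilator series. The map $Z\mapsto ZF_n$ is handled symmetrically.

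Composing the two maps (inductive hypothesis followed by the single-step multiplication map) is again given by polynomials in homogeneous coordinates on the intersection of the respective open sets, which is open and nonempty; this is the map $U\mapsto \varpi U$, taking values in $\Grass(t, I)$ for the appropriate $t$. The only mild subtlety — the main thing to be careful about rather than a genuine obstacle — is the bookkeeping of the open subsets: at each inductive step one restricts to where the previous map is defined \emph{and} where the new spanning family attains its maximal rank, and one must check that this intersection stays nonempty; nonemptiness is clear because all the maps in question are defined on honest dense open sets of irreducible varieties, and a finite intersection of dense opens in an irreducible variety is again dense open. The homogeneity/degree-zero rational bookkeeping from Lemma \ref{lCartPluck} is exactly what guarantees that the composite is, after clearing denominators, given by genuine homogeneous polynomials in the Pl\"ucker coordinates, as required by the statement.
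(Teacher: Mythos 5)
Your ingredients are the same as the paper's: Lemma \ref{lCartPluck} to get a distinguished basis of the moving subspace whose Cartesian coordinates are degree-zero rational functions of its Pl\"ucker coordinates, spanning sets obtained by multiplying that basis by (monomials in) the free generators, and restriction to the open locus where the spanning family has maximal rank. The difference is organizational: you induct on $|\varpi|$ and compose one-step multiplication maps $Z\mapsto F_nZ$, $Z\mapsto ZF_n$, whereas the paper does it in a single step, spanning $\varpi U$ directly by elements such as $(v_1f'_j)v_2$ with $v_1,v_2$ monomials in the free generators arranged according to $\varpi$, and then applies the maximal-rank argument once, inside the irreducible variety $\X_{i,s_i}=\Grass(s_i,I_{i+1})$. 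The one-shot version is what lets the paper avoid the point where your write-up is shaky.

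The shaky point is the bookkeeping you flag but then dismiss. You set $t:=\max_Z\dim_\F F_nZ$ with $Z$ ranging over all of the intermediate Grassmannian, and justify nonemptiness of the composite's domain by ``a finite intersection of dense opens in an irreducible variety is dense open.'' But the two conditions do not live in one irreducible variety: the locus in $\Grass(t',\cdot)$ where $\dim_\F F_nZ$ attains that absolute maximum is dense open \emph{there}, while the image of $U\mapsto\varpi'U$ is only a constructible subset (typically of much smaller dimension), so that open locus may miss the image entirely and the pulled-back domain could be empty; as written, the argument does not establish what is needed. The repair is easy and standard: define $t$ as the generic (i.e.\ maximal) value of $\dim_\F F_n(\varpi'U)$ as $U$ runs over the open subset of the irreducible Grassmannian $\Grass(s_i,I_{i+1})$ on which the previous step is defined; this is an open, nonempty condition on $U$ (non-vanishing of suitable minors, expressed through the Pl\"ucker coordinates of $U$), and then the dense-open intersection argument is applied in $\Grass(s_i,I_{i+1})$, where it is valid. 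This is exactly what the paper's single-step construction does implicitly. One further small slip: for $|\varpi'|=j<i$ one only has $\varpi'U\subset I_{i+1-j}$, not $\varpi'U\subset I$, so the intermediate maps land in Grassmannians of subspaces of $I_{i+1-j}$ (or simply of $F_n$); only the final image lies in $I$, and since containment in a fixed subspace is a closed condition, the final map does factor through $\Grass(t,I)$ as required — harmless, but the inductive statement should be phrased accordingly.
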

\begin{proof} Let  $e_1,\ld,e_{d_{i+1}}$ be a basis of $I_{i+1}$ and $U$ be a subspace in $I_{i+1}$. As mentioned earlier, $U$ is defined by an  $s_i$-vector $f_1\wedge\ldots\wedge f_{s_i}$, where $f_1\ld,f_{s_i}\in I_{i+1}$. This $s_i$-vector is a linear combination of $s_i$-vectors $e_{j_1}\wedge\ldots\wedge e_{j_{s_i}}$, $j_1<\ldots<j_{s_i}$, forming a basis in $\Lambda^{s_i}(I_{i+1})$, the coordinates of the linear combination being the Pl\"ucker coordinates of $U$ in the Grassmann variety $\Grass(s_i,I_{i+1})$ as a subvariety in the projective space $\textbf{P}(\Lambda^{s_i}(I_{i+1}))$. 

According to Lemma \ref{lCartPluck}, there is a basis $f'_1,\ld,f'_{s_i}$ in $U$ such that the coordinates of the vectors of this basis with respect to $e_1\ld,e_{d_{i+1}}$ are homogeneous rational functions of degree zero in the Pl\"ucker coordinates of $U$.

Let us first consider the case $|\varpi|=1$ say $\varpi=\rho$. 

Using the the Cartesian coordinates of $f'_1,\ld,f'_{s_i}$, one can find the coordinates of the vectors spanning $Uv$, where $v$ is any monomial in the free generators of $F_n$ (linear functions whose coordinates depend on the structure constants of $F_n$). Let $g_1,\ld,g_r$ be a basis of $UF$. Then the Pl\" ucker coordinates of $UF$ in $\Grass(r,I_i)$ are the coordinates of $g_1\wedge\ldots\wedge g_r$ in the standard basis of the $r$th exterior power of $I_i$ (we assume that the basis of $I_i$ is a part of a basis of $I_{i+1}$). These Pl\" ucker coordinates of $UF$ do not depend on the choice of Cartesian basis in $UF$. As a result, the Pl\" ucker coordinates of $UF$ are homogeneous rational functions of degree zero in terms of the Pl\" ucker coordinates of $U$.

Now let us choose $U\in \Grass(s_i,I_{i+1})$ for which the number $r$ appearing above is maximal possible and denote this number by $r_0$. If $U$ is such that $r<r_0$ then the rank of the matrix formed by the Cartesian coordinates of $f'_1v,\ld,f'_{s_i}v$, spanning $UF$, is less than $r_0$ and then the Cartesian, hence Pl\" ucker coordinates of $U$ satisfy a fixed system of homogeneous equation, hence $U$ belongs to a proper closed irreducible subvariety of  $\X_{i,s_i}$. By Lemma \ref{LSH}, Claim (a), the dimension of this subvariety is strictly less that $\dim \X_{i,s_i}$. So the map $U\to UF$ is a polynomial map defined on an open subset of $\X_{i,s_i}$, that is, of a quasi-projective variety. This completes the proof in the case where $\varpi=\rho$. 

The proof in the case of any other of finitely many different  placements of brackets $\varpi$ goes in the same way except that instead of considering vectors $f'_1v,\ld,f'_{s_i}v$ spanning $UF_n$, in the case of, say, $(F_nU)F_n$, one has to consider $(v_1 f'_1)v_2,\ld,(v_1f'_{s_i})v_2$, for any two monomials in the free generators of $F_n$.
\end{proof}

To proceed further, for all $1\le j\le d_1$, we introduce Grassmann varieties  $\fZ_j=\Grass(j,\,I)$ of $j$-dimensional subspaces in $I$. Then $\dim \fZ_j=j(d_1-j)$. Finally, for any tuple $\textbf{s}=(s_h,\ld,s_1)$,  $s_i\le\dim_\F(I_{i+1}/I_i)$, we define the product 
\[
\fZ_{\textbf{s},j}=\X_{h,s_{h}}\times\cdots\times \X_{1,s_{1}}\times \fZ_j. 
\]
The dimension of this variety is at most
\begin{equation}\label{eDP}
s(d_{h}+\cdots+d_2)+j(d_1-j)\mbox{ where }s=s_1+\cdots+s_h.
\end{equation}

The following property of the points $(U_h,\ld,U_1)$ in the variety $\X_{h,s_h}\times\cdots\times\X_{1,s_1}$ is generic:  the sum $U_h+\cdots+U_1$ is  direct. This follows because a nonzero intersection of  $U_i$ with $I_i$ is a property selected by an algebraic equation. In the same way, generic is the property of the points  $(U_h,\ld,U_1,V)\in \fZ_{\textbf{s},j}$ where  $V\in \fZ_j$, saying that  the sum of $U_h+\cdots+U_1+V$ is direct.
 
At this time, it is convenient to write $\fZ_{\textbf{s},j}$ as the union of quasi-projective varieties $\fZ_{\textbf{s},\varpi,i}$, each of which includes the points with $\dim_\F\varpi U_i=t_i$, where $t_i=t_i(\varpi)$ is chosen in Lemma \ref{lXsi}. 

For $j\le d_1-t_i$, and any placement of brackets $\varpi$, using Lemmas \ref{lproduct} and \ref{lXsi}, we have a  map of varieties $\vp = \vp(\textbf{s},\varpi, i,j)$ from an open subset of the variety $\fZ_{\textbf{s},\varpi,i}$ into the Grassmann variety $\Grass(p,F_n)$ where  $p=\dim \fZ_{\textbf{s},\varpi,i}+\dim_\F \varpi U_i$, which maps the point corresponding to the direct sum $U=U_h\op\cdots\op U_1\op V$ into the point corresponding to the direct sum 
\begin{eqnarray*}
\bar U = U_h\op\cdots\op U_1\op V \op \varpi U_i
\end{eqnarray*}
where $|\varpi|=i$.

Since $t_i+j \le d_1$, the sum above is almost always direct, meaning that such $V$  is generic in $\Grass(j,I)$. Now $\vp$ is defined on the open subset of a projective variety $\fZ_{\textbf{s},\varpi,i}$. 
\begin{Lemma}\label{lC1}
Each ideal $N$ of $F_n$, which is not in $I(F_n)$, belongs to the image of some map $\vp(\textbf{s},\varpi,i, j)$ where $i$ is such that $s_i>\frac{s}{c}$, $\varpi$ is chosen as in Lemma \ref{27.24} and $j$ is some number satisfying $j \le d_1-\gamma \frac{sn}{c}$.
\end{Lemma}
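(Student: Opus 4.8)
The plan is to take an arbitrary ideal $N \trianglelefteq F_n$ with $N \not\subset I = I(F_n)$, and build from $N$ a tuple of data $(\textbf{s}, \varpi, i, j)$ together with a point in the appropriate variety $\fZ_{\textbf{s},\varpi,i}$ whose image under $\vp(\textbf{s},\varpi,i,j)$ is exactly $N$. The guiding idea is to ``peel off'' the ideal along the annihilator filtration $I_1 \subset I_2 \subset \cdots \subset I_h = F_n$: since $N \not\subset I$, the largest index $i$ for which $N \cap I_{i+1}$ jumps above $N \cap I_i$ in a controlled way will furnish a subspace $U_i \subset I_{i+1}$ complementary to $I_i$, and applying a suitable bracketing word $\varpi$ of length $i$ (the one from Lemma \ref{27.24}) recaptures enough of $N$ inside $I$. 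Concretely, first I would choose, for each $k = 1,\ldots,h-1$, a subspace $U_k \subset I_{k+1}$ with $U_k \cap I_k = \{0\}$ and $U_k + I_k = (N \cap I_{k+1}) + I_k$; set $s_k = \dim_\F U_k$ and $s = s_1 + \cdots + s_{h}$. (Here $s_h$ records the "top" piece of $N$ outside $I_{h-1}$; since $N\not\subset I$ at least one $s_k$ with $k\ge 1$ counting past $I_1$ is positive, and summing forces some $s_i > s/c$ because there are at most $c$ nonzero terms — $h \le c$.) This is where the inequality $s_i > s/c$ comes from: among the (at most $c$) indices contributing to $s$, one of them carries more than a $1/c$ fraction.

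Next, having fixed such an index $i$ with $s_i > s/c$, I would invoke Lemma \ref{27.24} applied to $U_i \subset I_{i+1}$: since $\dim_\F (U_i + I_i)/I_i = s_i$, there is a bracketing word $\varpi \in \varPi$ of length $i$ with $\dim_\F \varpi U_i > \gamma s_i n^i \ge \gamma \frac{s}{c} n^i$. In particular, because $\varpi U_i \subset I_1 = I$ and $d_1 = \dim_\F I$, we get that the codimension of $\varpi U_i$ inside $I$ is at most $d_1 - \gamma \frac{s}{c} n^i$; taking $j$ to be the dimension of a complement of $\varpi U_i$ inside the $N\cap I$ part (more precisely, $j = \dim_\F(N\cap I) - \dim_\F(\varpi U_i \cap N)$, or simply the dimension needed to fill out $N$), we obtain $j \le d_1 - \gamma\frac{sn}{c}$ — matching the claimed bound once we absorb the extra factor $n^{i-1}\ge 1$ into the estimate (the statement as written uses $n$, which is the weakest case $i=1$; for $i>1$ the bound is only better). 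The point to feed into $\vp$ is then the tuple $(U_h, \ldots, U_1, V)$ where $V \subset I$ is a $j$-dimensional complement chosen so that $U_h \oplus \cdots \oplus U_1 \oplus V \oplus \varpi U_i = N + (\text{the } U_k\text{'s for } k \ne i \text{ already inside } N)$; one checks directly that $\bar U = U_h \oplus \cdots \oplus U_1 \oplus V \oplus \varpi U_i$ recovers $N$ exactly, using that $N$ is an ideal (so $\varpi U_i \subset N$ automatically) and that the $U_k$ were chosen to span $N$ modulo $I$ in each filtration layer.

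The delicate bookkeeping — and the step I expect to be the main obstacle — is verifying that the direct-sum decomposition $\bar U = U_h \oplus \cdots \oplus U_1 \oplus V \oplus \varpi U_i$ genuinely equals $N$, not merely contains $\varpi U_i$ and surjects onto $N/I$. One has to be careful that: (i) the pieces $U_k$ for $k\ne i$, together with $V$ and $\varpi U_i$, exactly account for a basis of $N$; (ii) the sum is direct, which is the generic condition noted just before Lemma \ref{lC1} (nonzero intersection with $I_k$ or mutual intersection is cut out by algebraic equations, so on the open set where $\vp$ is defined this is automatic); and (iii) the index $i$ can be chosen so that $\varpi U_i$ lands inside $N$ — this holds because $U_i \subset I_{i+1}$ and $N$ is an ideal, so $\varpi U_i$, being built from $U_i$ by multiplications by $F_n$, lies in the ideal generated by $U_i$, which is contained in $N$ provided $U_i \subset N$; hence one must additionally arrange $U_i \subset N$, i.e. choose $U_i$ inside $N \cap I_{i+1}$ rather than merely complementary to $I_i$ in $(N\cap I_{i+1})+I_i$. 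That refinement is harmless since $N\cap I_{i+1}$ already maps onto $(N\cap I_{i+1} + I_i)/I_i$. Once these compatibility checks are in place, the membership $N \in \im{\vp(\textbf{s},\varpi,i,j)}$ follows, and the bounds on $s_i$ and $j$ are exactly as recorded above.
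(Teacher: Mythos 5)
Your proposal is correct and follows essentially the same route as the paper: decompose $N$ along the annihilator filtration into pieces $U_k\subset N\cap I_{k+1}$ complementary to $I_k$, pick $i$ with $s_i>\frac{s}{c}$ by pigeonhole, apply Lemma \ref{27.24} to get $\varpi$ with $\dim_\F\varpi U_i>\gamma\frac{sn}{c}$, observe $\varpi U_i\subset N$ because $N$ is an ideal, and complete inside $N\cap I$ by a subspace $V$ of dimension $j\le d_1-\gamma\frac{sn}{c}$. Your refinement that the $U_k$ must be taken inside $N$ (so that $\varpi U_i\subset N$) is exactly what the paper's decomposition $N=V+(U_1\oplus\cdots\oplus U_h)$ does implicitly.
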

\begin{proof}
Indeed, $N= V+ (U_1\op U_2\op\cdots\op U_h)$ for some subspace $V\subset I$ in $F_n$, such that 
\[
U_i\subset I_{i+1}\mbox{ and }U_i\cap I_{i}=\{ 0\},\mbox{ so that } \dim_\F U_i = s_i \le d_{i+1}-d_{i}\; (i\le h),
\]
for some tuple $\textbf{s}$ such that $s=s_1+\ld+s_{h}>0$, following because $N\not\subset I$. In this case, we choose $i$,  $\varpi$ with $|\varpi|=i$, and $t_i=\dim_\F\varpi U_i$ as earlier, and then 
\[
\dim_\F \varpi U_i>\gamma \frac{sn}{c}.
\]

Clearly, this subspace is in the ideal $N$, since $N$ is an ideal of $F_n$. Therefore, we can complement $\varpi U_i$ by a direct summand $V$ of dimension $j<d_1-\gamma s \frac{n}{c}$ to $V$. It follows from this construction and the definition of $\vp(\textbf{s},\varpi,i, j)$ that the point corresponding to $N$ belongs to $\vp(\textbf{s},\varpi, i,j)(\fZ_{\textbf{s},\varpi,i})$.
\end{proof}

Note that the same image $\bar U$ under $\vp$ will be obtained if we replace $V$ by any subspace $V'\subset I(F_n)$, such that $V'\op \varpi U_i = V \op \varpi U_i$. 

The dimension of the quasi-projective variety of such subspaces $V'$ in $W=V\op \varpi U_i$ equals $j t_i=j(j+t_i-j)$, where $t_i = \dim_\F \varpi U_i$. It follows that the dimension of the preimage of every point in the image of $\vp(\textbf{s},\varpi, i,j)$ is at least $jt_i$. Now for $i$ and $\varpi$ as in Lemma \ref{lC1}, we have by Lemma \ref{27.24} that $t_i >\gamma s_i n^i \ge\gamma s\frac{n}{c}$.  This estimate from below of the dimension of the preimage of any point of $\bar U$ under $\vp$, taken together with (\ref{eDP}), implies that the dimension of $\fZ_{\textbf{s},\varpi,i}$ is at most 
\begin{equation}\label{eDPP}
s(d_{h+1}+\cdots+d_2)+j(d_1-j)-\gamma js \frac{n}{c} \le   s(d_{h+1}+\cdots+d_2)+j(d_1-j-\gamma s \frac{n}{c}).
\end{equation}
Here we have used Claim (b) in Lemma \ref{LSH}.

\smallskip 

We need one more lemma before we proceed to the main result of the section (Theorem \ref{tNSA}). The conclusion of this lemma is stronger than it is necessary for the main result of this section; it will be used in  the treatment of the generic properties of algebras, rather than ideals, in Section \ref{ssSFR}.

\begin{Lemma}\label{lC2} If $c\ge2$ then there exists $n_0=n_0(c)$ such that if $n\ge n_0$,  $i$ is chosen so that $s_i>\frac{s}{c}$ and $\varpi$ chosen the same as in Lemma \ref{lC1}, then the dimension of the image of the variety $\fZ_{\textbf{s},\varpi,i}$  satisfies the following inequality: 
\[
\dim\vp(\textbf{s},\varpi,i,j)(\fZ_{\textbf{s},\varpi,i}))<\frac{d_1^2}{4}-n^2.
\]
\end{Lemma}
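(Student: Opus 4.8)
The goal is to bound $\dim \vp(\textbf{s},\varpi,i,j)(\fZ_{\textbf{s},\varpi,i})$ from above. Since a map of varieties does not increase dimension, it suffices to bound $\dim \fZ_{\textbf{s},\varpi,i}$, and for that we use the estimate (\ref{eDPP}) already obtained, namely
\[
\dim \fZ_{\textbf{s},\varpi,i} \le s(d_{h+1}+\cdots+d_2) + j\Bigl(d_1 - j - \gamma s\tfrac{n}{c}\Bigr).
\]
The plan is to show that the right-hand side is $< \tfrac{d_1^2}{4} - n^2$ once $n \ge n_0(c)$. First I would control the term $s(d_{h+1}+\cdots+d_2)$: each $d_k = \dim_\F I_k(F_n)$ with $k \ge 2$ is the dimension of an ideal contained in $F_n/F_n^{\,?}$-type quotients, and $I_2(F_n) \subsetneq F_n$ forces $d_2, \ldots, d_h$ to be bounded above by $\dim_\F F_n - (\text{at least the }n\text{ generators})$; more crudely, by Lemma \ref{27.4} applied to the relevant quotient classes, each $d_k$ with $k \le h-1$ is $O(n^{c-1})$ in fact $I_k(F_n) \ne F_n$ means $I_k$ misses a full multihomogeneous component of top degree, so $d_k \le d_1 - \kappa_1 n^c \le d_1 - (\text{quadratic})$. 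Meanwhile $s = s_1+\cdots+s_h \le \sum (d_{i+1}-d_i) = d_h - d_0 \le \dim_\F F_n$, which is $O(n^c)$, but the key point is that each $s_i \le d_{i+1}-d_i$ and $s_h \le \dim_\F(F_n/I_{h-1})$, which is bounded by $O(n^{c-1})$ since $F_n/I_{h-1}$ is a relatively free algebra in a shorter-step class; so actually $s(d_{h+1}+\cdots+d_2)$ can be absorbed.

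Second, and this is where the real content lies, I would analyze $j(d_1 - j - \gamma s\tfrac{n}{c})$ as a function of $j \in [0, d_1 - \gamma\tfrac{sn}{c}]$. Write $m = d_1 - \gamma s\tfrac{n}{c}$; the quadratic $j(m-j)$ is maximized at $j = m/2$, giving $m^2/4 = \tfrac14\bigl(d_1 - \gamma s\tfrac{n}{c}\bigr)^2$. Expanding,
\[
\frac{1}{4}\Bigl(d_1 - \gamma s\tfrac{n}{c}\Bigr)^2 = \frac{d_1^2}{4} - \frac{\gamma s n d_1}{2c} + \frac{\gamma^2 s^2 n^2}{4c^2}.
\]
So $\dim \fZ_{\textbf{s},\varpi,i} \le \tfrac{d_1^2}{4} - \tfrac{\gamma s n d_1}{2c} + \tfrac{\gamma^2 s^2 n^2}{4c^2} + s(d_{h+1}+\cdots+d_2)$, and I need the negative term $-\tfrac{\gamma s n d_1}{2c}$ to dominate both the $+\tfrac{\gamma^2 s^2 n^2}{4c^2}$ term and $s(d_{h+1}+\cdots+d_2)$ with room to spare for the extra $-n^2$. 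Here $d_1 = \dim_\F I(F_n) \ge \dim_\F F_n^c \ge \kappa_1 n^c$ by Lemma \ref{27.4}, which is much larger than $n^2$; and $s \ge 1$. The term $\tfrac{\gamma s n d_1}{2c} \ge \tfrac{\gamma n \kappa_1 n^c}{2c}$ grows like $n^{c+1}$, whereas $s^2 n^2 \le (\dim_\F F_n)^2 n^2 = O(n^{2c+2})$ — so I must be careful: if $c = 2$ these are comparable in $n$. The resolution is that $s n d_1 / c$ and $s^2 n^2/c^2$ differ by a factor $d_1 c / (s n)$; since $d_1 \ge \kappa_1 n^c$ and $s \le \dim_\F F_n \le \kappa_2 n^c$, this ratio is $\ge \kappa_1 c/(\kappa_2 n)$, which goes to zero, so naively the $s^2 n^2$ term could win. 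The fix is to use instead that $d_1 \ge s_i \cdot(\text{something})$ via Lemma \ref{27.24}: in fact $d_1 \ge \dim_\F \varpi U_i > \gamma s_i n^i \ge \gamma \tfrac{s n^i}{c} \ge \gamma \tfrac{sn}{c}$, so $\gamma s n/c \le d_1$, hence $\tfrac{\gamma^2 s^2 n^2}{4c^2} \le \tfrac{\gamma s n d_1}{4c}$, i.e. the positive quadratic-in-$s$ term is at most half the negative linear-in-$s$ term. That leaves a surplus of $\tfrac{\gamma s n d_1}{4c} \ge \tfrac{\gamma n d_1}{4c} \ge \tfrac{\gamma \kappa_1 n^{c+1}}{4c}$, which must beat $s(d_{h+1}+\cdots+d_2) + n^2$; since $s(d_{h+1}+\cdots+d_2) = O(n^{2c-1})$ and $n^2 = O(n^2)$, and $c + 1 \ge 2c - 1$ fails for $c \ge 3$ — so again one must exploit the finer bound $d_k \le d_1 - (\text{quadratic in }n)$ for $k \le h-1$ to shave $s(d_{h+1}+\cdots+d_2)$ down, or better, use that $h-1 < c$ and $s d_k \le s \dim_\F F_n$ combined with the gap.

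I expect the main obstacle to be precisely this bookkeeping: showing the single negative term $-\tfrac{\gamma s n d_1}{2c}$ (which is roughly $n^{c+1}$ when $s$ is bounded, but only $n^{2c+1}$ when $s$ is as large as $\dim_\F F_n$) dominates the competing positive terms $\tfrac{\gamma^2 s^2 n^2}{4c^2}$ and $s\sum_{k\ge 2} d_k$ uniformly in $s$. The clean way to organize this is to factor out $s$: after dividing the inequality through by $s > 0$, one must show
\[
d_{h+1}+\cdots+d_2 + \frac{\gamma^2 s n^2}{4c^2} - \frac{\gamma n d_1}{2c} < \frac{d_1^2}{4s} - \frac{n^2}{s} - \text{(lower order)},
\]
and then split into the cases $s$ small (where $d_{h+1}+\cdots+d_2$ is the enemy, defeated by $-\gamma n d_1/(2c) \sim -n^{c+1}$ against $O(n^{2c-1})$... which still needs $c+1 > 2c-1$, i.e. $c<2$!). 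So in fact the genuinely delicate point, and the one I would spend the most care on, is re-deriving a better bound than (\ref{eDPP}): one should not max out $j$ freely but note that the relevant $N$'s of interest have $\dim_\F V = j$ with $j \le d_1 - t_i$ and $t_i > \gamma s n^i/c$, so for $i \ge 2$ the slack $\gamma s n^i/c$ is much bigger, $\sim n^3$ or more, making $j(d_1-j-\gamma sn/c)$ considerably smaller than $d_1^2/4$. Concretely: $j(d_1-j) \le \tfrac{d_1^2}{4}$ always, and subtracting $\gamma j s n/c$ with $j \ge $ the typical value $\sim d_1/2 \sim \kappa_1 n^c/2$ gives a subtraction of order $n^{c+1}$, more than enough to cover $n^2$ and (after the $s$-bookkeeping above) the remaining terms, for all $n \ge n_0(c)$. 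I would therefore structure the proof as: (1) recall (\ref{eDPP}); (2) bound each $d_k$ ($2 \le k \le h$) and $s$ in terms of powers of $n$ using Lemma \ref{27.4} and the fact that $F_n/I_{h-1}$ is relatively free in a $\le (c-1)$-step class; (3) bound $j(d_1-j-\gamma sn/c) \le \tfrac{d_1^2}{4} - \gamma j s n/c$ and handle the cross terms; (4) verify the resulting polynomial inequality in $n$ holds for $n$ large, absorbing the $-n^2$, and set $n_0(c)$ accordingly.
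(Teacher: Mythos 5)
Your first half is essentially the paper's argument: starting from (\ref{eDPP}) you bound the $j$-dependent summand by $\frac{(d_1-\gamma s n/c)^2}{4}$ and absorb the dangerous cross term via $\gamma s n/c\le d_1$ (equivalently $(d_1-\gamma s n/c)^2\le d_1(d_1-\gamma s n/c)$), leaving the negative surplus $-\frac{\gamma s n d_1}{4c}$, which is exactly the inequality the paper uses. The gap is in how you finish. You discard the factor $s$ from the surplus (replacing $\frac{\gamma s n d_1}{4c}$ by $\frac{\gamma n d_1}{4c}$) while keeping the worst case $s\sim n^{c-1}$ inside $s(d_h+\cdots+d_2)=O(n^{2c-1})$, conclude that the comparison fails for $c\ge 3$, and then pivot to a repair that does not work: the ``typical value $j\sim d_1/2$'' reasoning is not uniform in $j$ (for small $j$ the claimed subtraction of order $n^{c+1}$ is simply absent, and the bound must hold for every admissible $j$), and your auxiliary estimates on the $d_k$ are wrong --- since $I_1\subset I_k$ one has $d_k\ge d_1\ge \kappa_1 n^c$, so neither $d_k=O(n^{c-1})$ nor $d_k\le d_1-\kappa_1 n^c$ can hold.

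The correct finish, which is what the paper does, is to keep $s$ as a common factor in both competing terms: after the absorption the bound reads $\frac{d_1^2}{4}-s\frac{\gamma n d_1}{4c}+s(d_h+\cdots+d_2)$, so the required inequality is $s\left(\frac{\gamma n d_1}{4c}-(d_h+\cdots+d_2)\right)>n^2$. Since $s\ge 1$, since $\frac{\gamma n d_1}{4c}\ge \frac{\gamma\kappa_1}{4c}\,n^{c+1}$ by Lemma \ref{27.4}, and since $d_h+\cdots+d_2+n^2$ is bounded by a polynomial of degree $c$ in $n$ (there are at most $c$ terms, each at most $\dim_\F F_n$), the bracket exceeds $n^2$ for all $n\ge n_0(c)$, uniformly in $s$, $j$ and the tuple $\mathbf{s}$. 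No case split on $s$, no improvement of (\ref{eDPP}), and no finer bounds on the $d_k$ are needed; as written, your proposal does not reach the stated conclusion.
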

\begin{proof}
Note that with  $i, \varpi$ fixed and any $j$, the second summand in (\ref{eDPP}) is at most 
\[
\frac{(d_1-\gamma s \frac{n}{c})^2}{4} \le d_1\frac{d_1-\gamma s \frac{n}{c}}{4}.
\] 
By adding to this the first summand in the right hand side of  (\ref{eDPP}), we will obtain 
\begin{equation}\label{eAmp}
\dim\vp(\textbf{s},i,\varpi,j)(\fZ_{\textbf{s},\varpi,i}))\le \left(\frac{d_1^2}{4} -n^2\right)- s\frac{\gamma d_1 n}{4c}  + s(d_{h}+...+d_2)+n^2.
\end{equation}

Note that the function $f(n) = \frac{\gamma d_1 n}{4c}$, with $n$ increasing, grows faster than $d_{h}+\cdots+d_2+n^2$. Indeed, by Lemma \ref{27.4}, $f(n)>\gamma \frac{\kappa_1}{4c} n^{c+1}$ and  $d_{h}+\cdots+d_2$ is bounded from above by a polynomial in $n$ of degree $c$. Considering that $s>0$, for the values of $n$ which are appropriately great, we have that $\frac{d_1^2}{4}-n^2$ is greater than the right hand side of (\ref{eAmp}), as needed.
\end{proof}

\begin{Theorem}\label{tAAI} For any $c\ge 2$ there is a  number $n_0=n_0(c)$ with the the following property. Suppose that $F_n$ is a relatively free $c$-step nilpotent algebra of rank $n$. Let $\fS\subset\cJ(F_n)$ be the set of ideals of $F_n$, which belong to $I(F_n)$. If $n\ge n_0$ then being in $\fS$ is a generic property for the ideals of $F_n$. Moreover, if $\T=\cJ(F_n)\setminus\fS$ then $\dim \T<\dim \mathcal{J}(F_n)-n^2$.
\end{Theorem}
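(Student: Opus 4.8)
The plan is to read the theorem off Lemmas \ref{lC1} and \ref{lC2}, which already contain all the substantive estimates; what remains is to identify the relevant dimensions, to bound $\dim\T$ by a finite‑union argument, and to match the conclusion with Definition \ref{dGeneric}. First I would describe $\fS$. Since $I=I(F_n)$ is the annihilator of $F_n$, \emph{every} linear subspace of $I$ is automatically an ideal of $F_n$, so for each $j$ with $1\le j\le d_1$ one has $\fS\cap\cJ_j(F_n)=\Grass(j,I)=\fZ_j$; consequently
\[
\fS=\bigcup_{j=1}^{d_1}\fZ_j,\qquad \dim\fS=\max_{1\le j\le d_1}j(d_1-j)=\big\lfloor d_1^2/4\big\rfloor,
\]
the maximum being attained on the irreducible projective variety $\fZ_{\lfloor d_1/2\rfloor}$. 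Since $\fS\subseteq\cJ(F_n)$ this already gives $\dim\cJ(F_n)\ge\big\lfloor d_1^2/4\big\rfloor$; and by Lemma \ref{27.4} the integer $d_1=\dim_\F I(F_n)$ grows like $n^c$, so it is large (in particular $d_1\ge 2$) once $n$ is large.

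Next I would bound $\T=\cJ(F_n)\setminus\fS$, the ideals of $F_n$ not contained in $I(F_n)$. By Lemma \ref{lC1} each such ideal lies in the image of one of the maps $\vp(\textbf{s},\varpi,i,j)$ occurring there, and only finitely many of these maps arise, since the entries of $\textbf{s}$ are bounded by $\dim_\F F_n$, both $i$ and $|\varpi|$ are at most $c$, and $j\le d_1$. Each such image is a constructible set whose Zariski closure, by Lemma \ref{lC2}, has dimension $<\tfrac{d_1^2}{4}-n^2$ as soon as $n\ge n_0(c)$. The union of these finitely many closures is therefore a projective variety containing $\T$, of dimension $<\tfrac{d_1^2}{4}-n^2$, so $\dim\T<\tfrac{d_1^2}{4}-n^2$. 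In particular $\dim\T<\big\lfloor d_1^2/4\big\rfloor\le\dim\cJ(F_n)$, and hence $\dim\cJ(F_n)=\dim\fS=\big\lfloor d_1^2/4\big\rfloor$.

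Finally I would assemble the two halves. Genericity in the sense of Definition \ref{dGeneric} holds with $k=\big\lfloor d_1^2/4\big\rfloor$: the set of ideals with the stated property contains the quasi‑projective (indeed irreducible projective) variety $\fZ_{\lfloor d_1/2\rfloor}$ of dimension $k$, while $\T$ is contained in the projective variety constructed above, of dimension $<\tfrac{d_1^2}{4}-n^2<k$. For the sharper conclusion $\dim\T<\dim\cJ(F_n)-n^2$ one uses that the estimate in Lemma \ref{lC2} has room to spare: by Lemma \ref{27.4} the term $s\gamma d_1 n/(4c)$ subtracted in (\ref{eAmp}) grows strictly faster in $n$ than $s(d_h+\cdots+d_2)+n^2$, so for $n\ge n_0$ each of the images in question has dimension at most $\big\lfloor d_1^2/4\big\rfloor-n^2-1$, whence $\dim\T\le\big\lfloor d_1^2/4\big\rfloor-n^2-1<\dim\cJ(F_n)-n^2$. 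The genuine difficulty is already discharged in Lemmas \ref{lC1} and \ref{lC2}; the only delicate point in the present argument is keeping this strict inequality across the floor in $\big\lfloor d_1^2/4\big\rfloor$, which is exactly why one appeals to the slack in Lemma \ref{lC2} rather than to its literal statement.
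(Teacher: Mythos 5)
Your proof is correct and follows essentially the same route as the paper's: Lemma \ref{lC1} covers $\T$ by finitely many images $\vp(\textbf{s},\varpi,i,j)$, Lemma \ref{lC2} bounds the dimension of each image below $\frac{d_1^2}{4}-n^2$, and the Grassmannians of subspaces of $I(F_n)$ (all of which are ideals) supply the large quasi-projective variety inside $\fS$. Your extra care with the floor of $d_1^2/4$ when $d_1$ is odd, invoking the slack in the estimate of Lemma \ref{lC2} to keep the strict inequality $\dim\T<\dim\cJ(F_n)-n^2$, addresses a point the paper passes over silently, and you handle it correctly.
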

\begin{proof}
All $N$ that are not in $I(F_n)$ are the points of the finite union of all images $\vp(\textbf{s},\varpi,i,j)(\fZ_{\textbf{s},j})$, for different values of $\textbf{s},\varpi,i,j$, in accordance with Lemma \ref{lC1}. Using Lemma \ref{lC2} and the fact that the dimension of the union is the maximum of the dimensions of the terms of the union, we can see that all these ideals are the points of a projective variety whose dimension is less than $\frac{d_1^2}{4}-n^2$. Incidentally, if $d_1=2k$ is even, $\frac{d_1^2}{4}=\dim\Grass(k, I(F_n))$,  the dimension of the Grassmannn variety of $k$-dimensional subspaces, equivalently, $k$-dimensional ideals of $F_n$ which are in $I(F_n)$. If $d_1=2k+1$ is odd, $\frac{d_1^2}{4}-n^2=k(k+1)+\frac{1}{4}-n^2<k(k+1)=(k+1)(2k+1-k)=\dim\Grass(k+1, I(F_n))$,  the dimension of the Grassmannn variety of $k+1$-dimensional subspaces, equivalently, $k+1$-dimensional ideals of $F_n$, which are in $I(F_n)$. The proof is now complete.
\end{proof}
\begin{Remark}\label{rd1} Using the restrictions of Theorem \ref{tAAI}, the following properties of the ideals of $F_n$ are generic. If $d_1=\dim_\F I(F_n)$ is even then a generic ideal of $F_n$ is contained in  $I(F_n)$ and has dimension $\frac{d_1}{2}$. If $d$ is odd then a generic ideal of $F_n$ is contained in  $I(F_n)$ and has  of one of two dimensions $\frac{d_1}{2}+\frac{1}{2}$ or $\frac{d_1}{2}-\frac{1}{2}$.
\end{Remark}

\subsection{Ideals and automorphisms of relatively free algebras}\label{ssARFNA}

Before we move with our proofs any further, we have to impose a restriction on the primitive classes in question, as described below.

\subsubsection{Central primitive classes} \label{sssCentral}

Given a relatively free algebra $F_n$ of a $c$-step nilpotent primitive class $\V$ of algebras, the action of $\GL(n)$ on $F_n^c$ introduced in Proposition \ref{lGLFc} does not need to be always extendible to the action  on the annihilator $I(F_n)$. One of the solutions to this problem is to restrict oneself to considering only the primitive classes $\V$ in which $F^c_n=I(F_n)$ for any $\V$-free algebra of any rank $n$. In many important particular cases, such as the primitive classes of all associative algebras, all nonassociative algebras, all commutative, all anticommutative and all Lie algebras  this condition is satisfied. Once we assume $F_n^c=I(F_n)$, for all $n$, a generic ideal of $F_n$ is contained in $F_n^c$ (Theorem \ref{tAAI}).

\begin{Definition}\label{dCentral} Let $c\ge 2$. A $c$-step nilpotent primitive class $\V$ of algebras over a field $\F$ is called \textit{central} if $I(F_n(\V))=F_n(\V)^c$, for all $n\ge 1$.  
\end{Definition}

\begin{Remark}\label{rFcnotI}
Probably the easiest counterexample of non-central primitive class is the $5$-step nilpotent primitive class $\V$ of Lie algebras with identity $((x_1x_2)(x_3x_4))x_5=0$ (``centrally metabelian'' Lie algebras). In the free algebra $F_5=F_5(\V)$ of rank $5$ one has $(x_1x_2)(x_3x_4)\in I(F_5)\setminus F_5^5$.
\end{Remark} 

\begin{Remark}\label{rIFc}
In the case where $\V$ is multihomogeneous, to check that $V$ is central, we only need to check that $I(F_n(\V))=F_n(\V)^c$ for the ranks $n\le c+1$. This is a direct consequence of  Lemma \ref{27.9}, Claim (a).
\end{Remark}

In the remainder of the paper we assume that $\V$ is a central primitive class, $F_n$ the $\V$-free algebra of rank $n>n_0$. We also recall the notation $\cJ$ for the variety of ideals in $F_n^c$.  Finally, we denote by $\cR\subset\cJ$ the set of  ideals $K$ of $F_n$ such that $\Stab(K)\subset\Scal(F_n)$(see Definition \ref{dScal}).

\begin{Theorem}\label{tNSA}  For each $c\ge 2$ there exists a natural number $n_0$ such that being in $\cR$ is a generic property for the ideals of $F_n$. If $\cU=\cJ(F_n)\setminus\cR$ then $\dim \cU<\dim \mathcal{J}(F_n)-n^2$. 
\end{Theorem}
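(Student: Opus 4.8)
The plan is to reduce the statement about ideals $K$ with $\Stab(K)\not\subset\Scal(F_n)$ to a dimension count of the kind already carried out in Theorem \ref{tAAI}, by combining the fibration machinery of Section \ref{ssINA} with the quantitative ``non-scalar operators move a big subspace'' estimate of Lemma \ref{34.3}. First I would observe that $\cR$ can be described as follows: $K\notin\cR$ exactly when there is an automorphism $\alpha$ of $F_n$ with $\alpha(K)=K$ whose induced map $A$ on $F_n/F_n^2$ is not scalar. Since $\V$ is central, $K\subset F_n^c=I(F_n)$ for a generic ideal (Theorem \ref{tAAI}), and the part of $\cJ(F_n)$ not contained in $I(F_n)$ already lies in a variety of dimension $<\dim\cJ(F_n)-n^2$; so it suffices to bound the dimension of the set of ideals $K\subset F_n^c$ that are stabilized by some automorphism acting non-scalarly on $F_n/F_n^2$.

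The key step is the following incidence/parametrization argument. Consider the variety $\GL(n)$ (of dimension $n^2$) and, for each $A\in\GL(n)$ acting non-scalarly, the restriction $\wA$ to $F_n^c$; by Lemma \ref{34.3} there is $\delta=\delta(c)>0$ with $\dim_\F F_n^c-\rank_{\wA}(F_n^c)\ge\delta n^{c-1}$, hence by Lemma \ref{28.24} (applied with $B=\wA$, provided $\rank_{\wA}(F_n^c)>\tfrac12\dim_\F F_n^c$, which holds for $n$ large since the rank is $\dim_\F F_n^c$ minus a lower-order term) there is an eigenvalue $\mu$ of $\wA$ whose eigenspace $E_\mu$ has dimension $\rank_{\wA}(F_n^c)$, so its complement has dimension $\ge\delta n^{c-1}$. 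An ideal $K\subset F_n^c$ fixed by $\wA$ decomposes along the generalized eigenspaces of $\wA$; the subspaces of $F_n^c$ invariant under a fixed operator $\wA$ of corank $\ge\delta n^{c-1}$ form, after a Grassmann computation analogous to Lemma \ref{lIdeajs} and \ref{lproduct}, a subvariety of $\Grass(m,F_n^c)$ whose dimension is at most $\dim\Grass(m,F_n^c)$ minus a term of order $\delta n^{c-1}\cdot\Theta(n^{c-1})$, i.e.\ of order $n^{2c-2}$, which for $c\ge 2$ dominates $n^2$. Fibering the total incidence variety $\{(A,K):A\in\GL(n)\text{ non-scalar},\ \wA(K)=K\}$ over $A$ and using Lemma \ref{LSH}(b), and then projecting to the $K$-coordinate and using that each such $K$ is hit, we get that the set of ideals in $\cJ(F_n)$ stabilized by some non-scalar automorphism has dimension at most
\begin{equation}\label{eNSAbound}
n^2 + \Bigl(\dim\cJ(F_n)\cap\{K\subset F_n^c\} - \Theta(n^{2c-2})\Bigr) < \dim\cJ(F_n) - n^2,
\end{equation}
the last inequality holding for all $n\ge n_0(c)$ since $n^{2c-2}-n^2\to\infty$ for $c\ge 2$ (for $c=2$ one must be a touch more careful, keeping the implied constants and invoking that $d_1=\dim_\F F_n^c$ grows like $n^2$ by Lemma \ref{27.4}, so $\delta n^{c-1}$ is a definite positive fraction of $d_1$).

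The remaining bookkeeping is to make \eqref{eNSAbound} rigorous as a statement about \emph{constructible} sets: one writes $\GL(n)\setminus\{\text{scalars}\}$ as a quasi-projective variety, stratifies by the rank of $\wA$ (finitely many strata), and on each stratum uses Lemma \ref{lCartPluck} to express ``$\wA(K)=K$'' by Pl\"ucker-coordinate equations, so that the incidence set is constructible and Lemma \ref{LSH} applies to its irreducible components. One also has to merge this with the already-known bound from Theorem \ref{tAAI} for the ideals not contained in $F_n^c$; since both pieces lie in varieties of dimension $<\dim\cJ(F_n)-n^2$, so does their union $\cU$, and the generic claim follows from Definition \ref{dGeneric} exactly as in Theorem \ref{tAAI} (with $k$ the dimension of $\Grass(\lceil d_1/2\rceil, F_n^c)$, which $\cR$ contains an open subset of). The main obstacle I anticipate is precisely the Grassmann estimate in the key step: showing that the subspaces of an $N$-dimensional space invariant under a fixed operator of corank $r$ form a subvariety of $\Grass(m,N)$ of codimension at least of order $r\cdot\min(m,N-m)$ — this is where the gain of order $n^{2c-2}$ (as opposed to merely $n^{c-1}$) comes from, and it has to beat the $n^2$ coming from the $\GL(n)$-fiber as well as the $n^2$ required in the conclusion; keeping track of constants when $c=2$ is the delicate case.
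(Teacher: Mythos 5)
Your overall architecture (reduce via Theorem \ref{tAAI} to ideals $K\subset F_n^c$, form the incidence set of pairs $(A,K)$ with $A$ non-scalar and $\wA(K)=K$, fiber over $A$, and feed Lemma \ref{34.3} into a dimension count) is in the right spirit, but the proposal has a genuine gap at exactly the point you flag as the ``main obstacle'': the claimed Grassmann estimate that, for a fixed $\wA$ of corank $\ge \delta n^{c-1}$, the $\wA$-invariant $m$-dimensional subspaces form a subvariety of $\Grass(m,F_n^c)$ of codimension at least of order $r\cdot\min(m,d-m)$. This is not a routine consequence of Lemmas \ref{lIdeajs} and \ref{lproduct}; it is the actual content of the theorem, and no argument is offered for it. As literally stated it is even false with constant $1$: for $B$ diagonalizable with two eigenvalues of equal multiplicity $d/2$ and $m=d/2$, the invariant subspaces form a variety of dimension $d^2/8$, i.e.\ codimension $d^2/8$, while your bound predicts codimension $\ge d^2/4$ (both the module corank and $d$ minus the largest eigenspace equal $d/2$ here). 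Even if some constant-factor version holds, proving it uniformly over all Jordan structures of $\wA$ is precisely the work the paper does by a different route: it dichotomizes not on properties of $\wA$ on all of $F_n^c$ but on $\rank_{\wA}(V)$ of the invariant subspace $V$ itself. When $\rank_{\wA}(V)\le m-3k$, Lemma \ref{28.1} produces $U\subset V$ with $U\cap\wA(U)=\{0\}$ and the parametrization $V=W\ast U\ast\wA(U)$, whose large fibers (estimate (\ref{***})) give the gain; when $\rank_{\wA}(V)>m-3k$, Lemma \ref{28.24} puts a large eigenspace of $\wA$ inside $V$, and Lemma \ref{34.3} caps every eigenspace of $\wA$ by $d-\delta n^{c-1}$, constraining that piece to a smaller Grassmannian. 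Your single-estimate strategy bypasses this case analysis but leaves its burden unproved.

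There are also two local errors worth fixing. First, you invoke Lemma \ref{28.24} for $F_n^c$ under the hypothesis $\rank_{\wA}(F_n^c)>\tfrac12\dim_\F F_n^c$, justified by saying ``the rank is $\dim_\F F_n^c$ minus a lower-order term''; but Lemma \ref{34.3} gives an \emph{upper} bound $\rank_{\wA}(F_n^c)\le d-\delta n^{c-1}$, not a lower bound, and the rank can be far below $d/2$. (The corank bound you actually need, $d-\dim E_\mu\ge\delta n^{c-1}$ for every eigenspace $E_\mu$, does follow directly, since any eigenspace has dimension at most $\rank_{\wA}(F_n^c)$ --- so Lemma \ref{28.24} is not needed there at all.) Second, for $c=2$ you prop up the borderline estimate $n^{2c-2}=n^2$ by asserting that $\delta n^{c-1}$ is ``a definite positive fraction of $d_1$''; this is false, since $d_1$ grows like $n^c$ by Lemma \ref{27.4}, so $\delta n^{c-1}/d_1\to 0$. (If your key codimension claim were available with the correct factor $\min(m,d-m)\sim d/2=\Theta(n^c)$, the arithmetic would in fact be comfortable for all $c\ge2$; but as written the $c=2$ case is not salvaged by that remark.) In short, the reduction and the bookkeeping are fine, but the central codimension estimate is unproven (and overstated), so the proof as proposed does not go through without essentially reconstructing the paper's two-case argument.
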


\bigskip

Before we proceed to the proof, we make several remarks which will be useful also in the future.

We classify all the ideals of $F_n$ as Class 1, Class 2 and Class 3. Class 1 is formed by the ideals $K$ such that $K\not\subset F_n^c$.  
By Theorem \ref{tAAI}, there is a variety of ideals of $F_n$ of dimension strictly less than $\frac{d_1^2}{4}-n^2$ which contains all the ideals of Class 1. 

The ideals in $F_n^c$ fall into classes 2 and 3. The dimension of the variety of ideals (=subspaces) of dimension $m$ in $F_n^c$ is given by the quadratic function
\begin{equation}\label{eQuadratic}
q(m)=m(d-m)=\dim\Grass(m,d)=\dim\Grass(m,F_n^c).
\end{equation}  
The maximum value of $q(m)$ is a rational number $\frac{d^2}{4}$. Now to split the ideals in $F_n^c$ into two classes, we introduce an interval for the values of the dimension $m$:
\begin{equation}\label{eInterval}
\Omega=\left[\frac{d}{2}-n,\frac{d}{2}+n\right]
\end{equation}
An easy exercise shows that for the values of $m$ outside $\Omega$ we have $q(m)<\frac{d^2}{4}-n^2$. We say that the ideals of dimension $m$ in $F_n^c$ such that $m\not\in\Omega$ form Class 2. As a result,it is true that there is a variety of dimension strictly less than $\frac{d^2}{4}-n^2$ which contains all the ideals of Class 1 and Class 2.

Now  the ideals of Class 3 are those ideals of $F_n$ which are not in Class 1 or Class 2.

We now proceed to the proof of Theorem \ref{tNSA}.
  
\medskip

\begin{proof} By what we just mentioned, we only need to consider varieties of ideals of Class 3. We denote by $\M$ be the Zarisski closure of the set of all Class 3 ideals of $F_n$,  
with the following property. For each $V\in\M$ there is an $NS$-automorphism $\alpha$ such that $\alpha(V)=V$. We need to prove that $\dim \M<\frac{d^2}{4}-n^2$. Note that according to Proposition \ref{lGLFc}, $V$  is  invariant under the linear operator $\wA$ induced by $A\in GL(n)$, where $A:F_n/F_n^2\to F_n/F_n^2$ is induced by $\alpha$. 

Let us denote by $\cG$ be the set of  linear maps $\wA$ on $F_n^c$, which are obtained from all $A\in \GL(n)$, 
as in Proposition \ref{lGLFc}. The map $A\to\wA$ is a morphism of  varieties $\GL\!(n)\to \cG$, so that $\dim \cG \le n^2-1$. 

Using Lemma \ref{27.4}, we choose an integer $k = k(c)$ such that  
\begin{equation}\label{eq_lower}
kd>5n^2\mbox{ (we remember that }c\ge 2).
\end{equation}
If $n$ is sufficiently great, we can choose $k$ so that $k\le \frac{d}{4}$. 

We set $\Y=\Y_1\times\Y_2$ where $\Y_1=\Grass(k,F_n^c)$ and $\Y_2=\Grass(m-2k,F_n^c)$. 

The dimension of $\Y$ is given by
\begin{eqnarray}\label{28.10}
\dim \Y &=& k(d - k) + (m-2k)(d- m + 2k)\nonumber\\ &=& k(d-k) + \left(\frac{d}{2}  - 2k\right)\left(\frac{d}{2} + 2k\right)+\Theta.
\end{eqnarray}
where $\Theta$ is a number such that $-2n^2<\Theta\le 0$ (for sufficiently great  values of $n$).

Let us define the map $\vp:\fZ=\cG\times \Y\to \Grass(m, I)$, given by
\begin{equation}\label{eqphi}
\vp((\wA, (U, W)))= W * U * \wA(U).
\end{equation}
The right hand side is an $m$-dimensional subspace defined as follows. If $\{ w_1,\ld,w_{m-2k}\}$ is a basis of $W$ and $\{ u_1,\ld,u_k\}$ is a basis of $U$ then the $m$-dimensional space $W*U*\wA(U)$ is defined by an $m$-vector
\begin{equation}\label{*}
w_1\wedge\cdots\wedge w_{m-2k}\wedge u_1\wedge\cdots\wedge u_k \wedge \wA(u_1)\wedge\cdots\wedge \wA(u_k).
\end{equation}
Note that this map is not always well-defined because the $m$-vector (\ref{*}) can be zero in the case where the vectors $w_1,\ld,\wA(u_k)$ are linearly dependent. 

To handle this difficulty, we consider two cases.

{\sc Case 1}. In this case, we assume that $\rank_{\wA} V\le m-3k$.

\begin{Lemma}\label{lBUW}
Let $V$ be an $m$-dimensional $\wA$-invariant subspace in $F_n^c$. Assume that  $\rank_{\wA}\, V\le m-3k$. Then there exists a pair $(U,W)\in \Y$, such that $\vp((\wA, (U, W)))=V$.
\end{Lemma}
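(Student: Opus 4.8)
The plan is to obtain the pair $(U,W)$ directly from Lemma \ref{28.1}, applied to the operator $B:=\wA|_V$ on the finite-dimensional space $V$. Since $V$ is $\wA$-invariant, $B$ is a well-defined linear operator on $V$, and the rank of $V$ as a $B$-module equals $\rank_{\wA} V$, which we denote by $r$; by hypothesis $r\le m-3k$, and in particular $m\ge 3k$, so $m-2k\ge k>0$ and all the dimensions appearing below are nonnegative.

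First I would set $\ell=m-k$ and check that $\ell$ lies in the interval $\left[\frac{2m+r}{3},m\right]$ demanded by Lemma \ref{28.1}: the bound $\ell\le m$ is clear, while $\frac{2m+r}{3}\le\frac{2m+(m-3k)}{3}=m-k=\ell$ follows from $r\le m-3k$. Lemma \ref{28.1} then yields subspaces $U\subset W'\subset V$ with $\dim_\F W'=m-k$, $\dim_\F U=k$, $\dim_\F\wA(U)=\dim_\F U=k$, and $V=W'\oplus\wA(U)$.

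Next I would pick any complement $W$ of $U$ inside $W'$, so that $W'=W\oplus U$ with $\dim_\F W=m-2k$, whence $V=W\oplus U\oplus\wA(U)$. Since $U\subset W'$ and $V=W'\oplus\wA(U)$, we get $U\cap\wA(U)=\{0\}$; hence for any bases $\{w_1,\ld,w_{m-2k}\}$ of $W$ and $\{u_1,\ld,u_k\}$ of $U$, the vectors $w_1,\ld,w_{m-2k},u_1,\ld,u_k,\wA(u_1),\ld,\wA(u_k)$ form a basis of $V$. Therefore the $m$-vector in (\ref{*}) is nonzero, so $\vp$ is well-defined at $(\wA,(U,W))$ and $\vp((\wA,(U,W)))=W*U*\wA(U)=V$. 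Finally $U$ and $W$ are subspaces of $F_n^c$ of dimensions $k$ and $m-2k$ respectively, so $(U,W)\in\Y_1\times\Y_2=\Y$, as required.

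I do not anticipate a real obstacle here: Lemma \ref{28.1} is tailored so that the choice $\ell=m-k$ becomes admissible exactly under the Case 1 hypothesis $\rank_{\wA} V\le m-3k$. The only points needing care are the dimension bookkeeping (that $m-2k\ge 0$ and that $\dim_\F\wA(U)=\dim_\F U$) and the check that the wedge in (\ref{*}) does not vanish; both are immediate consequences of the directness of the decomposition $V=W\oplus U\oplus\wA(U)$.
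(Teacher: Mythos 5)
Your proof is correct and follows essentially the same route as the paper: apply Lemma \ref{28.1} with $\ell=m-k$ (admissible because $r\le m-3k$ gives $\frac{2m+r}{3}\le m-k$), obtain $U$ with $U\cap\wA(U)=\{0\}$, and complete with a complement $W$ of dimension $m-2k$ so that $V=W\oplus U\oplus\wA(U)$ and the wedge in (\ref{*}) is nonzero. Your bookkeeping is in fact slightly cleaner than the paper's (which misstates the dimension of the complement as $m-k$ rather than $m-2k$), but the argument is the same.
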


\begin{proof}
 Let us apply Lemma \ref{28.1}. Since $r\le m-3k$, we have that $\frac{2m+r}{3}\le m-k$, so that $\ell=m-k$ is within the limits for $\ell$ in the statement of that lemma. As a result, there is a subspace $U$ in $V$ such that $\dim_\F U = m-\ell =k$ and $U \cap \wA(U) = \{ 0\}$. Let $W$ be any complement of dimension $m-k$ to $U\oplus\wA(U)$ in $V$. Then $V=W\oplus U\oplus\wA(U)$.
\end{proof} 
 
Using the same argument as in the proof of Lemma \ref{lXsi}, the Pl\" ucker coordinates of $W\ast U\ast \wA(U)$ given by (\ref{*}) are homogeneous polynomials of the same degree in the projective coordinates of $\wA$, $U$ and $W$.  Thus $\vp$, given by (\ref{eqphi}), is a  map of an open subset of the irreducible projective variety $\fZ$.

To proceed, we first estimate $\dim \fZ$.

Since $\dim \cG = n^2-1$, it follows from (\ref{28.10}) that 
\begin{equation}\label{**}
\dim \fZ = k(d-k) + \left(\frac{d}{2}  - 2k\right)\left(\frac{d}{2} + k\right)+n^2-1+\Theta.
\end{equation}

In the statement of the next lemma the operators $\wA$ are induced by the operators $A\in\GL(n)$, as in Proposition \ref{lGLFc}. So $F_n^c$ is an $\wA$-module, for each $A\in\GL(n)$.

\begin{Lemma}\label{lAVM} Let $\cN$ be the set of ideals $V$ of dimension $m$ in $F_n^c$, such that $\rank_{\wA}\,V \le m-3k$. Then  $\dim\cN\le\frac{d^2}{4}-n^2$.
\end{Lemma}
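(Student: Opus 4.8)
The plan is to realize every ideal $V$ from $\cN$ as a value $\vp((\wA,(U,W)))$ of the map $\vp$ from \eqref{eqphi}, and then bound $\dim\cN$ by $\dim\vp(\fZ')$ for a suitable open subset $\fZ'\subset\fZ$, using the fiber–dimension estimate in Lemma \ref{LSH}(b). Concretely, by Lemma \ref{lBUW}, for each $V\in\cN$ we have $V=W\oplus U\oplus\wA(U)$ with $(U,W)\in\Y$ and $\wA$ ranging over $\cG$; hence $\cN$ lies in the image of $\vp$ restricted to the open locus where the $m$-vector \eqref{*} is nonzero. The key observation is that the fiber of $\vp$ over a given $V$ has large dimension: if $(U,W)$ is one preimage, then replacing $(U,W)$ by $(U',W')$ with $U'\oplus\wA(U')\oplus W'=U\oplus\wA(U)\oplus W=V$ produces the same image. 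First I would fix $\wA$ and count the pairs $(U,W)$ with this property: the choice of $U$ is a point of an open subset of $\Grass(k,V)$ (those $k$-dimensional $U\subset V$ with $U\cap\wA(U)=\{0\}$), contributing dimension $k(m-k)$, and for each such $U$ the complement $W$ of $U\oplus\wA(U)$ in $V$ is a point of an affine space of dimension $(m-k)(m-2k)$ — or, if one prefers to stay projective, a $\Grass(m-2k,\,V/(U\oplus\wA(U)))$-worth of choices, of dimension $(m-2k)\cdot k$ plus the affine part coming from lifting. Either way one gets a lower bound for the fiber dimension of the shape $k(m-k)+\bigl((m-2k)k + (\text{lifting term})\bigr)$, which I will compute to be at least $k(m-k)+(m-2k)(m-k) - (\text{something controlled})$; the cleanest route is to bound it below by $k(m-k)+(m-2k)k$, i.e.\ by the difference $\dim\Y-\dim\Grass(m,F_n^c)$ up to the error $\Theta$.

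Granting the fiber bound, Lemma \ref{LSH}(b) gives
\[
\dim\cN\ \le\ \dim\vp(\fZ')\ \le\ \dim\fZ-\bigl(\text{fiber lower bound}\bigr),
\]
and substituting \eqref{**} for $\dim\fZ$ and the fiber estimate just described, the terms $k(d-k)$ and the bilinear terms in $k$ largely cancel against what is subtracted, leaving
\[
\dim\cN\ \le\ \Bigl(\tfrac{d}{2}\Bigr)^2 + n^2 - 1 + \Theta - \bigl(\text{positive multiple of }kd\bigr) + (\text{lower-order in }n).
\]
Here is where the arithmetic choice \eqref{eq_lower}, namely $kd>5n^2$, does its work: the subtracted term of order $kd$ dominates $2n^2$ (the size of $\Theta$), the $+n^2$, and all the $O(n^{c})$ lower-order contributions, so the right-hand side is $\le \tfrac{d^2}{4}-n^2$ once $n\ge n_0(c)$. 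I would keep the bookkeeping honest by recording at each step that the discarded quantities are bounded by polynomials in $n$ of degree $\le c$ while $kd$ grows like $n^{c+1}$ by Lemma \ref{27.4}, exactly as in the proof of Lemma \ref{lC2}.

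The main obstacle I anticipate is the careful fiber-dimension computation: one must verify that the map $\vp$, though only defined on an open subset, is genuinely dominant onto (the closure of) a variety containing $\cN$, and that the generic fiber — not merely some special fiber — has the claimed dimension, so that Lemma \ref{LSH}(b$'$) (or (b)) applies cleanly. In particular I need the locus where $U\cap\wA(U)=\{0\}$ and where \eqref{*} is nonzero to be nonempty and open (nonemptiness is Lemma \ref{lBUW} applied to any $V\in\cN$), and I need to make sure that varying $W$ inside the fixed space $U\oplus\wA(U)\oplus W$ really sweeps out the full $(m-k)$-codimensional-complement family rather than a smaller one — this is a routine but slightly delicate point about complements of a fixed subspace of a fixed $V$. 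Everything else is dimension arithmetic that parallels Lemma \ref{lC2}.
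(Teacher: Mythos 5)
Your plan reproduces the paper's proof essentially verbatim: embed $\cN$ in the image of $\vp$ via Lemma \ref{lBUW}, bound the fibre of $\vp$ over each $V\in\cN$ from below by counting pairs $(U,W)$ of subspaces inside $V$, apply Lemma \ref{LSH}, Claim (b), and let the negative term of order $kd$ (dominant by (\ref{eq_lower}) and Lemma \ref{27.4}) absorb $\Theta$, $n^2$ and the lower-order terms. The two slips in your intermediate fibre count (the complements of $U\oplus\wA(U)$ in $V$ form an affine space of dimension $2k(m-2k)$, not $(m-k)(m-2k)$, and $\Grass(m-2k,\,V/(U\oplus\wA(U)))$ is a single point) are harmless, because the bound you finally commit to, $k(m-k)+(m-2k)k$, is a valid underestimate of the paper's fibre dimension $k(m-k)+2k(m-2k)$ and still leaves a negative term $-\tfrac{kd}{2}$ that suffices for $\dim\cN\le\frac{d^2}{4}-n^2$ once $n$ is large.
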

 \begin{proof} 
To prove our claim, we first use Lemma \ref{lBUW} to choose $\wA\in \cG$, $U\subset F_n^c$ and	 $W\subset I$, with $\dim_\F W=m-2k$, $\dim_\F U=k$  such that $V=W\ast U\ast \wA(U)$. As a result, $\cN\subset\vp(\fZ)$ where $\vp$ is defined in (\ref{eqphi}).

In the triple $(\wA,U,W)$ just found if  we fix $\wA$ then  ``almost any'' pair of subspaces $(U,W)$ of dimensions $k$ and $m-2k$, respectively, can be chosen provided that $U,W\subset V$.  This follows because the linear dependence of $m$ vectors in (\ref{*}) is given by algebraic equations with  respect to the coordinates of $w_1,\ld,w_{m-2k}, u_1,\ld,u_k,\wA(u_1),\ld,\wA(u_k) $: one simply needs to equate to zero all $m\times m$ minors of  the matrix formed by the coordinates of these vectors. As a result, $\dim(\vp^{-1}(V))$ for such $V$ equals to the dimension of such variety of pairs $(U,W)$. Therefore,
\begin{eqnarray}\label{***}
&&\dim(\vp^{-1}(V))\ge k(s-k) + 2k(m-2k)\nonumber \\&=& k(3m-5k) > k\left(\frac{3d}{2}-3n -5k\right).
\end{eqnarray}

By Lemma \ref{LSH}, Claim (b), the estimate from above for $\dim \mathcal{N}$ is given by the difference of the dimensions provided by  the formulas (\ref{**}) and (\ref{***}), namely
\begin{eqnarray*}
&k(d-k) + \left(\frac{d}{2}  - 2k\right)\left(\frac{d}{2} + k\right) +n^2-1+\Theta - k\left(\frac{3d}{2}-3n -5k\right)\\
&< \frac{d^2}{4} - dk + 2k^2 +3kn+n^2+\Theta-1 \\
&< \frac{d^2}{4}-n^2, 
\end{eqnarray*} 

Let us explain the latter inequality.  We call $-dk$ the \textit{negative term}.

First, assume $c=2$ and use $kd >5 n^2$ from (\ref{eq_lower}) and $\Theta\le 2n^2$. Then we need to check $-5n^2+2k^2+3kn+3n^2-1<-n^2$ or $n^2-3kn-2k^2+1>0$, a true inequality because $k$ is fixed and $n$ is sufficiently great.

Now if $c\ge 3$, then the inequality is obvious because, with $n$ growing, the negative term grows as a polynomial of degree $c$ (see Lemma \ref{27.4}) while all the remaining terms grow as polynomials of degrees at most $c-1$. The choice of $k$ now does not matter, simply for different values of $k$ the number $n_0$ should be chosen appropriately.

As a result, $\dim\mathcal{\cN}< \frac{d^2}{4}-n^2$.
\end{proof} 

Thus we have proved Theorem \ref{tNSA} for the set of ideals $V$ admitting $NS$-automorphisms $\wA$, and such that $\rank_{\wA} V\le m-3k$ (Case 1).

To deal with the remaining case, we will use the number $k$, satisfying (\ref{eq_lower}) and the variety $\cG'$ that consists of the images of $NS$-automorphisms of $F_n$. We consider $m$-dimensional ideals $V$ in $F_n^c$, where $m\in\Omega$, each invariant under an element from $\cG'$. Notice the following relation: 
\begin{equation}\label{es2}
m-3k < d,
\end{equation}
following because by (\ref{eInterval}), $m-3k\le \frac{d}{2}+n-3k$ and $n-3k<\frac{d}{2}$ for great enough values of $n$. Also, we remember that by Lemma \ref{27.4} $d$ is bounded from below by a polynomial of degree $c\ge 2$.

{\sc Case 2}. Assume that  $\rank_{\wA}(V)> m-3k$.
Let $\Y_m=\Grass(m - 3k, F_n^c)$, the Grassmann variety of subspaces of dimension $m-3k$ in $F_n^c$. By (\ref{dimGr}), we have $\dim \Y_m=(m-3k)(d-m +3k)$. 

Let us choose a basis $\{e_j\,|\,j=1,\ld,d\}$ of $F_n^c$ and a standard basis  for the $n^2$ coordinates of $\wA$. We cover the set $\cG'$ by open subsets $\cG'(i)$ consisting of $\wA$ such $\wA\in\X'(i)$ if there is nonzero $b_{i1}$ in the matrix $B$ of $\wA$, with respect to the chosen basis of $F_n^c$ . Also denote by $\fZ_m(i)$ the subset  in $\cG'(i)\times \Y_m$ whose elements are the pairs $(\wA, U)$ such that for any $u\in U$ there is $\lambda\in\F$ that satisfies $\wA(u) = \lambda u$. Clearly, $\lambda$ must be the same for all vectors in $U$. The union of all $\fZ_m(i)$ is denoted by $\fZ_m$.
\begin{Lemma}\label{lZs} Each
$\fZ_m(i)$ is a quasi-projective variety.
\end{Lemma}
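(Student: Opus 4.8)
The plan is to show that each $\fZ_m(i)$, defined as the set of pairs $(\wA,U)$ with $\wA\in\cG'(i)$ and $U\in\Y_m$ such that every vector of $U$ is an eigenvector of $\wA$ (necessarily with one common eigenvalue $\lambda$ depending on the pair), is carved out of the quasi-projective variety $\cG'(i)\times\Y_m$ by polynomial conditions in the Pl\"ucker coordinates of $U$ and the matrix coordinates of $\wA$. First I would fix the chosen basis $\{e_j\mid j=1,\ld,d\}$ of $F_n^c$ and, on the open set $\cG'(i)$ where the entry $b_{i1}$ of the matrix $B$ of $\wA$ is nonzero, normalize so that $b_{i1}=1$; then all entries $b_{jk}$ become regular functions on $\cG'(i)$. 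By Lemma \ref{lCartPluck}, on an open subset of $\Y_m$ there is a distinguished basis $u_1,\ld,u_{m-3k}$ of $U$ whose coordinates with respect to $\{e_j\}$ are homogeneous rational functions of degree zero in the Pl\"ucker coordinates of $U$; after clearing denominators we may take these coordinate vectors to be given by homogeneous polynomials.

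Next I would express the eigenvector condition. For a pair $(\wA,U)$, saying that every $u\in U$ satisfies $\wA(u)=\lambda u$ for a common scalar $\lambda$ is equivalent to saying that for each basis vector $u_r$ the two vectors $\wA(u_r)$ and $u_r$ are proportional, \emph{and} that the proportionality constant is the same for all $r$. Proportionality of $\wA(u_r)$ and $u_r$ is the vanishing of all $2\times 2$ minors of the $2\times d$ matrix with rows $Bu_r$ and $u_r$; these are bihomogeneous polynomials in the $b_{jk}$ and in the coordinates of $u_r$, hence regular functions on $\cG'(i)\times\Y_m$. To encode that the common eigenvalue is the same, pick an index $p$ with $(u_1)_p\ne 0$ — such an index exists on a further open cover of $\Y_m$, and $\fZ_m(i)$ is the union of the resulting pieces, which does not affect quasi-projectivity — set $\lambda=(Bu_1)_p/(u_1)_p$, and impose $\,(Bu_r)_q\,(u_1)_p=(Bu_1)_p\,(u_r)_q\,$ for all $r$ and all $q$; again these are polynomial conditions after clearing the single denominator $(u_1)_p$, i.e. they define a closed subset of the open set $\{(u_1)_p\ne 0\}$. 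Intersecting this locally closed set with $\cG'(i)\times\Y_m$ gives a quasi-projective variety, and $\fZ_m(i)$ is the finite union over the choices of index $p$ of these pieces, hence itself a constructible set, indeed a quasi-projective variety (or at worst a finite union of such, which is all that is needed in the sequel).

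The main point to be careful about — the only real obstacle — is bookkeeping of the open subsets on which the rational parametrizations of Lemma \ref{lCartPluck} and the normalization $b_{i1}=1$ are valid, and checking that the ``common eigenvalue'' clause is genuinely algebraic rather than just the conjunction of the individual proportionality conditions (on a subspace $U$ of dimension $\ge 2$ one could a priori have each $u_r$ an eigenvector with its own eigenvalue $\lambda_r$, but this cannot happen for a genuine linear operator unless all $\lambda_r$ coincide, so in fact once $\dim_\F U\ge 2$ the extra clause is automatic; when $m-3k\le 1$ the statement is trivial). I would therefore note that for $\dim_\F U\ge 2$ the condition reduces to: $\wA$ restricted to $U$ is scalar, equivalently $U\subset\ker(\wA-\lambda I)$ for some $\lambda$, which is exactly the vanishing of the $2\times2$ minors above; the resulting set is closed in $\cG'(i)\times\Y_m$ and hence $\fZ_m(i)$ is quasi-projective. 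This completes the argument.
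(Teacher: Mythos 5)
Your construction is essentially the paper's: restrict to the chart of $\cG'(i)$ on which the distinguished entry of the matrix $B$ of $\wA$ is nonzero, use Lemma \ref{lCartPluck} to express a basis of $U$ rationally in its Pl\"ucker coordinates, and turn the condition that $\PS(\wA)$ fixes every point of $\PS(U)$ into homogeneous polynomial equations in the coordinates of $\wA$ and the Pl\"ucker coordinates of $U$. The paper forces the common eigenvalue by imposing the eigenvector equation on the basis vectors \emph{and on their sum}; your cross equations $(Bu_r)_q(u_1)_p=(Bu_1)_p(u_r)_q$ are the same device in a different guise, so the second paragraph of your proposal is a correct implementation of the same route.

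Your closing paragraph, however, undercuts this. The claim that for $\dim_\F U\ge 2$ the common-eigenvalue clause ``is automatic'' and that the defining condition of $\fZ_m(i)$ ``is exactly the vanishing of the $2\times 2$ minors above'' (i.e.\ the proportionality of $Bu_r$ and $u_r$ for each basis vector separately) is false: the operator $\mathrm{diag}(1,2)$ on a two-dimensional $U$ has both basis vectors as eigenvectors with distinct eigenvalues, so those minors alone cut out a strictly larger set than $\fZ_m(i)$. What is true is only that if \emph{every} vector of $U$ is an eigenvector, then the eigenvalue is common. So you must retain the cross equations of your second paragraph (equivalently, impose proportionality also for $u_1+u_r$, as the paper does via ``$r+1$ generic points''). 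With them the argument stands; note also that your chart-by-chart description a priori exhibits $\fZ_m(i)$ only as a finite union of locally closed sets, i.e.\ a constructible set, which is all that the dimension bound in Lemma \ref{ldimZs} requires, whereas the basis-independent form of the equations, $\wA(u_j)\wedge u_j=0$ and $\wA(u_j)\wedge u_k+\wA(u_k)\wedge u_j=0$ for all $j,k$, shows that $\fZ_m(i)$ is in fact closed in $\cG'(i)\times\Y_m$ and hence genuinely quasi-projective.
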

\begin{proof}
 Passing to the projective spaces, all points in $\textbf{P}(U)$ are fixed by the projective map $\textbf{P}(\wA)$ induced by $\wA$ in $\textbf{P}(F_n^c)$: $\textbf{P}(\wA)(\textbf{P}(u))=\textbf{P}(u)$, for any $u\in U$. 
  
 Choose a nonzero elements $b_{i1}$ in the $i$th column of the matrix $B$ of $\wA$ with respect to this basis. Then writing
\begin{equation}\label{e28.27}
\wA(u)=b_{i1}u
\end{equation} 
 makes the system of equations satisfied by $U$ homogeneous both with respect to the coordinates of $u$ and the coordinates of $\wA$. 

The coordinates of any basis $\{f_j\}$ of a subspace $U$ can be written as such linear combinations of the basis $\{e_i\}$, where according to Lemma \ref{lCartPluck}, the coefficients are ratios of homogeneous polynomials of the same degree in the Pl\" ucker coordinates of $\textbf{P}(U)$. Any projective map of an $(r-1)$-dimensional projective space  is determined by the images of $r+1$ generic points. So if we substitute the ratios of Pl\" ucker coordinates mentioned above into (\ref{e28.27}) for $u$ equal to each of $f_j$ and to their sum, we will obtain homogeneous algebraic equations in the Pl\" ucker coordinates of $U$ and the elements of $\wA$, which are equivalent to (\ref{e28.27}). Hence $\fZ_m(i)$ is a quasi-projective subvariety in $\cG'(i)\times \Y_m$.
\end{proof}

\begin{Lemma}\label{ldimZs} It is true that 
\[ 
\dim\fZ_m<\frac{d^2}{4} - d\frac{\delta n^{c-1}}{4} + n^2-1.
\]
\end{Lemma}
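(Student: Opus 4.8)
The plan is to estimate $\dim\fZ_m$ one chart at a time. Since $\fZ_m=\bigcup_i\fZ_m(i)$ is a finite union of quasi-projective varieties (Lemma \ref{lZs}), we have $\dim\fZ_m=\max_i\dim\fZ_m(i)$, so it is enough to bound each $\dim\fZ_m(i)$. I would use the projection $\pi\colon\fZ_m(i)\to\cG'(i)$, $(\wA,U)\mapsto\wA$. Passing to the irreducible components of $\fZ_m(i)$ and applying the theorem on the dimension of the fibers of a morphism (see Lemma \ref{LSH}) to $\pi$ on each component yields
\[
\dim\fZ_m(i)\ \le\ \dim\cG'(i)+\max_{\wA}\dim\pi^{-1}(\wA).
\]
Here $\dim\cG'(i)\le\dim\cG'\le\dim\cG\le n^2-1$, the last bound being the one already recorded in the proof of Theorem \ref{tNSA}; note that $\cG'\subseteq\cG$ because an $NS$-automorphism induces a non-scalar operator $A\in\GL(n)$ on $F_n/F_n^2$, and that $\cG'(i)$ is an open subset of $\cG'$.

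The heart of the argument is the estimate of the fiber $\pi^{-1}(\wA)$. By the definition of $\fZ_m(i)$, a subspace $U$ occurring in a pair $(\wA,U)\in\fZ_m(i)$ lies entirely inside a single eigenspace $E_\lambda=\Ker{\wA-\lambda\,\id}$ of $\wA$. Since $\wA$ has only finitely many eigenvalues, $\pi^{-1}(\wA)=\bigcup_\lambda\Grass(m-3k,\,E_\lambda)$, and hence $\dim\pi^{-1}(\wA)=\max_\lambda(m-3k)\bigl(\dim_\F E_\lambda-(m-3k)\bigr)$. I would then prove the key inequality $\dim_\F E_\lambda\le\rank_{\wA}(F_n^c)$: by Proposition \ref{tFGMPID}, $F_n^c=V_1\oplus\cdots\oplus V_r$ with $r=\rank_{\wA}(F_n^c)$ and each $V_j\cong\F[t]/(f_j(t))$ cyclic, and the $\lambda$-eigenspace of $\wA$ meets each $V_j$ in dimension at most $1$ — it is nonzero exactly when $(t-\lambda)\mid f_j(t)$, in which case it is spanned by the image of $f_j(t)/(t-\lambda)$ — so $\dim_\F E_\lambda\le r$. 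As $\wA$ arises from an $NS$-automorphism, the induced operator on $F_n/F_n^2$ is not scalar, and Lemma \ref{34.3} gives $\rank_{\wA}(F_n^c)\le d-\delta n^{c-1}$, where $d=\dim_\F F_n^c$; thus $\dim_\F E_\lambda\le d-\delta n^{c-1}$ for every $\lambda$.

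Writing $D=d-\delta n^{c-1}$ and $x=m-3k$, for every $\wA$ in the image of $\pi$ the fiber is non-empty, which forces $0\le x\le D$, so $\dim\pi^{-1}(\wA)\le x(D-x)\le D^2/4$. Combining with the first display, $\dim\fZ_m(i)\le n^2-1+D^2/4$ for all $i$. Finally, since $\delta=\delta(c)>0$, $c\ge2$, and $d\ge\kappa_1 n^c$ by Lemma \ref{27.4}, we have $0<D<d$ for all large $n$, hence $D^2/4<dD/4=\frac{d^2}{4}-\frac{d\,\delta n^{c-1}}{4}$, and therefore
\[
\dim\fZ_m=\max_i\dim\fZ_m(i)\ <\ \frac{d^2}{4}-\frac{d\,\delta n^{c-1}}{4}+n^2-1,
\]
which is the assertion. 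The step I expect to be the main obstacle is the eigenspace bound $\dim_\F E_\lambda\le\rank_{\wA}(F_n^c)$ together with its combination with Lemma \ref{34.3}; once this is in place, the fiber computation and the convexity estimate $x(D-x)\le D^2/4$ are routine, and the only point requiring a little care is the reduction to irreducible components in order to apply Lemma \ref{LSH}.
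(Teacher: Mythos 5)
Your proof is correct and follows essentially the same route as the paper: project $\fZ_m(i)$ to $\cG'(i)$, bound each fiber by the Grassmannian of $(m-3k)$-dimensional subspaces of a $\wA$-eigenspace of dimension at most $d-\delta n^{c-1}$ via Lemma \ref{34.3}, add $\dim\cG'(i)\le n^2-1$, and finish with the convexity estimate $x(D-x)\le D^2/4<dD/4$. The only difference is that you make explicit the step $\dim_\F E_\lambda\le\rank_{\wA}(F_n^c)$ (via the cyclic decomposition of Proposition \ref{tFGMPID}), which the paper uses implicitly when it passes from Lemma \ref{34.3} to the bound on the eigenspace $L$.
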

\begin{proof} We prove the claim for all $\fZ_m(i)$. Since $\fZ_m$ is covered by $\fZ_m(i)$, our cliam for $\fZ_m$ will follow.

 Let $B\in\X'(i)$. We consider the preimage $\pi^{-1}(B)\cap \fZ_m(i)$ in $\fZ_m(i)$ with respect to the projection $\pi: \cG'(i)\times \Y_m\to  \cG'(i)$. It consists of the pairs $(\wA,U)$ where each $U$ is a $(m-3k)$-dimensional subspace, a point in  $\Y_m$, consisting of eigenvectors for $B$. Such $U$ is contained in a (maximal) $B$-eigenspace $L$ of $F_n^c$.  Additionally, since $B$ is not scalar, it follows by Lemma \ref{34.3} that $\dim_\F L \le d-\delta n^{c-1}$. Hence all the second components of the pairs $(A,U)\in \pi^{-1}(B)\cap \fZ_m(i)$ belong to the variety of subspaces of dimension at most $m-3k$ of the space $L$ of dimension at most $d-\delta n^{c-1}$. (As a matter of fact, there are finitely many such ``large'' $B$-eigenspaces for different eigenvalues but their number is unimportant for our estimates!) By (\ref{dimGr}), this latter variety has dimension at most $(m-3k)(d-\delta n^{c-1}-m+3k)$.

Since $\cG'(i)$ is in the image of $\mathrm{PGL}\!(n)$, $\dim \cG'(i) \le n^2-1$, and it follows by Lemma \ref{LSH}, Claim (c), our choice of $m$, and the estimate (\ref{es2}) that 
\begin{eqnarray}\label{28.30}
\dim \fZ_m(i) &\le& (m-3k)(d-\delta n^{c-1}-m+3k) + n^2-1\nonumber\\ & \le& \frac{(d-\delta n^{c-1})^2}{4} +n^2+1\nonumber\\&\le& \frac{d^2}{4} - d\frac{\delta n^{c-1}}{4} + n^2-1
\end{eqnarray}
\end{proof}

 Now it follows by  Lemma \ref{27.4} that $m$ grows as a polynomial of degree $c$ in $n$. Since $k$ is a constant not depending on $n$, $m>6k$ for sufficiently great $n$ or $m-3k>\dfrac{m}{2}$. Then Lemma \ref{28.24} applies and $V$ has an eigenspace of dimension at least $m-3k$. 
  
  Let us choose in this eigenspace a subspace $U$ of dimension exactly $m-3k$; then we have obtained a pair $(B,U) \in \fZ_m(i)$ such that $U \subset V$. The variety $\fZ_m(i)$ has been defined before Lemma \ref{lZs}. 

Let us introduce one more Grassmann variety $\cP_m=\Grass(3k,F_n^c)$.
Since $U$ can be complemented in $V$ by a subspace $U'$ of dimension $3k$, we conclude that $V$ is a point in the image under the map $\vp_m(i):\fZ_m(i)\times\cP_m\to\cR_m$, which maps $(B,U,U')$ to $V=U+U'$. So $\dim\cR_m\le \dim(\fZ_m(i)\times\cP_m$. (See Lemma \ref{lBUW}). By (\ref{dimGr}), $\dim \cP_m = (3k)(d-3k)$, while $\dim\fZ_m(i)$ was estimated in (\ref{28.30}). Adding these two values, we  obtain
\begin{equation}\label{28.33}
\dim\cR_m\le\dim(\fZ_m(i) \times \cP_m)\le\frac{d^2}{4} - d\frac{\delta n^{c-1}}{4} + n^2-1+3k(d-3k).
\end{equation}
 
This number is less than $\frac{d^2}{4}-n^2$ if $n$ is sufficiently great because if $k$ is fixed as in (\ref{eq_lower}) then after subtracting $\frac{d^2}{4}$ from (\ref{28.33}) we will obtain a summand  $-d\frac{\delta n^{c-1}}{4}$ (negative term). Using Lemma \ref{27.4}, it follows, on the one hand, that the absolute value of $-(\frac{\delta}{2})d_1n^c$ grows at least as fast as a polynomial of degree $2c-1$, while on the other hand, that the growth of other terms is bounded by the polynomials in $n$ whose degree is at most $c$. We remember that $c\ge 2$.

As a result, the image under $\vp_m(i)$, which contains $\fZ_m(i)$, is a  variety of smaller dimension than  $\dim\Grass(m,F_n^c)-n^2$. The same is true for the variety $\fZ_m$, which is a finite  union of $\fZ_m(i)$ by all $i$, and finally for $\fZ$ which is a disjoint union of all $\fZ_m$.  Since for each $m$, $\fZ_m$ contains all the subspaces $V$ of Case 2, the proof of Theorem \ref{tNSA} is complete.
\end{proof}

\begin{Proposition}\label{tNSD} For each $c\ge 2$ there exists a natural number $n_0=n_0(c)$ with the following property. Suppose $F_n$ is a relatively free $c$-step nilpotent algebra of rank $n>n_0$. Let $\cU\subset\cJ$ be the set of ideals each being invariant only under the derivations of $F_n$, which are scalar modulo $F_n^2$. Then being in $\cU$ is a generic property for the ideals of $F_n$. Moreover, if $\cV=\cJ(F_n)\setminus\cU$ then $\dim \cV<\dim \mathcal{J}(F_n)-n^2$. 
\end{Proposition}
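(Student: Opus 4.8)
The plan is to transcribe the proof of Theorem \ref{tNSA} almost verbatim, replacing ``automorphism'' by ``derivation'' throughout, after one preliminary observation that lets $\scal(F_n)$ play the role of $\Scal(F_n)$. Since $\F$ is algebraically closed, hence infinite, $\V$ is multihomogeneous by Lemma \ref{L1.3}, so by Corollary \ref{cDer} every $A\in\gl(n)$ extends to a derivation of $F_n$, which by Proposition \ref{lGLFc} acts on $F_n^c$ as a well-defined linear operator $\wA$. First I would record that if a derivation $d$ of $F_n$ is scalar modulo $F_n^2$, say $d(y)-\lambda y\in F_n^2$ for all $y$, then applying the Leibniz rule to a monomial $w$ of degree $c$ in free generators gives $d(w)=c\lambda w$ (the contributions of the $F_n^2$-parts land in $F_n^{c+1}=\{0\}$); hence such a $d$ acts on $F_n^c$ as the scalar $c\lambda$ and preserves every subspace of $F_n^c$. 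Consequently an ideal $K\subseteq F_n^c$ lies in $\cV=\cJ(F_n)\setminus\cU$ precisely when it is $\wA$-invariant for some \emph{non-scalar} $A\in\gl(n)$. The set $\cG$ of operators $\wA$ arising from such $A$ is the image of a morphism from (an open subset of) $\gl(n)$, and since scalar $A$ act on $F_n^c$ as scalars, $\dim\cG\le n^2-1$, exactly as in the automorphism case.

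With this in hand the classification is identical to the one preceding the proof of Theorem \ref{tNSA}: ideals $K\not\subseteq F_n^c$ (Class 1) lie, by Theorem \ref{tAAI} together with centrality ($F_n^c=I(F_n)$, so $d:=\dim_\F F_n^c=d_1$), in a variety of dimension $<\frac{d^2}{4}-n^2$; ideals $K\subseteq F_n^c$ of dimension $m$ outside the interval $\Omega=[\frac{d}{2}-n,\frac{d}{2}+n]$ (Class 2) lie in Grassmannians $\Grass(m,F_n^c)$ of dimension $m(d-m)<\frac{d^2}{4}-n^2$. So it remains to cover, by a variety of dimension $<\frac{d^2}{4}-n^2$, the Class 3 ideals $V\subseteq F_n^c$ with $\dim_\F V=m\in\Omega$ that are $\wA$-invariant for some non-scalar $A\in\gl(n)$.

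Now I would fix $k=k(c)$ with $kd>5n^2$ and $k\le d/4$ (possible for large $n$ by Lemma \ref{27.4}) and split into the two cases of the proof of Theorem \ref{tNSA}. If $\rank_{\wA}V\le m-3k$, Lemma \ref{28.1} produces a decomposition $V=W\oplus U\oplus\wA(U)$ with $\dim_\F U=k$, so $V$ lies in the image of the morphism $(\wA,(U,W))\mapsto W*U*\wA(U)$ defined on an open subset of $\cG\times\Grass(k,F_n^c)\times\Grass(m-2k,F_n^c)$, whose generic fibre has dimension at least $k(3m-5k)$; the arithmetic of Lemma \ref{lAVM}, using $kd>5n^2$ when $c=2$ and the degree gap when $c\ge3$, then bounds the image dimension by $<\frac{d^2}{4}-n^2$. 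If instead $\rank_{\wA}V>m-3k$, then (since $m$ grows like a polynomial of degree $c$ in $n$ by Lemma \ref{27.4}, so $m-3k>m/2$ for $n$ large) Lemma \ref{28.24} yields an $\wA$-eigenspace of dimension $\ge m-3k$ inside $V$, while Lemma \ref{34.3}, applied to the derivation and its non-scalar induced operator $A$, bounds every $\wA$-eigenspace of $F_n^c$ by $\dim_\F\le d-\delta n^{c-1}$. Exactly as in the proof of Theorem \ref{tNSA} one then covers the parameter set by quasi-projective pieces $\fZ_m(i)$, maps $\fZ_m(i)\times\Grass(3k,F_n^c)$ onto the relevant $V$'s, and obtains image dimension at most $\frac{d^2}{4}-d\tfrac{\delta n^{c-1}}{4}+n^2-1+3k(d-3k)$, whose negative term dominates (degree $2c-1$ against degree $\le c$), again $<\frac{d^2}{4}-n^2$. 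Enlarging $n_0$ so that this and the Class 1,2 bounds all hold, $\cV$ sits in a variety of dimension $<\frac{d^2}{4}-n^2\le\dim\cJ(F_n)-n^2$, as required.

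The one genuinely new point is the opening observation that scalar-mod-$F_n^2$ derivations act as scalars on $F_n^c$ (so $\scal(F_n)$ is the ``default'' part) and that the parameter space of the non-scalar ones is still $(n^2-1)$-dimensional; everything else goes through unchanged because Lemma \ref{34.3} is already stated for derivations and the geometric lemmas (\ref{28.1}, \ref{28.24}, \ref{LSH}, \ref{lXsi}, \ref{lproduct}) are indifferent to the origin of $\wA$. I therefore do not expect a real obstacle beyond the bookkeeping of re-running the two-case dimension count; the subtlest spot is merely to note that ``$\wA$-eigenvector'' behaves in Case 2 exactly as before, which is automatic since $\wA$ is a genuine linear operator whatever derivation induces it.
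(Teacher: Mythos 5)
Your proposal is correct and follows essentially the same route as the paper's own proof, which likewise reruns Theorem \ref{tNSA} with $\gl(n)$ in place of $\GL(n)$ (legitimate by multihomogeneity and Corollary \ref{cDer} together with Proposition \ref{lGLFc}), notes that Lemma \ref{34.3} already covers derivations, and handles the possible singularity of $\wA$ by shrinking the open subset of $\fZ$ on which the map (\ref{eqphi}) is defined — exactly the role played by Lemma \ref{28.1} and Lemma \ref{lBUW}, which you invoke implicitly. The only cosmetic difference is that the paper is content with the bound $\dim\cG'\le n^2$ and observes this changes the estimates only negligibly, so your slightly loose derivation of $n^2-1$ (via reduction modulo scalars) is immaterial.
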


\begin{proof}
To prove this, we have to slightly modify the proof of Theorem \ref{tNSA}. First of all, we have to replace the variety $\cG$ of linear operators $\wA$ on $F_n^c$ induced by the operators $A\in\GL(n)$, acting on $F_n/F_n^2$, by the space $\cG'$ of operators $\wA$ on $F_n^c$ induced by the operators $A\in \mathfrak{gl}(n)$ (this is legitimate because of Proposition \ref{lGLFc}, considering that $\V$ is multihomogeneous if $\F$ is algebraically closed, hence infinite). Then $\dim\cG'\le n^2$, which causes only minor changes in the estimates of the proof. The key Lemma \ref{34.3} stating that $\dim_\F F_n^c-\rank_{\wA}(F_n^c)\ge\delta n^c$ is true both for $\wA$ induced from the automorphisms and derivations. The main obstacle in carrying out the proof of that Lemma could be the apparent requirement that $\dim_\F\wA(U)=\dim_\F U$ must be nonsingular in the definition (\ref{eqphi}) of an auxiliary map $\vp$. However the argument in Lemma \ref{lBUW} and in the  paragraph following that Lemma allows one to further decrease the open set in the variety $\fZ$ to include the requirement of non-degeneracy of $\wA$ on the space $U$ in question. No further changes in the proofs are needed.
\end{proof}

\begin{Remark}\label{rchar0} In the case where $\chr\F=0$, by Proposition \ref{pMultilin} and Corollary \ref{cDer}, we have an infinite set of derivations $D_\lambda$ extending the map $x\mapsto\lambda x$ on the free generating set $X$ of $F_n$. A subspace $V\subset F_n^c$ invariant under an $NS$-derivation $D$ will be invariant under any $D+D_\lambda$. Clearly, one of $D+D_\lambda$ is nonsingular, which eliminates any obstacles in the proof of Proposition \ref{tNSD}.
\end{Remark}

\subsection{Generic algebras}\label{ssSFR}
 
In this section we are dealing with the variety $\V_n$ of (isomorphism classes of) $n$-generated algebras in a $c$-step nilpotent central primitive class $\V$, $c\ge 2$. All algebras  in $\V_n$ have the form $F_n/K$ where $K$ is an ideal of $F_n$. By Malcev's Lemma \ref{tScrap8.4}, $F_n/K\cong F_n/L$ iff there is an automorphism $\delta\in\Aut{F_n}$ such that $\delta(K)=L$. So the study of the isomorphism classes of algebras in $\V_n$ is equivalent to the study of the variety of orbits under the action of the algebraic group $\Aut{F_n}$ on the variety $\cJ_n=\cJ(F_n)$ of ideals of $F_n$.

We start with some more facts from Algebraic Geometry. This can be found in \cite{HUM}.

\begin{Remark}\label{31.6}
Suppose $\cG$ is a  connected algebraic group. One says that $\cG$ acts on an algebraic variety $\fZ$ if there is everywhere defined regular map $\cG\times \fZ\to \fZ$, satisfying $(gh)(z)=g(hz)$ and $1z=z$, for all $g,h\in \cG$ and $z\in\fZ$. In this case, the set of orbits $\fZ/\cG$ is an algebraic variety and there is a rational map $\fZ\to\fZ/\cG$  which takes different values on different orbits and constant on each particular orbit.  The pre-image of any point is an orbit. Since the dimension of an orbit is at most the dimension of the acting group $\cG$, using Lemma \ref{LSH}, Claim (c), it follows that $\dim \fZ/\cG\ge\dim \fZ-\dim \cG$.
\end{Remark}

\medskip

\subsubsection{Key observations}\label{KO} 

Remark \ref{31.6}  allows us to identify the set of orbits $\cJ_n/\Aut{F_n}$ with the set $\V_n$ of $n$-generated algebras in $\V$, viewed up to isomorphism. The structure of an algebraic variety on $\cJ_n/\Aut{F_n}$ transfers to $\V_n$, making it an algebraic variety. Having in mind this structure of $\V_n$, we will be talking about the generic properties of algebras in $\V_n$, in the sense of Definition \ref{dGeneric}.   
 
In Section \ref{ssARFNA}, we already split the set of ideals of $F_n$ into three classes: Class 1 of ideals $K$ such that $K\not\subset F_n^c$, Class 2 of ideals of dimension $m\not\in\Omega$. All ideals of these two classes where shown to belong to a variety of dimension $<\frac{d^2}{4}-n^2$. When we switch to the varieties of algebras $F_n/K$, where $K$ belongs of one of these two classes, the dimensions of these varieties as well as their unions, is always bounded from above by $\frac{d^2}{4}-n^2$.

The remaining ideals belong to Class 3, that is, they belong to $F_n^c$ and have dimension $m\in\Omega$. We select in Class 3 a subclass, we call it Class 3a, consisting of the ideals each of which is stable under an $NS$-automorphism. By Theorem \ref{tNSA}, the ideals of Class 3a  also belong to a variety of dimension $<\frac{d^2}{4}-n^2$. By Remark \ref{31.6}, the varieties of orbits of these ideals under the action of $\Aut{F_n}$, hence the varieties of algebras in $\V_n$ of the form $F_n/K$ where $K$ is in Classes 1, 2 and Subclass 3a are contained in a variety of dimension $<\frac{d^2}{4}-n^2$.

The remaining ideals form Class 3b. This is the main class: the dimension of the variety of algebras $F_n/K$ where $K$ is an ideal of Class 3b is greater than $\frac{d^2}{4}-n^2$. To prove this, we denote by $\cS_m$ the set of ideals in Class 3b having dimension $m$ and consider the action of $\Aut{F_n}$ on $\cS_m$.  For each $K\in\cS_m$ we have $\Stab_{\Aut{F_n}}(K)\subset\Scal (F_n)$ (see Definition \ref{dScal}). It follows that
\[
\dim\cS_m/\Aut{F_n^c}=\dim\cS_m-\dim\PGL(n)=\dim\cS_m-(n^2-1).
\]
Thus the dimensions of the varieties of orbits of $\cS_m$ under the action of $\Aut{F_n}$ drop by the same value $n^2-1$ when compared with the dimensions of the Grassmann varieties of ideals themselves, uniformly for all $m\in\Omega$. 

So while looking at the generic algebras, one can restrict oneself to the algebras of the form $P=F_n/K$ where $K\subset F_n^c$, $\dim_\F K=m$, $m\in\Omega$ and $\Stab_{\Aut{F_n}} K\subset\Scal(F_n)$. We can also calculate the dimension of the variety of ideals $\cJ_n$ of $F_n$ as follows.  Then 
\begin{equation}\label{dimGenId}
\dim\cJ_n=\left\{\begin{array}{ll}\dfrac{d^2}{4}&\mbox{ if }d\mbox{ is even}\\\\ \dfrac{d^2}{4}-\dfrac{1}{4}&\mbox{ if }d\mbox{ is odd}.
\end{array}\right.
\end{equation}
The dimension of the variety of algebras in $\V_n$ is given by the following
\begin{equation}\label{dimGenAlg}
\dim\V_n=\left\{\begin{array}{ll}\dfrac{d^2}{4}-n^2+1&\mbox{ if }d\mbox{ is even}\\\\ \dfrac{d^2}{4}-n^2+\dfrac{3}{4}&\mbox{ if }d\mbox{ is odd}.
\end{array}\right.
\end{equation}

The remainder of this section is devoted to the derivation of generic properties of algebras in $\V_n$ where $\V$ is a $c$-step nilpotent central class of algebras over an algebraically closed field $\F$.

\begin{Theorem}\label{36} For any $c\ge 2$ there is natural $n_0$ such that the following is true. Let $\V$ be a central $c$-step nilpotent primitive class of  algebras over an algebraically closed field $\F$, $c\ge 2$. Let $F_n$ be a $\V$-free algebra of rank $n\ge n_0$. Set $\V'=\V\cap\fN_{c-1}$. Then the following are true for a \textbf{generic} $n$-generated algebra $P$ in $\V$.
\begin{enumerate}
\item[$\mathrm{(i)}$] $P/P^c\cong F_n/F_n^c$ is a $\V'$-free algebra of rank $n$; 
\item[$\mathrm{(ii)}$] The dimension of $P$ is given by the following. If $\dim_\F F_n^c$ is even then 
\[
\dim_\F P=\dim_\F F_n - \frac{1}{2} \dim_\F F_n^c.
\]
If  $\dim_\F F_n^c$ is odd then 
\[
\dim_\F P=\dim_\F F_n - \frac{1}{2} \left(\dim_\F F_n^c \pm 1\right).
\]

\item[$\mathrm{(iii)}$] Let $n>n_0$ and $0<r\le \sqrt{n}$. Then $P$ contains a subalgebra isomorphic to $F_r(\V)$; in particular, if $n\ge c^2$, then $\var P=\V$.
\item[$\mathrm{(iv)}$] $P$ is graded by the degrees with respect to any $n$-element generating set;
\item[$\mathrm{(v)}$] Any automorphism of $P$ is scalar modulo $P^2$ and for any generating set $a_1,\ld,a_n$ of $P$ and any nonzero $\lambda\in\F$ the map $a_i\to\lambda a_i$ extends to an automorphism of $P$;
\item[$\mathrm{(vi)}$] Any derivation of $P$ is scalar modulo $P^2$ and for any generating system $a_1,\ld,a_n$ of $P$ and any $\lambda\in\F$ the map $a_i\to\lambda a_i$ extends to a derivation of $P$;
\item[$\mathrm{(vii)}$] $\Aut{P}$ is a solvable algebraic group which is a semidirect product of a $(c-1)$-step nilpotent normal subgroup and the multiplicative group of the field $\F$; one has \[
\dim\Aut P=(\dim_\F P-n)n+1;
\]
\item[$\mathrm{(viii)}$] $\Der P$ is a solvable Lie algebra which is a semidirect sum of a $(c-1)$-step nilpotent ideal and the additive group of the field $\F$; one has 
\[
\dim_\F(\Der P)=(\dim_\F P-n)n+1;
\]
\item[$\mathrm{(ix)}$] If $c=2$ then $\Aut P=\Def\,(P)$, as in Example \ref{rtwostep}.
\item[$\mathrm{(x)}$] If $c=2$, then  $\Der P=\default\,(P)$, as in Example  \ref{rtwostep}.
\item[$\mathrm{(xi)}$] The minimal number of relations defining  $P$ in $\V$ is 
\[
\dim_\F F_n - \frac{1}{2} \dim_\F F_n^c\mbox{  if }\dim_\F F_n^c \mbox{  is even}
\]
and 
\[
\dim_\F F_n - \frac{1}{2} \left(\dim_\F F_n^c \pm 1\right)\mbox{  if }\dim_\F F_n^c\mbox{  is odd.}
\] 

\item[$\mathrm{(xii)}$] $P^c$ is a ``large ideal'' in the sense that there is a constant $K=K(c)$ such that 
\[
\frac{\dim_\F P^c}{\dim_\F P} > 1-\frac{K}{n}.
\]
Additionally, every subspace of $P^c$ is a characteristic ideal of $P$.
\end{enumerate}
\end{Theorem}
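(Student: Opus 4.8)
The plan is to establish all twelve items at once for a single generic algebra $P = F_n/K$. By Theorems~\ref{tAAI} and~\ref{tNSA} and the Key Observations of Section~\ref{KO}, the generic $K$ may be taken to be an ideal of $F_n$ with: (a) $K\subseteq F_n^c = I(F_n)$ (centrality of $\V$ enters here); (b) $\dim_\F K = m$ one of the one or two values maximizing $q(m) = m(d-m)$, where $d = \dim_\F F_n^c$ (Remark~\ref{rd1}); and (c) $\Stab_{\Aut{F_n}}(K)\subseteq\Scal(F_n)$. The observation that drives everything is a degree count: since $F_n$ is $\N$-graded and $c$-step nilpotent, $F_n^c$ is the single homogeneous component $(F_n)_c$, so every endomorphism of $F_n$ that is the identity modulo $F_n^2$ restricts to the identity on $F_n^c$, while $x_i\mapsto\lambda x_i$ restricts there to $\lambda^c\cdot\mathrm{id}$. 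Hence $\Scal(F_n)\subseteq\Stab_{\Aut{F_n}}(K)$ for \emph{every} subspace $K\subseteq F_n^c$, and combining with (c) gives $\Stab_{\Aut{F_n}}(K) = \Scal(F_n)$ for the generic $K$; likewise a derivation vanishing modulo $F_n^2$ kills $F_n^c$, and $D_\lambda\colon x_i\mapsto\lambda x_i$ acts on $F_n^c$ as $c\lambda$ (legitimate since $\F$ algebraically closed makes $\V$ multihomogeneous, Lemma~\ref{L1.3}).

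The structural and arithmetic items are then almost immediate. For (i), $K\subseteq F_n^c$ gives $P/P^c = (F_n/K)/(F_n^c/K)\cong F_n/F_n^c$, the $\V'$-free algebra of rank $n$. For (ii), $\dim_\F P = \dim_\F F_n - m$ with $m$ as in (b). For (iv), given any $n$-element generating set of $P$, Lemma~\ref{tScrap8.4}(2) identifies its kernel with $\delta(K)$ for some $\delta\in\Aut{F_n}$, and $\delta(K)\subseteq\delta(F_n^c) = (F_n)_c$ lies in one homogeneous component, hence is homogeneous, so the grading of $F_n$ descends to $P$. For (xi), $K\subseteq I(F_n)$ forces the ideal generated by any $S\subseteq K$ to be $\Sp(S)$, so a minimal set of defining relations of $P$ in $\V$ is a linear basis of $K$, of size $\dim_\F K$. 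For the first half of (xii), $\dim_\F P^c = d - m$ and $\dim_\F P = \dim_\F F_n - m$, while $d$ lies between fixed multiples of $n^c$ and $\dim_\F(F_n/F_n^c)$ is $O(n^{c-1})$ by Lemma~\ref{27.4}, so $\dim_\F P^c/\dim_\F P = 1 - O(1/n)$.

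The automorphism and derivation items rest on the key observation. For (v): any $\alpha\in\Aut{P}$ lifts by Lemma~\ref{tScrap8.4}(1) to $\bar\alpha\in\Aut{F_n}$ with $\bar\alpha(K) = K$, so $\bar\alpha\in\Stab_{\Aut{F_n}}(K) = \Scal(F_n)$ and $\alpha$ is scalar modulo $P^2$; conversely $x_i\mapsto\lambda x_i$ preserves the homogeneous $K$ and descends. Item (vi) is identical, using Corollary~\ref{cDer}, Proposition~\ref{pID} and Proposition~\ref{tNSD}. For the second half of (xii): $P^c = F_n^c/K\subseteq I(P)$, so every subspace of $P^c$ is an ideal, and by (v) any automorphism acts on $P^c = P_c$ as $\lambda^c\cdot\mathrm{id}$ and hence fixes each of them. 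For (vii)--(viii): $\Aut{P}\cong\Stab_{\Aut{F_n}}(K)/\{\alpha\equiv\mathrm{id}\bmod K\} = \Scal(F_n)/\{\alpha\equiv\mathrm{id}\bmod K\}$, where $\dim\Scal(F_n) = n\dim_\F F_n^2 + 1 = n(\dim_\F F_n - n)+1$ and $\{\alpha\equiv\mathrm{id}\bmod K\}\cong K^n$ has dimension $nm$, so $\dim\Aut{P} = n(\dim_\F P - n)+1$; the same computation gives $\Der{P}$ since $\dim\gl(n) = \dim\GL(n)$. The structure is the semidirect product of $\cU(P) = \{\alpha\equiv\mathrm{id}\bmod P^2\}$, nilpotent of class $\le c-1$ by the standard commutator-collection argument along the power filtration, with the copy of $\F^\times$ (resp.\ $\F$) furnished by the scalings, whence solvability. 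Finally (ix)--(x): for $c = 2$, $\Scal(P) = \Def(P)$ and $\scal(P) = \default(P)$ by Example~\ref{rtwostep}, so (v)--(vi) give $\Aut{P} = \Def(P)$ and $\Der{P} = \default(P)$.

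The last and fussiest item is (iii), which carries the main obstacle. Fix $r\le\sqrt{n}$; the subalgebra $G = \langle x_1,\dots,x_r\rangle\cong F_r(\V)$ (Lemma~\ref{lFSRFL}) satisfies $G\cap F_n^c = G^c = F_r^c$, so for $\delta\in\Aut{F_n}$ inducing $A\in\GL(n)$ one has $\delta(G)\cap K = \wA(F_r^c)\cap K$, and $P$ contains a copy of $F_r(\V)$ as soon as $\wA(F_r^c)\cap K = \{0\}$ for some $A$. Let $B_r\subseteq\Grass(m,F_n^c)$ be the bad locus of those $K$ for which $\wA(F_r^c)\cap K\ne\{0\}$ for every $A$; it is closed and $\GL(n)$-stable, and proper, since $\dim_\F F_r^c\le\kappa_2 r^c$ lies far below $m\approx d/2$ (Lemma~\ref{27.4}), so a generic $m$-subspace of $F_n^c$ misses $F_r^c$. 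Intersecting with the dense locus of Class~3b ideals of dimension $m$---on which every $\GL(n)$-orbit has the constant dimension $n^2-1$, by the key observation and $\Stab_{\Aut{F_n}}(K) = \Scal(F_n)$---and passing to orbits lowers dimension by exactly $n^2-1$, so the image of the bad ideals in $\V_n$ has dimension $<q(m) - (n^2-1)\le\dim\V_n$; a finite union over $m$ and over $r\le\sqrt{n}$ stays negligible. Taking $r = c$ when $n\ge c^2$ yields a subalgebra $\cong F_c(\V)$, hence $\var{P} = \V$ by Remark~\ref{rvarFc}. The genuine difficulty is precisely this last dimension bookkeeping: the excluded locus must stay negligible \emph{after} quotienting by $\Aut{F_n}$, which works only because on the generic (Class~3b) locus all orbits share the uniform dimension $n^2-1$---equivalently, because $\Scal(F_n)\subseteq\Stab_{\Aut{F_n}}(K)$ always holds while the opposite inclusion fails only on a set of dimension $<\dim\cJ(F_n)-n^2$.
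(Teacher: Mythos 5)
Your proposal is correct and, for most of the twelve items, it runs along the same lines as the paper: the Key Observations of Section~\ref{KO} (generic $K\subseteq F_n^c$, of dimension maximizing $m(d-m)$, with $\Stab_{\Aut{F_n}}(K)\subseteq\Scal(F_n)$), Malcev's Lemma~\ref{tScrap8.4} for lifting, and the fact that every $S$-automorphism acts on $F_n^c=(F_n)_c$ as $\lambda^c\,\mathrm{id}$ — which the paper also uses, in the proof of (vii), to see that $\wP$ and $\widetilde Q$ stabilize $K$. Your reorganization of (vii)--(viii) as $\Aut{P}\cong\Scal(F_n)/\{\delta\equiv\mathrm{id}\bmod K\}$ with $\{\delta\equiv\mathrm{id}\bmod K\}\cong K^n$ is just a repackaging of the paper's count $\dim\wP\widetilde Q=n(\dim_\F P-n)+1$ and gives the same structure statement. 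The genuine divergence is (iii). The paper fixes $q=[n/r]$ free subalgebras $F(i)$ on pairwise disjoint generator sets, observes that a bad ideal must meet each $F(i)^c$, hence contains a $q$-dimensional subspace of $U=\sum_iF(i)^c$, and then runs an explicit image/fiber dimension count (Lemma~\ref{LSH}(b)) to bound the bad locus by $\frac{d^2}{4}+\kappa_2n^c-\theta n^{c+1/2}<\frac{d^2}{4}-n^2$ — i.e.\ it preserves the uniform $n^2$ safety margin used everywhere else. You instead take the single copy $G=\langle x_1,\dots,x_r\rangle$, move it by $\GL(n)$, and argue that the bad locus $B_r=\bigcap_{A}\{K:\wA(F_r^c)\cap K\ne 0\}$ is a closed, $\GL(n)$-stable, proper subvariety of $\Grass(m,F_n^c)$ (properness because $\dim_\F F_r^c+m<d$ by Lemma~\ref{27.4}), so that after the uniform orbit-dimension drop $n^2-1$ its image in $\V_n$ has dimension strictly below $\dim\V_n$. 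This is softer and shorter: you get only a drop of at least $1$ rather than the paper's quantitative gap, but Definition~\ref{dGeneric} requires no more, and the $\GL(n)$-stability of $B_r$ (which you correctly single out) is exactly what legitimizes subtracting the full orbit dimension. The trade-off is that the paper's estimate is uniform and explicit, matching the $-n^2$ bookkeeping of Theorems~\ref{tAAI} and~\ref{tNSA}, while yours leans on the closedness/properness of an intersection of Schubert-type conditions over all of $\GL(n)$ — true, but worth stating (each $\{K:\wA(F_r^c)\cap K\ne0\}$ is closed, and an arbitrary intersection of closed sets is closed). One small point of terseness: in the second halves of (v) and (vi) you verify the scaling extension only for the images of $x_1,\dots,x_n$; for an arbitrary generating set you need, as the paper does, to route the argument through (iv), i.e.\ through the fact that lifts of any $n$ generators form a free generating set (Corollary~\ref{cGFS}) and $K$ lies in the top component of the corresponding grading — an ingredient you already have, so this is presentation rather than a gap.
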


\medskip 

\noindent \textit{Proof} of (i).
As follows from Key observations, we only need to consider the case of algebras $F_n/K$ where $K\subset F_n^c$. Then $P/P^c\cong F_n/F_n^c$ is a $\V'$-free algebra, as claimed.$\hfill\Box$ 

\bigskip

\noindent \textit{Proof} of (ii). By Key observations, $P\cong F_n/K$  where $K\in\cS_m$ is an ideal  in Class 3b.  If $\T_m$ is the variety of orbits of $PGL(n)$ in $\cS_m$ then by Lemma \ref{LSH}, Claim (b'), $\dim \T_m=\dim\cS_m-(n^2-1)$. It then follows that the greatest dimension of $\T_m$ is achieved when $m=\frac{d}{2}$ if $d=\dim_\F F_n^c$ is even or $m=\frac{d}{2}\pm\frac{1}{2}$ is $d$ is odd. The proof of Claim (ii) is complete. $\hfill\Box$

\medskip

\noindent \textit{Proof} of (iii). We fix in $F_n$ a collection of $q=\left[\frac{n}{r}\right]$ subalgebras $F(i)$, generated by pairwise disjoint $r$-element subsets of the set $\{x_1,\ld,x_n\}$ of free generators of $F_n$. It is obvious (see similar argument in the first paragraph of the proof of Lemma \ref{27.4}) that the sum of all subalgebras $F(i)$ is direct.

We set $C=C(r)=\dim_\F F(i)^c$. By Lemma \ref{27.9}, $C < \kappa_2 r^c \le \kappa_2 n^{c/2}$. It follows that for $U=\sum_i F(i)^c$ we have $\dim_\F U = Cq\le \kappa_2 n r^{\frac{c}{2} - 1}$.

Let us estimate the dimension of the set $\fZ$ of ideals $K$ of $F_n$ such that $F_n/K$ has no subalgebras isomorphic to $F_r(\V)$. In particular, $K\cap F(i)\ne\{ 0\}$ for all $i$. Since $K$ contains a nonzero vector from every $F(i)$, it follows that $K$ has a subspace $W\subset U$ of dimension $q$.

Having in mind Key Observations, we only consider the case where $K\subset F_n^c$ and $\dim_\F K=m\in \Omega$ (see Definition \ref{eInterval}). In this case, there is a subspace $V\subset F_n^c$ such that $K=W\oplus V$. Consequently, $K$ is the image of the pair $(W,V)\in \Grass(q,U)\times\Grass (m-q, F_n^c)$ under the map $(W,V)\to W\oplus V$ defined in the customary way on an open subspace of $\Grass(q,U)\times\Grass (m-q, F_n^c)$, of the same dimension. Since $c\ge 2$, for the dimension in question we will have
\begin{eqnarray}\label{estimate}
&q(Cq-q) + (m-q)(d-(m-q))< Cn^2/r^2 + d^2/4\\&\le \kappa_2 r^{c-2}n^2+ d^2/4 \le \kappa_2 n^{(c-2)/2}n^2+ d^2/4\le \kappa_2 n^c + d^2/4.\nonumber 
\end{eqnarray}

Next we notice that the image $K$ of a pair $(W,V)$ does not change if we replace $V$ by virtually any subspace $V'$ of $W\oplus V$ of the same dimension as $V$. Therefore, the preimage of $K$ has dimension at least $\dim \Grass(q, m)> \theta n^{c+ 1/2}$, for some constant $\theta=\theta(c,r)>0$. This is true because if  $m\in\Omega$, then by Lemma \ref{27.4} it is bounded from below by a polynomial of degree $c$ in $n$ .

This estimate together with the estimate (\ref{estimate}) bounds from the above the dimension of the set of ideals $K$ under consideration by the value 
\[
d^2/4 +\kappa_2 n^c- \theta n^{c +1/2} < d^2/4 -n^2.
\]
 Hence the dimension of the image after the factorization by the action of $\PGL(n)$ can be estimated by the same value.  Using (\ref{dimGenAlg}), we conclude that a generic $n$-generated algebra does contain a free subalgebra of rank $r$ if $n>n_0$ for some $n_0=n_0(c,r)$.
 
Finally, if $n\ge c^2$ then $P$ contains a subalgebra isomorphic to $F_c(\V)$. So $\var P\supset \var{F_c(\V)}$. By Remark \ref{rvarFc}, $\var{F_c(\V)}=\V$, and the proof of (iii) is complete.$\hfill\Box$

 \medskip
 
 \noindent \textit{Proof} of (iv). Using (i), we may assume that $P=F_n/K$, where $K\subset F_n^c\subset F_n^2$. Hence the preimages $y_1,\ld,y_n$ in  $F_n$ of the vectors $a_1,\ld,a_n$ of any generating system of $n$ elements for $P$ are linearly independent  $\mod F_n^2$. By Corollary \ref{cGFS}, Claim 1, $\{ y_1,\ld,y_n\}$ is the set of free generators in $F_n$. Now using  equation (\ref{eRelGrad}) and Lemma \ref{L1.3} we obtain a grading of $F_n$ by $X=\{y_1,\ld,y_n\}$. The ideal $K$ is graded with respect to this grading since $K\subset F_n^c$. So the grading of $F_n$ induces the grading of $P=F_n/K$.$\hfill\Box$
 
 \medskip
 
\noindent \textit{Proof} of (v). Assume that $\alpha\in\Aut{P}$ is an $NS$-automorphism. We use Malcev's Lemma \ref{tScrap8.4}. By (i) $P=F_n/K$ where $K\subset F_n^2$. Then $\alpha$ is induced by an $NS$-automorphism $\delta$ of $F_n$ which leaves $K$ invariant. By Key observations, $K$ is in Classes 1, 2 or 3a. Hence $P=F_n/K$ is not generic, a contradiction.

To prove that any map $a_i\to\lambda a_i$ extends to an automorphism of $P$, we use Claim (iv). Then $P=P_1\oplus\cdots\oplus P_c$ is the grading by the degrees with respect to $a_1,\ld,a_n$ and so the map $w\to\lambda^m w$ where $w$ is any element of $P_m$, $m=1,\ld,c$, is an automorphism of $P$ providing the desired extension of the original map of the generators of $P$.  $\hfill\Box$ 

\medskip

\noindent \textit{Proof} of (vi). We can repeat the argument of (v). Since $\F$ is infinite, the variety $\V$ is multihomogeneous, and according to Proposition \ref{pID} any dervation of $F_n/K$ is induced by a derivation of $F_n$, preserving $K$. Additionally, by Proposition  \ref{tNSD} the ideals $K$ invariant under nonscalar derivations are negligible. $\hfill\Box$ 

\medskip

\noindent \textit{Proof} of (vii). By Malcev's Lemma \ref{tScrap8.4}, if  $P=F_n/K$ and $\ve:F_n\to P$ is a natural homomorphism with kernel $K$ then for any $\alpha\in\Aut{P}$ there is $\delta\in \Aut F_n$ such that $\alpha\ve=\ve\delta$. 

Now in $\Aut{F_n}$ there is a subgroup $P$ consisting of automorphisms $\delta$ for each of which there is $\lambda\in\F^\ast$ such that $\delta(x_i)=\lambda x_i$, for all $i=1,\ld,n$. Since $K\subset F_n^c$ is invariant under all these $\delta$ and since $F_n/F_n^2$ maps onto $P/P^2$, we have a subgroup $\wP$ in $\Aut{P}$ isomorphic to $P$. If $a_i=\ve(x_i)$, for all $i=1,\ld,n$, we have $\delta(a_i)=\lambda a_i$.

There is a normal subgroup $Q\subset\Aut{F_n}$ consisting of automorphisms $\delta_{w_1,\ld,w_n}$, where $w_i\in F_n^2$, mapping each $x_i$ to $x_i+w_i$, $i=1,\ld,n$ (see Corollary \ref{cGFS}, Claim 2). These automorphisms act trivially on all factors $F_n^i/F_n^{i+1}$, $i=1,2,\ld,c$, so  $Q$ is nilpotent, because it can be represented by unitriangular matrices. 

Now $Q$ acts trivially on  $F_n^c$ and $P$ acts by scalar automorphisms on $F_n^c$, hence both $P$ and $Q$ leave $K$ invariant. As a result, we have subgroups $\widetilde{P}$ and $\widetilde{Q}\subset\Aut{P}$ induced by $P$ and $Q$. By  Claim (iv) of this theorem, $\Aut{P}=\wP\widetilde{Q}$.  Now $\dim\widetilde{Q}=\dim_\F (P^2)^n=n(\dim_\F P-n)$. Since $\dim\wP=1$, we finally obtain 
\[
\dim\Aut{P}=n(\dim_\F P-n)+1.
\]
Since $\widetilde{Q}$ is nilpotent and $\wP$ abelian, the group $\Aut{P}$ is  solvable.$\hfill\Box$

\medskip 

\noindent \textit{Proof} of (viii). The proof of (viii) is analogous to the proof of (vii). 

\noindent \textit{Proof} of (ix) and (x).  Both claims follow by application of (v), in the case of automorphisms, (vi), in the case of derivations, and equations $\Scal(P)=\Def(P)$ and $\scal(P)=\default(P)$ from Example \ref{rtwostep}. $\hfill\Box$

\medskip

\noindent \textit{Proof} of (xi). If an ideal $K$ in $F_n$ is contained in $F_n^c$ then its dimension is precisely the minimum number of defining relations needed to define $P=F_n/K$ in $\V_n$. (This number is also the lower bound for the defining relations of $P$ in any greater primitive class!) It remains to refer to laim (ii) of t his Theorem.

\medskip

\noindent \textit{Proof} of (xii). The first claim follows because $\dim_\F P^c$ is bounded from below by a polynomial in $n$ of degree $c$ while $\dim_\F P/P^c$ is bounded from above by a polynomial of degree $c-1$.

The second claim follows from the description of $\Aut{P}$: every $\alpha\in\Aut{P}$ acts on $P^c$ as a scalar transformation.$\hfill\Box$

A remark in the spirit of Claim (iii) of the previous theorem is the following.

\begin{Remark}\label{rSubalgebras}\label{rGenTup} Let $\V$ be $c$-step nilpotent primitive class and $Q$ a finite - dimensional algebra in $\V$ that contains a $\V$-free subalgebra of rank $r$. Denote by $\cG$ the set of $r$-tuples $(g_1,\ld,g_r)$ in an affine space $\underbrace{Q\times\ldots\times Q}_r$ such that $g_1,\ld,g_r$ freely generates a $\V$-free subalgebra of $Q$. Then $\cG$ is a Zarisski open subset of $\underbrace{Q\times\ldots\times Q}_r$, of the same dimension $r\dim_F Q$.   

Indeed, let $D=\dim_\F F_r$. Then a subalgebra $R$ generated by $\{ g_1,\ld,g_r\}$ is $\V$-free, freely generated by this set, if and only if $\dim_\F H=D$. Let us fix a basis $\{e_i\}$ of $Q$. Given an $r$-tuple $g_1,\ld,g_r$, we can form a matrix $S$, whose rows are formed by the coordinates  of the elements $g_1,\ld,g_r$ and their product of length at most $c$, with respect to $\{e_i\}$. Then the fact that the $\dim_\F Q<D$ is equivalent to the fact that all minors of size $D$ in $S$ equal zero. Thus we get a system of polynomial equations, defining a Zarisski closed subset whose complement $\cO$ is an open set. Since $Q$ contains a $V$-free subalgebra of rank $r$, $\cO$ is not empty, which completes the proof of our claim.

Less formally, we can say that  if some $r$-tuple of elements of $Q$ freely generates a $\V$-free subalgebra then almost any $r$-tuple does the same. In particular, by (iii), almost any $r$-tuple of elements of a generic algebra $P$ freely generates a $\V$-free subalgebra.

\end{Remark} 

In Section \ref{ssWGI} we will need another result, close to (iii) from Theorem \ref{36}.

\begin{Lemma}\label{lFor Width}
Let $w(x_1,\ld, x_t)\in\cF_\infty$ be a polynomial without monomials depending only on one variable. Suppose  that $w(x_1,\ld, x_t)=0$ is not an identity in $\V$. Then $w(x_1,\ld, x_t)=0$ is not an identity in a generic algebra $P$. Moreover, there exists an increasing quadratic function $\beta(n)$ such that $\dim_\F w(A)\ge\beta(n)$.
\end{Lemma}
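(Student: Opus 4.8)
The plan is to follow the same strategy that proved Claim (iii) of Theorem \ref{36}, replacing the ``contains a free subalgebra of rank $r$'' condition by the condition ``$w=0$ fails in $F_n/K$''. First I would observe that it suffices to prove the $\dim_\F w(A)\ge\beta(n)$ statement for a generic algebra $A=F_n/K$, since that already forces $w=0$ to fail. By the Key observations of Section \ref{KO}, it is enough to restrict attention to ideals $K$ of Class 3b, i.e. $K\subset F_n^c$ with $\dim_\F K=m\in\Omega$, and to show that the subset of such $K$ for which $w$ vanishes in $F_n/K$ (or for which $\dim_\F w(F_n/K)$ is too small) is negligible, i.e. lies in a variety of dimension $<\frac{d^2}{4}-n^2$; the dimension bound then transfers after factoring by $\PGL(n)$ exactly as in the proof of (ii)--(iii).

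The heart of the argument is a lower bound on the verbal ideal: by Lemma \ref{L*}, since $w\in\cF_\infty^c$ has no monomials in a single variable and $w=0$ is not an identity in $\V$, the verbal ideal $w(F_n)$ generated by $w$ in $F_n$ has $\dim_\F w(F_n)\ge\kappa\binom{n}{t}$, a quadratic (in fact degree-$t\ge2$) function of $n$; call it $\beta_0(n)$. The point is that if $w$ vanishes in $P=F_n/K$ then $w(F_n)\subset K$, so $K$ contains a fixed-dimension subspace sitting inside $w(F_n)\subset I(F_n)=F_n^c$. More precisely, for the refined statement I would fix a large constant $q=q(c)$ with $q\le\beta_0(n)$ and say $K$ is ``bad'' if $\dim_\F(w(F_n)\cap K)\ge \beta_0(n)-q$ roughly; if $\dim_\F w(F_n/K)<\beta(n)$ for a suitable increasing quadratic $\beta$ then $K$ meets $w(F_n)$ in a subspace of dimension growing quadratically in $n$. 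So $K=W\oplus V$ with $W\subset w(F_n)\subset F_n^c$ of dimension roughly $\beta_0(n)-\beta(n)$ and $V\subset F_n^c$. Then $K$ lies in the image of $\Grass(\dim_\F W,\,w(F_n))\times\Grass(m-\dim_\F W, F_n^c)$ under the direct-sum map (defined on an open set by Lemma \ref{lproduct}), whose dimension is at most $(\dim_\F W)(\dim_\F w(F_n)-\dim_\F W)+(m-\dim_\F W)(d-m+\dim_\F W)$. Crucially, replacing $V$ by virtually any subspace $V'\subset W\oplus V$ of the same dimension gives the same $K$-less-than-the-image point, so the preimage of each such $K$ has dimension at least $\dim\Grass(\dim_\F W, m)$, which grows at least quadratically times $m$, hence like a polynomial in $n$ of degree $c$ (using $m\in\Omega$ and Lemma \ref{27.4}). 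Subtracting, as in (\ref{estimate}) of the proof of (iii), the dimension of the set of bad $K$ is bounded by $\frac{d^2}{4}-n^2$ once $n\ge n_0(c)$.

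After dividing by $\PGL(n)$ using Remark \ref{31.6} and comparing with (\ref{dimGenAlg}), the bad algebras form a variety of dimension strictly smaller than $\dim\V_n$, so $w=0$ fails generically and $\dim_\F w(A)\ge\beta(n)$ for generic $A$, where $\beta$ is the increasing quadratic function coming from the gap between $\beta_0(n)$ and the threshold. The main obstacle is bookkeeping the right threshold: one must choose the increasing quadratic $\beta(n)$ so that, on one hand, $\dim_\F w(A)<\beta(n)$ really does force $W=w(F_n)\cap K$ to have dimension large enough that the preimage computation beats the ambient dimension, and, on the other hand, the subtracted term (preimage dimension, which is $\dim\Grass(\dim_\F W,m)$) genuinely dominates the additive error terms $\kappa_2 n^c$-type contributions for all large $n$; this is precisely the calculation carried out in (\ref{estimate}), and the same $c\ge 2$ / degree-counting remark (the negative term grows as a degree-$c$ or larger polynomial while the others are bounded by degree $c$, or in the $c=2$ case a direct comparison) closes it.
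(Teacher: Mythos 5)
The one genuine gap is at the first step: you invoke Lemma \ref{L*} ``since $w\in\cF_\infty^c$'', and later assert $w(F_n)\subset I(F_n)=F_n^c$, but neither is a hypothesis of the statement --- $w$ is only assumed to have no monomials depending on a single variable, so it may have components of degree less than $c$ (e.g.\ $w=x_1x_2$ with $c=3$), in which case $w\notin\cF_\infty^c$, $w(F_n)\not\subset F_n^c$, and Lemma \ref{L*} does not apply as stated. Since the quadratic lower bound $\dim_\F w(F_n)\ge\beta_0(n)$ is the engine of your whole dimension count, this must be repaired. The paper does it in two lines: if the image of $w$ does not lie in $F^c$, replace $w$ by $wy$ or $yw$ for a fresh variable $y$ --- one of these is again a non-identity precisely because $I(F_n)=F_n^c$ (centrality) --- and iterate; the verbal ideal only shrinks under this replacement, so a quadratic lower bound for the new polynomial, now covered by Lemma \ref{L*}, gives one for $w$. (Alternatively, one can observe that the $\sigma_Z$-argument proving Lemma \ref{L*} never really uses $f\in\cF_\infty^c$.) With this supplement your count does not in fact need $w(F_n)\subset F_n^c$ at all: only the identity $\dim_\F w(F_n/K)=\dim_\F w(F_n)-\dim_\F(K\cap w(F_n))$ together with $K\subset F_n^c$ enters.

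Apart from this, your route is essentially the paper's: restrict to Class 3b via the Key observations, note that $\dim_\F w(F_n/K)<\beta(n)$ forces $\dim_\F (K\cap w(F_n))$ to be quadratically large, parametrize such $K$ by pairs in $\Grass(\ell,w(F_n))\times\Grass(m-\ell,F_n^c)$, and pass to quotient-algebras modulo $\PGL(n)$. The difference is how the bad set is beaten down: the paper adds the side condition $m\le 1.1\dim_\F w(F_n)$ and bounds the product dimension directly, treating the remaining $m$ separately, whereas you subtract the fiber dimension $\ell(m-\ell)$ (complements of $W$ in $K$), as in the proof of Claim (iii) of Theorem \ref{36}. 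Your variant does close: the product dimension minus the fiber term simplifies to $(m-\ell)(d-m)+\ell(\dim_\F w(F_n)-\ell)\le \left(\frac{d}{2}+n-0.9\beta_0(n)\right)\left(\frac{d}{2}+n\right)+\left(\frac{d}{2}+n\right)\beta(n)$, which is below $\frac{d^2}{4}-n^2$ for large $n$ by Lemma \ref{27.4}, and it has the small advantage of avoiding the paper's extra case $m>1.1\dim_\F w(F_n)$. Two minor cautions: your preliminary definition of ``bad'' via a fixed constant $q$ is the wrong threshold (the gap must be quadratic, as you then say), and the fiber bound $\ell(m-\ell)$ is not itself of degree $c$ when $\ell$ is close to $m$ --- it is the combined estimate above, not the fiber alone, that does the job.
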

\begin{proof}One can restrict oneself to a consequence of $w$ where all monomials have degree $c$. Indeed, if $w\not\in F^c$ then for a new variable $y$ either $wy=0$ or $yw=0$ is not an identity in $\V$, because $I(F_n)=F_n^c$. Then one can use Lemma \ref{L*} to conclude that the dimension of the verbal ideal $w(F)$ is bounded from below by a polynomial $\alpha(n)$, which is quadratic in $n$.

Note that $w(A) = w(F_n/K)=(w(F_n)+K)/K\cong w(F_n)/(K\cap w(F_n))$. We want to show that the set of algebras for which $\dim_\F w(A)$ is not bounded from below by a quadratic function of $n$ is negligible. As follows from Basic observations, we only need to consider ideals $K$ inside $F^c$ and if in $F^c$ then of dimension $m\in\left[\frac{d}{2}-n,\frac{d}{2}+n\right]$ and invariant only those automorphisms of $F_n$ which are scalar modulo $F^2$. 

We first consider the variety $\fZ_m$ of such ideals $K\subset F^c$ of dimension $m$ from the above interval which additionally satisfy  $m\le 1.1\dim_\F w(F_n)$ and $\dim_\F (K\cap w(F_n))>0.9 \dim_\F w(F_n)$. Let us set $\ell=\left[0.9\dim_\F w(F_n)\right]$.

If $\fZ_m$ is nonempty then $m > 0.9\dim_\F w(F_n)$, so that
\[
\dim_\F w(F_n)< \frac{10}{9}m \le \frac{10}{9}
\left(\frac
{d}{2}+n\right) < 0.57 d
\]
 for all great enough $n$. In this case there is a basis in $K$ in which the first $\ell$ vectors are in $w(F_n)$. It follows that $K$ is the image of a pair $(U,V)$ of the product 
\[
\Grass\left(\ell, w(F_n)\right) \times \Grass\left(m-\ell, F_n^c\right)
\] 
under the map $\ast$ (see \ref{eqphi}). Hence by Lemma \ref{LSH}, Claims (b) and (c), we have the following.
\begin{eqnarray*}
\dim \fZ_m\le \ell\left(\dim_\F w(F_n)-\ell\right) + \left(m-\ell\right)d.
\end{eqnarray*}
Here the first summand is less than
\[
0.09\dim_\F w(F_n)^2+d <0.09(0.57d)^2+d < 0.03d^2+d,
\]
while the second is less than 
\begin{eqnarray*}
\left(1.1\dim_\F w(F_n) -\ell\right)d < 0.2(\dim_\F w(F_n) +1)d\le 0.2d^2 + d,
\end{eqnarray*}

The sum of these two is less than $0.23 d^2 +2d < \frac{d^2}{4} -n$ for all great enough $n$ because $d$ is bounded from below by a quadratic function of $n$.

We have shown that $\dim \fZ_m<\frac{d^2}{4} -n^2$. Since all these ideals have trivial stabilizers in $\PGL(n)$ the dimension of the set of $F_n/K$ with such $K$ is strictly than $\dim\V_n$.

It remains to consider the cases of factorization by an ideal $K\subset F^c$ whose dimension $m$ belongs to that interval and, besides, either $m>1.1\dim_\F w(F_n)$ or $\dim_\F (K\cap w(F))\le 0.9 \dim_\F w(F)$.

In this case, using $\dim_\F w(A)=\dim_\F w(F_n)/(K\cap w(F_n))$, it follows that in both cases the dimension is bounded from below by the value 
\[
\min\left\{0.1; \frac{1}{9}\right\}\dim_\F w(F).
\]
Hence the dimension in question is at least $0.1\alpha(n)$. We  conclude the proof of Claim (iii) by setting $\beta(n) =0.1\alpha(n)$.
\end{proof}

\medskip

\begin{Remark}\label{rScrap29.3}
Any $2$-nilpotent primitive class is central. Indeed, it is obvious that $F_n^2\subset I(F_n)$. Also, an element outside of $F_n^2$ can be included in a free generating set of $F_n$ (see Corollary \ref{cGFS}). Clearly, if a free generator is in $I(F_n)$ then $I(F_n)\subset F_n$. Thus $I(F_n)= F_n^2$ and $\V$ is central.  Now the normal subgroup $\widetilde{Q}$ from the proof of Theorem \ref{36}, Claim (vi), is simply the additive group of a vector space $(P^2)^n$, on which $\wP\cong\F^\ast$ acts by the scalar multiplication. 

Indeed, given $\psi\in\wP$, there is $\lambda$ such that $\psi(a_i)=\lambda a_i$. For $\theta\in\widetilde{Q}$ there are $u_1,\ld,u_n\in P^2$ such $\theta(a_i)=a_i+u_i$. Note that $\psi(u_i)=\lambda^2u_i$. In this case, $\psi\theta\psi^{-1}(a_i)=a_i+\lambda u_i$. So the tuple $(u_1,\ld,u_n)$ maps to $(\lambda u_1,\ld,\lambda u_n)$, as needed. 

Hence the automorphism group is almost always one and the same in the case, where $n$ is great enough and $d$ is even, or there are two such groups in the case where $n$ is great enough and $d$ is odd.
\end{Remark}

\subsection{Generic quotient-algebras of generic algebras}\label{sGFG}

As earlier, we consider generic properties of algebras in a fixed $c$-step nilpotent primitive class $\V$, where $c\ge 2$.
A finite-dimensional algebra $Q$ is called a \textit{generic quotient-algebra} for $\V$ if $Q$ is a homomorphic image of a generic algebra in $\V_n$, as soon as $n>n_0=n_0(Q)$. In particular, $n_0$ is greater that the minimal number of generators of $Q$.

In this section, we keep assuming that $\V$ is a central primitive class, that is, $I(F_n)=F_n^c$, for any $\V$-free algebra $F_n$, $n\ge 2$.

\begin{Theorem}\label{tTypical} An algebra
$Q\in\V$ is a generic quotient-algebra for $\V$ if and only if $Q^c=\{ 0\}$.
\end{Theorem}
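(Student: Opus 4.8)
The plan is to prove the two directions separately. For the easy direction, suppose $Q$ is a generic quotient-algebra for $\V$. Pick $n$ large (larger than $n_0(Q)$ and larger than the $n_0(c)$ of Theorem \ref{36}), so that a generic $n$-generated algebra $P$ simultaneously surjects onto $Q$ and enjoys all the properties listed in Theorem \ref{36}. By Claim (xii) of Theorem \ref{36}, every subspace of $P^c$ is a characteristic ideal of $P$; more to the point, by Claim (i) we may take $P=F_n/K$ with $K\subset F_n^c$, so $P^c=F_n^c/K$ and $P/P^c\cong F_n/F_n^c$ is $\V'$-free with $\V'=\V\cap\fN_{c-1}$, hence $(P/P^c)^c=\{0\}$. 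If $\pi:P\twoheadrightarrow Q$ is a surjection, then $\pi(P^c)=Q^c$. I would argue that $\pi(P^c)=\{0\}$: indeed $P^c=I(P)$ (since $\V$ is central, $F_n^c=I(F_n)$ and modding out $K\subset F_n^c$ gives $P^c=I(P)$), and an epimorphism cannot be forced to kill the annihilator — but that is not automatic, so instead I use a counting/dimension argument. A cleaner route: $Q$ being a generic quotient for \emph{all} large $n$ means $Q$ is a quotient of $F_n/K$ for generic $K$; if $Q^c\ne\{0\}$ pick $0\ne z\in Q^c$, lift to $w\in F_n^c$, and observe that $z$ being a nonzero value of a degree-$c$ monomial forces a constraint. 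Actually the right tool is Lemma \ref{lFor Width}: write $Q=F_r(\V)/L$ with $r$ the number of generators of $Q$; if $Q^c\ne 0$ there is a degree-$c$ multilinear (hence involving $\ge 2$ variables, as $c\ge2$) polynomial $w$ with $w=0$ \emph{not} an identity in $Q$. By Lemma \ref{lFor Width}, $w=0$ is not an identity in a generic $P$ either, and $\dim_\F w(P)\ge\beta(n)$ grows; but if $Q$ is a quotient of $P$ then $w(Q)$ is a quotient of $w(P)$ — this does not immediately give a contradiction, so the genuine obstruction, discussed below, is to turn ``generic quotient for all $n$'' into a statement forcing $w(P)$ to lie in the kernel.

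For the converse, suppose $Q^c=\{0\}$, so $Q\in\V'=\V\cap\fN_{c-1}$, generated by $r$ elements. Fix $n>n_0$ with $n\ge r$, and let $P=F_n/K$ be a generic $n$-generated algebra in $\V$; by Theorem \ref{36}(i), $P/P^c\cong F_n/F_n^c$ is the $\V'$-free algebra of rank $n$. Since $Q$ is $r$-generated and lies in $\V'$, it is a quotient of $F_r(\V')$, which embeds in $F_n(\V')=F_n/F_n^c\cong P/P^c$ as a retract; composing the projection $P\twoheadrightarrow P/P^c$ with a surjection $P/P^c\twoheadrightarrow F_r(\V')\twoheadrightarrow Q$ (using the retraction onto the first $r$ free generators) exhibits $Q$ as a homomorphic image of $P$. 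Hence $Q$ is a generic quotient-algebra for $\V$, with $n_0(Q)=\max\{r,\,n_0(c)\}$. This direction is essentially bookkeeping with the universal property and the retraction $F_r(\V')\hookrightarrow F_n(\V')$, both recalled in Sections \ref{ssPC} and \ref{sNAV}.

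The main obstacle is the ``only if'' direction, specifically ruling out $Q^c\ne\{0\}$. The delicate point is that ``$Q$ is a homomorphic image of a generic algebra $P$ in $\V_n$'' allows the surjection $P\twoheadrightarrow Q$ to vary with $n$, so I cannot directly say the kernel contains $P^c$. The plan to resolve it: suppose $Q^c\ne\{0\}$. Then $Q$ satisfies no identity killing all degree-$c$ monomials, so by Proposition \ref{pMultilin} (or directly) there is a degree-$c$ multilinear polynomial $w(x_1,\ldots,x_c)$ — necessarily depending on $\ge 2$ variables since $c\ge 2$ — with $w=0$ not an identity in $Q$. Since $Q$ is $r$-generated and $n>r$, a surjection $\eta:P\twoheadrightarrow Q$ must have $\eta(P^c)\supset Q^c\ne\{0\}$, because $w$ evaluated on suitable generators of $Q$ is nonzero and those generators lift along $\eta$. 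Thus $\ker\eta\not\supset P^c$, i.e. $\ker\eta$ is an ideal of $P=F_n/K$ not contained in $P^c=F_n^c/K$; pulling back, $K':=$ preimage of $\ker\eta$ in $F_n$ is an ideal with $K\subset K'$ and $K'\not\subset F_n^c$. Now I invoke Theorem \ref{tAAI}: for generic $K$ (which $K$ is, since $P$ is generic), \emph{and} for the purpose of the argument we also need that $F_n/K$ has \emph{no} ideal outside $F_n^c$ of small codimension whose quotient is $\cong Q$ — here I would count dimensions: the variety of pairs $(K,K')$ with $K\subset F_n^c$, $K\subset K'$, $K'\not\subset F_n^c$, $F_n/K'\cong Q$ has dimension strictly below $\dim\V_n$, by combining the codimension estimate of Theorem \ref{tAAI} with $\dim_\F Q$ being bounded independent of $n$ while $\dim_\F F_n^c$ grows like $n^c$ (Lemma \ref{27.4}). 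Consequently generic $P$ admits no surjection onto $Q$, contradicting that $Q$ is a generic quotient-algebra. This forces $Q^c=\{0\}$, completing the proof. The technical heart is the dimension count showing that ``$Q$ is a quotient of $F_n/K$ via a kernel not inside $F_n^c$'' is a negligible condition on $K$.
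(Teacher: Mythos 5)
Your ``if'' direction is fine and is essentially the paper's own argument: a generic $P$ has $P/P^c\cong F_n/F_n^c$, which maps onto any $Q$ with $Q^c=\{0\}$. The genuine gap is in the ``only if'' direction. First, a local slip: from $\eta(P^c)=Q^c\neq\{0\}$ you may conclude $P^c\not\subset\ker\eta$, but your ``i.e.'' silently converts this into $\ker\eta\not\subset P^c$; that statement is true, but only because $\dim_\F\ker\eta=\dim_\F P-\dim_\F Q>\dim_\F P^c$ for large $n$, an argument you do not make. Much more seriously, the dimension count you defer to is not supplied and does not follow from the ingredients you cite. Theorem \ref{tAAI} bounds the variety of ideals that \emph{themselves} lie outside $F_n^c$; here the ideal $K$ defining the generic algebra lies \emph{inside} $F_n^c$, and it is only the larger ideal $K'$ that leaves $F_n^c$. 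If all you use about $K$ is that it is contained in $M'=K'\cap F_n^c$, a subspace of codimension $\dim_\F Q^c$ in $F_n^c$, you get no contradiction at all: every $m$-dimensional subspace of $F_n^c$ with $m\le d-\dim_\F Q^c$ is contained in some such subspace, so the ``bad'' set would be all of $\Grass(m,F_n^c)$.

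The missing idea, which is the heart of the paper's proof, is to pin $M'$ down. By Malcev's Lemma \ref{tScrap8.4}, any ideal $L'$ with $F_n/L'\cong Q$ is $\alpha(L)$ for one fixed presentation ideal $L$ and some $\alpha\in\Aut{F_n}$, and by Proposition \ref{lGLFc} the action of $\alpha$ on $F_n^c$ depends only on its image in $\GL(n)$; hence $M'=L'\cap F_n^c$ ranges over a single $\PGL(n)$-orbit of dimension at most $n^2-1$, not over the full family of codimension-$\dim_\F Q^c$ subspaces. The paper also first reduces to $\dim_\F Q^c=1$, writes the bad ideals as the image of $\PGL(n)\times\Grass(m,M)$ under $(g,U)\mapsto g(U)$, and---crucially---lowers the image dimension further by observing that the fibres contain the stabilizer of $M$, which contains a copy of $\PGL(n-r)$ (with $r$ the number of generators of $Q$, since $L$ contains $x_{r+1},\ldots,x_n$), yielding the bound $m(d-1-m)+2nr-r^2$. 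Without this fibre estimate the cruder bound $m(d-1-m)+n^2-1$ need not beat $\lfloor d^2/4\rfloor$ when $c=2$ (for instance $d\sim n^2/2$ for $2$-step nilpotent Lie algebras, so $d/2<n^2$), and Lemma \ref{27.4} alone does not rescue it. So your plan, as stated, cannot be completed from Theorem \ref{tAAI} plus boundedness of $\dim_\F Q$; it needs the Malcev/$\GL(n)$-orbit constraint on $M'$, the reduction to $\dim_\F Q^c=1$, and the stabilizer estimate, which is exactly the route the paper takes.
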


\begin{proof}
First assume that $Q\in\V$ and $Q^c=\{ 0\}$. By Theorem \ref{36}, Claim (i),  a generic $n$-generated algebra $P$  admits a homomorphism $\vp$ onto $F_n/F_n^c$ (with kernel $P^c$). Also $F_n$ admits a homomorphism onto $Q$ whose kernel contains $F_n^c$. Thus we have a homomorphism $\psi:F_n/F_n^c\to Q$. Finally, $\psi\vp: P\to Q$ is a homomorphism of $P$ onto making $Q$ a quotient algebra of $P$. 

Now let us assume that $Q^c\ne\{ 0\}$. In this case, $Q$ possesses a quotient-algebra $Q'$, such that $(Q')^c\ne\{ 0\}$ but whose quotient-algebra by an ideal of dimension 1 is $(c-1)$-step nilpotent. If we prove that $Q'$ is not a generic quotient-algebra in $\V$ then obviously $Q$ is not a generic quotient-algebra in $\V$. So from now on we assume $\dim_\F Q^c=1$.

Let $Q$ have $r$ generators (our number $n$ will turn out to be much greater). We write $Q=F_r/K$ which leads to a presentation $Q=F_n/L$ where $L$ contains all the free generators $x_{r+1},\ld,x_n$. By our choice, the intersection $M=L\cap F_n^c$ has codimension $1$ in $F_n^c$.

Let $d=\dim_\F F_n^c$ and $m\le d-1$. We set $\Y_m=\Grass(m,M)$. We have $\dim \Y_m=m(d-1-m)$. Now the automorphism group of $F_n$ acts on $F_n^c$ as $\GL(n)$ and on $\Y_m$ as the $\cG=\PGL(n)$ (see Proposition \ref{lGLFc}). We now consider $\fZ_m=\cG\times \Y_m$; $\dim \fZ_m =  m (d-1-m) +n^2-1$. Given $(g, U)$, where $g\in \cG$ and $U\in \Y_m$, we set $\vp((g, U)) = g(U)$, which is an $m$-dimensional subspace in $F_n^c$. This defines a polynomial map $\vp:\fZ_m\to \Grass(m, F_n^c)$.

We are going to show that the subset $\M_m$ of ideals $J$ in $\Y_m$ such that $F_n/J$ admits a homomorphism onto $Q$ belongs to $\vp(\fZ_m)$.

Indeed, for such $J$, there is an ideal $L'$, $J\subset L'$, such that $F_n/L'\cong Q$. Therefore, $F_n^c/(L'\cap F_n^c)\cong Q^c$, meaning that $M'=L'\cap F_n^c$ has codimension 1 in $F_n^c$. 
As mentioned in Theorem \ref{tScrap8.4}, there is an automorphism $\alpha$ of $F_n$ such that $\alpha(L)=L'$. Since $F_n^c$ is invariant, this automorphism also maps $M=L\cap F_n^c$ onto $M' = L'\cap F_n^c$. It follows that $g(M)=M'$ for an appropriate $g\in \cG$. Since $J\subset L'\cap F_n^c=M'$, it follows that there is $U\in \Y_m$ such that $g(U)=J$ and $\vp((g,U))=J$, as claimed.

Now we need to estimate the dimension of the image of $\vp$. In order to do this, we will estimate  the dimension of the preimage of $J$. So assume $g(U)=J$. If $h\in\cG$ then  the preimage also contains all pairs $(gh,h^{-1}(U))$. If we only consider the second components of the pairs, it follows that the dimension of the preimage is at least the dimension of the stabilizer of $M$ in $\cG$. Since $M$ contains the generators $x_{r+1},\ld, x_n$, the stabilizer in question contains the subgroup $\PGL(n-r)$, whose dimension is $(n-r)^2-1$.

It now follows by Lemma \ref{LSH}, Claim (c), that the dimension of $\vp(\fZ_m)$ does not exceed the following number:
\begin{eqnarray}
\dim \fZ_m - (n-r)^2 +1&=& m(d-1-m) +n^2-1 - (n-r)^2+1\nonumber\\ &=& m (d-1-m) +2nr -r^2\label{egenfac}
\end{eqnarray}

All values of this function of $m$ do not exceed $\frac{(d-1)^2}{4}+2nr-r^2$. Now the dimension of the variety of generic ideals equals $\frac{d^2}{4}$, if $d$ is even and $\frac{d^2}{4}-\frac{1}{4}$ if $d$ is odd. The difference of $\frac{d^2}{4}-\frac{1}{4}$ and $\frac{(d-1)^2}{4}+2nr-r^2$ equals
\[
\frac{d}{2}-2nr+r^2-\frac{1}{2}>0,\mbox{ for all sufficiently great }n,
\]
because when $c\ge 2$, by Lemma \ref{27.9}, the growth of $\frac{d}{2}$  is at least quadratic (see Lemma \ref{27.9}).  Thus there is $n_0=n_0(Q)$ such that for all $n\ge n_0$ the dimension  of the variety of ideals mapping onto $Q$ is strictly less than the dimension of the  variety of all ideals in $F_n^c$. So we proved that if an algebra $P=F_n/J$ admits a homomorphism onto $Q$ then $J$ is not a generic ideal. Actually, we have proven that for any $m$, $\dim\M_m\le \dim(\vp(\fZ_m))<\frac{d^2}{4}-1$.

Now let us switch from the ideals to their respective quotient-algebras. We consider inside $\Y_m$ the quasi-projective subvariety $\cQ_m$  whose points are $m$-dimensional ideals stable only under the automorphisms which are scalar modulo $F_n^2$ (use Theorem \ref{tAAI}). The ideals $J$ in $\cQ_m$ such that $F_n/J$ maps onto $Q$ are a subvariety in $\cQ_m\cap\vp(\fZ_m)$ whose dimension is less then $\frac{d^2}{4}-1$.

According Key observations, the dimension of varieties of ideals in Classes 1, 2 and 3a is always less than $\frac{d^2}{4}-n^2$. The same can then be said about the varieties of quotient-algebras by these ideals.

Let us consider Class 3b and compare the dimensions of varieties of algebras $F_n/K$, where $K\in \M_m$ on one hand and $K$ is of Class 3b on the other. In both cases $\Stab_{\Aut{F_n}}K=\Scal(F_n)$. Therefore, the dimensions of varieties of algebras arizing after factorization by the action of $\Aut{F_n}$, which amounts the action of $\PGL(n)$ are equal to the dimensions of varieties of ideals minus $n^2-1$. 

From these estimates and (\ref{dimGenId}), it follows that a generic algebra does not admit a homomorphism onto $Q$.
\end{proof}

\begin{Remark}\label{rInfNumRanks} We have proved slightly more than had been stated. Namely, that there no algebras $Q$ which are generic quotient-algebra for \textit{infinitely many} values of $n$ (not for all $n>n_0$, as stated in the theorem).
\end{Remark}

\subsection{Width of verbal ideals in generic algebras}\label{ssWGI}

Given an algebra $P$ over a field $\F$ and any nonassociative polynomial $f=f(x_1,\ld,x_q)\in\cF_\infty$, one can consider a verbal ideal $f(P)$ of $P$ generated by $f$. If $\V$ is the primitive class defined by the identical relation $f(x_1,\ld,x_q)=0$ then $f(P)$ is the  verbal ideal of $P$ corresponding to $\V$. Let $\overline{f(P)}$ stands for the closure of the set of all values $f(a_1,\ld,a_q)$, $a_1,\ld,a_q\in P$, under all possible multiplications by the elements of $P$. Then  $f(P)$ is the  set of all linear combinations of the elements in $\overline{f(P)}$.

An element $a\in f(P)$ is said to have \textit{$f$-width} $m$ if $a$ is a linear combination of $m$ elements from $\overline{f(P)}$ and $m$ is minimal with this property. We write $m=w_f(a)$. 

Finally, if exists, the number 
\[
w_f(f(P))=\max\{ w_f(a)\,|\,a\in f(P)\} 
\]
is called the \textit{width} of $f(P)$. If not, we write $w_f(f(P))=\infty$.

For example, for any algebra $P$ the $f$-width of an element $a\in P^2$ with respect to $f=xy$ is equal to the least number $m$ of nonzero summands in the expressions of the form $a=\sum_{i=1}^m b_ic_i$, where  $b_i,c_i\in P$. In the case of Lie algebras the width with respect to $xy$ is called the \textit{commutator width}. It is known to be finite for any finitely generated Lie algebra. For many finitely generated solvable Lie algebras the commutator width of the commutator subalgebra has been found by V. A. Romankov \cite{Rom}.

The definition of the width of $f(P)$ naturaly extends to the width with respect to arbitrary sets $\f$ of nonassociative polynomials and verbal ideals $\f(P)$. We use the notation $w_\f(a)$ and $w_\f(\f(P))$. 

\begin{Lemma}\label{ltwowidth}
\begin{enumerate}
\item[$\mathrm{(a)}$] If two finite sets of elements $\f$ and $\f'$ define one and the same primitive class $\W$ (respectively, one and the same subclass  $\W$ of a primitive class $\V$) then there exists a positive constant $C$ such that $w_{\f'}(\f'(P))\le C\, w_{\f}(\f(P))$ for any algebra $P$ (respectively, any $P\in\V$). 
\item[$\mathrm{(b)}$] If the base field $\F$ is infinite and $f$ contains a nonzero linear part then there exists a positive $C$ such that $w_f(f(P))\le C$ for any algebra $P$. 
\item[$\mathrm{(c)}$] If a primitive class $\V$ is nilpotent and $f$ contains a nonzero linear part then for any field and any nonzero $P\in\V$ we have $\overline{f(P)}=P$ and  $w_f(f(P))=1$
\end{enumerate}
\end{Lemma}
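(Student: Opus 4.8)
The plan is to prove the three claims in the order (a), (c), (b): claim (a) is the main engine, (c) supplies the sharp bound that feeds into (b), and (b) is then a formal consequence of (a) together with multihomogeneity. Throughout one must keep in mind that $\overline{\f(P)}$ is closed under multiplications by elements of $P$ but \emph{not} under addition or scalar multiplication, so that $w_\f$ genuinely counts summands.

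For (a), the hypothesis means that $\f$ and $\f'$ have the same verbal ideal (in $\cF_\infty$, or modulo $V(\cF_\infty)$ in the relative case), so in particular $\f(P)=\f'(P)$ for every algebra $P$ (resp.\ every $P\in\V$). The key point is that membership in a verbal ideal is witnessed by a \emph{single fixed finite expression}: each $g\in\f$ lies in the ideal $\f'(\cF_\infty)$ (resp.\ its image lies in $\f'(F(\V,X_\infty))$), hence $g=\sum_{k=1}^{N_g}c_k\,w_k\bigl(h_k(\bar u^{(k)}),\bar q^{(k)}\bigr)$ for scalars $c_k$, polynomials $h_k\in\f'$, tuples $\bar u^{(k)},\bar q^{(k)}$ of polynomials, and nonassociative monomials $w_k$ linear in the slot occupied by $h_k(\bar u^{(k)})$. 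Applying an arbitrary substitution into $P$ and using closure of $\overline{\f'(P)}$ under multiplication by $P$, every value $g(\bar b)$ becomes a linear combination of at most $N_g$ elements of $\overline{\f'(P)}$; set $C=\max_{g\in\f}N_g<\infty$. Now if $a\in\f(P)=\f'(P)$ has $\f$-width $m$, write $a=\sum_{l=1}^m d_l s_l$ with $s_l\in\overline{\f(P)}$; each $s_l$ is a monomial, linear in one slot, formed from a value $g_l(\bar a^{(l)})$ ($g_l\in\f$) and further elements of $P$. Substituting the expression of $g_l(\bar a^{(l)})$ as a combination of $\le C$ elements of $\overline{\f'(P)}$ and distributing through the monomial (using closure under multiplication by $P$ once more), each $s_l$ becomes a combination of $\le C$ elements of $\overline{\f'(P)}$, so $a$ is a combination of $\le Cm$ of them. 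Hence $w_{\f'}(a)\le C\,w_\f(\f(P))$ for all $a$, whence $w_{\f'}(\f'(P))\le C\,w_\f(\f(P))$.

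For (c), decompose $f=f_1+f_2+\cdots$ by total degree in $\cF(X)$ (always possible), and note that since $\V$ is $c$-step nilpotent only $f_1,\dots,f_c$ survive in $P\in\V$. By hypothesis $f_1\neq 0$; say $f_1=\sum_i\lambda_i x_i$ with $\lambda_1\neq 0$. Setting all variables but $x_1$ to $0$ yields a map $\Phi\colon P\to\overline{f(P)}$, $\Phi(a)=f(a,0,\dots,0)=\lambda_1 a+h(a)$, where $h$ is the degree-$\ge 2$ part; since $h$ strictly raises the power filtration one has $\Phi(a)\equiv\lambda_1 a\pmod{P^{k+1}}$ for $a\in P^k$ and $h(a+v)\equiv h(a)\pmod{P^{k+1}}$ for $v\in P^k$, $k\ge 1$. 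I would then show $\Phi$ is surjective by successive approximation along the finite filtration: start with $a^{[1]}=\lambda_1^{-1}b$, note $b-\Phi(a^{[1]})\in P^2$, and given $a^{[k]}$ with $b-\Phi(a^{[k]})\in P^{k+1}$ set $a^{[k+1]}=a^{[k]}+\lambda_1^{-1}\bigl(b-\Phi(a^{[k]})\bigr)$; a short check gives $b-\Phi(a^{[k+1]})\in P^{k+2}$, so after $c$ steps $\Phi(a^{[c]})=b$ because $P^{c+1}=\{0\}$. Thus $\overline{f(P)}=P$, and $f(P)=\Sp\,\overline{f(P)}=P$; for nonzero $P$ every $b\in P$ is a single element of $\overline{f(P)}$, so $w_f(b)\le 1$, with equality iff $b\neq 0$, giving $w_f(f(P))=1$.

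For (b), since $\F$ is infinite, by Lemma \ref{L1.3} the primitive class defined by $f=0$ is multihomogeneous, so $f(\cF_\infty)$ is a multihomogeneous ideal and therefore contains every multihomogeneous component of $f$, in particular its nonzero linear component $f_1$; hence $f_1(\cF_\infty)\subseteq f(\cF_\infty)$. Conversely $f_1=0$ already forces the zero algebra, so $f(\cF_\infty)=f_1(\cF_\infty)=\cF_\infty$ and $\{f\},\{f_1\}$ define the same (trivial) primitive class. Since $\overline{f_1(P)}\supseteq\{\lambda_1 a:a\in P\}$, we have $w_{f_1}(f_1(P))\le 1$ for every $P$, and applying (a) to the pair $(\{f_1\},\{f\})$ gives $w_f(f(P))\le C$ uniformly in $P$. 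The only delicate point in the whole argument is the bookkeeping in (a), keeping precise track of the fact that $\overline{\f'(P)}$ is closed only under multiplications; once that is set up, (a) is immediate, (b) follows formally, and in (c) the sole piece of genuine content is the standard finite-filtration surjectivity of $\Phi$.
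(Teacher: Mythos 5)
Your proposal is correct. Parts (a) and (b) follow essentially the paper's own route: in (a) the paper likewise writes each element of $\f$ as a bounded linear combination of elements obtained from values of $\f'$ by repeated multiplications and then substitutes into a minimal expression, your version merely making the bookkeeping (monomials linear in the slot carrying the $\f'$-value, distributing the outer multiplications) explicit; in (b) the paper invokes Lemma \ref{L1.3} to conclude that the class defined by $f$ is the trivial one, notes that the width with respect to $x$ is $1$, and applies (a) -- your use of the linear part $f_1$ in place of the single variable $x$ is the same argument. The genuine divergence is in (c): the paper simply cites Theorem 2.4 of \cite{BO} (the implicit function theorem for nilpotent algebras), whereas you reprove the special case needed here from scratch, specializing $f$ to one variable so that $\Phi(a)=\lambda_1a+h(a)$ with $h$ raising the power filtration, and showing surjectivity of $\Phi$ by successive approximation along the finite filtration $P\supset P^2\supset\cdots\supset P^{c+1}=\{0\}$, using $h(a+v)\equiv h(a)\pmod{P^{k+2}}$ for $v\in P^{k+1}$. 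This argument is sound and, importantly, uses no hypothesis on the field, which is exactly what (c) asserts; what the citation buys is brevity and access to the stronger general statement of \cite{BO}, while your route buys self-containedness at the cost of about a paragraph of computation.
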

\begin{proof} \begin{enumerate}
\item[(a)]Since the verbal ideals of $\cF_\infty$ (respectively, $F_\infty(\V)$) generated by $\f$ and $\f'$ coincide, each polynomial in $\f$ can be written as  $\sum_i\lambda_iv_i$ where each $v_i$ is obtained from a value of a polynomial in $\f'$ using repeated multiplications by the elements of $\cF_\infty$ (respectively, $F_\infty(\V)$). Let $C$ bound from above the number of summands for all polynomials in $\f$ written  through $\f'$. Choose $a\in P$ and write it as an element of $\f(P)$. Then replace in this expression each polynomial of $\f$ by its expression through the polynomials of $\f'$. Then the number of summands in the resulting expression of $a$ through $\f'$ will get increased by at most $C$ times. Thus we have proved (a).
\item[(b)] By Lemma \ref{L1.3}, $f$ defines the same primitive class $\V$ as the set of its multihomogeneous components, hence $x=0$ is satisfied in $\V$. The width of any algebra with respect to $x$ equals 1, so by (a) the width with respect to $f$ will be restricted by $C$ equal to the number of summands of $f$.
\item[(c)] Follows from \cite[Theorem 2.4]{BO}.
\end{enumerate}
\end{proof}

In the remainder of this section, we assume that the field $\F$ is algebraically closed. Given a $c$-step nilpotent variety $\V$, $c\ge 2$, we denote by $F_n$ the $\V$-free algebra of rank $n$ and  by $\V_n$ the variety of (isomorphism classes of) $n$-generated algebras in $\V$.  The main result of this section is the following.

\begin{Proposition}\label{pLabel}
Let $f=f(x_1,\ld,x_t)$ be a nonassociative polynomial which is not an identity in $F_n$. Suppose $f$ does not contain monomials depending on one variable only. Then there is a constant $\rho=\rho(f,c)$, such that $w_f(f(P))>\rho n$ for a generic algebra $P\in \V_{n}$. 
\end{Proposition}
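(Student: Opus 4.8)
The strategy is to combine the lower bound on the dimension of the verbal ideal $f(F_n)$ from Lemma \ref{L*} (and Lemma \ref{lFor Width}) with a counting/dimension argument showing that, for a generic ideal $K\subset F_n^c$, the quotient $P=F_n/K$ inherits a verbal ideal $f(P)$ that is still ``quadratically large'' in $n$, while on the other hand any element of $f(P)$ of small $f$-width can live in only a low-dimensional subvariety of possibilities. The key quantitative tension is: $\dim_\F f(P)$ grows at least like a fixed quadratic $\beta(n)$ (by Lemma \ref{lFor Width}), whereas the set of elements of $\overline{f(P)}$ is parametrized by a variety of dimension $O(n)$ (roughly $t\dim_\F P\le t\kappa_2 n^{c-1}$ copies of $P$ modulo the low-degree part, but crucially the image in $f(P)$ after passing to $F_n^c$ is controlled), so a linear combination of only $\rho n$ such values cannot exhaust $f(P)$ once $\rho$ is chosen small relative to the quadratic growth.

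\textbf{Main steps.} First I would reduce to the case $f\in \cF_\infty^c$, i.e. all monomials of $f$ have degree exactly $c$: since $f$ has no monomials in one variable and $\V$ is central ($I(F_n)=F_n^c$), if $f\notin F_n^c$ then either $fy$ or $yf$ is a nonzero consequence of degree $c$, and by Lemma \ref{ltwowidth}(a) the $f$-width and the width of this degree-$c$ consequence differ by at most a constant factor; so it suffices to bound the width of the degree-$c$ verbal ideal from below. Second, by Lemma \ref{lFor Width} there is an increasing quadratic $\beta(n)$ with $\dim_\F f(P)\ge \beta(n)$ for a generic $P\in\V_n$; since being generic is closed under finite intersection, I may assume $P=F_n/K$ with $K$ an ideal of Class 3b (in $F_n^c$, of dimension $m\in\Omega$, with $\Stab_{\Aut{F_n}}K\subset\Scal(F_n)$) and additionally $\dim_\F f(P)\ge\beta(n)$. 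Because $f\in\cF_\infty^c$, the value $f(a_1,\dots,a_t)$ for $a_i\in P$ depends only on the images of $a_i$ in $P/P^2\cong F_n/F_n^2$ (all higher-degree contributions vanish since we are multiplying $c$ elements), so $\overline{f(P)}$ is the union, over $t$-tuples $\bar a\in (F_n/F_n^2)^t\cong\F^{tn}$ and over the finitely many bracketings and over multiplications (which, since we are already at the top layer $F_n^c$, only multiply the value by a fixed monomial in the free generators — giving again finitely many linear maps on a $tn$-dimensional parameter space), of the images of polynomial maps from affine spaces of dimension $O(n)$. Hence the variety $\overline{f(P)}\subset \PS(f(P))$ has dimension bounded by a fixed linear function $c_1 n$. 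Third, the set of elements of $f(P)$ of $f$-width $\le w$ is contained in the image of a map from $\underbrace{\overline{f(P)}\times\cdots\times\overline{f(P)}}_{w}\times \PS^{w-1}$ (choosing $w$ values and a linear combination), so it lies in a variety of dimension at most $w(c_1 n+1)+(w-1)<w(c_1 n+2)$. If $w_f(f(P))\le \rho n$ then every element of the $(\dim_\F f(P)-1)$-dimensional projective space $\PS(f(P))$ lies in this variety, forcing $\beta(n)-1\le \dim\PS(f(P))\le \rho n(c_1 n+2)$. Choosing $\rho$ so small that $\rho n(c_1 n+2)<\beta(n)-1$ for all large $n$ — possible since $\beta$ is quadratic with a fixed positive leading coefficient independent of $\rho$ — yields a contradiction, proving $w_f(f(P))>\rho n$.

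\textbf{Expected main obstacle.} The delicate point is making the bound ``$\dim \overline{f(P)}\le c_1 n$ with $c_1$ independent of $n$'' rigorous and uniform. The issue is that $\overline{f(P)}$ involves not just values $f(a_1,\dots,a_t)$ but their closure under \emph{all} multiplications by elements of $P$; one must observe that, after reducing $f$ to be homogeneous of degree $c$, any such value already lies in $P^c$, and further multiplication by a non-scalar element of $P$ kills it, so the only surviving ``multiplications'' are by scalars (absorbed into the $\PS^{w-1}$ factor) — more precisely, $\overline{f(P)}$ is just the cone on finitely many maps $\F^{tn}\to f(P)$, one per bracketing, plus the zero vector. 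Verifying that these maps are given by polynomials of bounded degree in the $tn$ coordinates, and that the dimension count $w(c_1 n+2)$ genuinely dominates the quadratic $\beta(n)$ only for $\rho$ small (so $n_0$ and $\rho$ depend only on $f$ and $c$), requires care but is routine once the structural reduction is in place. A secondary subtlety is ensuring the generic properties invoked — Class 3b membership, trivial $\PGL(n)$-stabilizer, and the Lemma \ref{lFor Width} bound — can be imposed simultaneously, which is fine since a finite intersection of generic properties is generic (as recalled in the introduction).
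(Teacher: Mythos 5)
Your core argument in the homogeneous degree-$c$ case is essentially the paper's own proof: the values of $f$ factor through $(P/P^2)^t$, so $\overline{f(P)}$ is the image of an affine space of dimension $tn$ under a polynomial map (multiplications by elements of $P$ annihilate $P^c$, as you note), sums of at most $k$ such values sweep out a set of dimension $O(kn)$, and comparison with the quadratic lower bound of Lemma \ref{lFor Width} forces $k>\rho n$. That part, including the simultaneous imposition of finitely many generic conditions, is sound and matches the paper.

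The gap is in your opening reduction to the degree-$c$ case. You replace $f$ by a consequence $fy$ or $yf$ and invoke Lemma \ref{ltwowidth}(a) to say the widths differ by a bounded factor; this step fails. First, Lemma \ref{ltwowidth}(a) applies only to two sets of polynomials generating the \emph{same} verbal ideal (the same primitive subclass of $\V$), which $f$ and $fy$ do not: $(fy)(P)$ is in general a proper subideal of $f(P)$. Second, and decisively, the inequality you need goes the wrong way: every value of $fy$ is a value of $f$ multiplied by an element of $P$, hence lies in $\overline{f(P)}$, so for $a\in (fy)(P)$ one only gets $w_f(a)\le w_{fy}(a)$; a linear lower bound on $w_{fy}((fy)(P))$ therefore gives no lower bound at all on $w_f(f(P))$. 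Third, if $f$ has homogeneous components of several degrees $<c$, then $fy$ is not homogeneous of degree $c$ anyway, and for such $f$ the values no longer factor through $P/P^2$, so your $O(n)$ bound on $\dim\overline{f(P)}$ is lost (the parameter space becomes $P^t$, of dimension of order $n^c$). The paper handles the inhomogeneous case differently: writing $f=f_2+\cdots+f_c$, either $f_2+\cdots+f_{c-1}=0$ holds identically modulo $F_n^c$, in which case $f$ takes the same values on algebras of $\V$ as a genuinely homogeneous degree-$c$ polynomial and the main case applies; or it does not, and one passes to $F_n/F_n^c$, onto which a generic algebra surjects by Theorem \ref{36}, Claim (i), using that the $f$-width can only decrease under surjective homomorphisms. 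You would need an argument of this kind in place of your reduction; as written, only the case $c=2$ and the homogeneous degree-$c$ case of your proof go through.
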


\begin{proof} We first assume that $f$ is homogeneous of degree $c$. By Lemma \ref{ltwowidth}, Claim (a), the proof reduces to the case where all monomials in $f$ are written through the same set of variables, depending on all of $x_1,\ld,x_t$, $t\le c$.  It then follows that $f$ defines a map 
\[
\Phi: \underbrace{P\times\cdots\times P}_t\to \overline{f(P)}\subset f(P).
\]
This map factorizes through
\[
\overline{\Phi}: U\to f(P)\mbox{ where }U= \underbrace{P/P^2\times\cdots\times P/P^2}_t\to \overline{f(P)}\subset f(P).
\]
 
 The map $\overline{\Phi}$ is given by homogeneous polynomials with coefficients expressed in terms of the structure constants of $P$. Thus $\overline{\Phi}$ defines a polynomial map of the affine space $\mathbb{P}^{nt}$ into $f(P)$, whose image is $\overline{f(P)}$. It then follows that $\overline{f(P)}$ is an affine variety whose dimension  is bounded from above by a linear function $tn$ in $n$.
 
Now for every $k$, we can consider a  map $\Psi:\overline{f(P)}\times\cdots\times \overline{f(P)}\to f(P)$ mapping $(u_1,\ldots,u_k)$ to $u_1+\cdots+ u_k$. If $w_f(f(P))\le k$, then the image of  $\Psi$ is the whole of $f(P)$. Note that $\dim_\F\Psi(\overline{f(P)}\times\cdots\times \overline{f(P)})\le k\dim \overline{f(P)}\le ktn$.

Since by Lemma \ref{lFor Width}, $\dim_\F f(P)$ as a function of $n$  is bounded from below by a quadratic function, the width $k=k(n)$ of $f(A)$ must be at least a linear function $\rho n$, for some constant $\rho$. This completes the proof in the case where $f$ is homogeneous of degree $c$.

Also, since by our hypotheses all monomials in $f$ have degree at least 2,  the case $c=2$ our Proposition is automatically true for any polynomials, without assumption that they have the same degree. 

If $c>2$, we write $f=f_2+\cdots+f_{c-1}+f_c$, where $f_i$ is the homogeneous component of degree $i$, $i=2,\ld,c$.
 
{\sc Case 1}. If $\bar f=f_2+\cdots+f_{c-1}=0$ holds identically in $F_n/F_n^c$ then there is a homogeneous polynomial $f'_c$ of degree c such that $f_2+\cdots+f_{c-1} = f'_c$ is an identity in $F_n$. Since $f$ and a homogeneous polynomial $f'_c+f_c$ take the same values on algebras in $\V$, we can use the assumption at the very beginning of the proof to complete the proof in this case. 

{\sc Case 2}. $\bar f=f_2+\cdots+f_{c-1}=0$ is not an identity in $F_n/F_n^c$. Then $w_{\bar f}({\bar f}(F_n/F_n^c))>{\bar\rho}n$ and then $w_f(f(F_n))>{\bar\rho}n$. Finally, since by Theorem \ref{tTypical}, Claim (i), a generic algebra in $\V_n$ homomorphically maps onto $F_n/F_n^c$, the verbal $\bar f$-width of $P$ is at least the verbal $f$-width of $F_n/F_n^c$, proving our proposition.
\end{proof}

The following claim does not involve generic algebras.
\begin{Corollary}\label{cTidFreeAlg}
Let $\V$ be an arbitrary primitive class of algebras over an infinite field $\F$, containing algebras with nonzero product. Assume that $f$ is the linear combinations of monomials of degree $c\ge 2$ none depending on one variable only. If $f$  is not an identity in $\V$ then the verbal ideal of $F_\infty=\cF_\infty(\V)$ generated by $f$ has infinite $f$-width.
\end{Corollary}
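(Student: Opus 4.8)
The plan is to reduce the statement about $F_\infty = \cF_\infty(\V)$ to the already-established Proposition \ref{pLabel}, via a limiting argument over ranks, together with the trivial observation that the verbal $f$-width of a quotient algebra cannot exceed that of the algebra itself. First I would fix $f$ as in the hypothesis: a linear combination of monomials of degree $c \ge 2$, none depending on a single variable only, and not an identity in $\V$. Since $\F$ is infinite, $\V$ is multihomogeneous by Lemma \ref{L1.3}, so we may pass to the $c$-step nilpotent quotient class $\overline{\V} = \V \cap \fN_c$ obtained by imposing all identities of degree $> c$; the polynomial $f$ is still not an identity in $\overline{\V}$ (it would then have to be a consequence of higher-degree identities, impossible since $f$ is homogeneous of degree $c$ and those identities vanish on monomials of degree $c$ in a nilpotent context), and the verbal ideal and $f$-width computations in $F_\infty(\overline{\V})$ agree with those in a relatively free object whose last term we control. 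Thus without loss of generality $\V$ is $c$-step nilpotent, $c \ge 2$.

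Next I would invoke Proposition \ref{pLabel}: for each $n$ large enough there is a generic algebra $P_n \in \V_n$ with $w_f(f(P_n)) > \rho n$, where $\rho = \rho(f,c) > 0$ is a fixed constant. (Here I would need the base field to be algebraically closed, as in Proposition \ref{pLabel}; if the given infinite $\F$ is not algebraically closed, one first base-changes to $\overline{\F}$ — the verbal $f$-width can only go down under the faithfully flat extension $F_\infty(\V) \to F_\infty(\V) \otimes_\F \overline{\F}$, since a short expression over $\overline\F$ specializes, after clearing a $\overline\F$-basis of coordinates, to a short expression over $\F$; so a lower bound for $f$-width over $\overline\F$ yields one over $\F$.) Each such $P_n$ is an $n$-generated algebra in $\V$, hence a homomorphic image of the $\V$-free algebra $F_n = F_n(\V)$, which in turn is a retract of $F_\infty$. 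Under any algebra homomorphism $\varphi : A \twoheadrightarrow B$ one has $\varphi(\overline{f(A)}) = \overline{f(B)}$ and $\varphi(f(A)) = f(B)$, so if $b = \varphi(a)$ with $a \in f(A)$ written as a sum of $k$ elements of $\overline{f(A)}$, then $b$ is a sum of $k$ elements of $\overline{f(B)}$; hence $w_f(b) \le w_f(a)$ after taking the image, which gives $w_f(f(B)) \le w_f(f(A))$ whenever $w_f(f(A))$ is finite — equivalently, $w_f(f(A)) = \infty$ whenever $w_f(f(B)) = \infty$, and in any case $w_f(f(A)) \ge w_f(f(B))$ with the convention $\infty \ge$ everything. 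Applying this to $F_n \twoheadrightarrow P_n$ gives $w_f(f(F_n)) > \rho n$, and applying it to the retraction $F_\infty \twoheadrightarrow F_n$ (the identity on $x_1,\dots,x_n$, zero on the rest), noting that $f$ and its values live inside $F_n$ for the appropriate $n$, gives $w_f(f(F_\infty)) \ge w_f(f(F_n)) > \rho n$.

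Since this holds for every sufficiently large $n$ and $\rho > 0$ is fixed, we conclude $w_f(f(F_\infty)) = \infty$, which is the claim. The only genuinely delicate point — and the one I expect to be the main obstacle — is the passage through Proposition \ref{pLabel}'s standing hypothesis that $\F$ be algebraically closed: one must check carefully that the base-change argument really does transfer the $f$-width lower bound downward from $\overline{\F}$ to $\F$, i.e. that shortening an $f$-expression cannot be an artifact of enlarging the field. This is handled by the standard specialization argument sketched above (write a hypothetical short expression over $\overline{\F}$, view its coefficients and the coordinates of the witnessing values as living in a finitely generated $\F$-subalgebra of $\overline{\F}$, and specialize a transcendence basis to scalars in $\F$ while keeping the expression valid — possible because $\F$ is infinite, so the relevant nonvanishing conditions can be met), but it must be written out. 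The reduction to the nilpotent case and the homomorphism-monotonicity of $f$-width are routine by comparison.
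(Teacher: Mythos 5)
Your overall route is the same as the paper's: reduce to a $c$-step nilpotent quotient class (the paper uses the grading of $\cF_\infty$ over an infinite field and (\ref{eApprox}) to pick the right power $F_\infty^{c+1}$ to kill; for $f$ homogeneous of degree $c$ this is exactly your $\V\cap\fN_c$, and if the monomials of $f$ are allowed different degrees $\ge 2$ one simply takes $c$ maximal with $f(F_\infty)\subset F_\infty^{c}$, as the paper does), then invoke Proposition \ref{pLabel} for generic algebras $P_n$, and push the bound $w_f(f(P_n))>\rho n$ up through the surjections $F_n\twoheadrightarrow P_n$ and $F_\infty\twoheadrightarrow F_n$, using that $f$-width cannot increase under a surjective homomorphism. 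All of this matches the paper, and your monotonicity paragraph is a correct elaboration of what the paper leaves implicit.

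The one step where you go beyond the paper --- descending the width bound from $\overline{\F}$ to the given infinite field $\F$ --- is where your argument breaks. The extension $\overline{\F}/\F$ is \emph{algebraic}, so a finitely generated $\F$-subalgebra of $\overline{\F}$ is a finite field extension of $\F$: there is no transcendence basis to specialize, and a nontrivial finite extension $E/\F$ admits no $\F$-algebra homomorphism $E\to\F$ at all (any such map is injective, forcing $E=\F$; e.g.\ there is none $\Q(i)\to\Q$). Moreover the transfer you need is the opposite of the one you argue: Proposition \ref{pLabel} produces \emph{some} element of $f(F_n\otimes_\F\overline{\F})$ of large $\overline{\F}$-width, and that witness need not be $\F$-rational; while for an $\F$-rational $a$ the inequality $w_f^{\F}(a)\ge w_f^{\overline{\F}}(a)$ is the trivial direction (an $\F$-expression is in particular an $\overline{\F}$-expression), so ``a short expression over $\overline{\F}$ specializes to one over $\F$'' is neither available nor what is required. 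A correct repair is different: it suffices to exhibit an $\F$-rational element of large $\overline{\F}$-width, and the dimension count behind Lemma \ref{L*}, Lemma \ref{lFor Width} and Proposition \ref{pLabel} gives this, because the elements of $\overline{\F}$-width at most $k$ lie in a constructible subset of $f(F_n)\otimes_\F\overline{\F}$ of dimension at most $ktn$, whereas $\dim_\F f(F_n)$ grows quadratically in $n$, and the $\F$-points $f(F_n)$ are Zariski dense since $\F$ is infinite, hence cannot all lie in that subset once $k$ is small compared to $n$. (In fairness, the paper's own one-line proof also applies Proposition \ref{pLabel} without commenting on its standing algebraically closed hypothesis; you deserve credit for flagging the issue, but the patch as you describe it does not work.)
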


\begin{proof}
If $\V$ is $c$-step nilpotent, $c\ge 2$, this follows from Proposition \ref{pLabel} since for any $n>n_0$, a generic algebra is a  is a homomorphic image of $F_n(\V)$. 

In a general case, since $\F$ is infinite, $\cF_\infty$ is graded by its homogeneous components (see (\ref{eGrading}), so that $\bigcap_{i=1}^{\infty} F_{\infty}^i = \{0\}$ (see (\ref{eApprox})). In this case, there is $c\ge 2$ such that $f$ is not an identity in $F_\infty/F_{\infty}^{c+1}$ but an identity in $F_{\infty}/F_{\infty}^{c}$. Since our claim is true for the relatively free algebra $F_{\infty}/F_{\infty}^{c+1}$, it is certainly true for $F_{\infty}$.  
\end{proof}

\begin{Remark}\label{rBreadth}
\begin{enumerate}
\item[(a)] Using Proposition  \ref{pLabel} and the fact that a generic algebra in $\V_n$ is a homomorphic image of a $\V$-free algebra  $F_n(\V)$, we conclude that  there is a constant $\sigma$ such that $w_f(f(F_n(\V)))>\sigma n$ for a  in $\V$. 

\item[(b)] The conclusion of Proposition \ref{pLabel} will not change (up to some possible constant factors) if we replace $f$ by a finite set $\f=\{f_1,\ld,f_t\}$ of polynomials with the same properties as $f$.
\end{enumerate}
\end{Remark}
\begin{Corollary}\label{cBreadth}
If a polynomial $f$ over a field of characteristic zero is not an identity in a $c$-step nilpotent primitive class $\V$, $c\ge 2$, and $f$ has no monomials of degree 1 then there exists an ascending linear function $\psi$ such that $w_f(f(P))\ge\psi(n)$ in a generic algebra in $\V_n$. Also, it is true that for all $n$,we have $w_f(f(F_n(\V)))\ge\psi(n)$.
\end{Corollary}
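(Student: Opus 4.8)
The plan is to reduce the statement to Proposition \ref{pLabel}, in its finite-set form (Remark \ref{rBreadth}(b)), by a polarization argument; this is the only place where the hypothesis $\chr\F=0$ is used. The obstruction to applying Proposition \ref{pLabel} directly is that $f$ may contain monomials of degree $\ge 2$ that depend on a single variable, and polarization removes precisely this obstruction.

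First I would normalize $f$. Since $\F$ is algebraically closed, hence infinite, $\V$ is multihomogeneous by Lemma \ref{L1.3}, so $f=0$ is equivalent, inside $\V$, to the finite system $\{f_\alpha=0\}$ of its multihomogeneous components. Components of degree $1$ do not occur (they would be degree‑$1$ monomials of $f$), and components of degree $>c$ are identities in $\V$ and may be discarded; among the remaining components keep only those $f_\alpha$ that are not identities in $\V$. At least one such $f_\alpha$ exists (otherwise $f=\sum f_\alpha$ would be an identity in $\V$), and each retained $f_\alpha$ is a nonzero multihomogeneous polynomial of some degree $d_\alpha$ with $2\le d_\alpha\le c$. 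This finite family still defines, within $\V$, the same subclass $\W$ as $\{f\}$.

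Next I would apply full polarization to each retained $f_\alpha$, obtaining a multilinear polynomial $\widehat{f_\alpha}$ of degree $d_\alpha$. Over a field of characteristic zero, identifying the polarized variables recovers a nonzero scalar multiple of $f_\alpha$, so $\widehat{f_\alpha}\ne 0$ and the identities $f_\alpha=0$ and $\widehat{f_\alpha}=0$ are equivalent; this is the standard step underlying Proposition \ref{pMultilin}. Consequently each $\widehat{f_\alpha}$ is not an identity in $\V$, and the finite set $\widehat{\f}=\{\widehat{f_\alpha}\}$ defines the same subclass $\W$ of $\V$ as $\{f\}$. Moreover $\widehat{f_\alpha}$, being multilinear of degree $d_\alpha\ge 2$, depends on $d_\alpha\ge 2$ distinct variables, so no monomial of $\widehat{f_\alpha}$ depends on one variable only; and for $n\ge c\ge d_\alpha$, Lemma \ref{lRelX} shows $\widehat{f_\alpha}$ is not an identity in $F_n(\V)$. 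Thus $\widehat{\f}$ meets the hypotheses of Proposition \ref{pLabel} in the form of Remark \ref{rBreadth}(b): there is a constant $\widehat{\rho}=\widehat{\rho}(f,c)$ with $w_{\widehat{\f}}\bigl(\widehat{\f}(P)\bigr)>\widehat{\rho}\,n$ for a generic $P\in\V_n$ and all sufficiently large $n$.

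Finally I would transfer the bound back to $f$. Since $\{f\}$ and $\widehat{\f}$ define the same subclass $\W$ of $\V$, Lemma \ref{ltwowidth}(a) gives a constant $C$ with $w_{\widehat{\f}}\bigl(\widehat{\f}(P)\bigr)\le C\,w_f\bigl(f(P)\bigr)$ for every $P\in\V$; hence $w_f(f(P))>(\widehat{\rho}/C)\,n$ for a generic $P\in\V_n$. Put $\rho=\widehat{\rho}/C$ and $\psi(n)=\rho(n-n_0)$, with $n_0$ chosen large enough for the preceding steps; then $\psi$ is an ascending linear function, $w_f(f(P))\ge\psi(n)$ for a generic $P\in\V_n$, and for $n\le n_0$ the inequality is vacuous. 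For the assertion about $F_n(\V)$: a generic algebra in $\V_n$ is a homomorphic image of $F_n(\V)$, and $w_f$ does not increase under a surjective homomorphism (lift an element of $f(P)$ to $f(F_n(\V))$ along the quotient map and count summands), so $w_f(f(F_n(\V)))\ge w_f(f(P))>\rho n\ge\psi(n)$ for $n>n_0$, and trivially for $n\le n_0$. The only delicate point is the polarization step together with the bookkeeping that the resulting finite family $\widehat{\f}$ both generates the same verbal ideal as $f$ and is free of monomials depending on a single variable; everything else is a direct application of results already established above.
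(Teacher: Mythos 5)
Your proposal is correct and follows essentially the paper's own route: the paper likewise reduces to a finite set of multilinear polynomials of degree $\ge 2$ via Proposition \ref{pMultilin} (your polarization step is just its proof spelled out), applies Proposition \ref{pLabel} in the form of Remark \ref{rBreadth}(b), and transfers the bound back to $f$ at the cost of a constant factor, which is exactly your use of Lemma \ref{ltwowidth}(a). Your closing observation that the width can only drop under the surjection $F_n(\V)\twoheadrightarrow P$ is also the paper's argument (Remark \ref{rBreadth}(a)) for the statement about $F_n(\V)$.
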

\begin{proof}
By Proposition \ref{pMultilin}, $f$ defines the same primitive class as a finite set $\f=\{f_1,\ld,f_t\}$ of multilinear polynomials of degrees $\ge 2$, hence without monomials depending on one variable. The remainder of the proof is along the lines of Proposition \ref{pLabel}, considering that $f(P)=f_1(P)+\cdots+f_t(P)$, which can decrease the coefficient of the estimate obtained in that proposition by at most $t$ times. 
\end{proof}

\begin{Remark}\label{r12}
In the general case, the linear lower estimate of $f$-width obtained in Corollary \ref{cBreadth} cannot be improved. For example, in the associative case, the $xy$-width of $P^2$ of any algebra $P$ with generators $a_1,\ld,a_n$ is always bounded by $n$ since every element in $P^2$ can be written as the sum $u_1a_1+\cdots+u_na_n$, for some $u_1,\ld,u_n \in P$. Similar equation can be derived in any Lie algebra, if we use the anticommutativity and the Jacobi identity.

At the same time, let $\chr\F=0$ and $f$ is a polynomial without terms of degree $\le i$. Then our arguments are sufficient to find a \textit{lower} bound by a polynomial of degree $i$ in $n$  for the $f$-width in all $c$-step nilpotent generic algebras. In some cases, like associative or Lie algebras and  $f=x_1x_2\cdots x_{i+1}$ one can find an \textit{upper} bound for the $f$-width.
\end{Remark}
\begin{Remark}\label{r13}
If $\chr \F=p>0$ then an easy counterexample to Corollary \ref{cBreadth} is given by $f=x^p$. If $P$ is a  commutative and associative algebra then any linear combination of $p$-powers is a $p$-power so that the $f$-width of $P^p=x^p(P)$ equals 1.
\end{Remark}

\section{Generic nilpotent algebras over finite fields}\label{sGAFF}

In this section, $\F$ is a finite field of $q$ elements. One more stipulation is that we only consider \textit{central} primitive classes.

\begin{Definition}\label{dGenericSets} Let $\{S_n\,|\,n=1,2,\ld\}$ be an infinite sequence of pairwise disjoint finite sets and $\{T_n\}$ a sequence of subsets $T_n\subset S_n$. Let $S=\bigcup_{n=1}^\infty S_n$ and $T=\bigcup_{n=1}^\infty T_n$. We say that $S\setminus T$ is \textit{negligible} (\textit{exponentially negligible}) if  
$\frac{|T_n|}{|S_n|}\to 1$ ($\frac{|T_n|}{|S_n|}\to 1$ faster than $1-q^n$ for some  $0\le q<1$), as $n\to\infty$. One could also say that a \textit{generic element} of $S$ is in $T$.
\end{Definition}

In this section the role of $S_n$ will be played by one of two objects. This could be the set $\V_n$ of the isomorphism classes of $n$-generated algebras in a $c$-step nilpotent primitive class $\V$ over a finite field $\F$. The other option is the set $\cJ_n$ of ideals of a relatively free nilpotent algebra $F_n$.

 All $n$-generated algebras in $\V$ have the form of $F_n/K$, where $K\in\cJ_n$.  The connection between the two cases is that by Proposition \ref{lGLFc} and Theorem \ref{tScrap8.4}, there is one-one correspondence between the algebras in $\V_n$ and the orbits of the natural action of $\Aut{F_n}$ on $\cJ_n$.  In the particular case of quotient-algebras modulo the ideals in $F_n^c$, the isomorphism classes of such algebras are in one-one correspondence with the orbits of the natural action of the groups $\GL(n)$ on the set of subspaces of $F_n^c$ (see Proposition \ref{lGLFc}). Therefore, the results about the generic (properties of) $n$-generated algebras in $\V$ are closely connected to the results about the generic (properties of) the ideals  of the sequence of algebras $F_n$.

In  Section \ref{ssENI} we give some estimates for the number of ideals in finite algebras. In Section \ref{prop1}  we examine the generic properties of the ideals of $F_n$. Finally, on the basis of this, in Section \ref{prop2}, we turn our attention to the generic properties of $n$-generated algebras in $\V$.

In the study of ideals of $F_n(\V)$, we  denote by $\cI_n$ the set of ideals inside the annihilator ideal  $I=I(F_n)$ of $F_n$. We are going to prove that  the sequence $\frac{|\cJ_n\setminus\cI_n|}{|\cI_n|}$ converges to 0 as an exponential function as $n\to\infty$. 

Then we consider the set $\cN_n$ of ideals each of which is invariant under an $NS$-automorphism. We will show that the sequence of numbers $|\cN_n|$ is exponentially negligible when compared with $|\cJ_n|$. So generic ideals are invariant only under the automorphisms which are scalar modulo $F_n^2$.

Our results about the structure of generic algebras and their automorphisms in this section are quite similar to the results of Section \ref{sMR}, where we have assumed that the base field was algebraically closed.

\subsection{Estimates of the number of ideals}\label{ssENI} Using the same notations as in the previous sections, we first estimate from below the number of $m$-dimensional ideals (=subspaces) belonging to $F_n^c$. Recalling $d=\dim_\F F_n^c$, and setting $\exp(m)=q^m$, we find the number of ideals of dimension $m\le d$ as the product below.
\[
\frac{(\exp(d)-1)\cdots(\exp(d)-\exp(m-1))}{(\exp(m) -1)\cdots(\exp(m)-\exp(m-1))}
\]
Now for all $i$, $0\le i\le m-1$, we have 
\[
\frac{(\exp(d)-i)}{(\exp(m) -i)}\ge \exp(d-m),
\]
for all $i=0,\ld,m-1$.
 So the number of ideals of dimension $m$ contained in $I$ is bounded from below by $\exp((d-m)m)$. If $m=\left[\frac{d}{2}\right]$ (also $m=\left[\frac{d}{2}\right]+1 $ in the case where $d$ is odd)  then the number of ideals of such dimension in $I$ is greater than 
\begin{equation}\label{eff*}
\exp\left(\left[\frac{d^2}{4}\right]\right).   
\end{equation}

For the estimate from above, we note that for any $i$, $0\le i\le m-1$,
\begin{eqnarray*}
\frac{\exp(d) -\exp(i)}{\exp(m) - \exp(i)}&\le& \exp(d-m)+ 2\exp(d-2m+i)\\&=& \exp(d-m)(1+2\exp(i-m)).
\end{eqnarray*}
So the product of all these fractions over all $i$ is bounded from above by 
\[
\exp((d-m)m)\prod_{i=0}^{m-1}(1+2\exp(i-m)).
\]
Note if $q\ge 2$, the product $(1+2q^{-1})(1+2q^{-2})\cdots$ converges and does not exceed 
\begin{equation}\label{effqq}
e^{\left(2q^{-1}+2q^{-2}+...\right)}\le e^2<8.
\end{equation} 
Considering the estimate (\ref{effqq}), we have for the number of $m$-dimensional ideals in $I$ an estimate from above by the number
 \begin{equation}\label{hash}
8\exp\left((d-m)m\right). 
 \end{equation}
Our further inequalities hold if $n$ is sufficiently great.  

\subsection{Ideals in finite relatively free nilpotent algebras}\label{prop1} 

Recall the annihilator series (\ref{eAS}) in an algebra $P$ over $\F$. We write $I_m=I_m(F_n)$ and following the first paragraph of Section \ref{ssINA}, we choose $h$ the least number such that $I_{h}=F_n$. In the proof below we keep using the annihilating series  to be closer to the proofs in the case where the base field is algebraically closed (Section \ref{ssTNA}). At the same time, because $\V$ is central, we have $d_1=\dim_F I(F_n)=\dim_F F_n^c=d$.

The main result in this section is the following

\begin{Theorem}\label{tfGI}
Let $F_n=F_n(\V)$ be a $\V$-free algebra, where $\V$ is a central $c$-nilpotent primitive class over a finite field $\F$. Then being in $F_n^c$ is a generic property for the ideals of the sequence of algebras $\{ F_n\,|\, n=1, 2,\ld\}$.
\end{Theorem}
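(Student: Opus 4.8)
The plan is to mirror the argument of Theorem \ref{tAAI} in the algebraically closed case, but replace dimension estimates of projective varieties by counting estimates over the finite field $\F$ of $q$ elements. The key input is that every ideal $N$ of $F_n$ which is \emph{not} contained in $F_n^c = I(F_n)$ is, by Lemma \ref{27.24} (and the filtration reasoning preceding Lemma \ref{lC1}), controlled by a large ``extra'' subspace $\varpi U_i$ living inside $I(F_n)$ whose dimension grows like $\gamma\,\frac{sn}{c}$, where $s>0$ is the total dimension of the part of $N$ sticking out of the annihilator. So I want to show that the number $|\cJ_n\setminus\cI_n|$ of ideals \emph{not} inside $F_n^c$ is exponentially small compared to the number $|\cI_n|$ of ideals that \emph{are} inside $F_n^c$, where $\cI_n$ already has cardinality at least $\exp\!\left(\left[\frac{d^2}{4}\right]\right)$ by (\ref{eff*}).

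First I would decompose an arbitrary ideal $N\not\subset F_n^c$ as $N = V \oplus (U_1\oplus\cdots\oplus U_h)$ with $U_i\subset I_{i+1}$, $U_i\cap I_i=\{0\}$, $\dim_\F U_i = s_i$, $s = s_1+\cdots+s_h>0$, and $V\subset I(F_n)$, exactly as in the proof of Lemma \ref{lC1}. Choosing $i$ with $s_i > s/c$ and $\varpi$ with $|\varpi|=i$ as in Lemma \ref{27.24}, the subspace $\varpi U_i \subset I(F_n)$ has dimension $t_i > \gamma s_i n^i \ge \gamma \frac{sn}{c}$ and is contained in $N$. Since $N$ is an ideal, once $\varpi U_i$ is forced to lie in $N\cap I(F_n)$, the freedom in choosing the ``annihilator part'' $V$ of $N$ is cut down: $V$ must complement $\varpi U_i$ inside $N\cap I(F_n)$, so only $\dim_\F(N\cap I(F_n)) - t_i$ ``new'' dimensions are available in $I(F_n)$. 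Counting: the number of choices for the tuple $(U_h,\ldots,U_1)$ of subspaces of the respective $I_{i+1}$ is at most $\exp\!\left(s(d_h+\cdots+d_2)\right)$ up to a bounded factor (using the crude bound that the number of $r$-subspaces of an $N$-dimensional $\F$-space is at most $q^{r(N-r)}\cdot C$, $C$ a constant, from (\ref{hash})-type estimates), and the number of choices for the remaining subspace inside $I(F_n)$ is at most $\exp\!\left(j(d-j)\right)$ where $j = \dim_\F V \le d - \gamma\frac{sn}{c}$. The product is at most $C\cdot \exp\!\left(s(d_h+\cdots+d_2) + \tfrac{d^2}{4} - \gamma\frac{dsn}{4c}\right)$ after the same manipulation $j(d-j)\le d\,\frac{d-\gamma sn/c}{4}$ used in Lemma \ref{lC2}.

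The main obstacle is exactly the point that made Lemma \ref{lC2} work: showing that the ``gain'' term $\gamma\frac{dsn}{4c}$ dominates the ``loss'' term $s(d_h+\cdots+d_2)$ for $n$ large, uniformly in the tuple $\textbf{s}$. Here I invoke Lemma \ref{27.4}: $d = d_1 = \dim_\F F_n^c$ is bounded below by $\kappa_1 n^c$, while $d_h+\cdots+d_2 \le \dim_\F F_n$ is bounded above by a polynomial in $n$ of degree $c$; hence $\gamma\frac{dn}{4c} > \gamma\frac{\kappa_1}{4c}n^{c+1}$ eventually exceeds $(d_h+\cdots+d_2) + n$, uniformly. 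Since $s\ge 1$, this gives $|\cJ_n\setminus\cI_n| \le C\cdot q^{-n}\cdot\exp\!\left(\tfrac{d^2}{4}\right) \le C\cdot q^{-n}\cdot |\cI_n|$ for $n$ sufficiently large, so $\frac{|\cI_n|}{|\cJ_n|} = \frac{|\cI_n|}{|\cI_n| + |\cJ_n\setminus\cI_n|} \ge \frac{1}{1 + Cq^{-n}} \to 1$, in fact exponentially fast. This is precisely the statement that being in $F_n^c$ is a generic (even exponentially generic) property for the ideals of $\{F_n\}$. One cosmetic subtlety to handle along the way: the finitely many admissible shapes $(\textbf{s},\varpi,i,j)$ contribute a polynomial-in-$n$ (indeed bounded, since $c$ is fixed and the bracket placements and annihilator-series length are bounded in terms of $c$) number of summands, which is absorbed harmlessly into the constant $C$ against the exponential decay.
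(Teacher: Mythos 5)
Your overall strategy is the right one and matches the paper's: count the ideals $N\not\subset F_n^c$ by forcing, via Lemma \ref{27.24}, a subspace $\varpi U_i\subseteq N\cap I(F_n)$ of dimension $t_i>\gamma\frac{sn}{c}$, and compare the resulting count with the lower bound (\ref{eff*}) for the ideals inside $F_n^c$; the peripheral bookkeeping (tuple count $\le C\exp(s(d_h+\cdots+d_2))$, domination of all degree-$c$ terms by $\gamma\frac{dsn}{4c}$ via Lemma \ref{27.4}, polynomially many shapes $(\mathbf{s},\varpi,i,j)$ absorbed into the exponential decay) is sound. But the central counting step has a genuine gap. You bound the number of choices of the complementary piece $V$ by $\exp(j(d-j))$, with $V$ ranging over $j$-dimensional subspaces of \emph{all} of $I(F_n)$, and then invoke ``$j(d-j)\le d\,\frac{d-\gamma sn/c}{4}$'', citing Lemma \ref{lC2}. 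That inequality is false: your only constraint on $j$ is $j\le d-\gamma\frac{sn}{c}$, and in the dominant case $s=1$ one has $\gamma\frac{n}{c}\ll\frac{d}{2}$ (since $d\ge\kappa_1 n^c$, $c\ge 2$), so $j$ may sit near $\frac{d}{2}$ and $j(d-j)$ reaches $\frac{d^2}{4}$. In Lemma \ref{lC2} the quantity maximized is $j\bigl(d_1-j-\gamma s\frac{n}{c}\bigr)$, i.e.\ the Grassmannian dimension \emph{minus} the fiber dimension $jt_i$ of the map $\vp$, and only after subtracting that term does the bound $\frac{(d_1-\gamma sn/c)^2}{4}\le d_1\frac{d_1-\gamma sn/c}{4}$ hold. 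You dropped the analogue of the fiber term, so your upper bound for the bad ideals is already $\ge C\exp\bigl(\frac{d^2}{4}-1\bigr)$ for $s=1$, which is not smaller than the main term, and no negligibility follows.

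The missing ingredient is exactly the device the paper's proof uses. After the non-annihilator data of $N$ are chosen, fix (as a function of those data) a subspace $L\subseteq I(F_n)$ of dimension $\ge\gamma\frac{sn}{c}$ contained in the ideal they generate, together with a direct complement $L^\ast$ of $L$ in $I(F_n)$. Since $L\subseteq N\cap I(F_n)$, one has $N\cap I(F_n)=L\oplus\bigl((N\cap I(F_n))\cap L^\ast\bigr)$, so $N$ is determined by the non-annihilator data and a subspace of $L^\ast$, and the number of subspaces of $L^\ast$ is at most $8\exp\bigl(\frac{(d-\gamma sn/c)^2}{4}\bigr)\le 8\exp\bigl(\frac{d^2}{4}-\frac{\gamma dsn}{4c}\bigr)$: the complement must be counted inside a space of dimension $\le d-\gamma\frac{sn}{c}$, not inside all of $I(F_n)$ with merely a cap on its dimension. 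Equivalently, keep your count of pairs but divide by the number $q^{jt_i}$ of complements of $\varpi U_i$ in $N\cap I(F_n)$ producing the same ideal; this restores the subtracted term $jt_i\ge\gamma js\frac{n}{c}$ and makes the Lemma \ref{lC2} manipulation legitimate. With either repair the rest of your argument goes through and yields exponential genericity essentially as in the paper (whose bookkeeping differs immaterially: it chooses $s$ generating vectors in $F_n$ at cost $\exp(s\dim_\F F_n)$ rather than your tuple of subspaces $U_i$).
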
 

\begin{proof}

Let $N$ be an ideal of $F_n$ which is not in $I(F_n)=F_n^c$. If $U=N\cap I$ then $N$ is spanned as a vector space by $U$ and and a union of the following sets of its vectors. First, $s_1$ vectors, where $\mathbf{s} =(s_1,s_2,..., s_{h-1})$, in $N\cap I_2$ linearly independent modulo $I=I_1$, then $s_2$ vectors in $N\cap I_3$ linearly independent modulo $I=I_2$, and so on, ending by $s_{h-1}$ vectors in $N\cap I_{h}=N$ which  are linearly independent modulo $I=I_{h-1}$. We set $s=s_{h-1}+\cdots+s_2+s_1$. We have $s\ge 1$.

Note the the results of Section \ref{sGRFNA} concering the estimates of the dimensions of subspaces in relatively free algebras hold valid for any field $\F$. So we are free to apply Lemmas \ref{27.4}, \ref{27.18}, \ref{27.24} to our current situation where $|\F|=q<\infty$.

Using the argument based on Lemma \ref{27.24} just before formula (\ref{eDPP}), we select in $U$ the subspace $L$ with $\dim_\F L\ge \gamma s \frac{n}{c}$, which is contained in the ideal $I'$ generated by the set $\textbf{s}$ of $s$ vectors chosen above. If we set $j=\dim_\F N$ then for the dimension of $U$, we will obtain $\dim_\F U=j-s\le d$. As a result, to obtain a generating set of $N$, as an ideal of $F_n$, we have to add to the collection of $s$ vectors, as above, a collection of $\le j-s - \gamma s \frac{n}{c}$ vectors from $U$. For each subspace $L$ in $I$ we fix its direct complement $L^*$ in $I$. One can choose these vectors from the subspace $L^*$ of dimension $\le d-\gamma s \frac{n}{c}$. The number of ways to select one vector from $L^*$ is at most $\le \exp\left(d-\gamma s \frac{n}{c}\right)$. The number of choices of all necessary vectors is then at most 
\begin{eqnarray}\label{eff**}
&&\exp\left(\left(j-s-\gamma s \frac{n}{c}\right)\left(d-\gamma s \frac{n}{c}\right)\right)\nonumber\\&\le& \exp\left(\frac{\left(d-\gamma s\frac{n}{c}\right)^2}{4}\right) < \exp\left(\frac{d^2}{4} - sd\gamma \frac{n}{4c}\right).
\end{eqnarray}
Note that the upper bound is uniform for all $j$.

We conclude that the ideal $N$ is completely defined by the choice of generating set, as described earlier. $F_n=F_n(\V)$. Let us set $f(n)=\dim_\F F_n(\V)$. Then the choice of one vector in $F_n$ is bounded by $\exp(f(n))$, while the choice of $s$ vectors is bounded by $\exp (sf(n))$. Now the choice of additional vectors inside $I$ is estimated in (\ref{eff**}). As a result, the number of ideals of dimension $j$ not belonging to $I$ is at most 
\[
\exp\left(\frac{d^2}{4} - sd\gamma \frac{n}{4c} + sf(n)
\right).
\]
Taking summation over all $j$ from $1$ to $f(n)$, we will obtain that the number of ideals not belonging $I$ is bounded from above by

\begin{equation}\label{eff***}
\exp\left(\frac{d^2}{4} - sd_1\gamma \frac{n}{4c} + sf(n)+\log_q f(n)
\right).
\end{equation}  
It is important in this estimate that since $s\ge 1$, similar to our argument in the proof of Lemma \ref{lAVM}, we have a \textit{negative term} $sd_1 \gamma \frac{n}{4c}$. Using Lemma \ref{27.4} with the estimates for $d$ from below and $\dim_\F F_n$ from above, the value $d\gamma \frac{n}{4c}$ is bounded from below by a polynomial of degree $c+1$ in $n$ while $f(n)$ is bounded from above by a polynomial of degree $c$ in $n$. Thus the ratio of the value in (\ref{eff***}) to the value in (\ref{eff*}) tends to zero as an exponential function as $n$ tends to $\infty$. Thus the property of an ideal of $F_n$ being in $I$ is generic for algebras $F_n$, moreover, it is \textit{exponentially generic}.
\end{proof}

\begin{Remark}\label{rff1} Comparing with the argument in Section \ref{ssINA}, we note that, in the case of finite fields, there is no need to perform summation over all tuples $\mathbf{s} =(s_{h-1},\ld,s_1)$ because the choice of $s$ vectors  uniquely defines the tuple $\textbf{s}$. No need also to take summation over all $\varpi U_i$ because we may assume that $L$ contains all these subspaces.
\end{Remark}

\subsection{Automorphisms of finite generic nilpotent algebras}\label{prop2}

We first estimate the number of pairwise nonisomorphic  $n$-generated algebras in $\V$. Since $I=F_n^c$, the group $\GL(n)$ acts on $I$ (see Proposition \ref{lGLFc} ). Further, $F_n/K\cong F_n/L$ where $K$ and $L$ are ideals of $F_n$, if and only if $K$ is moved to $L$ under the action of $\Aut{F_n}$ (see Lemma \ref{tScrap8.4}). If $K,L$ are in $I$ then $F_n/K\cong F_n/L$ iff $K,L$ belongs to the same orbit of the action of $\GL(n)$ on the set of subspaces of $I$. It follows that to estimate from below the number of algebras  of the form $F_n/K$, $K$ is in $I$ one has to divide the estimate of the number of all ideals in $I$ by the order of $\GL(n)$. So by (\ref{eff*}) the estimate will be at least  
\begin{equation}\label{effAmodGL}
\exp\left(\left[\frac{d^2}{4}\right]-n^2\right).
\end{equation}
Since the absolute value of the negative term in (\ref{eff***}) is bounded from below by a polynomial of degree $c+1$ in $n$, the total value in (\ref{eff***}) is exponentially negligible if compared with the value (\ref{effAmodGL}). It follows that a generic algebra is obtained when $K\subset I$. As a result, the algebras $P$ such that $P/P^c$ is free in the primitive class $\V'=\V\cap\fN_{c-1}$ of $(c-1)$-step nilpotent algebras of $\V$ are exponentially generic in $\bigcup_{n=1}^{\infty}\V_n$.
\begin{Theorem}\label{tffIdeal} Let $\V$ be a $c$-step nilpotent, $c\ge 2$, central primitive class of algebras over a finite field $\F$, $\V'=\V\cap\fN_{c-1}$ a primitive sublass of algebras $B$ with $B^c=\{ 0\}$. Then for an $n$-generated generic algebra $P\in\V$ it is true that $P/P^c$ is a free $\V'$-algebra of rank $n$.

\end{Theorem}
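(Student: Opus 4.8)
The plan is to combine Theorem~\ref{tfGI} with the cardinality bounds (\ref{eff*}), (\ref{eff***}) and (\ref{effAmodGL}) already in hand, to pass from counting \emph{ideals} of $F_n$ to counting \emph{isomorphism classes} of quotients, and then to pin down $P/P^c$ by an elementary algebraic identity valid over any field. First I would set up the ``good'' side. Since $\V$ is central, $I(F_n)=F_n^c$ is a characteristic ideal of $F_n$, so by Proposition~\ref{lGLFc} the group $\GL(n)$ acts on $F_n^c$, and by Malcev's Lemma~\ref{tScrap8.4} two quotients $F_n/K$ and $F_n/L$ with $K,L\subseteq F_n^c$ are isomorphic precisely when $K$ and $L$ lie in one $\GL(n)$-orbit. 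Dividing the lower bound (\ref{eff*}) for the number of subspaces of $F_n^c$ of dimension $\bigl[\tfrac{d}{2}\bigr]$ by $|\GL(n,\F)|<q^{n^2}$ yields, as in (\ref{effAmodGL}), at least $\exp\bigl(\bigl[\tfrac{d^2}{4}\bigr]-n^2\bigr)$ pairwise non-isomorphic algebras $F_n/K$ with $K\subseteq F_n^c$, where $d=\dim_\F F_n^c=\dim_\F I(F_n)$.

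Then I would bound the ``bad'' side: the number of isomorphism classes of $F_n/K$ with $K\not\subseteq F_n^c$. As $F_n^c$ is characteristic, no automorphism of $F_n$ can carry such a $K$ inside $F_n^c$, so these classes are disjoint from the good ones and their number is at most the number of ideals not contained in $F_n^c$. By (\ref{eff***}), after the summations over the dimension and over the admissible tuples $\mathbf{s}$ are carried out (the latter being unnecessary by Remark~\ref{rff1}) and using that the estimate is decreasing in $s\ge1$ once $n$ is large, this number is at most $\exp\bigl(\tfrac{d^2}{4}-d\gamma\tfrac{n}{4c}+g(n)\bigr)$ for some $g$ bounded by a polynomial of degree $\le c$ in $n$. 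Since by Lemma~\ref{27.4} the negative term $d\gamma n/(4c)$ grows like a polynomial of degree $c+1$ in $n$, while $n^2$ and $g(n)$ are bounded by polynomials of degree $\le c$ (this is where $c\ge2$ enters), the ratio of the bad count to the good count $\exp\bigl(\bigl[\tfrac{d^2}{4}\bigr]-n^2\bigr)$ tends to $0$ like a negative exponential in $n$. Hence a generic $n$-generated algebra $P\in\V$ admits a presentation $P=F_n/K$ with $K\subseteq F_n^c$.

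Finally I would compute $P/P^c$. With $K\subseteq F_n^c$ one has $P^c=(F_n/K)^c=(F_n^c+K)/K=F_n^c/K$, whence $P/P^c\cong F_n/F_n^c$. Writing $F_n=\cF(X)/V(\cF(X))$ with $|X|=n$, we get $F_n/F_n^c\cong\cF(X)/\bigl(V(\cF(X))+\cF(X)^c\bigr)$, which is the relatively free algebra of rank $n$ in the primitive class whose verbal ideal is $V(\cF_{\infty})+\cF_{\infty}^c$, namely $\V'=\V\cap\fN_{c-1}$. Thus $P/P^c\cong F_n(\V')$, as claimed.

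I expect the main obstacle to be the transition from counting ideals to counting isomorphism classes: one divides the good count by $|\GL(n)|$ but leaves the bad count undivided, which is legitimate only because passing to orbits can only decrease a cardinality and because the characteristic-ness of $F_n^c$ keeps the two families of orbits disjoint. The remaining ingredients — monotonicity of (\ref{eff***}) in $s$, the summations over $\mathbf{s}$ and over $\dim_\F K$, and the degree comparison in $n$ — reproduce the routine already carried out in the proof of Theorem~\ref{tfGI}.
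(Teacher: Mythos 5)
Your proposal is correct and follows essentially the same route as the paper: the paper's argument (the paragraph preceding the theorem) likewise divides the lower bound (\ref{eff*}) by $|\GL(n)|$ to get (\ref{effAmodGL}), compares it with the bound (\ref{eff***}) on ideals outside $F_n^c$ via the degree-$(c+1)$ negative term, and concludes that a generic $P$ is $F_n/K$ with $K\subset F_n^c$, so $P/P^c\cong F_n/F_n^c\cong F_n(\V')$. Your added care about why the bad isomorphism classes stay disjoint from the good ones (characteristicness of $F_n^c$) and why the undivided bad count suffices only makes explicit what the paper leaves implicit.
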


We now switch to the ideals  invariant under some $NS$ automorphisms of $F_n=F_n(\V)$. Following the notation introduced just before Lemma \ref{34.3}, based on Proposition \ref{lGLFc}, with each automorphism $\vp$ of $F_n$, we can associate a matrix $P\in\GL(n)$ of the action of $\vp$ on $F_n/F_n^2$. Since we deal with central primitve classes, the restriction of $\vp$ to $I$ depends only on $P$ and is denoted by $\wA$.
\setcounter{Case}{0}
\begin{Case}\label{st1}
The number of all ideals outside $I$, not necessarily invariant under the $NS$-automorphisms have been determined  bounded from above by (\ref{eff***}), where $s=1$. 
\end{Case}

\begin{Case}\label{st2}
Let $N$ be an ideal belonging to $I$ such that $\dim_\F N=m$, where 
\begin{equation}\label{effm}
\frac{d}{2}-2n<m<\frac{d}{2}+2n.
\end{equation} 
Suppose that $N$ is invariant under an automorphism $\vp$ whose action on $F_n/F_n^2$ is an non-scalar matrix $P$. Assume that $\rank_{\wA}(N)\le m-n^{\frac{3}{2}}$. (We will use later that the growth of $n^{\frac{3}{2}}$ is slower than $n^2$ but the growth of its square is faster than  $n^2$.) Then, with $P$ fixed, $N$ is defined, as an ideal, by a subspace of dimension at most $m-n^{\frac{3}{2}}$. Since $n$ is big, if we use (\ref{effm}), we will obtain $m-n^{\frac{3}{2}}<\frac{d}{2}-\frac{n^{\frac{3}{2}}}{2}$. Now using formula (\ref{hash}), we derive that for all great enough $n$, the number of  subspaces in our case does not exceed
\begin{eqnarray*}
&&8\exp\left(\left(m- n^{\frac{3}{2}}\right)\left(d-\left(m - n^{\frac{3}{2}}\right)\right)\right)\\ &<& 
8\exp\left(\left(\frac{d}{2}-\frac{n^{\frac{3}{2}}}{2}\right)\left(\frac{d}{2}+\frac{n^{\frac{3}{2}}}{2}\right)\right)\\
&<&\exp \left(\frac{d^2}{4} - \frac{n^3}{5}\right).
\end{eqnarray*}
(Note that we get rid of 8 here and few times later again  because  multiplying by 8 only adds a constant to the exponent, which is negligible as $n\to\infty$.)
After summation over all $A\in\GL(n)$, we will find that the number of such $N$ is bounded from above by 
\begin{equation}\label{hh}
\exp \left(\frac{d^2}{4} - \frac{n^3}{5}+n^2\right).
\end{equation}
\end{Case}

\begin{Case}\label{st3}
As in Case \ref{st1}, let $N$ be an ideal contained in $I$ and such that $\dim_\F N=m$, where $m$ as in (\ref{effm}). Further assume that $N$ is invariant under an automorphism $\vp$ whose action on $F_n/F_n^2$ is an  matrix $A$. This time we assume that $\rank_{\wA}(N)> m-n^{\frac{3}{2}}$.  Also, since  $m$ is bounded from below by a quadratic function (see the lower bound for $d$ in Lemma \ref{27.4}, it is true that for all great enough $n_0$, we have $m-n^{\frac{3}{2}}>\frac{m}{2}$. So Lemma \ref{28.24} applies and $N$ contains a subspace $W$ which is an eigenspace for $\wA$ with eigenvalue $\lambda$ and such that $\dim_\F W>m- n^{\frac{3}{2}}$. On the other hand, by Lemma \ref{34.3}, there is a proper subspace $L$ of $I$ such that $W\subset L$ and $\dim_{\F} L \le d-\delta n^{c-1}$.

By formula (\ref{hash}), the number of ways to choose the subspace $W$ in $L$ is less than
\begin{eqnarray*}
&&8 \exp \left(\left(m- n^{\frac{3}{2}}\right)\left(d-\delta n^{c-1}-\left(m- n^{\frac{3}{2}}\right)\right)\right)\\ &\le& \exp\frac{(d-\delta n^{c-1})^2}{4}\le \exp\left(\frac{d^2}{4} - d\delta \frac{n^{c-1}}{4}\right).
\end{eqnarray*}

Once $W$ has been selected, to complete the selection of $N$, 
one has to choose at most $n^{\frac{3}{2}}$ vectors from $I$, which can be done by less than $\exp \left(n^{\frac{3}{2}}d\right)$ ways. As a result, when $A$ is fixed, the number of subspaces $N$ in question, is less than
\[
\exp\left(\frac{d^2}{4} - d\delta \frac{n^{c-1}}{4} + d n^{\frac{3}{2}}\right).
\]
After summation by all $A\in\GL(n)$ and $m$ in (\ref{effm}), we find that  the number of subspaces $N$ in this case is less than  
\begin{equation}\label{hhh}
\exp\left(\frac{d^2}{4} - d\delta\frac{n^{c-1}}{4} + d n^{\frac{3}{2}}+n^2+\log_q 4n\right).
\end{equation}

\end{Case}

\begin{Case}\label{st4}
Finally, it follows by formula (\ref{hash}) that the number of ideals $N$ belonging to $I$ of dimension $m$,where $m$ is outside of (the interval \ref{effm}), is less than $8\exp\left( \frac{d^2}{4} - (2n)^2\right)$. The total number, for all such $m$, is less than 
\begin{equation}\label{hhhh}
8\exp\left( \frac{d^2}{4} - (2n)^2+ \log_q d\right).
\end{equation}
\end{Case}

In all four cases, the numbers of ideals, given by the formulas (\ref{eff***}), (\ref{hh}), (\ref{hhh}) and (\ref{hhhh}) are exponentially negligible with $n\to\infty$ when compared to the total number of ideals estimated from below in (\ref{eff*}). It follows that the number of ideals which are invariant only under the automorphisms which are scalar modulo $F_n^2$ are exponentially generic.

As several times earlier, we can use Theorem \ref{tScrap8.4} to switch from the ideals to the quotient-algebras by these ideals. We need to identify the ideals which are conjugate under the action of $\Aut{F_n}$. As a result, the total number of pairwise nonisomorphic $n$-generated algebras in our primitive class $\V$ is bounded from below by the ratio of the value in (\ref{eff*}) over  the number $\exp(n^2)$. So for the total number of pairwise nonisomorphic algebras is estimated from below by the same number $\exp\left(\left[\frac{d^2}{4}\right]-n^2\right)$, as in (\ref{effAmodGL}).   

This number is still exponentially greater than the numbers of ideals obtained in all four cases. So we conclude that the number of algebras in $\V_n$ whose automorphism group contains an $NS$-automorphism is exponentially negligible when compared with the total number of algebras in $\V_n$, as $n\to\infty$.

\begin{Theorem}\label{tffAut}Let $\V$ be a $c$-step nilpotent central primitive class of algebras over a finite field $\F$, $c\ge 2$. Then the automorphisms of a finitely generated generic algebra $P\in\V$ are scalar modulo $P^2$.
$\hfill\Box$
\end{Theorem}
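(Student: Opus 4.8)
The plan is to reduce everything to a counting argument, exactly in the spirit of Section~\ref{ssENI}. Since $\V$ is central, $I(F_n)=F_n^c$, and writing $d=\dim_\F F_n^c$ the estimate (\ref{eff*}) gives that the number of ideals of $F_n$ contained in $F_n^c$ is at least $\exp([d^2/4])$, while (\ref{hash}) bounds the number of $m$-dimensional ones from above by $8\exp((d-m)m)$. By Malcev's Lemma~\ref{tScrap8.4} together with Proposition~\ref{lGLFc}, isomorphism classes of $n$-generated algebras in $\V$ correspond to $\Aut{F_n}$-orbits of ideals, and on $F_n^c$ this action factors through $\GL(n)$; since $|\GL(n)|\le\exp(n^2)$, the number of pairwise non-isomorphic algebras $F_n/K$ with $K\subset F_n^c$ is at least $\exp([d^2/4]-n^2)$, as in (\ref{effAmodGL}). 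Hence it suffices to show that the set $\cN_n$ of ideals of $F_n$ invariant under some $NS$-automorphism is exponentially negligible when compared with $\exp([d^2/4])$: the gap survives division by $|\GL(n)|$.

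First I would dispose of the ideals $K\not\subset F_n^c$ by Theorem~\ref{tfGI}, which already shows that they are exponentially negligible (the corresponding count is (\ref{eff***}) with $s\ge 1$, carrying the negative term $-s d\gamma\tfrac{n}{4c}$). It then remains to treat $NS$-invariant ideals $N\subset F_n^c$ of dimension $m$. For such an $N$ let $\wA$ be the operator on $F_n^c$ induced, via Proposition~\ref{lGLFc}, by the non-scalar matrix $A\in\GL(n)$ coming from an $NS$-automorphism fixing $N$. I would split into three cases. (a) If $m$ lies outside the interval $(\tfrac d2-2n,\tfrac d2+2n)$, then $(d-m)m<\tfrac{d^2}{4}-4n^2$, and summing (\ref{hash}) over such $m$ gives the bound (\ref{hhhh}). (b) If $m$ lies in that interval and $\rank_{\wA}N\le m-n^{3/2}$, then with $A$ fixed $N$ is determined by a subspace of dimension $\le m-n^{3/2}<\tfrac d2-\tfrac{n^{3/2}}{2}$, so (\ref{hash}) applies; summing over all $A\in\GL(n)$ yields (\ref{hh}), whose exponent carries the negative term $-\tfrac{n^3}{5}$. (c) If $m$ lies in the interval and $\rank_{\wA}N> m-n^{3/2}$, then, since $m$ grows at least quadratically by Lemma~\ref{27.4}, for large $n$ one has $m-n^{3/2}>m/2$, so $\rank_{\wA}N>m/2$ and Lemma~\ref{28.24} produces an eigenspace $W\subset N$ with $\dim_\F W>m-n^{3/2}$; as $A$ is non-scalar, Lemma~\ref{34.3} forces $W$ into a subspace $L\subset F_n^c$ with $\dim_\F L\le d-\delta n^{c-1}$, so by (\ref{hash}) the number of choices of $W$ is at most $\exp\!\big(\tfrac{d^2}{4}-d\delta\tfrac{n^{c-1}}{4}\big)$, and at most $n^{3/2}$ further vectors complete $N$; summing over $A$ and $m$ gives (\ref{hhh}).

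To finish, I would observe that in each of the four bounds (\ref{eff***}), (\ref{hhhh}), (\ref{hh}), (\ref{hhh}), subtracting $\tfrac{d^2}{4}$ from the exponent leaves a negative term — respectively $-s d\gamma\tfrac n{4c}$, $-4n^2$, $-\tfrac{n^3}{5}$, $-d\delta\tfrac{n^{c-1}}{4}$ — which, using the lower bound on $d$ from Lemma~\ref{27.4} (here $c\ge 2$), grows strictly faster than $n^2$ and dominates all the positive correction terms (each at most of polynomial order $n^c$ or $\log_q$-size). Dividing the resulting counts by $|\GL(n)|\le\exp(n^2)$, the number of isomorphism classes of $n$-generated algebras possessing an $NS$-automorphism is exponentially negligible relative to $\exp([d^2/4]-n^2)$; combined with Theorem~\ref{tffIdeal} this proves that a finitely generated generic $P\in\V$ has only $S$-automorphisms. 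The main obstacle is the rank dichotomy in case (c): one must confirm that $\rank_{\wA}N>m/2$ genuinely holds for large $n$ (via the quadratic lower bound on $m$) so that Lemma~\ref{28.24} applies, and that the eigenspace-codimension gap $\delta n^{c-1}$ of Lemma~\ref{34.3} is available uniformly over all $NS$-automorphisms; the rest is bookkeeping with the exponential estimates of Section~\ref{ssENI}.
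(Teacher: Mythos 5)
Your proposal is correct and follows essentially the same route as the paper: the same case split (ideals outside $F_n^c$, then the three cases by dimension interval and by $\rank_{\wA}N$ relative to $m-n^{3/2}$), the same use of Lemmas \ref{27.4}, \ref{28.24} and \ref{34.3}, the same counting bounds (\ref{eff*}), (\ref{hash}), (\ref{eff***}), (\ref{hh}), (\ref{hhh}), (\ref{hhhh}), and the same final passage to isomorphism classes via Malcev's Lemma and division by $|\GL(n)|\le\exp(n^2)$. Nothing essential is missing.
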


Similar to the way we did this in Section \ref{ssARFNA}, see Propostion \ref{tNSD}, we can derive the following.
\begin{Proposition}\label{pffDer} Let $\V$ be a $c$-step nilpotent central primitive class of algebras over a finite field $\F$, $c\ge 2$. Then the derivations of a finitely generated generic algebra $P\in\V$ are scalar modulo $P^2$.
$\hfill\Box$
\end{Proposition}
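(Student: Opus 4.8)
The plan is to repeat, with derivations in place of automorphisms, the counting argument that proves Theorem \ref{tffAut}, just as Proposition \ref{tNSD} obtained the derivation statement from Theorem \ref{tNSA} in the algebraically closed case. First, by Theorem \ref{tfGI} a generic ideal $K$ of $F_n$ is contained in $F_n^c=I(F_n)$, so it is enough to treat ideals $K\subseteq F_n^c$; write $P=F_n/K$. Since $\V$ is multihomogeneous (this is the one point that requires attention over a finite field; see below), Proposition \ref{pID} shows that every derivation $d$ of $P$ is induced by a derivation $\ba{d}$ of $F_n$ leaving $K$ invariant; $\ba{d}$ induces a linear operator $A\in\gl(n)$ on $F_n/F_n^2$, and by Proposition \ref{lGLFc} its restriction to $F_n^c$ is the well-defined operator $\wA$, with $\wA(K)\subseteq K$. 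Thus $d$ is scalar modulo $P^2$ unless $A$ is non-scalar, and what must be shown is that the set of ideals $K\subseteq F_n^c$ invariant under $\wA$ for some non-scalar $A\in\gl(n)$ is exponentially negligible inside the set of all ideals of $F_n$.

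Next I would run the four-case estimate in the proof of Theorem \ref{tffAut} verbatim, the only difference being that $A$ ranges over $\gl(n)$ instead of $\GL(n)$; since $|\gl(n)|=q^{n^2}=\exp(n^2)$ equals the bound $|\GL(n)|\le\exp(n^2)$ used there, none of the estimates changes. Case~1 (ideals not in $F_n^c$) is covered by Theorem \ref{tfGI}, i.e. the bound (\ref{eff***}). In Case~2 (ideals $K\subseteq F_n^c$ of dimension $m$ in the range (\ref{effm}) with $\rank_{\wA}(K)\le m-n^{\frac32}$) the ideal is determined, once $A$ is fixed, by a subspace of dimension at most $m-n^{\frac32}$, which gives the bound (\ref{hh}) via (\ref{hash}). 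In Case~3 (same range of $m$, but $\rank_{\wA}(K)>m-n^{\frac32}$) Lemma \ref{28.24} supplies a $\wA$-eigenspace of $K$ of dimension $>m-n^{\frac32}$, which by Lemma \ref{34.3} --- stated there for $D\in\Der{F_n}$ as well as for automorphisms --- sits inside a proper subspace of $F_n^c$ of codimension at least $\delta n^{c-1}$, yielding the bound (\ref{hhh}). Case~4 ($m$ outside (\ref{effm})) gives (\ref{hhhh}). The module-theoretic inputs, Proposition \ref{tFGMPID} and Lemmas \ref{28.1} and \ref{28.24}, are stated for an arbitrary linear operator with no invertibility hypothesis, so the possible singularity of $\wA$ (which is what distinguishes derivations from automorphisms) causes no difficulty here --- unlike in Proposition \ref{tNSD}, there is no auxiliary map whose non-degeneracy would have to be arranged. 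Adding (\ref{eff***}), (\ref{hh}), (\ref{hhh}) and (\ref{hhhh}) bounds the number of ideals $K\subseteq F_n^c$ with a non-scalar invariant $\wA$ by a quantity exponentially smaller than the lower bound (\ref{eff*}) for the number of all ideals.

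Finally I would pass from ideals to isomorphism classes of algebras as in Section \ref{prop2}: by Malcev's Lemma \ref{tScrap8.4} the classes of algebras $F_n/K$ are in bijection with the $\Aut{F_n}$-orbits on the ideals, each orbit having size dividing $|\GL(n)|\le\exp(n^2)$, so after dividing through by $\exp(n^2)$ the number of classes carrying an $NS$-derivation remains exponentially negligible against the lower bound $\exp\left(\left[\frac{d^2}{4}\right]-n^2\right)$ for $|\V_n|$ from (\ref{effAmodGL}). Hence a generic $n$-generated algebra $P$ in $\V$ has all its derivations scalar modulo $P^2$.

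The step I expect to need the most care is the one flagged above: whereas automorphisms of $F_n/K$ always lift to $F_n$ by Malcev's Lemma \ref{tScrap8.4}, a derivation of $F_n/K$ lifts to a derivation of $F_n$ --- and so produces the operator $\wA$ needed for the counting --- only when $\V$ is multihomogeneous, and over a finite field this is not automatic (Remark \ref{rQ}). I would handle this either by stating the proposition for multihomogeneous central classes $\V$ (which still includes all the classical ones, since these are defined by multilinear identities) or, more generally, by first reducing to the multihomogeneous primitive subclass carrying the relevant identities. Once $\V$ is multihomogeneous, Propositions \ref{pID} and \ref{lGLFc} apply and every estimate above is a word-for-word transcription of the automorphism case.
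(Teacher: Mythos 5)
Your proposal follows essentially the same route as the paper, which proves Proposition \ref{pffDer} only by pointing to the automorphism argument of Section \ref{prop2} modified as in Proposition \ref{tNSD}: replace $\GL(n)$ by $\gl(n)$ and rerun the four-case count, which goes through verbatim since $|\gl(n)|=q^{n^2}$ matches the bound used for $|\GL(n)|$ and Lemmas \ref{28.1}, \ref{28.24} and \ref{34.3} need no invertibility of $\wA$. Your caveat about multihomogeneity is well taken: the lifting of derivations (Proposition \ref{pID}) and the $\gl(n)$-action on $F_n^c$ (Proposition \ref{lGLFc}) do require $\V$ multihomogeneous, which over a finite field is not automatic (Remark \ref{rQ}), so your added hypothesis (or reduction to the multihomogeneous case) supplies a detail the paper leaves implicit rather than a divergence from its method.
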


\subsection{The automorphism group of a generic nilpotent algebra}\label{ssOAGFGA}

Given a relatively free algebra $F_n=F_\V(x_1,\ld,x_n)$ over a field $\F$, $|\F|=q$, we call an element $f\in F_n$ \textit{quasihomogeneous} if, when written as a polynomial in $x_1,\ld,x_n$, all its monomials have the same degree modulo $q-1$. For each $\bar k\in\Z_{q-1}$, we have a subspace $F_n^{\bar k}$ spanned by the monomials of all degrees $\ell$ such that $\ell\in\bar k$. We have $F_n=\sum_{\bar k\in\Z_{q-1}}F_{\bar k}$. We can write $f(x_1,\ld,x_n)=\sum_{\bar k\in\Z_{q-1}}f_{\bar k}(x_1,\ld,x_n)$, where $f_{\bar k}\in F_{\bar k}$. Let us prove that this sum is direct. By contradiction, assume 
\begin{equation}\label{effquasi}
f(x_1,\ld,x_n)=\sum_{\bar k\in\Z_{q-1}}f_{\bar k}(x_1,\ld,x_n)=0
\end{equation}  
so that the number of nonzero summands is the least possible, different from zero. Suppose that $\lambda\in\F$ is the generator of the (cyclic) multiplicative group $\F^*$. If $\bar k\ne\bar\ell$ then $\lambda^k\ne\lambda^\ell$.

Now (\ref{effquasi}) is an identity in $\V$ (see Lemma \ref{lRelX}). If we apply an automorphism $\psi$ of $F_n$ sending each $x_i$ to $\lambda x_i$ to (\ref{effquasi}), it remains to be identity. We will have  
\[
f(\lambda x_1,\ld,\lambda x_n)=\sum_{\bar k\in\Z_{q-1}}\lambda^kf_{\bar k}(x_1,\ld,x_n)=0.
\]

If for $k\le\ell$ we have $\bar k\ne\bar\ell$ we have $f_{\bar k}(x_1,\ld,x_n)\ne 0$ and $f_{\bar \ell}(x_1,\ld,x_n)\ne 0$ subtracting from the latter relation (\ref{effquasi}) multiplied by $\lambda^k$, we will have a nontrivial relation with less number of summands. Thus $F_n=\oplus_{\bar k\in\Z_{q-1}}F_{\bar k}$ is indeed the direct sum of quasi homogeneous components. Actually, this is a grading of $F_n$ by the group $\Z_{q-1}$.

It follows that the multiplication of the free generators by a scalar $\lambda\ne 0$ extends to an automorphism of $F_n$ such that  each quasi homogeneous component $F_{\bar k}$ is multiplied by $\lambda^k$. Even if the $c$-step nilpotent algebra $F_n$ is not graded, every element of $F_n^c$ is multiplied by $\lambda^c$ which is true because $F_n^c$ is contained in the graded component $F_{\bar c}$. It now  follows that any ideal $N$ inside $I=F_n^c$ is invariant under such automorphism. Since by Theorem \ref{tffIdeal} a generic algebra $P$ is isomorphic to a quotient-algebra $F_n/N$, where $N\subset I$, we conclude that a generic algebra admits a  well defined faithful action by an automorphism of order $q-1$.   
 
Finally, since the automorphism which are identity modulo $F_n^2$ act trivially on $N$, it follows that in a generic algebra $P$ the map adding to any of $n$ generators an element of $P^2$ extends to an automorphism of $P$. Since by Remark \ref{rScrap29.1}, $I\subset F_n^2$, it follows that $|P/P^2|=|F_n/F_n^2|=q^n$, we obtain a formula for the order of the automorphism group of a generic algebra $P$ in the following.

\begin{Proposition}\label{pffaut} Let $P$ be a generic $n$-generated algebra in a central $c$-step nilpotent, $c\ge 2$,  primitive class of algebras over a finite field $\F$ with $q$ elements. Then
\[
|\Aut{P}|=(q-1)\left(\frac{|P|}{q^n}\right)^n.
\]
The group $\Aut{P}$ is a solvable group which is a semidirect product of a $(c-1)$-step nilpotent normal subgroup and the multiplicative group of $\F$. $\hfill\Box$
\end{Proposition}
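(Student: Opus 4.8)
The plan is to assemble Proposition \ref{pffaut} from the structural facts already established in Section \ref{prop2} and \ref{ssOAGFGA}, treating the order formula and the semidirect-product description as two separate tasks. For the order, I would start from Theorem \ref{tScrap8.4}: every automorphism of $P=F_n/N$ is induced by an automorphism of $F_n$ stabilizing $N$, so $\Aut{P}\cong\Stab_{\Aut{F_n}}(N)/K_N$ where $K_N$ is the kernel of the action on $F_n/N$. Since by Theorem \ref{tffIdeal} a generic $P$ has the form $F_n/N$ with $N\subset I=F_n^c$, and by Theorem \ref{tffAut} a generic $P$ admits only automorphisms scalar modulo $P^2$, the point is to count exactly these. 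The scalar-modulo-$P^2$ condition means: an automorphism $\alpha$ is determined by a nonzero $\lambda\in\F$ (the scalar on $P/P^2$) together with the data of where each of the $n$ generators goes modulo the choices that don't affect anything, i.e.\ adding to each $a_i$ an arbitrary element of $P^2$. The discussion in Section \ref{ssOAGFGA} already shows both that the $\lambda$-multiplication extends to an automorphism (via the $\Z_{q-1}$-grading of $F_n$ and $F_n^c\subset F_{\bar c}$, so $N$ is invariant) and that the map $a_i\mapsto a_i+u_i$ with $u_i\in P^2$ extends to an automorphism (since automorphisms trivial mod $F_n^2$ act trivially on $N\subset F_n^c$). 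So the count is: $(q-1)$ choices of $\lambda$, times $|P^2|^n$ choices of the tuple $(u_1,\dots,u_n)$. Using $|P/P^2|=q^n$ (from $I\subset F_n^2$, Remark \ref{rScrap29.1}), we get $|P^2|=|P|/q^n$, hence $|\Aut{P}|=(q-1)(|P|/q^n)^n$. The one genuinely technical point to nail down is that these two families of automorphisms are \emph{independent} — that distinct pairs $(\lambda,(u_i))$ give distinct automorphisms of $P$ — and that no further automorphisms exist; the first is elementary (compare the induced maps on $P/P^2$ and then on generators), and the second is precisely the content of Theorem \ref{tffAut}.

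For the semidirect-product structure, I would mirror the proof of Theorem \ref{36}(vii). Let $\widetilde{Q}\subset\Aut{P}$ be the subgroup of automorphisms adding elements of $P^2$ to the generators, and $\widetilde{P}\cong\F^*$ the subgroup of scalar-multiplication automorphisms. Then $\Aut{P}=\widetilde{P}\ltimes\widetilde{Q}$: the product exhausts $\Aut{P}$ by Theorem \ref{tffAut} plus the extendibility statements above, $\widetilde{Q}$ is normal (it is exactly the kernel of $\Aut{P}\to\Aut{P/P^2}$, restricted to the image, which lands in $\F^*$), and $\widetilde{P}\cap\widetilde{Q}=1$. For nilpotency of $\widetilde{Q}$: automorphisms trivial modulo $P^2$ act trivially on every factor $P^i/P^{i+1}$ (they are ``unitriangular'' with respect to the filtration $P\supset P^2\supset\cdots\supset P^c$), so $\widetilde{Q}$ is nilpotent; since it acts trivially on $P^{c-1}/P^c$ it is in fact $(c-1)$-step nilpotent — one checks by the standard commutator-through-the-filtration argument that the lower central series of $\widetilde{Q}$ reaches the identity in at most $c-1$ steps. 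Solvability of $\Aut{P}$ is then immediate. I would present this compactly by invoking the parallel with Theorem \ref{36}(vii) rather than rewriting the filtration bookkeeping in full.

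The main obstacle I anticipate is purely expository rather than mathematical: making sure that every ingredient is genuinely available for \emph{finite} fields and not only for algebraically closed ones. In particular the semidirect decomposition used in Theorem \ref{36}(vii) relied on $K$ being invariant under the relevant subgroups of $\Aut{F_n}$ and on $F_n/F_n^2$ surjecting onto $P/P^2$; these facts hold verbatim here because $\V$ is central so $I(F_n)=F_n^c$ and $N\subset F_n^c$ is automatically invariant under scalar automorphisms and under automorphisms trivial mod $F_n^2$ — exactly the content spelled out in Section \ref{ssOAGFGA}. The other place requiring a moment of care is the passage $|P^2|=|P|/q^n$, which needs $|P/P^2|=q^n$ exactly; this uses $N\subset F_n^c\subset F_n^2$ (Remark \ref{rScrap29.1}) so that $P/P^2\cong F_n/F_n^2$, a vector space of dimension $n$. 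Everything else is a matter of citing Theorems \ref{tScrap8.4}, \ref{tffIdeal}, \ref{tffAut} and Proposition \ref{pffDer} in the right order and observing that the counting is literally $(q-1)\cdot|P^2|^n$.
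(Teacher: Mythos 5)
Your proposal is correct and follows essentially the same route as the paper's Section \ref{ssOAGFGA}: generic form $P\cong F_n/N$ with $N\subset F_n^c$ (Theorem \ref{tffIdeal}), only scalar-mod-$P^2$ automorphisms (Theorem \ref{tffAut}), extendibility of the scalar automorphism via the $\Z_{q-1}$-quasihomogeneous grading and of the $P^2$-translations via triviality on $F_n^c$, giving the count $(q-1)\,|P^2|^n$ with $|P/P^2|=q^n$, and the semidirect-product/nilpotency structure mirrored from Theorem \ref{36}(vii). Your explicit verification that the pairs $(\lambda,(u_i))$ parametrize $\Aut{P}$ bijectively only makes precise what the paper leaves implicit.
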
  
   
In conclusion, we invite the reader to provide the analogues of other claims of Theorem \ref{36}, in the case of finite fields.


\begin{thebibliography}{xxxxx}



\bibitem[AS]{AS} Amayo, Ralph K.; Stewart, Ian, Infinite-dimensional Lie algebras, Noordhoff International Publishing, Leyden, 1974, 425 pp.

\bibitem[AO]{AO} Arzhantseva, G. N.;  Olshanskii, A. Yu., \textit{The class of groups all of whose subgroups with lesser number of generators are free is generic}, Matem. Zametki, \textbf{59} (1996), 489--496 (Russian). Engl. Transl.: Math. Notes, \textbf{59} (1996), 350–355.

\bibitem[B]{B} Bahturin, Yuri, Identical relations in Lie algebras, De Gruyter Expositions in Mathematics, \textbf{68}(2021), 516pp.

\bibitem[BO]{BO} Bahturin, Yuri; Olshanskii, Alexander, \textit{Nilpotent algebras, implicit function theorem, and polynomial quasigroups}, J. Algebra, 632(2023), 154-193. 

\bibitem[Be]{Be} Beletskii, P. M., \textit{Dense subclasses in some varieties of two-step nilpotent groups}, Math. USSR-Sb., \textbf{50}(1985), 369–385.    

\bibitem[Bou]{Bou} Bourbaki, N.,  \'El\'ements de math\'ematique, Groupes et Alg\`ebres de Lie, Herman,
Chapter II,  1971, 1972, Chapter VII, 1975.
                       
\bibitem[Dix]{Dix}Dixon, J. D., \textit{The probability of generating the symmetric group}, Math. Z., \textbf{110} (1969), 199-205.

\bibitem[Gub]{GVS} Guba, V. S., \textit{On the conditions under which 2-generator subgroups in the groups with small cancellation are free}, Izv. VUZov, Matem., (1986) 7, 12 - 19
(Russian). Eng. Transl.: Soviet Math. (Iz. VUZ), 30:7 (1986), 14--24.

\bibitem[HAG]{HRT} Hartshorne, Robin, Algebraic geometry, 
GTM, \textbf{52}(1977), xvi+496 pp.

\bibitem[HLA]{HUM} Hartshorne, Robin, Linear Algebraic Groups, Graduate Text in Mathematics,\textbf{21} (1975), xv+247 pp.

\bibitem[KN]{KN} Kirillov, A. A.; Neretin, Yu. A., The variety of structures of $n$-dimensional Lie algebras. In: Certain questions of contemporary analysis, Moscow, Moscow University, 1984, 42 - 56.

\bibitem[AIM]{AIM} Malcev, A. I.,
\textit{On algebras with identical defining relations}, (Russian), Mat. Sb., N. Ser. \textbf{26}(68)(1950), 19--33.


\bibitem[Mil]{Mil} Milentjeva, M. V., \textit{On the torsion free ranks of finitely
generated nilpotent groups and their abelian subgroups},
J. Group Theory, \textbf{7}(2004), 403-408.

\bibitem[Ner]{NYA} Neretin, Yu. A., \textit{An estimate of the number of parameters defining 
an $n$-dimensional algebra}
Izv. AN SSSR, Ser.  mat., \textbf{51}(1987), 306--318 (Russian). Engl. Transl.: Math. USSR--Izv., \textbf{30}(1988), 283-294.

\bibitem[Ols]{OAY} Olshanskii, A. Yu., \textit{The number of generators and orders of Abelian subgroups of finite p-groups}, 
Matem. Zametki, \textbf{23}(1978), No.3, 345--350 (Russian). Engl. Transl.:Math. Notes, \textbf{23}(1978),183–185.

\bibitem[OAY]{OA} Olshanskii, A. Yu., \textit{Almost every group is hyperbolic}, Internat J. of Algebra and Comput., \textbf{2} (1992), 1--17.

\bibitem[Lan]{SL} Lang, S., Algebra, Rev. 3rd ed., Graduate Text in Mathematics, 211, Springer-Verlag, New York, 2002

\bibitem[Rom]{Rom}  Roman'kov, V. A., \textit{The commutator width of some relatively free Lie algebras and nilpotent groups}, Sib. Mat. Zh., \textbf{57}(2016), 866-888. Engl. Transl.: Siberian Math. J., \textbf{57}(2016), 679–695 .

\bibitem[Sha]{IRS} Shafarevich, I. R., Basic Algebraic geometry, vol. 1, 2nd Ed., Engl. Transl., Springer-Verlag, 1994.

\end{thebibliography}
\end{document}